\documentclass[a4paper,11pt]{article}
\usepackage[lmargin=30mm,rmargin=30mm,tmargin=26mm,bmargin=26mm]{geometry}
\usepackage{amsmath,amssymb,amsthm,mathtools}
\usepackage{booktabs}
\usepackage{array}
\usepackage[dvipsnames]{xcolor}
\usepackage[colorlinks=true,linkcolor=blue!50!black,citecolor=blue!50!black,urlcolor=blue!50!black]{hyperref}
\usepackage{enumitem}
\usepackage{tikz}
\usepackage{alltt}
\usepackage{sectsty}
\newcommand{\defn}[1]{\textcolor{Maroon}{\emph{#1}}}
\allsectionsfont{\boldmath}

\newtheorem{theorem}{Theorem}[section]
\newtheorem{lemma}[theorem]{Lemma}
\newtheorem{proposition}[theorem]{Proposition}
\newtheorem{corollary}[theorem]{Corollary}

\theoremstyle{definition}

\newcommand{\had}{\operatorname{had}}
\newcommand{\eps}{\varepsilon}
\newcommand{\cL}{\omega_{\ref*{thm:refinedlin}}}
\newcommand{\PL}{\tau_{\ref*{thm:refinedlin}}}
\newcommand{\Ck}{C_{\ref*{lem:Ck}}}
\newcommand{\Cdens}{C_{\ref*{prop:DP}}}
\newcommand{\Sst}{c_{\ref*{lem:level}}}
\newcommand{\cS}{c_{\ref*{thm:sepexplicit}}}
\newcommand{\cst}{c_{\ref*{prop:explicitbootstrap}}}
\newcommand{\LamE}{\Lambda_{\mathrm E}}
\newcommand{\LamU}{\Lambda_{\mathrm U}}
\newcommand{\LamF}{\Lambda_{\ref*{lem:weavefinal}}}
\newcommand{\KF}{\Lambda'_{\ref*{lem:weavefinal}}}
\newcommand{\CNP}{C_{\ref*{thm:NP}}}
\newcommand{\CLin}{C_{\mathrm{Lin}}}
\newcommand{\CDP}{C_{\mathrm{DP}}}
\newcommand{\cfin}{c_{\ref*{thm:all}}}

\title{\bfseries Tweaking the constant in the\\ Linear Hadwiger Theorem}
\author{David R. Wood\footnote{School of Mathematics, Monash University, Melbourne, Australia (\texttt{david.wood@monash.edu}). Research supported by the Australian Research Council.}}
\date{}

\newcommand{\doi}[1]{\href{https://doi.org/#1}{\nolinkurl{doi:#1}}}
\begin{document}
\maketitle
{\renewcommand{\thefootnote}{}\footnotetext{7th October 2026 (updated 15th October 2026)}}

\begin{abstract}
Norin and Steiner recently proved the Linear Hadwiger Conjecture. That is, there is an absolute constant $C$ such that every graph $G$ satisfies $\chi(G)\leqslant C\,\had(G)$, where $\chi(G)$ is the chromatic number and $\had(G)$ is the Hadwiger number of $G$. They estimate that their proof gives a non-optimised constant $C$ of order $10^{100}$. This paper uses AI assistance to optimise their argument, and shows that every graph $G$ satisfies $\chi(G)\,\leqslant\,1{,}118{,}430\,\had(G)$.
\end{abstract}

{\footnotesize\setlength{\parskip}{0pt}\tableofcontents}
\newpage

%=====================================================================
\section{Introduction}\label{sec:intro}
%=====================================================================

Hadwiger's Conjecture~\cite{Had43} suggests a deep relationship between complete graph minors and graph colourings, and is widely considered one of the most important open problems in graph theory. The conjecture asserts that $\chi(G)\leqslant\had(G)$ for every graph $G$. Here, the \defn{chromatic number} \defn{$\chi(G)$} is the minimum number of colours in a colouring of $G$, where a \defn{colouring} of $G$ assigns one colour to each vertex so that adjacent vertices receive distinct colours. And the \defn{Hadwiger number} \defn{$\had(G)$} is the largest integer $t$ such that the complete graph $K_t$ is a minor of $G$, where a graph $H$ is a \defn{minor} of a graph $G$ if a graph isomorphic to $H$ can be obtained from a subgraph of $G$ by contracting edges.

The conjecture is trivial for $\had(G)\leqslant2$. It was proved for $\had(G)=3$ by Hadwiger~\cite{Had43} and Dirac~\cite{Dir52}. For $\had(G)=4$ it is equivalent to the Four Colour Theorem for planar graphs~\cite{Wag37,AH77a,AHK77b,RSST97,IKMMTT26}. For $\had(G)=5$ it was reduced to the Four Colour Theorem by Robertson, Seymour and Thomas~\cite{RST93}. It is open for $\had(G)\geqslant6$. See the surveys of Seymour~\cite{Sey16} and Norin~\cite{Nor23} for background.

Much effort has gone into upper bounds on $\chi(G)$ as a function of $t=\had(G)$. Table~\ref{tab:history} summarises the history.

\begin{table}[ht]
\centering\small
\begin{tabular}{@{}lll@{}}\toprule
Date & Upper bound on $\chi(G)$ when $\had(G)=t$ & Reference\\\midrule
1964 & $2^{t-1}$ & Wagner~\cite{Wag64}\\
1968 & $O(t\log t)$ & Mader~\cite{Mad68}\\
1982/84 & $O(t\sqrt{\log t})$ & Kostochka~\cite{Kos82,Kos84}, Thomason~\cite{Tho84}\\
2001 & $(2\alpha+o(1))\,t\sqrt{\log t}$ & Thomason~\cite{Tho01}\\
2019 & $O(t(\log t)^{\beta})$ for every $\beta>\frac14$ & Norin, Postle and Song~\cite{NPS23}\\
2021 & $O(t\log\log t)$ & Delcourt and Postle~\cite{DP}\\
Sep 2026 & $O(t\sqrt{\log\log t})$ & Liu and Luo~\cite{LL26a}, Lin~\cite{Lin26b}\\
Sep 2026 & $O(t\log\log\log t)$ & Liu and Luo~\cite{LL26}\\
Oct 2026 & $O(t)$, with constant of order $10^{100}$ & Norin and Steiner~\cite{NS}\\
Oct 2026 & $1{,}118{,}430\,t$ & this paper\\\bottomrule
\end{tabular}
\caption{Upper bounds on the chromatic number of graphs with Hadwiger number $t$. Dates refer to the first appearance of each result. Here $\alpha=0.3190863\ldots$ is Thomason's constant~\cite{Tho01}. The order $10^{100}$ is the estimate of Norin and Steiner.}\label{tab:history}
\end{table}

The bounds up to 2001 came from degeneracy. A graph is \defn{$d$-degenerate} if every non-null subgraph has a vertex of degree at most $d$. A $d$-degenerate graph is $(d+1)$-colourable, by a greedy argument. Since $\had(H)\leqslant\had(G)$ for every subgraph $H$ of $G$, an upper bound on the average degree of a graph in terms of its Hadwiger number gives an upper bound on its degeneracy, and hence on its chromatic number. Mader~\cite{Mad68} showed that, for $t\geqslant2$, every graph with no $K_t$ minor has average degree $O(t\log t)$. Kostochka~\cite{Kos82,Kos84} and Thomason~\cite{Tho84} independently improved this to $O(t\sqrt{\log t})$. Thomason~\cite{Tho01} later determined the extremal function asymptotically. Thomason defined an explicit constant $\alpha=0.3190863\ldots$ and showed that every graph $G$ with no $K_t$ minor has at most $(\alpha+o(1))\,t\sqrt{\log t}\,v(G)$ edges, implying $G$ has average degree and thus chromatic number at most $(2\alpha+o(1))\,t\sqrt{\log t}$.

The bound $O(t\sqrt{\log t})$ on the average degree is tight. Bollob\'as, Catlin and Erd\H{o}s~\cite{BCE80} proved that the Hadwiger number of the random graph $G(n,\frac12)$ is almost surely $(1+o(1))\,n/\sqrt{\log_2n}$. So for each $t$ there is a graph with no $K_t$ minor and average degree $\Omega(t\sqrt{\log t})$. Hence no argument based only on degeneracy gives a bound on $\chi(G)$ better than $\Theta(t\sqrt{\log t})$. This limitation is called the \defn{degeneracy barrier}. Thomason~\cite{Tho01} showed that the constant $\alpha$ is best possible, with random graphs as extremal examples, and observed that the extremal graphs are essentially disjoint unions of dense pseudo-random graphs. Myers~\cite{Mye02} made this precise.

However, these random graphs are far from counterexamples to Hadwiger's Conjecture. By a theorem of Bollob\'as~\cite{Bol88}, $G(n,\frac12)$ almost surely has chromatic number $(1+o(1))\,n/(2\log_2n)$, which is much smaller than its Hadwiger number. Evidence for a linear bound comes from fractional colouring. Reed and Seymour~\cite{RS98} proved that $\chi_f(G)\leqslant2\had(G)$ for every graph $G$, where $\chi_f$ is the fractional chromatic number. So the fractional relaxation of Hadwiger's Conjecture holds up to a factor of~$2$. In the same paper, Reed and Seymour asked whether there is an absolute constant $C$ such that $\chi(G)\leqslant C\had(G)$ for every graph $G$. This question became known as the \defn{Linear Hadwiger Conjecture}.

The degeneracy barrier stood for more than thirty years, until Norin, Postle and Song~\cite{NPS23} broke it. They proved that graphs with no $K_t$ minor are $O(t(\log t)^\beta)$-colourable for every $\beta>\frac14$. Their approach shows that a graph with large chromatic number contains a small highly connected subgraph, and that many such subgraphs can be linked together to form a $K_t$ minor. A key tool is their density increment theorem, which finds in a graph either a denser minor or a small subgraph of nearly the same density. A density bound of Norin and Postle~\cite{NP23} for unbalanced bipartite graphs with no $K_t$ minor was used later by Delcourt and Postle~\cite{DP} to find small dense subgraphs.

The next breakthrough came from Delcourt and Postle~\cite{DP}, who proved the following reduction. If every graph with no $K_t$ minor and $O(t\log^4t)$ vertices is $O(t)$-colourable, then the Linear Hadwiger Conjecture holds. So only graphs of order $t(\log t)^{O(1)}$ need to be considered. For such a graph $G$ with $n$ vertices, the Duchet--Meyniel theorem~\cite{DM82} gives an independent set of at least $n/(2t)$ vertices, and greedy colouring then gives $\chi(G)\in O(t\log(n/t))\subseteq O(t\log\log t)$. Together with a more general form of their reduction, this gives the bound $O(t\log\log t)$ of Delcourt and Postle. Lin~\cite{Lin} later reduced the order of the graphs in this reduction to $O(t\log^2t)$.

After the reduction of Delcourt and Postle, the problem was to colour small graphs with no $K_t$ minor more efficiently. Graphs with $n\in O(t)$ vertices are trivially $O(t)$-colourable. So the difficulty lies with graphs whose order is more than linear in $t$ but at most $t(\log t)^{O(1)}$. Liu and Luo~\cite{LL26a} and, independently, Lin~\cite{Lin26b} improved the colouring of such graphs, and obtained the bound $O(t\sqrt{\log\log t})$. Liu and Luo~\cite{LL26} then improved this to $O(t\log\log\log t)$.

Shortly afterwards, Norin and Steiner~\cite{NS} proved the Linear Hadwiger Conjecture. They colour graphs with at most $t(\log t)^{1/2-\eps}$ vertices directly, using an argument based on the fractional colouring theorem of Reed and Seymour, a quantitative Pippenger--Frankl--R\"odl theorem, and linear programming (see Section~\ref{sec:NSoverview}). They then bootstrap up to graphs with $t\log^5t$ vertices, and finish with the reduction of Delcourt and Postle.

\begin{theorem}[Norin and Steiner~\cite{NS}]\label{thm:NSmain}
There is an absolute constant $C$ such that every graph $G$ satisfies $\chi(G)\leqslant C\,\had(G)$.
\end{theorem}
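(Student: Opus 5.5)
The plan follows the three-stage architecture described above: colour small graphs directly, bootstrap to graphs of order $t\log^5 t$, then invoke the reduction of Delcourt and Postle~\cite{DP}. Throughout, fix $t$ large (small $t$ is absorbed into $C$ via the $O(t\log\log t)$ bound) and let $G$ have no $K_{t+1}$ minor, so $\had(G)\leqslant t$. By the Delcourt--Postle reduction (in Lin's form~\cite{Lin}), it suffices to show that every graph with no $K_{t+1}$ minor and at most $t\log^5 t$ vertices is $O(t)$-colourable. Actually, I would use the more general form of their reduction: colouring graphs of order $t(\log t)^{O(1)}$ with $Ct$ colours implies colouring all graphs with $C't$ colours.

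\textbf{Step 1 (base range).} Take graphs $H$ with $n\leqslant t(\log t)^{1/2-\eps}$ vertices and no $K_{t+1}$ minor. Reed--Seymour gives $\chi_f(H)\leqslant 2t$, so there is a probability distribution on independent sets covering each vertex with probability at least $1/(2t)$. I would convert this fractional colouring into a near-integral one by repeatedly extracting large independent sets. Consider the hypergraph whose edges are the (bounded-size pieces of) independent sets in the support. A quantitative Pippenger--Frankl--R\"odl theorem yields a near-perfect matching, that is, a family of disjoint independent sets covering all but an $o(1)$ fraction of the vertices using about $(1+o(1))\cdot O(t)$ colours. The degree and codegree conditions are enforced by an LP reweighting. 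The small graph order ($n/t\leqslant(\log t)^{1/2}$) is what keeps the codegrees small relative to the degrees. The leftover vertices are coloured greedily, using Duchet--Meyniel: every induced subgraph on $m$ vertices has an independent set of size $m/(2t)$.

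\textbf{Step 2 (bootstrap).} This is the main obstacle: passing from order $t(\log t)^{1/2-\eps}$ up to $t\log^5 t$ with only constant-factor loss. I would iterate. Given $H$ of order $n$ up to $t\log^5t$, either $H$ contains a small dense subgraph, or its structure is pseudorandom enough. Small dense subgraphs are found via the Norin--Postle unbalanced bipartite density bound and density-increment arguments. In the second case, many highly connected small pieces can be linked into a $K_{t+1}$ minor, a contradiction, in the style of Norin--Postle--Song. In the first case, I would peel off an independent set of size proportional to $n/t\cdot(\log(n/t))$. Alternatively, I would partition $H$ into parts of order at most $t(\log t)^{1/2-\eps}$, each with Hadwiger number a constant fraction of its order-based budget, and colour each by Step 1 with separate palettes whose total size telescopes geometrically. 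The key quantity to control is that each bootstrap round multiplies the order range by a power while costing only an additive $O(t)\cdot 2^{-i}$ colours.

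\textbf{Step 3.} I would combine the results: graphs of order at most $t\log^5t$ are $O(t)$-colourable, so the Delcourt--Postle reduction gives $\chi(G)\leqslant Ct$ for all $G$.

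I expect Step 2 to be the hard part. Ensuring the losses telescope, rather than accumulating a $\log\log t$ factor as in~\cite{LL26a,Lin26b,LL26}, is exactly where previous approaches stalled.
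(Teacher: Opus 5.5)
Your outer architecture (small graphs, bootstrap, Delcourt--Postle reduction) is that of Norin and Steiner, and Step~1 names their tools. The gap is in Step~2, which you rightly identify as the crux: it contains no mechanism that works.

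\begin{itemize}
\item \emph{Peeling off independent sets cannot beat the barrier.} A disjoint union of copies of $K_t$ has no $K_{t+1}$ minor and $\alpha=n/t$, so no lower bound of order $(n/t)\log(n/t)$ on $\alpha$ holds in general. Peeling with the guaranteed $n/(2t)$ (Reed--Seymour or Duchet--Meyniel) gives exactly the Delcourt--Postle bound $O(t\log(n/t))=O(t\log\log t)$.
\item \emph{Separate palettes on parts do not help.} Partitioning into parts of order $t(\log t)^{1/2-\eps}$ costs a factor equal to the number of parts, which is $(\log t)^{9/2+\eps}$ when $n=t\log^5t$. Passing to a part need not lower the Hadwiger number, so nothing supports your claimed cost of $O(t)2^{-i}$ per round.
\item \emph{The dense-versus-pseudorandom dichotomy colours nothing by itself.} In the dense case your only colouring step is the peeling above.
\end{itemize}
Also, telescoping is not what is needed. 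It suffices that each round raises the exponent of $\log t$ by a constant amount (Norin and Steiner multiply it by $\frac43$). Then boundedly many rounds reach $t\log^5t$, and each round may multiply the constant by a bounded factor.

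The missing idea is a round that reduces the order of the graph while keeping it free of a $K_{t+1}$ minor. To pass from order $t(\log t)^{\beta}$ to order $t(\log t)^{\beta+\zeta}$, take a maximal family of disjoint connected induced bipartite subgraphs with exactly $k=\lceil(\log t)^{\zeta}\rceil$ vertices. Let $G_1$ be their union and $G_2$ the rest.
\begin{itemize}
\item \emph{Colouring $G_1$.} Contracting each piece gives a minor of $G$ with at most $v(G)/k\leqslant t(\log t)^{\beta}$ vertices. The previous round colours it with $ct$ colours, and $2$-colouring each piece gives $\chi(G_1)\leqslant2ct$.
\item \emph{Controlling inseparability in $G_2$.} Here $\operatorname{bip}(G_2)<k$. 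The Gy\'arf\'as path argument then shows that every $s$-chromatic-inseparable induced subgraph $H$ with $\had(H)<b$ contains a set $X$ with $\chi(H[X])>s$ and $\alpha(H[X])\leqslant k(k-1)$. By Reed--Seymour, $v(H[X])\leqslant2bk(k-1)$. If $2\zeta$ is below the exponent of an available earlier bound (the previous round in Norin--Steiner, the base case in Lemma~\ref{lem:level}), that bound gives $s=O(b)$ (Lemma~\ref{lem:NS17}).
\item \emph{Colouring $G_2$.} A chromatic-separable reduction in the style of Delcourt and Postle's Theorem~7.1 (Theorem~\ref{thm:sepexplicit}) turns this bound on $s$ into $\chi(G_2)=O(t)$. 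It uses a linked hub, highly connected subgraphs of large chromatic number, and recursive rooted models in three separated subgraphs.
\end{itemize}
Each round thus maps a constant $C$ to $4C+2C_{\mathrm{sep}}(1+C)$ in Norin--Steiner, or $c$ to $2c+O(1)$ in Lemma~\ref{lem:level}. Bipartite-piece contraction, the Gy\'arf\'as argument for the remainder, and the chromatic-separable reduction are exactly what your Step~2 lacks; without them the proof does not go through.
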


Because of Theorem~\ref{thm:NSmain}, all that remains in the study of Hadwiger's Conjecture is to determine the best possible constant. Norin and Steiner~\cite[Section~4]{NS} estimate that their proof gives a non-optimised constant $C$ of order $10^{100}$. They write:
\begin{quote}
``There is a lot of room for improvement, and we see no clear barriers to reducing the value of $C$ below $10^{10}$. Proving Theorem~2 with, say, $C=100$ would likely require substantial new ideas.''
\end{quote}
Here their Theorem~2 is Theorem~\ref{thm:NSmain}. Our main result confirms the first sentence of this quotation.

\begin{theorem}\label{thm:all}
Every graph $G$ satisfies $\chi(G)\leqslant1{,}118{,}430\,\had(G)$.
\end{theorem}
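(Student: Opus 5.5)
The plan is to retrace the Norin--Steiner argument in three stages and keep every constant explicit, choosing parameters to minimise the final constant. Let $t=\had(G)$, and suppose $G$ is a vertex-minimal counterexample. Then $G$ has minimum degree at least $1{,}118{,}430\,t$ and every proper subgraph satisfies the bound. The three stages are as follows.

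\textbf{Stage 1: small graphs, colored directly.} For graphs with at most $t(\log t)^{1/2-\eps}$ vertices and no $K_{t+1}$ minor, I will give an explicit colouring bound. Start from the Reed--Seymour fractional colouring with weight at most $2t$ on independent sets. Round it to a proper colouring using a quantitative Pippenger--Frankl--R\"odl cover of the hypergraph of independent sets, then colour the uncovered leftover greedily via Duchet--Meyniel. Finally, set up the linear program from the Norin--Steiner proof and solve it numerically rather than bounding it crudely. This should yield a constant near $2+o(1)$ times a modest factor, valid once $t$ exceeds an explicit threshold $t_0$. For $t\leqslant t_0$, I fall back on the Delcourt--Postle bound or on Thomason's degeneracy bound $(2\alpha+o(1))t\sqrt{\log t}$. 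That bound is at most $1{,}118{,}430\,t$ whenever $\sqrt{\log t}$ is small, so $t_0$ must be explicit and moderately small. Making it so forces explicit versions of the Kostochka--Thomason and Mader bounds.

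\textbf{Stage 2: bootstrap to $t\log^5 t$ vertices.} Here I make the Norin--Steiner bootstrapping lemma quantitative. A graph of order $n$ with no $K_t$ minor either splits into pieces of smaller order, which are coloured inductively with a multiplicative loss $1+\delta_i$ at step $i$, or contains a dense small subgraph that can be linked into a forbidden minor. The number of iterations is $O(\log\log\log t)$-type, so the product $\prod(1+\delta_i)$ must converge to an explicit constant. I will choose $\delta_i$ geometrically decreasing.

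\textbf{Stage 3: the Delcourt--Postle reduction with explicit constants.} The reduction uses the Norin--Postle density bound for unbalanced bipartite graphs and the density increment theorem, and I will state each with explicit numbers. Preferably I use Lin's version, which needs only order $O(t\log^2 t)$ so that Stage~2 has fewer steps. The final constant is the product of the reduction's overhead factor and the Stage~1--2 constant.

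I expect the main obstacle to be Stage~3. The reduction's constants, namely the connectivity thresholds in the density increment, the linkage lemma that builds a $K_t$ minor from many small highly connected subgraphs, and the Norin--Postle constant, multiply together and dominate the final bound. I would first try to rebalance the density parameter in the increment theorem against the linkage requirement, optimising a single scalar trade-off numerically. A secondary difficulty is handling small $t$ uniformly so that no hidden ``sufficiently large $t$'' survives. I would resolve this by verifying the explicit thresholds and using degeneracy bounds below them.
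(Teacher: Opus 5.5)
Your outline follows the Norin--Steiner architecture, but the two places that decide the constant are set up so that they cannot produce anything of order $10^6$.

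\textbf{Stage~2.} No bootstrapping step with loss factor $1+\delta_i$ is available. The step splits $G$ into two parts.
\begin{itemize}
\item A part $G_1$ covered by disjoint connected induced bipartite subgraphs of order about $(\log t)^{\zeta}$. It is coloured through their contraction, which costs a factor $2$.
\item A part $G_2$ with no such subgraph. It is handled by the chromatic-separable reduction together with the Gy\'arf\'as path argument.
\end{itemize}
Norin and Steiner get $C\mapsto 4C+2C_{\mathrm{sep}}(1+C)$ per step, and the best this mechanism gives is $c\mapsto 2c+S$. The number of steps is also a fixed constant, not a slowly growing function of $t$. The exponent rises from just below $\frac12$ to just above $2$, by increments below half the exponent used for the Gy\'arf\'as graphs. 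If the number of steps grew with $t$, even the factor $2$ would destroy linearity.

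The idea you are missing is to colour the Gy\'arf\'as graphs with a \emph{fixed} earlier bound, namely the base case with constant $4.01$, and in the last step an extra side step with constant $615.64$. Then the separation cost $42+\min\{517.5+12\Phi,\,897.5+10\Phi\}$ is added to $2c$ instead of multiplying it. This needs a cheap explicit separable reduction:
\begin{itemize}
\item a hub taken from a vertex-critical subgraph via the sharpened Mader theorem and the density form of Thomas--Wollan;
\item Kawarabayashi's rooting lemma applied inside the recursion, so that separations are made in $K_{2a}$-minor-free graphs;
\item the uneven $\frac35,\frac35,\frac45$ distribution of branch sets.
\end{itemize}
Seven such steps give about $45{,}025$.

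\textbf{Stage~3.} Taking the product of the Delcourt--Postle overhead and the Stage~1--2 constant is exactly what makes the Norin--Steiner constant huge. In their inseparable lemma, a constant $C_{\mathrm{ins}}$, driven by density constants such as $3{,}072{,}000$, multiplies the small-graph bound $\Gamma$ in both the separation cost and the chromatic threshold. The paper avoids this in two ways.
\begin{itemize}
\item It reworks the inseparable case so that $\Gamma$ enters the cost $78t+2\Gamma$ and the threshold $(32+2\cdot18{,}000)t+2\Gamma+2$ only with coefficient $2$. The density constant $18{,}000$ appears only in the threshold. It comes from a refined Lin theorem giving dense $(9t+1)$-connected pieces, and a Wollan-type lemma showing that density $70t$ suffices for wovenness.
\item It replaces the final reduction by one more round of the weaving lemma, whose separations are supplied by this proposition.
\end{itemize}
All contributions then add, giving $1829.6+2\cdot18{,}000+24\cdot45{,}024.99$.

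\textbf{Small $t$.} Your plan here also fails numerically. The base case from $\chi(G)\leqslant4\had(G)+\varepsilon\,v(G)+(100/\varepsilon)^{2000/\varepsilon^2}$ works only for $\log\log t$ at least about $1160$. There $60\sqrt{\log t}$ is astronomically larger than $10^6$, whereas degeneracy is useful only for $\log\log t$ below about $19.6$, so no single moderately small $t_0$ exists. The paper bridges the gap in two stages. For $\log\log a<1161$, it colours the graphs of order $O(a\log^2a)$ with the greedy bound $(144.4+4\log\log a)\,a\leqslant4788.4\,a$, which comes from the Reed--Seymour independence bound. So the small-graph bound holds for all $a\geqslant3$. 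The final step is then needed only when $\log\log t\geqslant18.4$, and degeneracy covers the rest.
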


The author does not claim that Theorem~\ref{thm:all} is a significant improvement on the result of Norin and Steiner (which is a real breakthrough). The present paper simply reports the outcomes of an experiment in using AI to optimise the constants.

\paragraph{Statement about AI use.} I started by asking Claude Opus 5.5 to optimise the constant in the proof of Norin and Steiner~\cite{NS}. Claude identified several areas for optimisation, worked out all the details, and produced a first draft of the paper. GPT-6 checked the proof and suggested further optimisations. I also identified optimisations, and instructed Claude how to improve the exposition. This process went through several iterations. I have read and revised the whole paper so that it is human-readable. I understand the proof at a high level, but have not checked all the technical and numerical details by hand. For these details I rely on automated verification.

\paragraph{Verification.} Theorem~\ref{thm:all} has been formally verified in Lean~4~\cite{Lean4} with the Mathlib library~\cite{Mathlib}, building on the Lean library \texttt{LinearHadwiger}, which formalises the proof of Norin and Steiner (see Appendix~\ref{app:lean}). The final Lean statement is exactly Theorem~\ref{thm:all}:
\begin{alltt}\small
theorem HadwigerConstant.chromaticNumber_le_mul_hadwigerNumber
    (V : Type u) [Fintype V] (G : SimpleGraph V) :
    G.chromaticNumber \(\le\) 1118430 * hadwigerNumber G
\end{alltt}
Here \texttt{G.chromaticNumber} is the chromatic number of $G$ as defined in Mathlib, and \texttt{hadwigerNumber G} is defined from standard Mathlib notions. The verification depends only on the standard axioms of Lean. All non-trivial numerical inequalities were checked by computer. The scripts and Lean code are available as ancillary files on arXiv.

\paragraph{Organisation.} Section~\ref{sec:overview} explains how the present proof builds on the work of Delcourt and Postle and of Norin and Steiner, and where the constant in Theorem~\ref{thm:all} comes from. Section~\ref{sec:prelim} gives definitions and the results we use. Sections~\ref{sec:sep}, \ref{sec:bootstrap} and~\ref{sec:insep} contain the three main ingredients: the chromatic-separable reduction, the colouring of graphs of order $O(a\log^2a)$ by an additive bootstrap, and the chromatic-inseparable case. Section~\ref{sec:wollan} shows, using Wollan's work, that dense highly connected graphs are woven (Lemma~\ref{lem:Ck}), and Section~\ref{sec:lin} proves the refined form of Lin's theorem. Both results are used in Section~\ref{sec:insep}. Section~\ref{sec:proof} combines the three ingredients in a final separation step, and proves Theorem~\ref{thm:all}. Appendix~\ref{app:further} discusses how the constant might be reduced further. Appendix~\ref{app:lean} gives details of the computer verification.

\paragraph{Constant naming conventions.} Each constant or coefficient that is used outside the result introducing it is typically denoted by a letter with the number of that result in subscript. For example, $\Ck$ is introduced in Lemma~\ref{lem:Ck}. Constants from other papers are named after the result of this paper in which they are restated. Exceptions include Thomason's constant $\alpha$ of Section~\ref{sec:intro}, the constants $C_\beta$, $C_{\mathrm{sep}}$, $C_{B.1}$, $C_{B.8}$ and $C_{\mathrm{ins}}$ of Section~\ref{sec:overview}, which are those of Norin and Steiner and of Delcourt and Postle, and the constants ${\CDP}$ and ${\CLin}$ of Delcourt and Postle and of Lin (Sections~\ref{sec:extremal} and~\ref{sec:lin}). We use $C$ for constants in edge-density bounds, $\Lambda$ for coefficients in lower bounds on the chromatic number that force a $K_a$ minor, $c$ for coefficients in upper bounds on the chromatic number, $\tau$ for a bound on the order of a subgraph, and $\omega$ for a proportion of density. 

%=====================================================================
\section{From Delcourt--Postle and Norin--Steiner to our bound}\label{sec:overview}
%=====================================================================

This section explains how the proof of Theorem~\ref{thm:all} builds on the work of Delcourt and Postle~\cite{DP} and of Norin and Steiner~\cite{NS}. Section~\ref{sec:DPoverview} describes the reduction of Delcourt and Postle, Section~\ref{sec:NSoverview} describes the proof of Norin and Steiner, Section~\ref{sec:present} summarises how the present paper improves it, and Section~\ref{sec:constants} explains where the constant in Theorem~\ref{thm:all} comes from.

\subsection{The reduction of Delcourt and Postle}\label{sec:DPoverview}

Delcourt and Postle~\cite{DP} reduced the Linear Hadwiger Conjecture to colouring small graphs. Their ideas underlie both the proof of Norin and Steiner and the present paper. The following definition is a key ingredient. Let $s\geqslant0$. A graph $G$ is \emph{$s$-chromatic-separable} if it has two vertex-disjoint subgraphs, each with chromatic number at least $\chi(G)-s$. Otherwise $G$ is \emph{$s$-chromatic-inseparable}. (Section~\ref{sec:prelim} gives the precise definition.) To find a $K_t$ minor in a graph of large chromatic number, Delcourt and Postle use three ingredients.

\medskip\noindent\emph{Small dense pieces.} For every integer $k\geqslant t$, every graph with no $K_t$ minor and with at least ${\CDP}k$ edges per vertex has a $k$-connected subgraph with $O(t\log^3t)$ vertices~\cite[Theorem~2.3]{DP}. This is the small-dense-subgraph theorem of Delcourt and Postle, stated in Section~\ref{sec:extremal}. Lin~\cite{Lin} later reduced the order to $O(t\log t)$.

\medskip\noindent\emph{The chromatic-inseparable case.} Let $G$ be a graph. Let $\Gamma$ be the maximum chromatic number of a subgraph of $G$ that has no $K_t$ minor and $O(t\log^4t)$ vertices. There is an absolute constant $C_{\mathrm{ins}}$ such that, if $G$ is $C_{\mathrm{ins}}(t+\Gamma)$-chromatic-inseparable and $\chi(G)\geqslant2C_{\mathrm{ins}}(t+\Gamma)$, then $G$ has a $K_t$ minor~\cite[Lemma~2.5]{DP}. (In their notation, $\Gamma=tg$.) The minor is built in stages. At each stage, small dense pieces supply new branch sets. Inseparability guarantees that, after the vertices already used are removed, a highly connected subgraph of large chromatic number remains for the next stage.

\medskip\noindent\emph{The chromatic-separable case.} Here a $K_a$ minor, for $a\leqslant t$, is built by a ternary recursion on $a$. A highly connected subgraph of large chromatic number is separated twice, giving three vertex-disjoint subgraphs of large chromatic number. A rooted $K_{2a/3}$ minor is built recursively in each of them. Each branch set of the $K_a$ minor is then formed from branch sets of two of the three minors, so that any two of the new branch sets are adjacent. Delcourt and Postle combine the two cases in a single induction~\cite[Theorem~7.1]{DP}, which gives their reduction~\cite[Theorem~1.6]{DP}.

\medskip
The present paper makes both cases explicit, and recombines them so that the constants add rather than multiply.

\subsection{The proof of Norin and Steiner}\label{sec:NSoverview}

We now describe the structure of the proof of Norin and Steiner~\cite{NS}. Let $\mathrm{LH}(\beta)$ denote the following statement. There is a constant $C_\beta$ such that, for all sufficiently large real $t$, every graph $G$ with $\had(G)<t$ and $v(G)\leqslant t(\log t)^\beta$ satisfies $\chi(G)\leqslant C_\beta t$.
\begin{enumerate}[label=(\Roman*),itemsep=1ex]
\item \emph{Small graphs.} 
Norin and Steiner~\cite[Theorem~4]{NS} prove that $\mathrm{LH}(\frac12-\eps)$ holds with constant $4+\eps$ for every $\eps\in(0,\frac12)$. The proof uses three tools: the fractional colouring theorem of Reed and Seymour~\cite{RS98}, a quantitative form of the Pippenger--Frankl--R\"odl theorem on almost perfect matchings in hypergraphs, and a linear programming argument. Norin and Steiner~\cite[Theorem~9 and Appendix~A]{NS} prove the quantitative form. An earlier draft~\cite{NSdraft} of their paper states the explicit bound $\chi(G)\leqslant4\had(G)+\eps v(G)+(100/\eps)^{2000/\eps^2}$, from which we derive our base case (Theorem~\ref{thm:NSquant} and Lemma~\ref{lem:base}).
\item \emph{Bootstrapping.} 
Norin and Steiner~\cite[Theorem~5]{NS} prove that $\mathrm{LH}(\beta)$ implies $\mathrm{LH}(\frac43\beta)$, with new constant $C_{\frac43\beta}=4C_\beta+2C_{\mathrm{sep}}(1+C_\beta)$. Here $C_{\mathrm{sep}}$ is the constant in the chromatic-separable reduction of Norin and Steiner~\cite[Theorem~15]{NS}, which adapts the chromatic-separable case of Delcourt and Postle described above.
\item \emph{Large graphs.} By the reduction of Delcourt and Postle described above~\cite[Theorem~1.6]{DP}, $\mathrm{LH}(5)$ implies Theorem~\ref{thm:NSmain}. This step multiplies the constant $C_5$ by a further constant factor.
\end{enumerate}

Norin and Steiner start from exponent $\frac13$, and apply Part~(II) ten times to reach exponent $5$. In the notation of Norin and Steiner, $C_{\mathrm{sep}}=39\cdot35\cdot C_{B.1}^2\cdot C_{B.8}$ in~\cite[Section~B.3 and the proof of Lemma~B.12]{NS}. The constant $C_{B.1}$ is the density constant of the small-dense-subgraph theorem of Delcourt and Postle (Section~\ref{sec:extremal}, where it is ${\CDP}=3{,}072{,}000$). Norin and Steiner need $C_{B.1}\geqslant480$, and they need~\cite[Theorem~B.1]{NS}, which is this theorem, to hold with $C_{B.1}$ in place of ${\CDP}$. This is known for $C_{B.1}={\CDP}$, but not for $C_{B.1}=480$. Also $C_{B.8}$ is the linkage constant of Bollob\'as and Thomason~\cite{BT96} in the form of~\cite[Theorem~B.8]{NS}, and Norin and Steiner only require $C_{B.8}\geqslant1$. By the theorem of Thomas and Wollan~\cite[Corollary~1.3]{TW05}, we may take $C_{B.8}=10$. Even with $C_{B.1}=480$ and $C_{B.8}=10$, we get $C_{\mathrm{sep}}=39\cdot35\cdot480^2\cdot10\approx3.1\cdot10^9$. Then ten iterations of $C\mapsto4C+2C_{\mathrm{sep}}(1+C)$ starting from $4$ give a constant of at least $4.8\cdot10^{98}$. Here $4$ is less than their starting constant $4+\frac16$, which is the constant of the small-graph bound of Norin and Steiner~\cite[Theorem~4]{NS} (Section~\ref{sec:NSres}) with $\eps=\frac16$. With $C_{B.1}={\CDP}$ in place of $480$, the same computation gives a constant of at least $6.4\cdot10^{174}$. So the constant of Norin and Steiner is large for three reasons: $C_{\mathrm{sep}}$ is large, the constant is multiplied by more than $2C_{\mathrm{sep}}$ at each of the ten iterations of Part~(II), and Part~(III) multiplies the result by a further constant. 

\subsection{The present paper}\label{sec:present}

We now summarise how the present paper improves the proof of Norin and Steiner~\cite{NS}. Figure~\ref{fig:structure} shows how the main results depend on each other.

\begin{figure}[ht]
\centering
\begin{tikzpicture}[scale=0.78,
  box/.style={draw,rounded corners,fill=gray!7,font=\footnotesize,align=center,inner sep=3pt},
  fin/.style={box,fill=yellow!20},
  arr/.style={-latex,thick,gray!70!black},
  cst/.style={font=\scriptsize,fill=white,inner sep=1pt}]
% inputs
\node[box] (hub) at (-1,0) {Lemma~\ref{lem:hub}\\ linked hub};
\node[box] (ngu) at (3.2,0) {Corollary~\ref{cor:nguyen}\\ Nguyen};
\node[box] (lin) at (8.6,0) {Theorem~\ref{thm:refinedlin}\\ refined Lin\\ $\cL=0.0044$};
\node[box] (wov) at (12.2,0) {Lemma~\ref{lem:Ck}, dense\\ pieces are woven\\ ${\Ck}=70$};
\node[box] (kaw) at (3.5,-2.4) {Lemma~\ref{lem:kawa}\\ Kawarabayashi};
% second row
\node[box] (weave) at (-1,-2.4) {Lemma~\ref{lem:weave}\\ weaving lemma};
\node[box] (base) at (7.6,-2.4) {Lemma~\ref{lem:base}\\ base case};
\node[box] (dp) at (11.8,-2.4) {Proposition~\ref{prop:DP}\\ inseparable case};
% third row
\node[box] (sep) at (2.6,-4.8) {Theorem~\ref{thm:sepexplicit}\\ explicit reduction};
\node[box] (lev) at (7.7,-4.8) {Lemma~\ref{lem:level}\\ bootstrapping step\\ ${\Sst}=607.6201$};
% fourth row
\node[box] (bs) at (7.7,-7.5) {Proposition~\ref{prop:explicitbootstrap}\\ Corollary~\ref{cor:allscales}\\ ${\cst}=45{,}024.9873$};
\node[box] (ind) at (2.6,-7.5) {Lemma~\ref{lem:indep}\\ small values of $a$};
% fifth row
\node[box] (wf) at (2.6,-9.8) {Lemma~\ref{lem:weavefinal}\\ final weaving};
\node[box] (sepP) at (7.7,-9.8) {Corollary~\ref{cor:sepP}\\ separation};
% final
\node[fin] (thm) at (2.6,-11.9) {Theorem~\ref{thm:all}\\ $\chi(G)\leqslant1{,}118{,}430\,\had(G)$};
\node[box] (kos) at (8.9,-11.9) {Theorem~\ref{thm:kos}\\ Kostochka};
% arrows
\draw[arr] (hub) -- (weave);
\draw[arr] (ngu) -- (weave);
\draw[arr] (kaw) -- (weave);
\draw[arr] (ind) -- (bs);
\draw[arr] (ngu) -- (dp);
\draw[arr] (lin) -- (dp);
\draw[arr] (wov) -- (dp);
\draw[arr] (weave) -- (sep);
\draw[arr] (sep) -- (lev);
\draw[arr] (base) -- node[cst] {$4.01$} (lev);
\draw[arr] (base.east) to[out=-10,in=70] (bs.north east);
\draw[arr] (lev) -- node[cst] {seven times} (bs);
\draw[arr] (weave.south) to[out=-90,in=180] node[cst,pos=0.45,align=center] {coefficient\\ $10$} (wf.west);
\draw[arr] (dp.south) to[out=-90,in=0] node[cst,pos=0.45] {$\Cdens=18{,}000$} (sepP.east);
\draw[arr] (bs) -- (sepP);
\draw[arr] (sepP) -- node[cst] {$\sigma_b,\Psi_b$} (wf);
\draw[arr] (wf) -- node[cst] {${\LamF}$} (thm);
\draw[arr] (kos) -- node[cst,above=1pt,fill=none] {small $t$} (thm);
\end{tikzpicture}
\caption{The structure of the proof of Theorem~\ref{thm:all}. An arrow means that the second result uses the first, or uses a constant from it. Most labels are the main constants passed along the arrows (see Table~\ref{tab:constants}). Here $4.01$ is the constant of Lemma~\ref{lem:base}, and $10$ is the coefficient of $\Phi$ in ${\LamU}$ and of $78+2{\cst}$ in ${\LamF}$.}\label{fig:structure}
\end{figure}

\paragraph{1. An explicit chromatic-separable reduction (Section~\ref{sec:sep}).} We reprove, with explicit constants, the chromatic-separable reduction of Norin and Steiner~\cite[Theorem~15]{NS}, which is used in each step of Part~(II). We use a theorem of Nguyen~\cite{Ngu24} on highly connected subgraphs with large chromatic number (Theorem~\ref{thm:nguyen}) in place of the theorem of Gir\~ao and Narayanan~\cite{GN22} used by Norin and Steiner. We also use the linkage theorem of Thomas and Wollan~\cite{TW05}. The reduction must join some prescribed vertices, called \defn{roots}, to the recursively built minors. It must also join other prescribed vertices, called \defn{terminals}, to each other in pairs. The reduction does this through a \defn{hub}, which is an induced subgraph with two properties. First, for a prescribed integer $m$, the hub is $m$-linked, so that any $m$ pairs formed by $2m$ distinct vertices of the hub can be joined by disjoint paths inside the hub. Second, the hub has chromatic number at most $\lceil25m/2\rceil+1$, so that deleting the hub costs few colours. We obtain the hub (Lemma~\ref{lem:hub}) from a smallest induced subgraph whose chromatic number is at least a prescribed value. This is a vertex-critical induced subgraph, from which a sharpened form of Mader's theorem (Lemma~\ref{lem:mader}) extracts a dense highly connected subgraph. The density form of the Thomas--Wollan theorem then makes the hub linked (see the remark after Lemma~\ref{lem:hub} for the saving over the connectivity form). Obtaining the hub in this way avoids the small-dense-subgraph theorem of Delcourt and Postle (Section~\ref{sec:extremal}). After the roots and terminals are joined through the hub, the proof of Lemma~\ref{lem:weave} reduces to showing that a $21a$-connected subgraph $G_3$ of large chromatic number is $(a,0)$-woven (see Section~\ref{sec:defs}). If $G_3$ has a $K_{2a}$ minor, then $G_3$ is $(a,0)$-woven by the rooting lemma of Kawarabayashi (Lemma~\ref{lem:kawa}). This is Step~7a of the proof of Lemma~\ref{lem:weave}, and the idea is due to Delcourt and Postle~\cite[Section~7.1]{DP}. So in Step~7b we may assume that $G_3$ has no $K_{2a}$ minor. As a result, the chromatic separations in Step~7b are made in graphs with no $K_{2a}$ minor, rather than no $K_{14a}$ minor as in~\cite{DP,NS}. We also allow an uneven distribution of the branch sets among the three subgraphs used in the recursion. The result is Theorem~\ref{thm:sepexplicit}, with the cost $\cS(\Phi)$ given in Table~\ref{tab:constants}, where $\Phi$ is the parameter in the hypothesis $(\mathcal H_{\Phi,a_0})$ of Section~\ref{sec:prelim}. This cost is $42$ plus the minimum of two terms. The second comes from the uneven distribution, and is the smaller one when $\Phi>190$.

\paragraph{2. An additive bootstrap (Section~\ref{sec:bootstrap}).} Each bootstrapping step of Norin and Steiner splits the graph into two parts, $G_1$ and $G_2$. The vertex set of $G_1$ is partitioned into the vertex sets of large connected induced bipartite subgraphs. The part $G_2$ has no such subgraph. The chromatic-separable reduction of Norin and Steiner~\cite[Theorem~15]{NS}, with constant $C_{\mathrm{sep}}$, is applied to $G_2$. The chromatic-inseparable subgraphs of $G_2$ are handled by a Gy\'arf\'as path argument, which requires a colouring bound for small graphs, called \emph{Gy\'arf\'as graphs} (see Lemma~\ref{lem:NS17}). Norin and Steiner colour the Gy\'arf\'as graphs with the bound obtained at the previous bootstrapping step. We colour them with a fixed earlier bound instead. At every step except the final one, this is the base case (Lemma~\ref{lem:base}, an explicit form of Part~(I)). At the final step, it is the bound given by an extra bootstrapping step, called the \defn{side step}, which is used only for this purpose. As explained after Lemma~\ref{lem:level}, colouring the Gy\'arf\'as graphs with a fixed earlier bound makes each step map a constant $c$ to $2c$ plus a constant independent of $c$. Five main steps, the side step and a final step suffice (Proposition~\ref{prop:explicitbootstrap}). This gives ${\cst}=45{,}024.9873$, compared with about $77{,}681.0$ for seven steps that all use the base case (see the remark after Proposition~\ref{prop:explicitbootstrap}).

\paragraph{3. The chromatic-inseparable case (Section~\ref{sec:insep}).} Let $H$ be a graph with no $K_t$ minor. Let $\Gamma$ be an upper bound on the chromatic number of the small subgraphs of $H$. For the lemma of Delcourt and Postle, these are the subgraphs with $O(t\log^4t)$ vertices. For Proposition~\ref{prop:DP}, they are the subgraphs with at most $N_t=O(t\log^2t)$ vertices. By the lemma of Delcourt and Postle in Section~\ref{sec:DPoverview}, if $H$ is $C_{\mathrm{ins}}(t+\Gamma)$-chromatic-inseparable, then $\chi(H)<2C_{\mathrm{ins}}(t+\Gamma)$. So the large constant $C_{\mathrm{ins}}$ multiplies $\Gamma$, both in the separation cost $C_{\mathrm{ins}}(t+\Gamma)$ and in the chromatic threshold $2C_{\mathrm{ins}}(t+\Gamma)$ (see Section~\ref{sec:prelim}). We rework the proof of Delcourt and Postle as follows, to keep the large constant $C_{\mathrm{ins}}$ out of the separation cost. Proposition~\ref{prop:DP} shows that, if $\log t\geqslant1.6\cdot10^6$, then every graph with no $K_t$ minor and chromatic number at least $(32+2\Cdens)t+2\Gamma+2$ is $(78t+2\Gamma)$-chromatic-separable. Here $\Cdens=18{,}000$ is the constant in the density threshold $d=\Cdens t$ of the chromatic-inseparable case. Thus $\Gamma$ has coefficient only $2$, and the large constant $\Cdens$ enters only the chromatic threshold, not the separation cost. Section~\ref{sec:proof} explains how this affects the final constant.

\paragraph{4. Rooted minors via massed pairs (Section~\ref{sec:wollan}).} In the chromatic-inseparable case, each piece must be woven, that is, for every choice of roots and terminal pairs it must have a rooted complete minor and a linkage that meet only in the roots that are also terminals (Section~\ref{sec:prelim}). Delcourt and Postle obtain this from Wollan's extremal theorem for rooted minors~\cite[Theorem~1.1]{Wol08}, applied to the graph $K_s+\ell K_2$, where $s$ is the number of roots and $\ell\approx10t$ is the number of terminal pairs. Their application of Wollan's theorem requires density about $5\cdot10^5\,t$. We use instead three structural theorems from Wollan's proof~\cite[Theorems~2.1--2.3]{Wol08}, concerning a minimal counterexample (Theorem~\ref{thm:wollanmin}), together with the strategy of Wollan's proof of~\cite[Theorem~5.1]{Wol08} (see the outline after Lemma~\ref{lem:massed}). Lemma~\ref{lem:Ck} then requires only density $70t$. Delcourt and Postle delete the roots and apply Wollan's theorem to the rest of the piece. We instead apply Lemma~\ref{lem:Ck} to the whole piece, roots included. This is possible for two reasons. First, in the proof of Lemma~\ref{lem:W}, each root that is not a terminal is a root of the rooted model, and each root that is a terminal is replaced by a neighbour, as in~\cite[Lemma~5.13]{DP}. Second, keeping the roots costs only a few edges per root in the density condition~(M1) of Section~\ref{sec:wollan}. Lemma~\ref{lem:Ck} needs pieces that are $9t$-connected and have density ${\Ck}t$, and these two requirements give the condition $\cL\Cdens\geqslant{\Ck}+9$ on $\Cdens$ in Proposition~\ref{prop:DP} (see Step~1 of the proof of Proposition~\ref{prop:DP}).

\paragraph{5. A refined form of Lin's theorem (Section~\ref{sec:lin}).} The chromatic-inseparable case uses a theorem that finds small highly connected subgraphs in dense graphs with no $K_t$ minor. The theorem of Delcourt and Postle in Section~\ref{sec:DPoverview} has density constant ${\CDP}\approx3.07\cdot10^6$ (Section~\ref{sec:extremal}). Lin's improvement of it has density constant ${\CLin}\approx1.31\cdot10^9$ (Section~\ref{sec:lin}). We rerun Lin's argument with different parameters, and obtain pieces that are dense, not only highly connected (Theorem~\ref{thm:refinedlin}). For $t\geqslant10^{13}$, every graph with no $K_t$ minor and density $d\geqslant1.8\cdot10^4\,t$ contains a $(9t+1)$-connected piece with at most ${\PL}\,t\log t$ vertices and density more than $\cL d-9t$, where $\cL=0.0044$ and ${\PL}=4.56\cdot10^{18}$. Section~\ref{sec:lin} explains the sources of the improvement. Because the pieces have $O(t\log t)$ vertices, the core built in the proof of Proposition~\ref{prop:DP} has $O(t\log^2t)$ vertices, and so do the small graphs that must be coloured. The exponent $2$ in this order bound determines the number of bootstrapping steps (see the remark after Proposition~\ref{prop:explicitbootstrap}).

\paragraph{6. A more economical construction (Sections~\ref{sec:wollan} and~\ref{sec:insep}).} In the chromatic-inseparable case, each piece must be $(2x,\ell)$-woven, where $x$ is the number of branch sets added at each stage and $\ell$ bounds the number of linkage pairs. Delcourt and Postle take $\ell=10t$ and $x=\lceil t/\sqrt{\log t}\rceil$. We take $x=\lceil t/(M\sqrt{\log t})\rceil$ with $M=10^5$. The reasons for this choice, and its cost, are explained before Lemma~\ref{lem:massed}. At stage $n$ of the construction the number of pairs is $(4n-3)x$. Since there are at most $M\sqrt{\log t}+1$ stages, $(4n-3)x\leqslant\lfloor4.004t\rfloor$ when $\log t\geqslant1.6\cdot10^6$. So we take $\ell=\lfloor4.004t\rfloor$. Finally, as in the proof of Delcourt and Postle, paths from the pieces are joined inside one designated piece, which we call the \emph{junction}, by knitting (Lemma~\ref{lem:knit}). Lemma~\ref{lem:knit} uses the density form of the Thomas--Wollan linkage theorem. So the connectivity required by the knitting step is at most $7.001t$ (Step~4 of the proof of Proposition~\ref{prop:DP}), instead of about $31t$ with the connectivity form of the Thomas--Wollan theorem.

\paragraph{7. Explicit size thresholds (Sections~\ref{sec:bootstrap} and~\ref{sec:proof}).} Our asymptotic statements hold above explicit size thresholds. The thresholds $T_{\mathrm{base}}$ and $T_{\mathrm{boot}}$ are lower bounds on $\log\log a$ in Section~\ref{sec:bootstrap}, and $T_{\mathrm{fin}}$ is a lower bound on $\log\log t$. The other thresholds are lower bounds on $\log t$ or on $t$, and are implied by $\log\log t\geqslant T_{\mathrm{fin}}$. Section~\ref{sec:small} and Case~1 of the proof of Theorem~\ref{thm:all} treat the values below $T_{\mathrm{boot}}$ and $T_{\mathrm{fin}}$, respectively. For $t$ below a size threshold $t_0$, the estimate of Norin and Steiner~\cite[Section~4]{NS} uses the Kostochka--Thomason degeneracy bound, which presumably contributes a factor of order $\sqrt{\log t_0}$ to the constant of Norin and Steiner. In the proof of Theorem~\ref{thm:all}, where $G$ has no $K_t$ minor, we colour with the bound of Theorem~\ref{thm:kos} only when $\log\log t<18.4$. Then this bound gives $\chi(G)<600{,}000\,t+1$.

\medskip
Most of the ingredients above re-optimise arguments of Delcourt and Postle~\cite{DP}, Norin and Steiner~\cite{NS}, Lin~\cite{Lin} and Wollan~\cite{Wol08}. The new ingredients are as follows:
\begin{itemize}[nosep]
\item The hub is built in a new way (Lemma~\ref{lem:hub} and item~1 above).
\item Step~7a of the proof of Lemma~\ref{lem:weave} uses an observation of Delcourt and Postle inside the recursion, so that Step~7b separates graphs with no $K_{2a}$ minor (item~1 above).
\item The branch sets in the proof of Lemma~\ref{lem:weave} are distributed unevenly among the three subgraphs (item~1 above).
\item The bootstrap is additive, with a new schedule (Section~\ref{sec:bootstrap}).
\item Lemma~\ref{lem:massed} adapts Wollan's massed-pair argument to rooted models of $K_s+\ell K_2$ with $s$ small, in which only the clique part needs a minor argument.
\item In Step~5 of the proof of Proposition~\ref{prop:DP}, the first stage of the chromatic-inseparable case is treated separately. At later stages, the construction, which follows Delcourt and Postle, makes the new branch sets pairwise adjacent through a rooted clique model in a piece. This piece exists only from the second stage onwards.
\end{itemize}

%=====================================================================
\subsection{Constants}\label{sec:constants}
%=====================================================================

The constant in Theorem~\ref{thm:all} is
\[
{\cfin}:=1829.6+2\Cdens+24\,{\cst}<1{,}118{,}430 .
\]
Here ${\cst}=45{,}024.9873$ is a constant such that every graph with no $K_a$ minor and at most $N_a=24{\PL}M^2\,a\log^2a$ vertices is ${\cst}a$-colourable (Proposition~\ref{prop:explicitbootstrap} and Corollary~\ref{cor:allscales}), and $\Cdens=18{,}000$ is the density constant of item~3 above (Proposition~\ref{prop:DP}). The term $1829.6$ collects the lower-order terms (see the proof of Theorem~\ref{thm:all}). The term $24{\cst}$ accounts for about $97\%$ of ${\cfin}$.

The terms of ${\cfin}$ come from three changes to the argument of Norin and Steiner, which remove the three reasons for their large constant listed after Part~(III) in Section~\ref{sec:NSoverview}. First, the reduction to chromatic-inseparable subgraphs costs $\cS(\Phi)$ in place of $C_{\mathrm{sep}}(1+\Phi)$, up to lower-order terms (item~1 above). Second, the Gy\'arf\'as graphs are coloured with a fixed earlier bound, and the additive bootstrap of item~2 above gives ${\cst}$. Third, Part~(III) is replaced by a final separation step (Corollary~\ref{cor:sepP} and Lemma~\ref{lem:weavefinal}). This step is one more round of the chromatic-separable reduction. Each of its chromatic separations is supplied by the chromatic-inseparable case (Proposition~\ref{prop:DP} and items~3--6 above), with ${\cst}$ as the bound for the small graphs. In a graph with no $K_b$ minor, such a separation costs $(78+2{\cst})b$ (Corollary~\ref{cor:sepP}). In Lemma~\ref{lem:weavefinal}, which builds a $K_a$ minor, these separations are made in graphs with no $K_{2a}$ minor. So $78+2{\cst}$ plays the role of $\Phi$ in ${\LamU}$. The coefficient $10$ of $\Phi$ in ${\LamU}$ therefore gives the term $20{\cst}$ of ${\cfin}$. Two successive separations account for a further $4{\cst}$. Of this $4{\cst}$, the chromatic threshold contributes $2{\cst}$. The cost of the first separation contributes the other $2{\cst}$, and this extra $2{\cst}$ ensures that each part of the first separation can be separated again. The chromatic threshold also contributes $2\Cdens$ (see Section~\ref{sec:proof}).
These and the other main constants are listed in Table~\ref{tab:constants}.

\begin{table}[!t]
\centering\footnotesize
\begin{tabular}{@{}>{\raggedright}p{1.8cm}>{\raggedright}p{3.9cm}>{\raggedright}p{6.0cm}>{\raggedright\arraybackslash}p{2.0cm}@{}}\toprule
Constant & Value & Role & Where\\\midrule
$\Phi$ & $4.01$ or $615.6401$ ($78+2{\cst}$ plays the role of $\Phi$ in ${\LamF}$) & in $(\mathcal H_{\Phi,a_0})$, $s$-chromatic-inseparable and $\had<b$ give $s\leqslant\Phi b$ & Sec.~\ref{sec:prelim}\\
$\LamE$, $\LamU$ & $(1+\eta)(517.5+12\Phi)$, $(1+\eta)(897.5+10\Phi)$ & coefficient of $a$ in the weaving lemma, in the even and uneven schemes & Lem.~\ref{lem:weave}\\
$\eta$ & $5/(a_0-4)$, $a_0=t/\log t$ & slack in $\LamE$, $\LamU$ and ${\LamF}$ & Lem.~\ref{lem:weave}, \ref{lem:level}, \ref{lem:weavefinal}\\
$\cS(\Phi)$ & $42+\min\{517.5+12\Phi,$ $897.5+10\Phi\}$ & term added by a bootstrapping step whose Gy\'arf\'as graphs are $\Phi a$-colourable & Thm.~\ref{thm:sepexplicit}\\
$\xi$ & $10^{-4}$ & error term of a bootstrapping step (which requires $0\leqslant\Phi\leqslant10^4$) & Lem.~\ref{lem:level}\\
${\Sst}$ & $607.6201$ & each main bootstrapping step maps $c$ to $2c+{\Sst}$ & Lem.~\ref{lem:level}\\
${\cst}$ & $45{,}024.9873$ & colouring graphs with at most $N_a$ vertices & Prop.~\ref{prop:explicitbootstrap}\\
$N_a$ & $24{\PL}M^2\,a\log^2a$ & order bound for the small graphs & Prop.~\ref{prop:explicitbootstrap}\\
$T_{\mathrm{base}}$, $T_{\mathrm{boot}}$ & $1160$, $1161$ & size thresholds on $\log\log a$ in Section~\ref{sec:bootstrap} & Sec.~\ref{sec:base}, Prop.~\ref{prop:explicitbootstrap}\\
$M$ & $10^5$ & each stage adds $x=\lceil t/(M\sqrt{\log t})\rceil$ branch sets & Lem.~\ref{lem:Ck}\\
${\Ck}$ & $70$ & density that makes a $9t$-connected graph $(2x,\lfloor4.004t\rfloor)$-woven & Lem.~\ref{lem:Ck}\\
$\cL$ & $0.0044$ & proportion of density kept by the refined Lin theorem & Thm.~\ref{thm:refinedlin}\\
${\PL}$ & $4.56\cdot10^{18}$ & pieces have at most ${\PL}\,t\log t$ vertices & Thm.~\ref{thm:refinedlin}\\
$\Cdens$ & $18{,}000$ & density threshold $d=\Cdens t$, with $\cL\Cdens\geqslant {\Ck}+9$ & Prop.~\ref{prop:DP}\\
$k$, $d$ & $12t$, $\Cdens t$ & connectivity and density threshold in the chromatic-inseparable case & Prop.~\ref{prop:DP}\\
$\sigma_b$, $\Psi_b$ & $(78+2{\cst})b$, $(32+2\Cdens+2{\cst})b+2$ & separation cost and chromatic threshold for graphs with no $K_b$ minor & Cor.~\ref{cor:sepP}\\
$T_{\mathrm{fin}}$ & $18.4$ & size threshold on $\log\log t$ in Section~\ref{sec:proof} & Lem.~\ref{lem:weavefinal}\\
${\LamF}$ & $(1+\eta)(897.5+10(78+2{\cst}))\approx9.022\cdot10^5$ & coefficient of $a$ in the final weaving lemma & Lem.~\ref{lem:weavefinal}\\
${\KF}$ & $110+2\Cdens+4{\cst}+\frac2t\approx2.162\cdot10^5$ & coefficient of $t$ in the final weaving lemma, which pays for two successive separations & Lem.~\ref{lem:weavefinal}\\
${\cfin}$ & $<1{,}118{,}430$ & $1829.6+2\Cdens+24{\cst}$ & Sec.~\ref{sec:constants}\\\bottomrule
\end{tabular}
\caption{Constants, parameters and thresholds used in more than one place. The parameters of the bootstrap schedule are listed in Table~\ref{tab:schedule}.}\label{tab:constants}
\end{table}

%=====================================================================
\section{Preliminaries}\label{sec:prelim}
%=====================================================================

\subsection{Definitions}\label{sec:defs}

All graphs are finite and simple. For an integer $n\geqslant0$, let \defn{$[n]$}${}:=\{1,\dots,n\}$, so $[0]=\emptyset$. All logarithms are natural. For $k\geqslant1$, we write \defn{$\log^kt$}${}:=(\log t)^k$. For a graph $G$, let \defn{$v(G)$} and \defn{$e(G)$} denote the numbers of vertices and edges of $G$. The \defn{order} of $G$ is $v(G)$. The \defn{density} of a non-null graph $G$ is $e(G)/v(G)$, which is half its average degree. For $v\in V(G)$, let \defn{$N_G(v)$} be the set of neighbours of $v$ in $G$, and let \defn{$\deg_G(v)$}${}:=|N_G(v)|$. Let \defn{$\delta(G)$} and \defn{$\Delta(G)$} denote the minimum and maximum degree. For an integer $k\geqslant0$, a graph is \defn{$k$-connected} if it has more than $k$ vertices and remains connected after deleting any set of fewer than $k$ vertices. For a non-null graph $G$, the \defn{connectivity} \defn{$\kappa(G)$} is the largest $k$ such that $G$ is $k$-connected. So $G$ is $k$-connected if and only if $\kappa(G)\geqslant k$, and $\kappa(G)\leqslant\delta(G)$. A subgraph $H$ of $G$ is \defn{spanning} if $V(H)=V(G)$. For $X\subseteq V(G)$, let \defn{$G[X]$} denote the subgraph induced by $X$. Let \defn{$G-X$}${}:=G[V(G)\setminus X]$. For disjoint $A,B\subseteq V(G)$, let \defn{$e_G(A,B)$} be the number of edges of $G$ with one end in $A$ and the other in $B$, and let \defn{$G(A,B)$} be the bipartite subgraph of $G$ with vertex set $A\cup B$ whose edges are these edges. For graphs $G_1,G_2$, the graph \defn{$G_1\cup G_2$} has vertex set $V(G_1)\cup V(G_2)$ and edge set $E(G_1)\cup E(G_2)$. The \defn{disjoint union} \defn{$G_1+G_2$} is the union of vertex-disjoint copies of $G_1$ and $G_2$. For an integer $\ell\geqslant0$, let \defn{$\ell G$} denote the disjoint union of $\ell$ copies of $G$. The \defn{null graph} has no vertices.

\emph{Contraction.} Let $uv$ be an edge of $G$. \defn{Contracting} $uv$ means deleting $u$ and $v$ and adding a new vertex adjacent to every vertex of $(N_G(u)\cup N_G(v))\setminus\{u,v\}$. The resulting graph is simple. Now let $A\subseteq V(G)$ be non-empty with $G[A]$ connected, and let $T$ be a spanning tree of $G[A]$. The graph \defn{$G/A$} is obtained from $G$ by contracting the edges of $T$ one after another. The vertices of $A$ are replaced by a single vertex $x_A$, which is adjacent to exactly those vertices of $V(G)\setminus A$ that have a neighbour in $A$. In particular, $G/A$ does not depend on the choice of $T$ or on the order of the contractions. If $|A|=1$, then $G/A=G$ and $x_A$ is the vertex of $A$. We say that $G/A$ is obtained by \defn{contracting} $A$. Contracting a connected subgraph $H$ means contracting $V(H)$. Finally, let $A_1,\dots,A_m\subseteq V(G)$ be pairwise disjoint and non-empty, with each $G[A_i]$ connected. Contracting $A_1,\dots,A_m$ means contracting $A_1$, then $A_2$, and so on, up to $A_m$. This is well defined, since each $A_i$ still induces a connected subgraph after $A_1,\dots,A_{i-1}$ are contracted. The result is obtained from $G-(A_1\cup\dots\cup A_m)$ by adding vertices $x_{A_1},\dots,x_{A_m}$, where $x_{A_i}$ is adjacent to every remaining vertex with a neighbour in $A_i$, and $x_{A_i}$ is adjacent to $x_{A_j}$ if and only if $G$ has an edge between $A_i$ and $A_j$. So the result does not depend on the order.

\emph{Colourings.} A colouring of $G$ is a map $c:V(G)\to\mathbb N$ with $c(u)\ne c(v)$ for every edge $uv$, and $\chi(G)$ is the minimum number of colours used by a colouring. An \defn{independent set} is a set of pairwise non-adjacent vertices. The \defn{independence number} \defn{$\alpha(G)$} is the maximum size of an independent set. The \defn{fractional chromatic number} \defn{$\chi_f(G)$} is the minimum of $\sum_Ix_I$ over all non-negative weights $x_I$ on the independent sets $I$ of $G$ such that $\sum_{I\ni v}x_I\geqslant1$ for every vertex $v$.

\emph{Minors and models.} We set $\had(G):=0$ if $G$ is null. A \defn{model} of a graph $H$ in $G$ is a family $\{B_v:v\in V(H)\}$ of pairwise disjoint non-empty subsets of $V(G)$ such that each $G[B_v]$ is connected and there is an edge of $G$ between $B_u$ and $B_v$ for every edge $uv$ of $H$. The sets $B_v$ are called \defn{branch sets}. A \defn{$K_s$ model} is a model of $K_s$, that is, a family of $s$ pairwise adjacent branch sets. A graph $G$ has an $H$ minor if and only if it contains a model of $H$. Let $X=\{x_v:v\in V(H)\}$ be a set of $|V(H)|$ distinct vertices of $G$. The model is \defn{rooted at $X$} (according to the bijection $x_v\mapsto v$) if $x_v\in B_v$ for every $v\in V(H)$. For a $K_a$ model and a set $R$ of $a$ vertices, \defn{rooted at $R$} means rooted at $R$ according to some bijection between $R$ and $V(K_a)$. Equivalently, each branch set contains exactly one vertex of $R$. If $\mathcal A$ is a model, then \defn{$V(\mathcal A)$} denotes the union of its branch sets. Similarly, for a family $\mathcal P$ of paths, \defn{$V(\mathcal P)$}${}:=\bigcup_{P\in\mathcal P}V(P)$. A subgraph or family is \defn{disjoint from} a set $X$ if its vertex set is disjoint from $X$, and two subgraphs are \defn{disjoint} if they are vertex-disjoint.

\emph{Separations.} A \defn{separation} of $G$ is a pair $(A,B)$ of subsets of $V(G)$ with $A\cup B=V(G)$ and no edge between $A\setminus B$ and $B\setminus A$. Its \defn{order} is $|A\cap B|$. It is \defn{trivial} if $A\subseteq B$ or $B\subseteq A$.

\emph{Paths and linkages.} A \defn{path} $P=v_0v_1\dots v_m$ in $G$ is a sequence of distinct vertices with $v_{i-1}v_i\in E(G)$ for each $i\in[m]$. We identify $P$ with the subgraph it forms. Its \defn{ends} are its \defn{first} vertex $v_0$ and its \defn{last} vertex $v_m$. The other vertices are \defn{internal}. For $X\subseteq V(G)$, a family $\mathcal P$ of paths in $G$ is \defn{pairwise disjoint outside $X$} if $V(P)\cap V(Q)\subseteq X$ for all distinct $P,Q\in\mathcal P$. A path $P$ is \defn{induced} in $G$ if $G[V(P)]=P$. Every subpath of an induced path is induced. If $P$ is an induced path of $G$, then $\chi(G[V(P)])\leqslant2$. Given a path $P$ in $G$ with ends $u$ and $v$, a shortest $u$--$v$ path in $G[V(P)]$ is an induced path of $G$ with the same ends and with vertex set contained in $V(P)$. For $A,B\subseteq V(G)$, an \defn{$A$--$B$ path} is a path whose first vertex is its only vertex in $A$ and whose last vertex is its only vertex in $B$. Let $s_1,\dots,s_\ell,t_1,\dots,t_\ell$ be vertices such that $s_i\ne s_j$ and $t_i\ne t_j$ for $i\ne j$, and such that if $s_i=t_j$ then $i=j$. An \defn{$\{(s_i,t_i)\}_{i\in[\ell]}$-linkage} is a family of pairwise vertex-disjoint paths $P_1,\dots,P_\ell$ such that $P_i$ has ends $s_i$ and $t_i$. A graph is \defn{$\ell$-linked}~\cite{Jun70,LM70} if it has an $\{(s_i,t_i)\}_{i\in[\ell]}$-linkage for every choice of $2\ell$ distinct vertices $s_1,\dots,s_\ell,t_1,\dots,t_\ell$. Unlike Thomas and Wollan~\cite{TW05}, we do not require an $\ell$-linked graph to have at least $2\ell$ vertices. This makes no difference below, since every graph that we show to be $\ell$-linked has more than $2\ell$ vertices. Every linkage used with Theorem~\ref{thm:TW} below has $2\ell$ distinct ends.

\emph{Woven graphs.} The following notion is due to Delcourt and Postle~\cite[Definition~5.12]{DP}, in a form that allows any number of terminal pairs up to $b$. Let $b'\in\{0,\dots,b\}$, and let $R=\{r_1,\dots,r_a\}$, $S=\{s_1,\dots,s_{b'}\}$ and $T=\{t_1,\dots,t_{b'}\}$ be sets of vertices of a graph $G$, where $r_1,\dots,r_a$ are distinct and $s_1,\dots,s_{b'},t_1,\dots,t_{b'}$ satisfy the conditions in the definition of a linkage. The sets $R$, $S$ and $T$ need not be disjoint. The graph $G$ is \defn{$(a,b)$-woven} if, for all such $b'$, $R$, $S$ and $T$, there is a $K_a$ model $\mathcal M$ rooted at $R$ and an $\{(s_i,t_i)\}_{i\in[b']}$-linkage $\mathcal P$ with $V(\mathcal M)\cap V(\mathcal P)=R\cap(S\cup T)$. As illustrated in Figure~\ref{fig:woven}, this definition means that the only intersections between the model and the linkage are unavoidable. Since $b'$ ranges over $\{0,\dots,b\}$, every $(a,b)$-woven graph is $(a,c)$-woven for every integer $c$ with $0\leqslant c\leqslant b$. Lemma~\ref{lem:pad} shows that the number of roots can also be decreased. Taking $b'=0$ and any set $R$ of $a$ vertices shows that every $(a,b)$-woven graph with at least $a$ vertices has a $K_a$ minor.

\begin{figure}[ht]
\centering
\begin{tikzpicture}[scale=0.8,
  vtx/.style={circle,fill,inner sep=1.2pt},
  lab/.style={font=\footnotesize},
  lpath/.style={orange!80!black,very thick}]
% branch sets of a K_4 model
\draw[fill=red!12,draw=red!75!black] (3.0,4.5) ellipse (1.0 and 0.5);
\draw[fill=blue!12,draw=blue!75!black] (1.6,1.0) ellipse (1.0 and 0.5);
\draw[fill=green!12,draw=green!50!black] (4.5,1.0) ellipse (1.0 and 0.5);
\draw[fill=violet!12,draw=violet!80!black] (3.0,2.6) ellipse (0.75 and 0.5);
\node[lab,red!75!black] at (3.0,5.2) {$M_1$};
\node[lab,blue!75!black] at (1.6,0.25) {$M_2$};
\node[lab,green!50!black] at (4.5,0.25) {$M_3$};
\node[lab,violet!80!black] at (2.7,3.3) {$M_4$};
\node[vtx,red!75!black] (a1) at (2.4,4.5) {}; \node[vtx] (a2) at (3.1,4.7) {}; \node[vtx] (a3) at (3.6,4.35) {};
\node[vtx,blue!75!black] (b1) at (1.0,1.0) {}; \node[vtx] (b2) at (1.7,0.8) {}; \node[vtx] (b3) at (2.25,1.2) {};
\node[vtx] (c1) at (3.95,1.2) {}; \node[vtx] (c2) at (4.6,0.8) {}; \node[vtx,green!50!black] (c3) at (5.2,1.0) {};
\node[vtx] (d1) at (2.75,2.6) {}; \node[vtx,violet!80!black] (d2) at (3.3,2.6) {};
\draw (a1)--(a2)--(a3); \draw (b1)--(b2)--(b3); \draw (c1)--(c2)--(c3); \draw (d1)--(d2);
\draw[gray] (a1)--(b1); \draw[gray] (a3)--(c2); \draw[gray] (b3)--(c1);
\draw[gray] (d2)--(a2); \draw[gray] (d1)--(b3); \draw[gray] (d2)--(c1);
\node[lab] at (1.75,4.5) {$r_1$}; \node[lab] at (0.3,1.0) {$r_2$};
\node[lab] at (6.1,0.45) {$r_3=s_3$};
% linkage
\node[vtx,orange!80!black] (s1) at (6.3,4.9) {}; \node[lab,left] at (s1) {$s_1$};
\node[vtx,orange!80!black] (t1) at (10.1,4.3) {}; \node[lab,right] at (t1) {$t_1$};
\node[lab] at (4.8,2.9) {$r_4=s_2$};
\node[vtx,orange!80!black] (t2) at (10.1,2.5) {}; \node[lab,right] at (t2) {$t_2$};
\node[vtx,orange!80!black] (t3) at (10.1,0.8) {}; \node[lab,right] at (t3) {$t_3$};
\draw[lpath] (s1) .. controls (7.8,5.3) and (8.6,3.9) .. node[lab,above,pos=0.55] {$P_1$} (t1);
\draw[lpath] (d2) .. controls (5.0,2.5) and (7.6,3.2) .. node[lab,above,pos=0.6] {$P_2$} (t2);
\draw[lpath] (c3) .. controls (7.0,1.5) and (8.4,0.4) .. node[lab,below,pos=0.55] {$P_3$} (t3);
\end{tikzpicture}
\caption{The definition of $(a,b)$-woven with $a=4$ and $b=3$. The shaded branch sets $M_1,\dots,M_4$ form a $K_4$ model rooted at $R=\{r_1,\dots,r_4\}$, and the paths $P_1,P_2,P_3$ form an $\{(s_i,t_i)\}_{i\in[3]}$-linkage. Since $s_2=r_4$ and $s_3=r_3$, the model and the linkage meet exactly in $R\cap(S\cup T)=\{r_3,r_4\}$.}\label{fig:woven}
\end{figure}

\begin{lemma}\label{lem:pad}
Let $a\geqslant a'\geqslant0$ and $b\geqslant0$ be integers. If $G$ is $(a,b)$-woven and $v(G)\geqslant a+2b$, then $G$ is $(a',b)$-woven.
\end{lemma}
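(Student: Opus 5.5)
The plan is to pad the root set. Fix $b'\in\{0,\dots,b\}$, a root set $R'=\{r_1,\dots,r_{a'}\}$ of $a'$ distinct vertices, and terminal sets $S,T$ of size $b'$ as in the definition. We want a $K_{a'}$ model rooted at $R'$ and an $\{(s_i,t_i)\}_{i\in[b']}$-linkage $\mathcal P$ with $V(\mathcal M)\cap V(\mathcal P)=R'\cap(S\cup T)$.

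\textbf{Step 1: choose the extra roots.} The terminals occupy at most $2b'\leqslant 2b$ vertices, and $R'$ has $a'$ vertices. Since $v(G)\geqslant a+2b$, there are at least $a-a'$ vertices outside $R'\cup S\cup T$. Choose such a set $X$ with $|X|=a-a'$, and put $R:=R'\cup X$. Then $R$ consists of $a$ distinct vertices, and $R\cap(S\cup T)=R'\cap(S\cup T)$ because $X$ avoids $S\cup T$.

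\textbf{Step 2: apply the hypothesis.} Since $G$ is $(a,b)$-woven, it has a $K_a$ model $\mathcal M$ rooted at $R$ and a linkage $\mathcal P$ with $V(\mathcal M)\cap V(\mathcal P)=R'\cap(S\cup T)$.

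\textbf{Step 3: delete the padded branch sets.} Let $\mathcal M'$ consist of the $a'$ branch sets of $\mathcal M$ that contain the vertices of $R'$. These sets are still pairwise disjoint and pairwise adjacent, each induces a connected subgraph, and each contains exactly one vertex of $R'$. So $\mathcal M'$ is a $K_{a'}$ model rooted at $R'$.

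Every vertex of $R'\cap(S\cup T)$ lies in $V(\mathcal M')$, since each vertex of $R'$ is in its own branch set. Together with $V(\mathcal M')\subseteq V(\mathcal M)$, this gives
\[
V(\mathcal M')\cap V(\mathcal P)=R'\cap(S\cup T).
\]
The edge case $a'=0$ is immediate: the empty model works, and the linkage is obtained by applying the hypothesis with $R=X$.

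There is no real obstacle. The only point needing care is Step 1, where the padding vertices must avoid the terminals. This is exactly why the hypothesis $v(G)\geqslant a+2b$ is needed: it guarantees enough vertices outside $R'\cup S\cup T$, and it ensures that no new intersection between the model and the linkage is forced.
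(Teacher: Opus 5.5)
Your proof is correct and is essentially the paper's argument: you pad $R'$ with $a-a'$ vertices outside $R'\cup S\cup T$, apply $(a,b)$-wovenness using $R\cap(S\cup T)=R'\cap(S\cup T)$, and keep the $a'$ branch sets that contain $R'$. Two minor remarks: the separate treatment of $a'=0$ is unnecessary, since the general argument already covers it, and the reverse inclusion in Step~3 also uses $R'\cap(S\cup T)\subseteq V(\mathcal P)$, which follows from the equality in Step~2.
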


\begin{proof}
Let $b'\in\{0,\dots,b\}$, and let $R'=\{r_1,\dots,r_{a'}\}$, $S=\{s_1,\dots,s_{b'}\}$ and $T=\{t_1,\dots,t_{b'}\}$ be as in the definition of $(a',b)$-woven. Since $|R'\cup S\cup T|\leqslant a'+2b'$ and $v(G)\geqslant a+2b$, there are $a-a'$ distinct vertices of $G$ outside $R'\cup S\cup T$. Call these \defn{padding} vertices $r_{a'+1},\dots,r_a$, and let $R:=\{r_1,\dots,r_a\}$. Since $G$ is $(a,b)$-woven, there is a $K_a$ model $\mathcal M=\{M_1,\dots,M_a\}$ with $r_i\in M_i$ and an $\{(s_i,t_i)\}_{i\in[b']}$-linkage $\mathcal P$ with $V(\mathcal M)\cap V(\mathcal P)=R\cap(S\cup T)$. Let $\mathcal M':=\{M_1,\dots,M_{a'}\}$, which is a $K_{a'}$ model rooted at $R'$. The padding vertices lie outside $S\cup T$, so $R\cap(S\cup T)=R'\cap(S\cup T)$, and hence $V(\mathcal M')\cap V(\mathcal P)\subseteq R'\cap(S\cup T)$. Conversely, each $r_i\in R'\cap(S\cup T)$ lies in $M_i\subseteq V(\mathcal M')$ and is an end of a path of $\mathcal P$. Hence $V(\mathcal M')\cap V(\mathcal P)=R'\cap(S\cup T)$.
\end{proof}

\emph{Knit graphs.} The following notion is due to Bollob\'as and Thomason~\cite{BT96}. We use the form of Delcourt and Postle~\cite[Definition~5.7]{DP}, without the requirement $\ell\leqslant k\leqslant v(G)$. A graph $G$ is \defn{$(k,\ell)$-knit} if, for every set $S$ of $k$ vertices and every partition of $S$ into $p\geqslant\ell$ non-empty parts $S_1,\dots,S_p$, there are pairwise disjoint sets $R_1,\dots,R_p$ with $S_i\subseteq R_i$ and each $G[R_i]$ connected.

\emph{Chromatic separability.} The following notion is due to Delcourt and Postle~\cite[Definition~2.4]{DP}. Let $s$ be real. A graph $G$ is \defn{$s$-chromatic-separable} if it has vertex-disjoint subgraphs $H_1,H_2$, possibly null, with $\chi(H_i)\geqslant\chi(G)-s$ for $i=1,2$. Otherwise $G$ is \defn{$s$-chromatic-inseparable}. Delcourt and Postle take $s$ to be an integer, and do not allow null subgraphs. We allow real $s$ and null subgraphs, which changes nothing when $\chi(G)>s$. Taking $H_1=G$ and $H_2$ null shows that every $s$-chromatic-inseparable graph satisfies $\chi(G)>s$. If $G$ is $s$-chromatic-inseparable and $s'\leqslant s$, then $G$ is $s'$-chromatic-inseparable. If $H_1,H_2$ are as above, then $G[V(H_1)]$ and $G[V(H_2)]$ are vertex-disjoint induced subgraphs of $G$ with $\chi(G[V(H_i)])\geqslant\chi(H_i)\geqslant\chi(G)-s$. So $G$ is $s$-chromatic-separable if and only if it has two vertex-disjoint induced subgraphs, possibly null, each with chromatic number at least $\chi(G)-s$.

\emph{Chromatic threshold and separation cost.} Several of our results state that, for a class of graphs and reals $\Psi,\sigma\geqslant0$, every graph $H$ in the class with $\chi(H)\geqslant\Psi$ is $\sigma$-chromatic-separable. Here \defn{$\Psi$} is the \defn{chromatic threshold} and \defn{$\sigma$} is the \defn{separation cost}. The separation cost bounds the loss in chromatic number in passing from $H$ to each of the two vertex-disjoint subgraphs. Such a statement implies that every $s$-chromatic-inseparable graph $H$ in the class satisfies $s<\sigma$ or $\chi(H)<\Psi$. In particular, if $\sigma\leqslant\Psi$, then every such $H$ satisfies $s<\Psi$, since $s<\chi(H)$. It will be convenient to introduce the following hypothesis. Let $\Phi\geqslant0$ and $a_0\geqslant1$ be real. A graph $G$ satisfies \defn{$(\mathcal H_{\Phi,a_0})$} if for all reals $s\geqslant0$ and $b\geqslant a_0$ and for every induced subgraph $H$ of $G$, if $H$ is $s$-chromatic-inseparable and $\had(H)<b$, then $s\leqslant\Phi b$. This property is inherited by induced subgraphs. It is adapted from the hypothesis on the function $f$ in a theorem of Norin and Steiner~\cite[Theorem~15]{NS}. The differences are that the lower bound $a_0$ on $b$ is a free parameter, and that $b$ is not bounded above. The corresponding theorem of Delcourt and Postle~\cite[Theorem~7.1]{DP} uses a different hypothesis, namely an upper bound on $\chi(H)/a$ over small subgraphs $H$ with no $K_a$ minor.

\emph{Tangency and cores.} The following notions are due to Delcourt and Postle~\cite[Definition~6.3]{DP}. A subgraph $J$ of $G$ is \defn{tangent} to a model $\mathcal A=\{A_1,\dots,A_s\}$ if $|V(J)\cap A_i|=1$ for every $i\in[s]$. A set $S\subseteq V(G)$ is a \defn{core} of $\mathcal A$ if, for all distinct $i,j$, some edge of $G$ has one end in $A_i\cap S$ and the other in $A_j\cap S$. Such edges are called \defn{core edges}.

\emph{Bipartite pieces.} Following Norin and Steiner~\cite[Section~3]{NS}, where it is denoted $b(G)$, let \defn{$\operatorname{bip}(G)$} denote the maximum number of vertices in a connected induced bipartite subgraph of $G$. For an integer $k\geqslant1$, we have $\operatorname{bip}(G)<k$ if and only if $G$ has no connected induced bipartite subgraph with exactly $k$ vertices. Indeed, deleting a leaf of a spanning tree of a connected induced bipartite subgraph with more than $k$ vertices leaves a connected induced bipartite subgraph with one vertex fewer.

\emph{The extremal function.} For a positive integer $s$, let \defn{$D(s)$} denote the maximum chromatic number of a graph with no $K_s$ minor. For real $y\geqslant1$, let \defn{$D(y)$}${}:=D(\lfloor y\rfloor)$.

\subsection{Connectivity, linkages and minors}

This subsection collects the results on connectivity, linkages and rooted minors that we use, starting with highly connected subgraphs of graphs with large chromatic number.

\begin{theorem}[Nguyen~{\cite[Theorem~1.1]{Ngu24}}]\label{thm:nguyen}
Let $k\geqslant1$ be an integer. Every graph $F$ with $\chi(F)\geqslant(3+\tfrac1{16})k$ contains a $(k+1)$-connected induced subgraph with more than $\chi(F)-k$ vertices and chromatic number at least $\chi(F)-2k+2$.
\end{theorem}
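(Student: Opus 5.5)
Theorem~\ref{thm:nguyen} is quoted from Nguyen~\cite{Ngu24}, so in the paper it would simply be cited. If I had to prove it, I would follow the Girão--Narayanan strategy, tightened. Begin with a vertex-critical induced subgraph $F'$ of $F$ with $\chi(F')=\chi(F)=:\chi$. Then $\delta(F')\geqslant\chi-1$. Also, for every separation $(A,B)$ of $F'$ with $X=A\cap B$, we have $\chi(F'[A])+\chi(F'[B\setminus A])\geqslant\chi$, and similar subadditivity inequalities hold.

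The core is an iterative peeling argument. Suppose $F'$ is not $(k+1)$-connected. Take a separation $(A,B)$ of order at most $k$, choosing $|A\setminus B|$ minimal or $\chi(F'[A])$ maximal. Colouring $F'[A]$ and $F'[B]$ and merging across $X$ shows that one side, say $A$, has $\chi(F'[A])\geqslant\chi-k$. The precise inequality is $\chi\leqslant\max\{\chi(F'[A]),\chi(F'[B])\}+|X|$, obtained by recolouring $X$ with fresh colours. We then recurse into $F'[A]$, taking a critical subgraph again.

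To prevent the loss from accumulating, I would follow Nguyen's refinement. Track a potential, namely $\chi$ minus the number of vertices of $X$ absorbed, and use the fact that the separator vertices can be given colours from a palette of size about $k$ that is reused across all levels. This is possible because the pieces cut off are nested, so their separators interact only through a bounded structure. The goal is a subgraph $H$ that is $(k+1)$-connected with $\chi(H)\geqslant\chi-2k+2$. Since $\delta(H)\geqslant\chi(H)-1$ for a critical $H$, the order bound $v(H)>\chi-k$ should follow from the minimum degree, perhaps with a separate argument if $H$ is not itself critical.

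The hypothesis $\chi\geqslant(3+\frac1{16})k$ should be exactly what guarantees that the final piece still has more than $k+1$ vertices and positive remaining chromatic budget, so the recursion terminates in a genuinely $(k+1)$-connected graph. The main obstacle is the accounting that yields the additive loss $2k-2$ rather than $O(k)$ per level with a multiplicative blow-up. Here I would try first Nguyen's key lemma: in a minimal counterexample, any two small separations can be uncrossed, so all the relevant separators lie in a single set of size less than $2k$. Deleting this set, and recolouring it with fewer than $2k-1$ extra colours, then gives the bound. Verifying the constant $3+\frac1{16}$ would require Nguyen's full case analysis, which I would cite rather than redo.
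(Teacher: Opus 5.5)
Citing Nguyen's Theorem~1.1 is exactly what the paper does, and as a citation your proposal is fine. The one point to make explicit is the word \emph{induced}. Nguyen states the result for subgraphs, but if $H$ is his $(k+1)$-connected subgraph, then $F[V(H)]$ has the same vertex set, is still $(k+1)$-connected (adding edges preserves connectivity), and satisfies $\chi(F[V(H)])\geqslant\chi(H)$. So the induced form follows at once; the paper instead remarks that Nguyen's proof already produces an induced subgraph. The sketch you offer in place of Nguyen's argument, however, is not a proof, and its central step fails.

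\textbf{The palette-reuse step fails.} Your recursion follows one chain $F'=F_0\supseteq F_1\supseteq\dots\supseteq F_r=H$, always keeping the side of larger chromatic number. This gives only $\chi(F)\leqslant\chi(H)+\sum_i|X_i|$. The reason is that the side cut off at level $i$, and the vertices discarded when you pass to a critical subgraph, may have chromatic number equal to that of the side you keep. So they may need every available colour, reserved ones included, on neighbours of $X_i$, and then $X_i$ needs fresh colours. Nestedness of the pieces does not prevent this, so the loss grows with the depth of the recursion.

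\textbf{What would be needed instead.} An additive loss of $2k-2$ requires controlling \emph{both} sides of each small separation. One way is to prove the contrapositive (if every $(k+1)$-connected subgraph has chromatic number less than $m$, then $\chi(F)\leqslant m+2k-3$) for a minimal counterexample, merging colourings of the two sides across the separator. That merging is the substance of the theorem, and your sketch gives no mechanism for it.

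\textbf{The proposed substitute lemma does not help.} Nothing you say shows that all relevant separators lie in a single set $Y$ with $|Y|<2k$. Even granting this, deleting $Y$ gives $\chi(F-Y)\geqslant\chi(F)-|Y|$ but not $(k+1)$-connectivity. A separation of $F-Y$ of order at most $k$ only yields a separation of $F$ of order up to $k+|Y|$, so it need not be one of the separations of order at most $k$ whose separators you collected in $Y$.

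\textbf{The order bound does not follow from criticality.} For a critical $H$ with $\chi(H)\geqslant\chi(F)-2k+2$, the bound $\delta(H)\geqslant\chi(H)-1$ gives only $v(H)\geqslant\chi(H)\geqslant\chi(F)-2k+2$. This is $k-1$ short of $v(H)>\chi(F)-k$.
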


Nguyen states Theorem~\ref{thm:nguyen} for subgraphs, not necessarily induced. The proof gives an induced subgraph, and the induced form is the one formalised. We use Theorem~\ref{thm:nguyen} in the following form.

\begin{corollary}\label{cor:nguyen}
Let $k\geqslant2$ be an integer and let $F$ be a graph with $\chi(F)\geqslant\lceil3.0625(k-1)\rceil$. Then $F$ contains a $k$-connected induced subgraph $F'$ with $\chi(F')\geqslant\chi(F)-2k$.
\end{corollary}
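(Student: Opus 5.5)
We apply Theorem~\ref{thm:nguyen} with $k-1$ in place of $k$. Write $k':=k-1$. Since $k\geqslant2$, we have $k'\geqslant1$, so Theorem~\ref{thm:nguyen} is applicable with parameter $k'$. Its hypothesis is $\chi(F)\geqslant(3+\tfrac1{16})k'=3.0625(k-1)$. Because $\chi(F)$ is an integer, this is equivalent to $\chi(F)\geqslant\lceil3.0625(k-1)\rceil$, which is exactly our assumption.

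Theorem~\ref{thm:nguyen} then supplies a $(k'+1)$-connected, that is $k$-connected, induced subgraph $F'$ of $F$. It also gives $\chi(F')\geqslant\chi(F)-2k'+2=\chi(F)-2k+4$. This is at least $\chi(F)-2k$, which proves the corollary. (The bound is in fact slightly stronger than stated. The weaker form is presumably just the more convenient one to quote.)

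The only points to check are the ones above:
\begin{itemize}[nosep]
\item the shift $k\mapsto k-1$ requires $k-1\geqslant1$, which is why the corollary assumes $k\geqslant2$;
\item the ceiling in the hypothesis is harmless because chromatic numbers are integers;
\item the definition of $k$-connected used in the paper (more than $k$ vertices, and connected after deleting fewer than $k$ vertices) agrees with Nguyen's notion. The extra conclusion that $F'$ has more than $\chi(F)-k'$ vertices is not needed, since $k$-connectivity as defined already forces $v(F')>k$.
\end{itemize}
There is no real obstacle. The induced form of Theorem~\ref{thm:nguyen} is taken as given, as stated in the remark after it.
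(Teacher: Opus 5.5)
Your proof is correct and matches the paper's argument exactly. You apply Theorem~\ref{thm:nguyen} with $k-1$ in place of $k$, use the integrality of $\chi(F)$ to remove the ceiling, and note that $\chi(F)-2(k-1)+2=\chi(F)-2k+4\geqslant\chi(F)-2k$.
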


\begin{proof}
Apply Theorem~\ref{thm:nguyen} with $k-1\geqslant1$ in place of $k$. Since $\chi(F)$ is an integer, $\chi(F)\geqslant\lceil3.0625(k-1)\rceil$ is equivalent to $\chi(F)\geqslant(3+\frac1{16})(k-1)$. This gives a $k$-connected induced subgraph $F'$ of $F$ with $\chi(F')\geqslant\chi(F)-2(k-1)+2=\chi(F)-2k+4\geqslant\chi(F)-2k$.
\end{proof}

We use Menger's theorem in the following form, which is the form formalised in \texttt{LinearHadwiger}. For $A,B\subseteq V(G)$, a \defn{path from $A$ to $B$} is a path whose first vertex lies in $A$ and whose last vertex lies in $B$. It may have further vertices in $A$ or in $B$.

\begin{theorem}[Menger]\label{thm:menger}
Let $G$ be a graph, let $A,B\subseteq V(G)$, and let $k\geqslant0$ be an integer. Suppose that every set of vertices of $G$ that meets every path from $A$ to $B$ in $G$ has at least $k$ vertices. Then $G$ contains $k$ pairwise vertex-disjoint paths from $A$ to $B$. In particular, this holds if $G$ is $k$-connected and $|A|,|B|\geqslant k$.
\end{theorem}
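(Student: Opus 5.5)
The statement to prove is Menger's theorem in the ``path from $A$ to $B$'' form. My plan is to reduce it to the classical vertex form via an auxiliary graph, then derive the ``in particular'' clause.

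\textbf{Main part: adding two terminals.} Let $G^+$ be obtained from $G$ by adding two new vertices $a^*$ and $b^*$, with $a^*$ adjacent to every vertex of $A$ and $b^*$ adjacent to every vertex of $B$. The $a^*$--$b^*$ paths in $G^+$ correspond to the paths from $A$ to $B$ in $G$: delete the two ends. Conversely, given a path from $A$ to $B$ in $G$, take the subpath from its last vertex in $A$ to its first vertex in $B$ after that point; this is an $A$--$B$ path, which extends to an $a^*$--$b^*$ path. Hence a set $X\subseteq V(G)$ separates $a^*$ from $b^*$ in $G^+$ exactly when it meets every path from $A$ to $B$.

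By hypothesis, every such separator has at least $k$ vertices. If $a^*$ and $b^*$ were adjacent, that would be a problem, but they are not, by construction. The vertex form of Menger's theorem for non-adjacent $a^*,b^*$ then gives $k$ internally disjoint $a^*$--$b^*$ paths. Deleting their ends yields $k$ pairwise vertex-disjoint paths from $A$ to $B$ in $G$.

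I would handle the degenerate cases directly:
\begin{itemize}
\item If $k=0$, there is nothing to prove.
\item A vertex in $A\cap B$ is a one-vertex path, and the auxiliary construction already handles it, since the path $a^*vb^*$ has $v$ as its only internal vertex.
\end{itemize}
For the proof of the vertex form itself, I would use the standard induction on $e(G)$. Take a minimum separator $X$ of size $k$. If some minimum separator is not contained in $N(a^*)$ or in $N(b^*)$, split $G$ along $X$ into two smaller instances and glue the resulting path systems at $X$. Otherwise, delete an edge and apply induction.

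\textbf{``In particular'' clause.} Let $G$ be $k$-connected with $|A|,|B|\geqslant k$, and let $X$ meet every path from $A$ to $B$. Suppose $|X|<k$. Then $A\setminus X$ and $B\setminus X$ are both non-empty, because $|A|,|B|\geqslant k>|X|$. Since $G$ is $k$-connected, $G-X$ is connected, so it contains a path from a vertex of $A\setminus X$ to a vertex of $B\setminus X$. This path avoids $X$, a contradiction. Hence $|X|\geqslant k$, and the main part applies.

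\textbf{Main obstacle.} The only real work is the vertex form of Menger's theorem itself. Since it is classical, I would cite it rather than reprove it. The remaining care point is the bookkeeping that converts a path from $A$ to $B$ into an $A$--$B$ path, so that separators in $G$ and in $G^+$ coincide.
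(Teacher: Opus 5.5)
Your proposal is correct and essentially matches the paper, which simply cites Menger's theorem for the main part, and cites it directly in the set form. So your reduction via the auxiliary terminals $a^*,b^*$ to the two-vertex form is a harmless standard extra step, and the sketched induction is not needed once you cite. For the ``in particular'' clause the paper uses exactly your argument: if $|X|<k$, then $G-X$ is connected and contains a vertex of $A$ and a vertex of $B$, so some path from $A$ to $B$ avoids $X$.
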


The last sentence holds since, if $Q\subseteq V(G)$ and $|Q|<k$, then $G-Q$ is connected and contains a vertex $a\in A$ and a vertex $b\in B$, so some path from $A$ to $B$ avoids $Q$. Let $P$ be a path from $A$ to $B$. Let $a$ be the last vertex of $P$ in $A$, and let $b$ be the first vertex in $B$ of the subpath of $P$ from $a$ to the last vertex of $P$. The subpath of $P$ from $a$ to $b$ is an $A$--$B$ path, and we say that it is obtained from $P$ by \defn{trimming}. Trimming each path of a family of pairwise vertex-disjoint paths from $A$ to $B$ gives a family of pairwise vertex-disjoint $A$--$B$ paths.

\begin{theorem}[Thomas and Wollan~{\cite[Corollary~1.2]{TW05}}]\label{thm:TW}
Let $\ell\geqslant1$ be an integer. Every $2\ell$-connected graph $G$ with at least $5\ell\,v(G)$ edges is $\ell$-linked.
\end{theorem}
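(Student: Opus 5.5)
Theorem~\ref{thm:TW} is a cited result, Corollary~1.2 of Thomas and Wollan, so the plan is to derive it from their main theorem rather than reprove linkage from scratch. Their main result says that every $2\ell$-connected graph $G$ with at least $5\ell\,v(G)$ edges, and with at least $2\ell$ vertices, is $\ell$-linked in their sense. The derivation is therefore a consistency check between the two conventions. We must confirm that the hypotheses match: our $2\ell$-connected graphs have more than $2\ell$ vertices by our definition of $k$-connected. We must also confirm that our notion of $\ell$-linked, which drops the requirement $v(G)\geqslant2\ell$, is implied by theirs.

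\textbf{Step 1: matching the connectivity definition.} By the definition in Section~\ref{sec:defs}, a $2\ell$-connected graph has more than $2\ell$ vertices and stays connected after deleting fewer than $2\ell$ vertices. This is exactly the standard notion used by Thomas and Wollan. In particular $v(G)\geqslant2\ell+1$, so their size requirement for $\ell$-linkedness is automatically met.

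\textbf{Step 2: matching the density hypothesis.} The condition $e(G)\geqslant5\ell\,v(G)$ is verbatim the hypothesis of their corollary. Their Corollary~1.2 is in turn obtained from their Theorem~1.1 (a $2\ell$-connected graph with $e(G)\geqslant5\ell|V(G)|-14\ell^2$ is $\ell$-linked), since $5\ell\,v(G)\geqslant5\ell\,v(G)-14\ell^2$.

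\textbf{Step 3: transferring linkedness.} Given $2\ell$ distinct vertices $s_1,\dots,s_\ell,t_1,\dots,t_\ell$, their theorem supplies pairwise vertex-disjoint paths $P_i$ from $s_i$ to $t_i$. This is precisely an $\{(s_i,t_i)\}_{i\in[\ell]}$-linkage in our sense.

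The only genuine obstacle is the deep content of the Thomas--Wollan theorem itself. Their proof takes a minimal counterexample, shows it is highly connected and edge-dense locally, and uses Mader-type arguments to find a linkage. That content we simply import. The translation steps above are routine.
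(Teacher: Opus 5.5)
Your route is the paper's. Theorem~\ref{thm:TW} is not proved there but quoted from Thomas and Wollan, with the remark in Section~\ref{sec:defs} that dropping their requirement of at least $2\ell$ vertices only weakens the notion of $\ell$-linked, so their conclusion implies ours, as in your Steps~1 and~3.

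One correction: delete the parenthetical in Step~2 describing their Theorem~1.1. You do not need it, and it is false as you state it. For $\ell\geqslant3$, take $K_{3\ell-1}$ minus the matching $\{s_it_i:i\in[\ell]\}$. This graph is $(3\ell-3)$-connected, hence $2\ell$-connected. It has more than $5\ell(3\ell-1)-14\ell^2=\ell^2-5\ell$ edges. But it is not $\ell$-linked, because each $s_i$--$t_i$ path needs an internal vertex, and only $\ell-1$ vertices are not terminals.
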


We call Theorem~\ref{thm:TW} the \defn{density form} of the Thomas--Wollan theorem. Its \defn{connectivity form}~\cite[Corollary~1.3]{TW05} states that every $10\ell$-connected graph $G$ is $\ell$-linked, which follows immediately from Theorem~\ref{thm:TW}.

\begin{lemma}[cf.\ Delcourt and Postle~{\cite[Theorem~5.8]{DP}}]\label{lem:knit}
Let $\ell$ and $k$ be integers with $1\leqslant\ell\leqslant k$. Every graph $G$ with $\kappa(G)\geqslant2k-\ell$ and $e(G)\geqslant5k\,v(G)$ is $(k,\ell)$-knit.
\end{lemma}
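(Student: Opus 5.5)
The plan is to reduce knitting to linkage and apply the density form of the Thomas--Wollan theorem (Theorem~\ref{thm:TW}). Let $S$ be a set of $k$ vertices partitioned into $p\geqslant\ell$ non-empty parts $S_1,\dots,S_p$. Each singleton part is already handled by $R_i:=S_i$. For a part with $|S_i|\geqslant2$, order its vertices $u_{i,1},\dots,u_{i,m_i}$ with $m_i=|S_i|$, and ask for the $m_i-1$ paths joining $u_{i,j}$ to $u_{i,j+1}$. A naive linkage would need these consecutive paths to share endpoints, which the definition of linkage forbids. So first I will replace the non-endpoint occurrences by distinct stand-in vertices, and I will keep the number of pairs small enough that Theorem~\ref{thm:TW} applies.

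\textbf{Counting pairs.} The total number of pairs is $\sum_i(m_i-1)=k-p\leqslant k-\ell$. I will use a star in place of a path for each part. Choose a centre $c_i:=u_{i,1}$ and link each other $u_{i,j}$ to a private neighbour $w_{i,j}$ of $c_i$. Alternatively, and more simply, link $u_{i,j}$ directly to $c_i$ for $j\geqslant2$, after splitting $c_i$ into distinct copies. I will do the splitting as follows.
\begin{itemize}
\item Work in $G':=G-(S\setminus\{c_1,\dots,c_p\})$, or rather choose for each pair a distinct neighbour of $c_i$ outside $S$.
\item This needs $\deg(c_i)\geqslant(m_i-1)+k$, which follows from $\delta(G)\geqslant\kappa(G)\geqslant2k-\ell\geqslant k+(k-p)$.
\item Since the neighbours are chosen greedily across all parts, they must also be disjoint from one another. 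The total demand is at most $k-p$ new vertices, and they must avoid the $k$ vertices of $S$.
\end{itemize}

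\textbf{Linkage graph.} Let $H:=G-\{c_1,\dots,c_p\}$. Then $H$ has connectivity at least $2k-\ell-p\geqslant2(k-p)$ when $\ell\leqslant p$, because $2k-\ell-p-2k+2p=p-\ell\geqslant0$. Set $\ell':=k-p$ and ask for an $\ell'$-linkage in $H$ between the pairs $(u_{i,j},w_{i,j})$. These $2\ell'$ vertices are distinct by construction. Adding $c_i$ together with the edges $c_iw_{i,j}$ then yields connected sets $R_i$, and these sets are disjoint. If $\ell'=0$, there is nothing to link.

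\textbf{Density and main obstacle.} Theorem~\ref{thm:TW} needs $e(H)\geqslant5\ell'\,v(H)$. Deleting $p$ vertices from $G$ removes at most $p\,\Delta$ edges, which could be large, so the density may drop. This is the main obstacle. I would try to avoid it by not deleting the centres at all. Instead, I would put each centre $c_i$ itself into the linkage. Pair $u_{i,2}$ with $c_i$ directly, and pair each $u_{i,j}$ with $j\geqslant3$ with the neighbour $w_{i,j}$. Then apply Theorem~\ref{thm:TW} in $G$ minus only the used stand-ins' conflicts.

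Cleaner still, the density loss can be bypassed by taking $w_{i,j}$ as the endpoints and linking in $G$ itself, with the requirement that the paths avoid $c_i$. This can be arranged by linking $u_{i,j}$ to $w_{i,j}$ using Theorem~\ref{thm:TW} with $\ell'\leqslant k$ pairs in $G$. Here $G$ is $2k$-connected? Only $2k-\ell$ is given. So the connectivity and density bookkeeping, with $\ell'=k-p\leqslant k-\ell$, gives the needed $2\ell'$-connectivity: $2k-\ell\geqslant2(k-\ell)$. Density $5k\geqslant5\ell'$ holds in $G$.

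The remaining issue is to prevent paths from passing through the centres. I expect to handle this by adding the centres as trivial pairs $(c_i,x_i)$ with $x_i$ a neighbour. This raises the pair count to at most $k-p+p=k$ while keeping $2$-connectivity slack, which is exactly where the hypotheses $\kappa\geqslant2k-\ell$ and density $5k$ are tight. Checking this final count is the step I expect to require the most care.
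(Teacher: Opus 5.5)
\textbf{There is a genuine gap: you abandon a correct route at a non-obstacle and end on a step that fails.} Your first construction is exactly the paper's proof. You take centres $c_i$ and choose private neighbours $w_{i,j}$ outside $S$ greedily; your degree count $2k-\ell\geqslant k+(k-p)$ is correct. You set $H:=G-\{c_1,\dots,c_p\}$ with $\kappa(H)\geqslant2k-\ell-p\geqslant2(k-p)$, and take a $(k-p)$-linkage from Theorem~\ref{thm:TW}. But you stop at the density condition, call it the main obstacle, and drop this route.

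It is not an obstacle. Every vertex has degree less than $v(G)$, so deleting the $p$ centres removes fewer than $p\,v(G)$ edges. Hence
\[
e(H)\ \geqslant\ e(G)-p\,v(G)\ \geqslant\ (5k-p)\,v(G)\ \geqslant\ 5(k-p)\,v(H),
\]
which is exactly what Theorem~\ref{thm:TW} needs for $k-p$ pairs. With this one line, your first paragraph is a complete proof. The point of deleting the centres is that deletion costs only $p$ in connectivity, which is affordable because $p\geqslant\ell$. It also costs only $p$ in density, which is affordable because you have density $5k$ and need only $5(k-p)$.

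The replacements you propose instead do not work.
\begin{itemize}
\item Linking the pairs $(u_{i,j},w_{i,j})$ in $G$ itself has the right connectivity and density. But nothing stops a path from running through a centre $c_{i'}$, in particular through the vertex of a singleton part, which is an endpoint of no pair. The sets $R_i$ are then not disjoint.
\item Pairing $u_{i,2}$ with $c_i$ protects the centres of non-singleton parts, but still leaves the singleton parts exposed.
\item Adding a trivial pair $(c_i,x_i)$ for every centre raises the number of pairs to $k$. Theorem~\ref{thm:TW} then requires $\kappa(G)\geqslant2k$, whereas the hypothesis gives only $2k-\ell\leqslant2k-1$.
\end{itemize}
So the final count you flag as needing the most care does not merely need care: it fails, by $\ell$.
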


\begin{figure}[ht]
\centering
\begin{tikzpicture}[scale=0.9,
  vtx/.style={circle,fill,inner sep=1.2pt},
  lv/.style={rectangle,draw,fill=white,inner sep=1.6pt},
  lab/.style={font=\footnotesize},
  lpath/.style={orange!80!black,very thick}]
\draw[rounded corners=8pt,fill=red!8,draw=red!60!black] (-0.55,-0.6) rectangle (3.5,1.55);
\draw[rounded corners=8pt,fill=blue!8,draw=blue!60!black] (4.45,-0.6) rectangle (7.0,1.55);
\node[lab,red!60!black] at (1.5,1.8) {$R_1$};
\node[lab,blue!60!black] at (5.7,1.8) {$R_2$};
\node[vtx] (v12) at (0,0) {}; \node[lv] (v11) at (1.5,0) {}; \node[vtx] (v13) at (3,0) {};
\node[vtx] (v22) at (5,0) {}; \node[lv] (v21) at (6.5,0) {};
\node[vtx] (u12) at (1.1,1.0) {}; \node[vtx] (u13) at (1.9,1.0) {}; \node[vtx] (u22) at (6.5,1.0) {};
\draw (v11)--(u12); \draw (v11)--(u13); \draw (v21)--(u22);
\draw[lpath] (v12) .. controls (0.1,0.85) and (0.55,1.05) .. (u12);
\draw[lpath] (v13) .. controls (2.9,0.85) and (2.45,1.05) .. (u13);
\draw[lpath] (v22) .. controls (5.15,0.85) and (5.8,1.05) .. (u22);
\node[lab,orange!80!black] at (0.05,1.2) {$Q_{1,2}$}; \node[lab,orange!80!black] at (2.95,1.2) {$Q_{1,3}$}; \node[lab,orange!80!black] at (5.2,1.2) {$Q_{2,2}$};
\node[lab,below] at (v11) {$v_{1,1}$}; \node[lab,below] at (v12) {$v_{1,2}$}; \node[lab,below] at (v13) {$v_{1,3}$};
\node[lab,below] at (v21) {$v_{2,1}$}; \node[lab,below] at (v22) {$v_{2,2}$};
\node[lab,above] at (u12) {$u_{1,2}$}; \node[lab,above] at (u13) {$u_{1,3}$}; \node[lab,above] at (u22) {$u_{2,2}$};
\end{tikzpicture}
\caption{The proof of Lemma~\ref{lem:knit} with $p=2$, $S_1=\{v_{1,1},v_{1,2},v_{1,3}\}$ and $S_2=\{v_{2,1},v_{2,2}\}$. Each $u_{i,j}$ is a neighbour of $v_{i,1}$ outside $S$, and the path $Q_{i,j}$ in $G-L$ joins $v_{i,j}$ to $u_{i,j}$. The vertices of $L=\{v_{1,1},v_{2,1}\}$ (squares) lie on no path, and $R_1$ and $R_2$ are shaded.}\label{fig:knit}
\end{figure}

\begin{proof}
Let $|S|=k$, and let $S_1,\dots,S_p$ be a partition of $S$ into non-empty parts, where $p\geqslant\ell$, with $S_i=\{v_{i,1},\dots,v_{i,n_i}\}$. As illustrated in Figure~\ref{fig:knit}, we choose for each $v_{i,j}$ with $j\geqslant2$ a private neighbour $u_{i,j}$ of $v_{i,1}$, and then link $v_{i,j}$ to $u_{i,j}$. Since $\delta(G)\geqslant\kappa(G)\geqslant2k-\ell\geqslant2k-p$, we can choose pairwise distinct vertices $u_{i,j}\notin S$, for $1\leqslant i\leqslant p$ and $2\leqslant j\leqslant n_i$, such that $u_{i,j}$ is adjacent to $v_{i,1}$. Indeed, there are $k-p$ such pairs $(i,j)$, and each $v_{i,1}$ has at least $2k-p$ neighbours, and hence at least $k-p$ neighbours outside $S$. So such a choice exists by Hall's theorem. Let $L:=\{v_{i,1}:i\in[p]\}$. There are $k-p$ pairs $(v_{i,j},u_{i,j})$, for $i\in[p]$ and $2\leqslant j\leqslant n_i$. These pairs have pairwise distinct ends and avoid $L$. The graph $G-L$ is $(\kappa(G)-p)$-connected, and $\kappa(G)-p\geqslant2k-\ell-p\geqslant2(k-p)$. At most $p\,v(G)$ edges of $G$ have an end in $L$, so $e(G-L)\geqslant(5k-p)\,v(G)\geqslant5(k-p)\,v(G-L)$. If $p<k$, then $G-L$ is $(k-p)$-linked by Theorem~\ref{thm:TW}. So $G-L$ has a linkage $\{Q_{i,j}:i\in[p],\ 2\leqslant j\leqslant n_i\}$, where $Q_{i,j}$ is a path from $v_{i,j}$ to $u_{i,j}$. If $p=k$, then there are no pairs, and this linkage is empty. The paths $Q_{i,j}$ avoid $L$. For each $i\in[p]$, let $R_i:=\{v_{i,1}\}\cup\bigcup_{j=2}^{n_i}V(Q_{i,j})$. Then $G[R_i]$ is connected, since $Q_{i,j}$ joins $v_{i,j}$ to $u_{i,j}$ and $u_{i,j}v_{i,1}\in E(G)$ for $2\leqslant j\leqslant n_i$. Also $S_i\subseteq R_i$, and the sets $R_i$ are pairwise disjoint, since the paths $Q_{i,j}$ are pairwise disjoint and avoid $L$.
\end{proof}

Delcourt and Postle~\cite[Theorem~5.8]{DP} state a result of this kind for $Ck$-connected graphs, with an unspecified constant $C$. They state that it is an easy corollary of their Theorem~5.6~\cite{DP}, and remark that the Thomas--Wollan theorem gives the value $10$ for the linkage constant in their Theorem~5.6. Since Lemma~\ref{lem:knit} assumes $e(G)\geqslant5k\,v(G)$, its proof can use Theorem~\ref{thm:TW}. To make $G-L$ $(k-p)$-linked, the density form needs connectivity $2(k-p)$, whereas the connectivity form needs $10(k-p)$.

For a graph $G$ and $W,B\subseteq V(G)$, a \defn{double fan} from $W$ to $B$ in $G$ is a family $\{P_{w,j}:w\in W,\ j\in\{1,2\}\}$ of paths in $G$ that are pairwise disjoint outside $W$, such that each $P_{w,j}$ has first vertex $w$, has its last vertex in $B$, and has no vertex in $W$ other than $w$.

\begin{lemma}[Delcourt and Postle~{\cite[Lemma~5.3]{DP}}]\label{lem:53}
Let $G$ be a graph, and let $A_1,A_2,B\subseteq V(G)$ with $A_1\cap A_2=\emptyset$. Suppose that, for each $i\in\{1,2\}$, the graph $G-A_{3-i}$ contains a double fan from $A_i$ to $B$. Then $G$ contains $|A_1|+|A_2|$ pairwise vertex-disjoint paths from $A_1\cup A_2$ to $B$.
\end{lemma}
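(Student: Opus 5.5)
Let $A:=A_1\cup A_2$ and $n:=|A_1|+|A_2|$. We will prove the claim with Menger's theorem (Theorem~\ref{thm:menger}). It suffices to show that every set $Q\subseteq V(G)$ meeting every path from $A$ to $B$ in $G$ has $|Q|\geqslant n$. Fix such a $Q$. For $i\in\{1,2\}$, let $D_i:=\{w\in A_i:\ w\notin Q\}$, the vertices of $A_i$ not deleted. Write $Q_i:=Q\cap A_i$ and $Q_0:=Q\setminus A$. Then $|Q|=|Q_0|+|Q_1|+|Q_2|$ and $|D_i|=|A_i|-|Q_i|$. The goal therefore becomes
\[
|Q_0|\geqslant|D_1|+|D_2|.
\]

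\emph{Step 1: use the double fans.} Let $\mathcal F_i$ be the double fan from $A_i$ to $B$ in $G-A_{3-i}$. For $w\in D_i$, each path $P_{w,j}$ ($j=1,2$) is a path from $A$ to $B$ in $G$. Hence it meets $Q$. It contains no vertex of $A_i$ other than $w\notin Q$, and no vertex of $A_{3-i}$ because it lies in $G-A_{3-i}$. So it meets $Q_0$. The two paths $P_{w,1},P_{w,2}$ are disjoint outside $w$, and paths of $\mathcal F_i$ for distinct $w$ are disjoint outside $A_i$. Choosing for each path a vertex of $Q_0$ on it therefore gives $2|D_i|$ distinct vertices of $Q_0$. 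Thus $|Q_0|\geqslant2|D_i|$ for each $i$.

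\emph{Step 2: combine.} Adding the two inequalities gives $2|Q_0|\geqslant2|D_1|+2|D_2|$, that is, $|Q_0|\geqslant|D_1|+|D_2|$. Hence $|Q|\geqslant n$. Menger then gives $n$ pairwise vertex-disjoint paths from $A$ to $B$. Trimming is not required, since the statement asks only for paths from $A_1\cup A_2$ to $B$.

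\emph{Expected obstacle.} The main point to check is that the distinct-vertex counting in Step 1 is legitimate: a vertex of $Q_0$ chosen on $P_{w,j}$ lies outside $A_i$, and the family is disjoint outside $A_i$, so distinct paths yield distinct chosen vertices. A second check is the degenerate case where a vertex $w\in A\cap B$ is not in $Q$. The trivial path $w$ is then a path from $A$ to $B$ avoiding $Q$, which is impossible, so $A\cap B\subseteq Q$. Consistently, in Step 1 each $P_{w,j}$ for $w\in D_i$ has at least one vertex other than $w$ in $Q$.
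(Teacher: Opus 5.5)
Your proof is correct and is essentially the paper's own argument. The paper also applies Menger's theorem (Theorem~\ref{thm:menger}), observes that the two double-fan paths from each $w\in A_i\setminus Q$ meet $Q$ in distinct vertices outside $A_1\cup A_2$, deduces $|Q\setminus(A_1\cup A_2)|\geqslant2|A_i\setminus Q|$ for $i=1,2$, and combines these to get $|Q|\geqslant|A_1|+|A_2|$. Your treatment of the distinctness of the chosen vertices and of the degenerate case $w\in A\cap B$ is sound.
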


Delcourt and Postle assume in addition that $A_1$, $A_2$ and $B$ are disjoint and non-empty, and that the paths in the double fans are $A_i$--$B$ paths. Their argument proves the form above, which is the one in \texttt{LinearHadwiger}: if a set $Q$ meets every path from $A:=A_1\cup A_2$ to $B$, then the two paths of the double fan starting at each $a\in A_i\setminus Q$ meet $Q$ in distinct vertices outside $A$, so $|Q\setminus A|\geqslant2|A_i\setminus Q|$ for $i\in\{1,2\}$. Hence $|Q|\geqslant|A|$, and Theorem~\ref{thm:menger} applies.

\begin{lemma}[Delcourt and Postle~{\cite[Lemma~5.15]{DP}}, see also~{\cite[Lemma~B.11]{NS}}]\label{lem:515}
Let $a,b\geqslant0$ be integers. Let $\mathcal P=\{P_1,\dots,P_b\}$ be an $\{(s_i,t_i)\}_{i\in[b]}$-linkage in a graph $G$, where $P_i$ has ends $s_i$ and $t_i$, and let $S:=\{s_1,\dots,s_b\}$ and $T:=\{t_1,\dots,t_b\}$. Let $H$ be a subgraph of $G$ such that $G[V(H)]$ is $(a,c)$-woven for some integer $c\geqslant b$, and let $R\subseteq V(H)$ with $|R|=a$. Then $G$ has an $\{(s_i,t_i)\}_{i\in[b]}$-linkage $\mathcal P'$ and a $K_a$ model $\mathcal M$ in $G[V(H)]$ rooted at $R$ such that $V(\mathcal P')\subseteq V(H)\cup V(\mathcal P)$ and $R\cap(S\cup T)\subseteq V(\mathcal P')\cap V(\mathcal M)\subseteq R\cap V(\mathcal P)$.
\end{lemma}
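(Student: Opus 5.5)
We would prove Lemma~\ref{lem:515} by rerouting the given linkage through $H$: each path of $\mathcal P$ is cut where it enters and leaves $V(H)$, and the woven property of $G[V(H)]$ supplies a rooted $K_a$ model together with a linkage that reconnects the pieces. Let $X:=V(H)$. We may assume that $\mathcal P$ minimises $\sum_i|E(P_i)\setminus E(G[X])|$ among all $\{(s_i,t_i)\}_{i\in[b]}$-linkages in $G[X\cup V(\mathcal P)]$. This assumption is not strictly needed.

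\textbf{Cutting the paths.} For each $i$ with $V(P_i)\cap X\ne\emptyset$, let $x_i$ be the first vertex of $P_i$ in $X$ and $y_i$ the last. Let $P_i^-$ be the subpath from $s_i$ to $x_i$, and $P_i^+$ the subpath from $y_i$ to $t_i$. Both meet $X$ only in $x_i$ and $y_i$ respectively. If $P_i$ misses $X$, we keep $P_i$ unchanged. Let $I$ be the set of indices with $V(P_i)\cap X\ne\emptyset$ and $x_i\ne y_i$. If $x_i=y_i$, the path is kept as the two halves glued at $x_i$, and we treat $x_i$ as fixed. Then:
\begin{itemize}
\item the vertices $x_i,y_i$ for $i\in I$ are pairwise distinct, since the paths of $\mathcal P$ are disjoint;
\item the pairs $(x_i,y_i)$ for $i\in I$ satisfy the linkage conditions;
\item $|I|\leqslant b\leqslant c$.
\end{itemize}

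\textbf{Applying the woven property.} Apply the $(a,c)$-woven property of $G[X]$ with root set $R$ and terminal pairs $\{(x_i,y_i)\}_{i\in I}$. We also add a trivial pair for each single touching vertex $x_i=y_i$. The linkage definition does not allow $s_j=t_j$, so such vertices must be handled separately. The intended treatment is to require the model to avoid them unless they lie in $R$. We do this by taking, for each such $i$, a trivial one-vertex path, or equivalently by deleting that vertex from consideration. The woven property as stated does not quite allow this. The natural fix is to observe that a one-vertex path $\{x_i\}$ can be added to the linkage after the fact only if the model avoids $x_i$. So instead we pair $x_i$ with itself through $P_i$: since $P_i^-$ and $P_i^+$ both contain $x_i$, we can choose $y_i$ as the neighbour of $x_i$ on $P_i$ outside $X$. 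That is not in $X$ either, so this needs care.

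This degenerate case is the main obstacle. The cleanest resolution is to apply the woven property in $G[X]$ with pairs $(x_i,y_i)$ only for $i\in I$. For a single touching vertex $x_i\notin R$, we route around it: the model may use it, so we reroute $P_i$ locally. If that fails, we instead use the minimality of $\mathcal P$ to argue that we can choose $\mathcal P$ with no such degenerate touches outside $R$, by making touches consecutive. This is where we expect the real work.

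\textbf{Reassembling.} The woven property gives a $K_a$ model $\mathcal M$ rooted at $R$ in $G[X]$ and an $\{(x_i,y_i)\}_{i\in I}$-linkage $\mathcal Q$ in $G[X]$, with $V(\mathcal M)\cap V(\mathcal Q)=R\cap\{x_i,y_i\}$. Set
\[
P_i':=P_i^-\,Q_i\,P_i^+ \quad\text{for } i\in I ,
\]
and keep the remaining paths unchanged. The new paths are pairwise disjoint, because $P_i^\pm$ lie outside $X$ except at their ends and $\mathcal Q$ is a linkage. So $\mathcal P'$ is an $\{(s_i,t_i)\}$-linkage with $V(\mathcal P')\subseteq X\cup V(\mathcal P)$.

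\textbf{Checking the intersection.} We have $V(\mathcal P')\cap V(\mathcal M)\subseteq X\cap V(\mathcal P')=V(\mathcal Q)\cup\{\text{degenerate }x_i\}$. Its intersection with $V(\mathcal M)$ lies in $R\cap\{x_i,y_i\}\subseteq R\cap V(\mathcal P)$. Finally, every vertex of $R\cap(S\cup T)$ lies in $X$ and equals some $s_i$ or $t_i$. Then $x_i=s_i$ or $y_i=t_i$, so this vertex is an end of $P_i'$ and lies in a branch set of $\mathcal M$, which gives the lower containment.
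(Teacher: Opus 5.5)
Your overall strategy is exactly the paper's proof: cut each $P_i$ at its first and last vertices $x_i,y_i$ in $V(H)$, apply the wovenness of $G[V(H)]$ with root set $R$ to the pairs $(x_i,y_i)$, and splice $P_i^-Q_iP_i^+$. Your splicing and intersection checks are correct for the indices in $I$.

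As written, however, the proposal has a genuine gap in the case $x_i=y_i$, which you explicitly leave open (``this is where we expect the real work''). Suppose a path $P_i$ meets $V(H)$ in a single vertex $x_i$, and you pass nothing about $x_i$ to the woven property. Then nothing stops a path $Q_j$ of $\mathcal Q$ from passing through $x_i$, which breaks the disjointness of $P_i$ and $P'_j$. Nothing stops a branch set of $\mathcal M$ from containing $x_i$ when $x_i\notin R$ either. Your disjointness claim and your final check $V(\mathcal P')\cap V(\mathcal M)\subseteq R\cap\{x_i,y_i\}$ silently assume both away. The workarounds you suggest do not work in general. For example, if $x_i=s_i\in V(H)\setminus R$ and the rest of $P_i$ lies outside $V(H)$, then every $s_i$--$t_i$ path contains $s_i$. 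So no rerouting or minimality choice of $\mathcal P$ can remove the touch.

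The missing observation is that your premise is false. The definition of a linkage in Section~\ref{sec:defs} requires only $s_i\ne s_j$ and $t_i\ne t_j$ for $i\ne j$, and that $s_i=t_j$ implies $i=j$. So $s_i=t_i$ is allowed, and hence degenerate pairs are allowed in the definition of $(a,c)$-woven. Step~1 of the proof of Lemma~\ref{lem:weave} handles precisely such pairs.

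The fix is to apply wovenness to all pairs $(x_i,y_i)$ with $V(P_i)\cap V(H)\ne\emptyset$, including those with $x_i=y_i$. There are at most $b\leqslant c$ of them, and they satisfy the linkage conditions because the paths $P_i$ are pairwise disjoint. For a degenerate pair, the returned linkage contains the one-vertex path $x_i$, which is disjoint from every other $Q_j$. The condition $V(\mathcal M)\cap V(\mathcal Q)=R\cap\{x_i,y_i\}$ then forces $\mathcal M$ to avoid $x_i$ unless $x_i\in R$. With this change your splicing argument goes through verbatim, and the minimality assumption on $\mathcal P$ is unnecessary.
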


If $H$ is $(a,c)$-woven, then so is $G[V(H)]$, since adding edges keeps a graph woven. Delcourt and Postle~\cite{DP} assume that $H$ is $(a,c)$-woven, $a\geqslant1$ and $s_i\ne t_i$, and state that $\mathcal M$ is a model in $G$. Their proof gives the form above, which is the one in \texttt{LinearHadwiger}. It applies the wovenness of $G[V(H)]$ to the pairs formed by the first and last vertices in $V(H)$ of the paths $P_i$ that meet $V(H)$, and replaces the part of each such $P_i$ between these two vertices by the resulting path. These two vertices may coincide, so the condition $s_i\ne t_i$ is not needed. There are at most $b\leqslant c$ such pairs, so no lower bound on $v(H)$ is needed.

\begin{lemma}[Kawarabayashi~{\cite[statement (3.1)]{Kaw07}}, see also~{\cite[Lemma~5.14]{DP}}]\label{lem:kawa}
Let $a\geqslant1$ be an integer and let $G$ be a graph. If $\kappa(G)\geqslant a$ and $G$ has a $K_{2a}$ minor, then $G$ is $(a,0)$-woven.
\end{lemma}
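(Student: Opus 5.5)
Since $(a,0)$-woven involves no linkage pairs, we must show: for every set $R=\{r_1,\dots,r_a\}$ of $a$ distinct vertices of $G$, there is a $K_a$ model rooted at $R$. The natural plan is to start from a $K_{2a}$ model and use connectivity to reach the roots.

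\textbf{Setup.} Fix a $K_{2a}$ model $\{B_1,\dots,B_{2a}\}$ in $G$. The idea is to pair up the $2a$ branch sets into $a$ pairs, say $\{B_{2i-1},B_{2i}\}$, so that any two pairs are adjacent. The union of a pair is connected, and any two such unions are joined by an edge. Such a merged pair has the useful robustness property that it remains adjacent to every other merged pair even if one of its two halves is partially destroyed.

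\textbf{Linking the roots.} Since $\kappa(G)\geqslant a$, I would apply Menger's theorem (Theorem~\ref{thm:menger}) between $R$ and a set $B$ consisting of one chosen vertex from each of $a$ branch sets. This gives $a$ disjoint paths from $R$ to the model. I would then trim each path so that it stops at its first vertex in $V(\mathcal M)$. The difficulty is that a path can hit the wrong branch set, and can pass through several branch sets; the trimming only ensures that the paths avoid the model until their last vertex. Kawarabayashi's actual argument is a minimality argument. Among all choices of $a$ disjoint paths from $R$ to the model together with some $K_{2a}$ model, take one minimising the number of vertices of the paths outside the model, or the total size of the model plus paths. Then I would show that the ends of the paths can be arranged so that each of the $a$ merged pairs receives exactly one path end. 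If two paths end in the same merged pair, I would reroute using the spare branch sets, shrinking a branch set by deleting a vertex and arguing that the remaining part is still connected and adjacent to the others.

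\textbf{Main obstacle.} The hard step is this rerouting. When a root $r_i$ itself lies inside some branch set, or when paths end in a common pair, one must recut branch sets while keeping connectivity and adjacency. I expect the cleanest route is to choose the $K_{2a}$ model with $|V(\mathcal M)|$ minimal and use the factor-$2$ redundancy. Each final branch set will be the union of a path, one or two original branch sets, and possibly pieces of others. Since this is precisely statement (3.1) of Kawarabayashi~\cite{Kaw07}, reproduced as~\cite[Lemma~5.14]{DP}, in the paper I would cite it. The only thing to check is that their hypotheses match, namely connectivity $a$ and a $K_{2a}$ minor. I would also handle the degenerate cases: if $v(G)<a$ the claim is vacuous, and for $a=1$ the claim is trivial since a single root forms a $K_1$ model.
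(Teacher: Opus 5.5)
Your proposal matches the paper, which also gives no proof beyond citing Kawarabayashi's statement (3.1); your merged-pairs rerouting sketch would not stand on its own, since the rerouting you leave open is exactly the content of (3.1), so the citation carries the argument. The one check the paper makes differs slightly from yours: Kawarabayashi assumes not connectivity but only that no separation $(A,B)$ of order less than $a$ with $R\subseteq A$ has $B\setminus A\neq\emptyset$, and $\kappa(G)\geqslant a$ implies this, because such a separation would force $A\setminus B=\emptyset$, so $R\subseteq A\cap B$ and the order would be at least $a$.
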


Kawarabayashi states~(3.1) for graphs with no separation of order less than $a$ that separates a vertex from the set of roots. This condition holds when $\kappa(G)\geqslant a$.

\begin{lemma}[Delcourt and Postle~{\cite[Lemma~6.2]{DP}}]\label{lem:62}
Let $G$ be a connected graph and $\emptyset\ne S\subseteq V(G)$. Then there is an induced connected subgraph $H$ of $G$ and a set $S'$ with $S\subseteq S'\subseteq V(H)$, $|S'|\leqslant3|S|$ and $\chi(H-S')\leqslant2$.
\end{lemma}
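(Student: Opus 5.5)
We build $H$ greedily by induction on $|S|$, attaching each new vertex of $S$ along a shortest path and putting at most two vertices of that path into $S'$, so that $|S'|\leqslant2|S|-1\leqslant3|S|$.

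Write $S=\{s_1,\dots,s_k\}$ with $k\geqslant1$. We construct, for $j=1,\dots,k$, a set $X_j\subseteq V(G)$ and a set $S'_j$ with the following properties:
\begin{itemize}[nosep]
\item $G[X_j]$ is connected;
\item $\{s_1,\dots,s_j\}\subseteq S'_j\subseteq X_j$;
\item $|S'_j|\leqslant 2j-1$;
\item $\chi(G[X_j]-S'_j)\leqslant2$.
\end{itemize}
Then $H:=G[X_k]$ and $S':=S'_k$ satisfy the statement. $H$ is induced by definition, and $2k-1\leqslant3k$.

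For $j=1$ take $X_1=S'_1=\{s_1\}$. For the step from $j-1$ to $j$ there are three cases.

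\emph{Case 1: $s_j\in X_{j-1}$.} Set $X_j=X_{j-1}$ and $S'_j=S'_{j-1}\cup\{s_j\}$. Deleting the extra vertex $s_j$ cannot increase the chromatic number, so all properties persist.

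\emph{Case 2: $s_j\notin X_{j-1}$.} Since $G$ is connected, there is a shortest path $P=v_0v_1\dots v_m$ in $G$ from $v_0=s_j$ to the set $X_{j-1}$. Here $m\geqslant1$, $v_m\in X_{j-1}$, and $v_0,\dots,v_{m-1}\notin X_{j-1}$. Minimality gives two facts:
\begin{itemize}[nosep]
\item $P$ is an induced path;
\item no vertex $v_i$ with $i\leqslant m-2$ has a neighbour in $X_{j-1}$, since otherwise a shorter path to $X_{j-1}$ would exist.
\end{itemize}
Set $X_j=X_{j-1}\cup V(P)$, which induces a connected subgraph, and $S'_j=S'_{j-1}\cup\{v_0,v_{m-1}\}$. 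At most two vertices are added, since $v_0=v_{m-1}$ when $m=1$, so $|S'_j|\leqslant 2j-1$.

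It remains to check $\chi(G[X_j]-S'_j)\leqslant2$. The vertex set of $G[X_j]-S'_j$ is the disjoint union of $X_{j-1}\setminus S'_{j-1}$ and $Q:=\{v_1,\dots,v_{m-2}\}$; note $v_m$ already lies in $X_{j-1}$. The vertices of $Q$ have no neighbours in $X_{j-1}$, so there are no edges between the two parts. The first part induces a $2$-colourable graph by induction. The second part induces a subpath of the induced path $P$, which is also $2$-colourable. Hence $G[X_j]-S'_j$ is $2$-colourable.

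The only delicate point is choosing which path vertices to put into $S'$. The penultimate vertex $v_{m-1}$ must be included, because it is the only path vertex that may have edges to the earlier part. This is exactly what the shortest-path choice guarantees. I expect no real obstacle beyond this bookkeeping, and the bound $3|S|$ is not even tight for this construction.
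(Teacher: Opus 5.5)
Your proof is correct and complete. The paper gives no proof of its own here: it quotes the lemma from Delcourt and Postle (and it is formalised in \texttt{LinearHadwiger}), so there is no in-paper argument to compare against.

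Both facts your argument rests on follow from the minimality of $P$. A chord $v_iv_{i'}$ with $i'\geqslant i+2$ would give a shorter path from $s_j$ to $X_{j-1}$. So would an edge from some $v_i$ with $i\leqslant m-2$ to $X_{j-1}$. Hence $G[X_j]-S'_j$ is the disjoint union of $G[X_{j-1}\setminus S'_{j-1}]$ and the induced path $v_1\dots v_{m-2}$, with no edges between them, and both cases preserve your invariants.

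Your construction gives $|S'|\leqslant2|S|-1$, which is stronger than the stated $3|S|$, since the attachment vertex $v_m$ never needs to be added to $S'$. In the one place the paper uses the lemma (Step~6 of the proof of Proposition~\ref{prop:DP}), this would only sharpen $|T|\leqslant3|S|$ to $|T|\leqslant2|S|$. That affects the order bound $N_t$, not the final constant.
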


\subsection{Extremal results}\label{sec:extremal}

This subsection collects extremal results for graphs with no $K_t$ minor, namely edge-density bounds for graphs and for bipartite graphs, small highly connected subgraphs of dense graphs, and a lower bound on the independence number, together with its consequence for colouring.

\begin{theorem}[Kostochka~\cite{Kos82,Kos84} and Thomason~\cite{Tho84}, see also~{\cite[Theorem~4.3]{DP}}]\label{thm:kos}
Let $s\geqslant2$ be an integer. Every non-null graph $G$ with $e(G)\geqslant30\,s\sqrt{\log s}\,v(G)$ has a $K_s$ minor. Consequently, every graph $G$ with no $K_s$ minor satisfies $e(G)\leqslant30\,s\sqrt{\log s}\,v(G)$ and $\chi(G)<60\,s\sqrt{\log s}+1$. In particular, $D(s)<60\,s\sqrt{\log s}+1$.
\end{theorem}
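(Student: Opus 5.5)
The plan is to derive Theorem~\ref{thm:kos} from the classical density bound for graphs with no $K_s$ minor, and then pass to colourings by degeneracy. The first sentence is the Kostochka--Thomason theorem with an explicit constant. I would take it from the version stated by Delcourt and Postle~\cite[Theorem~4.3]{DP}, which gives a $K_s$ minor whenever the density is at least $C s\sqrt{\log s}$ for an explicit $C\leqslant30$. If one wanted to be self-contained, Thomason's asymptotic bound $(\alpha+o(1))s\sqrt{\log s}$ with $\alpha<0.32$ would not suffice on its own for small $s$. So I would rely on the cited explicit form.

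For small $s$ I would also check the degenerate cases directly. For $s=2$ we have $\log 2>0$, so the hypothesis forces $e(G)>0$, and a single edge is a $K_2$ minor. Similar trivial arguments handle any $s$ below the range in which the cited explicit statement is proved. In particular, when $30s\sqrt{\log s}\geqslant (s-1)/2$, a graph with that density must have enough edges to invoke Mader's bound $e(G)\leqslant 2^{s-3}v(G)$ for $K_s$-minor-free graphs. I would only do this if the cited version requires $s$ large.

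The ``consequently'' parts follow formally. If $G$ has no $K_s$ minor, then no non-null subgraph can have $e\geqslant30s\sqrt{\log s}\,v$, since minors of subgraphs are minors of $G$. Hence $e(G)\leqslant30s\sqrt{\log s}\,v(G)$, and indeed strictly less if $G$ is non-null; the null case is trivial. Applying this to every non-null subgraph $H$ gives average degree $2e(H)/v(H)<60s\sqrt{\log s}$. So $H$ has a vertex of degree less than $60s\sqrt{\log s}$, and $G$ is $d$-degenerate for the largest integer $d<60s\sqrt{\log s}$. Greedy colouring then gives $\chi(G)\leqslant d+1<60s\sqrt{\log s}+1$. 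The statement $D(s)<60s\sqrt{\log s}+1$ is this bound taken as a maximum over all such $G$. The maximum is attained, since $\chi$ is integer-valued and bounded.

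The main obstacle is purely bibliographic and numeric: confirming that the explicit constant $30$ holds for all $s\geqslant2$, rather than only for sufficiently large $s$. The rest is routine degeneracy.
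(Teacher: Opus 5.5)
Your plan matches the paper, which takes the density statement directly from Delcourt and Postle~\cite[Theorem~4.3]{DP}, where it is stated with constant $30$, and obtains $\chi(G)<60\,s\sqrt{\log s}+1$ and $D(s)<60\,s\sqrt{\log s}+1$ by the same degeneracy argument you give. Your contingency remark on small $s$ is garbled but not needed: Mader's bound yields the first sentence only when $30\,s\sqrt{\log s}\geqslant2^{s-3}$, which holds for $2\leqslant s\leqslant12$ but fails for every $s\geqslant13$, and the condition $30\,s\sqrt{\log s}\geqslant(s-1)/2$ you wrote, which holds for every $s\geqslant2$, plays no role.
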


Delcourt and Postle~\cite[Theorem~4.3]{DP} state Theorem~\ref{thm:kos} with the constant $30$, and note that it follows from the results of Kostochka~\cite{Kos82}. The explicit bound of Kostochka~\cite[Theorem~1]{Kos84} holds for all $s$, but with a larger constant (about $270$). The constant $30$ affects the size thresholds, and also the constant in Theorem~\ref{thm:all}. Indeed, in Case~1 of the proof of Theorem~\ref{thm:all}, where $\log\log t<18.4$, Theorem~\ref{thm:kos} gives $\chi(G)<600{,}000\,t+1$, which is below $1{,}118{,}430\,\had(G)$. With the constant $270$ in place of $30$, this bound would be about $5.3\cdot10^6\,t$.

\begin{theorem}[Norin and Postle~{\cite[Theorem~3.2]{NP23}}, see also~{\cite[Theorem~4.1]{DP}}]\label{thm:NP}
Let ${\CNP}:=6400$ and let $t\geqslant3$ be an integer. Let $B$ be a bipartite graph with no $K_t$ minor and with bipartition $(V_1,V_2)$. Then
\[
e(B)\ \leqslant\ {\CNP}\,t\sqrt{\log t}\,\sqrt{|V_1||V_2|}+(t-2)(|V_1|+|V_2|).
\]
\end{theorem}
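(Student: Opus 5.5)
The plan is to reduce to the unbalanced case and then compress $V_2$ into $V_1$ by contractions, so that the general edge-density bound of Theorem~\ref{thm:kos} applies to a minor living on $V_1$. The statement is the explicit form of Norin and Postle's theorem, so in the paper it may be enough to check that their argument yields ${\CNP}=6400$; the sketch below is how I would reconstruct that argument.

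\textbf{Step 1 (cheap reductions).} By symmetry assume $|V_1|\leqslant|V_2|$. Every vertex of $V_2$ of degree at most $t-2$ is deleted; together these carry at most $(t-2)|V_2|$ edges, which the second term absorbs. So we may assume $\deg(v)\geqslant t-1$ for all $v\in V_2$. If $|V_2|\leqslant C'|V_1|$ for a suitable constant $C'$, then Theorem~\ref{thm:kos} applied to $B$ already gives
\[
e(B)\leqslant30\,t\sqrt{\log t}\,(|V_1|+|V_2|)\leqslant 30(1+C')\,t\sqrt{\log t}\sqrt{|V_1||V_2|},
\]
since $|V_1|+|V_2|\leqslant(1+C')\sqrt{|V_1||V_2|}$ in this range. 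So the real case is $|V_2|\gg|V_1|$.

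\textbf{Step 2 (contraction into $V_1$).} Contract each $v\in V_2$, as a star, onto a chosen neighbour, or better onto a small connected group of neighbours. This produces a minor $H$ on vertex set $V_1$, in which $v$ creates edges among its neighbourhood. Since $H$ has no $K_t$ minor, Theorem~\ref{thm:kos} gives $e(H)\leqslant30\,t\sqrt{\log t}\,|V_1|$.

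\textbf{Step 3 (lower-bounding $e(H)$).} I would choose the contraction targets randomly and bound $\mathbb E[e(H)]$ from below. A pair $\{x,y\}\subseteq V_1$ becomes an edge of $H$ when some $v$ adjacent to both is contracted appropriately. Summing over pairs, and using the codegree structure of $B$ together with Cauchy--Schwarz ($e(B)^2\leqslant|V_2|\sum_v\deg(v)^2$), the target inequality has the shape
\[
e(H)\gtrsim\min\Bigl\{\tbinom{|V_1|}{2},\ \frac{e(B)^2}{|V_2|\,\text{(overlap factor)}}\Bigr\}.
\]
Combining this with the upper bound on $e(H)$ from Step 2 gives $e(B)\lesssim t\sqrt{\log t}\sqrt{|V_1||V_2|}$.

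\textbf{Main obstacle.} The hard step is Step 3, and specifically controlling repeated coverage. Many vertices of $V_2$ may have nearly identical neighbourhoods and so create the same edges of $H$. My first attempt would be a greedy or iterative scheme: process $V_2$, and whenever $v$'s neighbourhood is already heavily covered by edges of the current minor, discard $v$ but charge its edges either to the $(t-2)$ term or to a dense-codegree configuration. In the latter case I would try to build a $K_t$ minor directly, using common neighbours as connectors between branch sets. Tracking the constants through this charging argument, and through the constant $30$ of Theorem~\ref{thm:kos}, is where the value $6400$ should come from.
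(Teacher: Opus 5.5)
The paper does not reprove Theorem~\ref{thm:NP}. It quotes it from Norin and Postle, noting only that their proof of Theorem~3.2 gives ${\CNP}=6400$ and that the statement with this constant is formalised in \texttt{LinearHadwiger}. So your opening remark matches the paper's treatment.

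Your reconstruction, however, has a genuine gap. Step~3, which you flag as the main obstacle, is the whole content of the theorem, and it is not carried out. Nothing in the proposal bounds the repeated coverage, and nothing produces the constant $6400$. Step~1 is fine: the balanced case does follow from Theorem~\ref{thm:kos} as you say. But it only reduces to the case that needs the real argument.

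The heuristic of Step~3 also does not fit the construction of Step~2.
\begin{itemize}
\item Contracting $v\in V_2$ onto one neighbour $x_v$ creates only the star from $x_v$ to $N(v)\setminus\{x_v\}$, not edges among all of $N(v)$.
\item Contraction never increases the number of edges, and each such contraction destroys the edge $vx_v$. So $e(H)\leqslant e(B)-|V_2|$.
\item After Step~1 every vertex of $V_2$ has degree at least $t-1$, so $e(B)^2/|V_2|\geqslant(t-1)\,e(B)>e(H)$. No Cauchy--Schwarz gain of that order is available.
\item Whenever the second term of your minimum is the smaller one, a bound $e(H)\gtrsim e(B)^2/(|V_2|\phi)$ can therefore hold only with $\phi\gtrsim e(B)/|V_2|$.
\item Combined with $e(H)\leqslant30t\sqrt{\log t}\,|V_1|$, it gives $e(B)=O\bigl(t\sqrt{\log t}\sqrt{|V_1||V_2|}\bigr)$ only if you prove $\phi=O(t\sqrt{\log t})$. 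That is essentially a restatement of the theorem.
\item When $\binom{|V_1|}{2}$ is the smaller term, the comparison only yields $|V_1|=O(t\sqrt{\log t})$, which says nothing about $e(B)$.
\end{itemize}

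The linear term raises a related difficulty. Step~1 removes only vertices of degree at most $t-2$. A surviving vertex of degree $d\geqslant t-1$ may still charge only $t-2$ of its edges to $(t-2)(|V_1|+|V_2|)$, and $K_{t-2,m}$ shows that this coefficient cannot be lowered. So the argument needs a mechanism that sees the exact threshold $t-1$. One example is the observation that if $N(v)$ already induces a clique of order at least $t-1$ in the current minor, then $v$ completes a $K_t$ minor. Your last paragraph gestures at this kind of charging. Turning such a per-vertex observation into the $\sqrt{|V_1||V_2|}$ bound with an explicit constant is exactly the missing step.
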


Norin and Postle show in the proof of~\cite[Theorem~3.2]{NP23} that the constant $6400$ suffices. Theorem~\ref{thm:NP} is formalised in \texttt{LinearHadwiger} with this constant.

Delcourt and Postle~\cite[Theorem~2.3]{DP} proved the following \defn{small-dense-subgraph theorem}. Let ${\CDP}:=480\cdot6400=3{,}072{,}000$. For all integers $t\geqslant3$ and $k\geqslant t$, every non-null graph $G$ with no $K_t$ minor and $e(G)\geqslant{\CDP}k\,v(G)$ has a $k$-connected subgraph with at most ${\CDP}^2\,t\log^3t$ vertices. Delcourt and Postle state this theorem with $480\,{\CNP}$ in place of ${\CDP}$, where ${\CNP}$ is the constant of the bipartite density bound in Theorem~\ref{thm:NP}, which they state as their Theorem~4.1~\cite{DP}. We do not use the small-dense-subgraph theorem, although it is formalised in \texttt{LinearHadwiger}, with a $k$-connected induced subgraph. Instead, Section~\ref{sec:lin} proves a refined form of Lin's theorem, which finds pieces that are both dense and highly connected.

\begin{theorem}[Reed and Seymour~\cite{RS98}, see also~{\cite[Theorem~6 and Corollary~17]{NS}}]\label{thm:RS}
Every graph $J$ satisfies $\chi_f(J)\leqslant2\had(J)$, and hence $v(J)\leqslant2\alpha(J)\had(J)$. In particular, if $a\geqslant2$ is an integer and $J$ has no $K_a$ minor, then $\alpha(J)\geqslant v(J)/(2a-2)$.
\end{theorem}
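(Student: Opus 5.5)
The statement to prove is Theorem~\ref{thm:RS}. Its first inequality, $\chi_f(J)\leqslant2\had(J)$, is the theorem of Reed and Seymour~\cite{RS98}. I would take it as given, in the form cited in~\cite[Theorem~6]{NS}. The content left to supply is the derivation of the two consequences, and this needs only LP duality, or the elementary averaging bound for fractional colourings.

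\emph{Step 1.} I would prove $v(J)\leqslant\chi_f(J)\,\alpha(J)$ for every non-null graph $J$; the null graph is trivial, since then $v(J)=0$. Take optimal weights $x_I\geqslant0$ on the independent sets, with $\sum_{I\ni v}x_I\geqslant1$ for every vertex $v$ and $\sum_Ix_I=\chi_f(J)$. Summing the covering constraints over all vertices and exchanging the order of summation gives
\[
v(J)\leqslant\sum_{v}\sum_{I\ni v}x_I=\sum_Ix_I|I|\leqslant\alpha(J)\sum_Ix_I=\alpha(J)\,\chi_f(J).
\]
Combining this with Reed--Seymour gives $v(J)\leqslant2\alpha(J)\had(J)$.

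\emph{Step 2.} If $J$ has no $K_a$ minor, then $\had(J)\leqslant a-1$. Hence $v(J)\leqslant2(a-1)\alpha(J)$. Since $a\geqslant2$, we have $2a-2>0$, and dividing gives $\alpha(J)\geqslant v(J)/(2a-2)$. One point needs care: when $J$ is null, $\had(J)=0$ and all quantities vanish, so the bound still holds.

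No step is a real obstacle. The only substantive ingredient, the fractional bound, is imported from~\cite{RS98}. If a self-contained argument were required, that import would be the hard part: one shows that a minimal counterexample has a $K_{\lceil\chi_f/2\rceil}$ minor via connected dominating sets and LP duality. I would not reprove it.
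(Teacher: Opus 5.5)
Your proposal is correct and follows the paper's route: the paper gives no separate proof, citing Reed and Seymour (and~\cite[Theorem~6 and Corollary~17]{NS}) for $\chi_f(J)\leqslant2\had(J)$ and treating the consequences as immediate. Your averaging bound $v(J)\leqslant\alpha(J)\chi_f(J)$, combined with $\had(J)\leqslant a-1$, supplies exactly that omitted step, and your handling of the null graph and of the division by $2a-2>0$ is fine.
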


Duchet and Meyniel~\cite{DM82} proved a slightly stronger form of the last statement of Theorem~\ref{thm:RS} by a direct argument, and Delcourt and Postle~\cite[Theorem~3.3]{DP} use their result. (We use the theorem of Reed and Seymour here, instead of the theorem of Duchet and Meyniel, so that the proof matches the Lean formalisation.)

Results of Delcourt and Postle~\cite[Lemma~3.1 and Corollary~3.2]{DP} imply that every non-null graph $J$ such that $\alpha(J')\geqslant v(J')/p$ for every non-null subgraph $J'$ of $J$ satisfies $\chi(J)\leqslant\bigl(2+\max\{\log(v(J)/p),0\}\bigr)p$. By Theorem~\ref{thm:RS}, this condition holds with $p=2a-2$ for every graph with no $K_a$ minor. We use the result in the following form.

\begin{lemma}[Delcourt and Postle~{\cite[Lemma~3.1 and Corollary~3.2]{DP}}]\label{lem:greedy}
Let $a\geqslant2$ be an integer and let $p:=2a-2$. Every non-null graph $J$ with no $K_a$ minor satisfies $\chi(J)\leqslant\bigl(2+\max\{\log(v(J)/p),0\}\bigr)p$.
\end{lemma}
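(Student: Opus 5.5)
The plan is to combine Theorem~\ref{thm:RS} with a greedy colouring that repeatedly removes large independent sets. Every non-null subgraph $J'$ of $J$ also has no $K_a$ minor, so Theorem~\ref{thm:RS} gives $\alpha(J')\geqslant v(J')/p$ with $p=2a-2$. I will prove, for every graph $J$ with this hereditary property, that $\chi(J)\leqslant(2+\max\{\log(v(J)/p),0\})p$.

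Write $n:=v(J)$. If $n\leqslant 2p$, colour every vertex with its own colour. This gives $\chi(J)\leqslant n\leqslant 2p$, which is at most the bound.

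If $n>2p$, repeatedly remove a maximum independent set from the current non-null remainder and give it a new colour. Each step removes at least a $1/p$ fraction of the remaining vertices, so after $m$ steps at most $n(1-1/p)^m\leqslant n e^{-m/p}$ vertices remain. Take $m:=\lceil p\log(n/p)\rceil$. Then at most $p$ vertices remain, and we give each of them its own colour. The total number of colours is at most $\lceil p\log(n/p)\rceil+p\leqslant p\log(n/p)+p+1$.

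It remains to absorb the $+1$, and this small rounding issue is the only real obstacle. There are two cases.

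\begin{itemize}
\item If $p\log(n/p)$ is an integer, the ceiling adds nothing, and we get at most $p\log(n/p)+p$ colours.
\item Otherwise, stop one step earlier and use the slack. The remainder has at most $n(1-1/p)^m$ vertices. Once it has at most $p$ vertices it can be coloured with at most $p$ colours, and with the ceiling the total is at most $p\log(n/p)+p+1\leqslant(2+\log(n/p))p$, since $p\geqslant1$ when $p\geqslant2$.
\end{itemize}

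The case $a=2$, that is $p=2$, is also safe. Here $J$ has no edges, so $\chi(J)\leqslant1$.

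This yields $\chi(J)\leqslant(2+\log(v(J)/p))p$ when $v(J)>2p$, and $\chi(J)\leqslant 2p$ otherwise. Together these give the stated bound.
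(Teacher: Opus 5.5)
Your proof is correct and takes the same route as the paper, which cites Delcourt and Postle's greedy lemma. Theorem~\ref{thm:RS} gives $\alpha(J')\geqslant v(J')/p$ for every induced subgraph $J'$, and you then repeatedly remove maximum independent sets. The rounding case split is unnecessary, and the ``stop one step earlier'' remark is confused, since you actually run all $m$ steps: the inequality $\lceil p\log(n/p)\rceil+p\leqslant p\log(n/p)+p+1\leqslant(2+\log(n/p))p$ holds unconditionally because $p\geqslant1$.
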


\subsection{Results of Norin and Steiner}\label{sec:NSres}

Norin and Steiner~\cite[Theorem~4]{NS} proved that, for every $\eps\in(0,\frac12)$, there is $t_0=t_0(\eps)$ such that, for every real $t\geqslant t_0$, every graph $G$ with $\had(G)<t$ and $v(G)\leqslant t(\log t)^{1/2-\eps}$ satisfies $\chi(G)\leqslant(4+\eps)t$. We call this result the \defn{small-graph bound} of Norin and Steiner. We do not use it, and it is not formalised. Lemma~\ref{lem:base} is an explicit instance of it, derived from a quantitative form (Theorem~\ref{thm:NSquant}).

\begin{lemma}[cf.\ Norin and Steiner~{\cite[Corollary~17]{NS}}]\label{lem:NS17}
Let $s$, $b$ and $\Gamma$ be real with $\Gamma\geqslant0$, and let $k\geqslant2$ be an integer. Suppose that every graph $J$ with $\had(J)<b$ and $v(J)\leqslant2bk(k-1)$ satisfies $\chi(J)\leqslant\Gamma$. If $H$ is an $s$-chromatic-inseparable graph with $\had(H)<b$ and $\operatorname{bip}(H)<k$, then $s<\Gamma$.
\end{lemma}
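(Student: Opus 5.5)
The plan is to argue by contradiction: suppose $s\geqslant\Gamma$. Since $H$ is $s$-chromatic-inseparable, $\chi(H)>s\geqslant\Gamma$ (take $H_1=H$ and $H_2$ null). I will use this inequality to grow an induced path on $k$ vertices, Gyárfás-style. Such a path is a connected induced bipartite subgraph with $k$ vertices, which contradicts $\operatorname{bip}(H)<k$. The only other ingredient is that every vertex of $H$ has small degree, so that the closed neighbourhood of a short path is a small graph, to which the hypothesis on $\Gamma$ applies.

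\emph{Step 1: degree bound.} Let $v\in V(H)$ and $J:=H[N_H(v)]$.
\begin{itemize}
\item If $J$ had an independent set $I$ of size $k-1$, then $H[\{v\}\cup I]$ would be a star. This is a connected induced bipartite subgraph with $k$ vertices, which is impossible. So $\alpha(J)\leqslant k-2$.
\item By Theorem~\ref{thm:RS}, $v(J)\leqslant2\alpha(J)\had(J)$. Since $\had(J)\leqslant\had(H)<b$, this gives $\deg_H(v)\leqslant 2(k-2)\had(H)<2b(k-2)$ when $k\geqslant3$. When $k=2$, $\alpha(J)=0$, so $\deg_H(v)=0$.
\end{itemize}
Hence for any set $P$ of at most $k-1$ vertices, the closed neighbourhood $N[P]$ has at most $(k-1)(1+2b(k-2))\leqslant 2bk(k-1)$ vertices. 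This holds provided $b\geqslant1$. If $b\leqslant1$, then $\had(H)<1$ forces $H$ to be null, contradicting $\chi(H)>s\geqslant\Gamma\geqslant0$. Since $H[N[P]]$ is a subgraph of $H$, it has no $K_b$ minor and at most $2bk(k-1)$ vertices, so the hypothesis gives $\chi(H[N[P]])\leqslant\Gamma$.

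\emph{Step 2: Gyárfás path.} I will construct, for $m=1,2,\dots,k$, an induced path $v_1\dots v_m$ together with a connected induced subgraph $C_m$ satisfying:
\begin{itemize}
\item $C_m$ is a component of $H-N[\{v_1,\dots,v_m\}]$;
\item $v_m$ has a neighbour in $C_{m-1}$ (this is how $v_{m+1}$ will be found);
\item $\chi(C_m)=\chi(H-N[\{v_1,\dots,v_m\}])$.
\end{itemize}
Start with a component $C_0$ of $H$ with $\chi(C_0)=\chi(H)$, and let $v_1$ be any vertex of $C_0$.

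Given $v_1,\dots,v_m$ with $m\leqslant k-1$, let $X:=N[\{v_1,\dots,v_m\}]$. Subadditivity gives $\chi(H)\leqslant\chi(H[X])+\chi(H-X)$. By Step~1, $\chi(H[X])\leqslant\Gamma<\chi(H)$, so $H-X$ is non-null.

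To keep the path attached, I maintain $C_m$ as a component of $C_{m-1}-N[v_m]$ of maximum chromatic number, rather than of all of $H-X$. I then need $\chi(C_{m-1}-N[v_m])>0$. I will get this by tracking the invariant $\chi(C_m)\geqslant\chi(H)-\chi(H[N[\{v_1,\dots,v_m\}]])$, which follows by induction using $\chi(C_{m-1})\leqslant\chi(C_m)+\chi(H[N[v_m]\cap V(C_{m-1})])$. Since $C_{m-1}$ is connected and contains both a neighbour of $v_m$ (or $v_m$ itself when $m=1$) and the non-empty $C_m$, some vertex $u\in N(v_m)\cap V(C_{m-1})$ has a neighbour in $C_m$. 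Set $v_{m+1}:=u$.

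The path $v_1\dots v_{m+1}$ is induced, because $u\in V(C_{m-1})$ lies outside $N[\{v_1,\dots,v_{m-1}\}]$. The invariants are preserved.

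Reaching $m=k$ gives an induced path on $k$ vertices, which is the contradiction.

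\emph{Main obstacle.} The delicate point is the bookkeeping in Step~2. The chromatic loss at each step must be charged against $H[N[\{v_1,\dots,v_m\}]]$, a single small graph, rather than summed over $m$ separately. That is why I would state the invariant as $\chi(C_m)\geqslant\chi(H)-\chi(H[N[\{v_1,\dots,v_m\}]])$ and check that the union of the charged neighbourhoods is exactly this closed neighbourhood. A second point to check is the counting in Step~1, namely that $(k-1)(1+2(k-2)\had(H))\leqslant2bk(k-1)$. Inseparability is used only to get $\chi(H)>s$.
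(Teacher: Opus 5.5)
There is a genuine gap, and it is in Step~2. Your Step~1 is correct: it gives $\chi(H[N[P]])\leqslant\Gamma$ for every set $P$ of at most $k-1$ vertices.

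\textbf{The invariant is not justified.} The problem is $\chi(C_m)\geqslant\chi(H)-\chi(H[N[\{v_1,\dots,v_m\}]])$. The induction hypothesis and $\chi(C_{m-1})\leqslant\chi(C_m)+\chi(H[N[v_m]\cap V(C_{m-1})])$ give only $\chi(C_m)\geqslant\chi(H)-\chi(H[A])-\chi(H[B])$, where $A:=N[\{v_1,\dots,v_{m-1}\}]$ and $B:=N[v_m]\cap V(C_{m-1})$. To replace this by $\chi(H[A\cup B])$ you would need $\chi(H[A])+\chi(H[B])\leqslant\chi(H[A\cup B])$. That is subadditivity read backwards, and it is false in general. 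For example, if the only edges between $A$ and $B$ were at $v_m$, then $\chi(H[A\cup B])\leqslant\max\{\chi(H[A]),\chi(H[B])+1\}$. Checking that the charged sets have union $N[\{v_1,\dots,v_m\}]$ does not help. The losses really do add up, so your construction proves only $\chi(H)\leqslant\sum_m\chi(H[N[v_m]\cap V(C_{m-1})])\leqslant(k-1)\Gamma$. This is far too weak, since $k=\lceil(\log t)^\zeta\rceil$ is unbounded in the application.

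\textbf{The other invariant fails too.} The invariant $\chi(C_m)=\chi(H-N[\{v_1,\dots,v_m\}])$ would make the bound immediate, but it is not maintained. You choose $C_m$ only among the components of $C_{m-1}-N[v_m]$. A component of $H-N[\{v_1,\dots,v_{m-1}\}]$ other than $C_{m-1}$ may survive with larger chromatic number, and your path cannot be extended into it.

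\textbf{A warning sign.} You use inseparability only to get $\chi(H)>s$. So your argument would show $\chi(H)\leqslant\Gamma$ for \emph{every} graph with $\had(H)<b$ and $\operatorname{bip}(H)<k$. In Lemma~\ref{lem:level} this would colour $G_2$ with $\Phi t$ colours directly, and Theorem~\ref{thm:sepexplicit} would be superfluous.

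\textbf{The missing idea.} Use inseparability at every step, against a fixed threshold. Assume $s\geqslant0$, since the case $s<0$ is trivial, and let $q:=\lceil\chi(H)-s\rceil$, so $1\leqslant q\leqslant\chi(H)$. Since $H$ is $s$-chromatic-inseparable, no two vertex-disjoint induced subgraphs of $H$ both have chromatic number at least $q$.
\begin{enumerate}
\item Grow the induced path, stopping as soon as $\chi(H-N[P])\leqslant q-1$.
\item If you do not stop, $H-N[P]$ has a component $C_m$ with $\chi(C_m)\geqslant q$. This component is unique. It lies inside a component of $H-N[\{v_1,\dots,v_{m-1}\}]$ with chromatic number at least $q$, so by uniqueness it lies in $C_{m-1}$.
\item Your connectivity argument then gives $v_{m+1}\in N(v_m)\cap V(C_{m-1})$ with a neighbour in $C_m$. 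No chromatic numbers are compared across steps.
\item Since $\operatorname{bip}(H)<k$, the process stops with $|P|\leqslant k-1$. Then $X:=N[P]$ satisfies $\chi(H-X)\leqslant q-1$, so $\chi(H[X])\geqslant\chi(H)-q+1>s$.
\item Your Step~1 gives $\chi(H[X])\leqslant\Gamma$, hence $s<\Gamma$.
\end{enumerate}
This is the Gy\'arf\'as path argument of Norin and Steiner on which the paper's proof rests. The paper states it as $\alpha(H[X])\leqslant k(k-1)$ and bounds $v(H[X])$ by Theorem~\ref{thm:RS}, in place of your degree count.
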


In applications of Lemma~\ref{lem:NS17}, we call the graphs $J$ in its hypothesis \defn{Gy\'arf\'as graphs}. As noted in Section~\ref{sec:defs}, $\operatorname{bip}(H)<k$ means that $H$ has no connected induced bipartite subgraph with exactly $k$ vertices. The proof of Lemma~\ref{lem:NS17} is that of Norin and Steiner~\cite[Corollary~17]{NS}. It uses the following form of the Gy\'arf\'as path argument~\cite{Gya87}, due to Norin and Steiner~\cite[Lemma~13]{NSdraft} (see also~\cite[Lemma~16]{NS}). Let $q\geqslant1$ and $k\geqslant2$ be integers, and let $G$ be a graph with $\operatorname{bip}(G)<k$ and $\chi(G)\geqslant q$ that has no two vertex-disjoint induced subgraphs with chromatic number at least $q$. Then there is a set $X\subseteq V(G)$ with $\alpha(G[X])\leqslant k(k-1)$ and $\chi(G-X)\leqslant q-1$. Norin and Steiner assume only $\operatorname{bip}(G)\leqslant k$. The form above is the one formalised in \texttt{LinearHadwiger}.

\begin{proof}[Proof of Lemma~\ref{lem:NS17}]
If $s<0$, then $s<0\leqslant\Gamma$. So let $s\geqslant0$. If $\chi(H)\leqslant s$, then two null subgraphs show that $H$ is $s$-chromatic-separable. So $\chi(H)>s$. Let $q:=\lceil\chi(H)-s\rceil$. Then $q$ is an integer with $1\leqslant q\leqslant\chi(H)$ and $q<\chi(H)-s+1$. Since $q\geqslant\chi(H)-s$ and $H$ is $s$-chromatic-inseparable, $H$ has no two vertex-disjoint induced subgraphs with chromatic number at least $q$. So the Gy\'arf\'as path argument gives a set $X\subseteq V(H)$ with $\alpha(H[X])\leqslant k(k-1)$ and $\chi(H-X)\leqslant q-1$. Hence $\chi(H[X])\geqslant\chi(H)-\chi(H-X)\geqslant\chi(H)-q+1>s$. Since $\had(H[X])\leqslant\had(H)<b$, Theorem~\ref{thm:RS} gives $v(H[X])\leqslant2\alpha(H[X])\had(H[X])\leqslant2bk(k-1)$. So $\chi(H[X])\leqslant\Gamma$ by hypothesis, and hence $s<\chi(H[X])\leqslant\Gamma$.
\end{proof}

\subsection{Paths to a large set}

The proofs of Lemma~\ref{lem:weave} and Proposition~\ref{prop:DP} use the following two constructions of paths. Both are taken from the proofs of Delcourt and Postle~\cite[Theorem~7.1 and Lemma~6.4]{DP}. Part~(a) is their application of Menger's theorem, which they state without details. We obtain it by replacing each vertex of the set $W$ by two non-adjacent copies. Parts~(b) and~(c) adapt the identification argument in their proof of~\cite[Lemma~6.4]{DP}, combined with their Lemma~5.3 (Lemma~\ref{lem:53}). Instead of identifying the two vertices of each pair, we add an auxiliary vertex adjacent to both, and work in the union of the given paths. Parts~(b) and~(c) are stated in these forms in \texttt{LinearHadwiger}.

\begin{lemma}\label{lem:paths}
Let $G$ be a graph, and let $W,B\subseteq V(G)$.
\begin{enumerate}[label=(\alph*),nosep]
\item Suppose that $W\cap B=\emptyset$, that $G$ is $r$-connected for some integer $r\geqslant2|W|$, and that $|B|\geqslant2|W|$. Then $G$ contains a double fan $\mathcal P$ from $W$ to $B$ with $\chi(G[V(\mathcal P)])\leqslant4|W|$.
\item Let $\mathcal P$ be a double fan from $W$ to $B$ in $G$. Let $m\geqslant0$ be an integer, and let $U\subseteq V(G)$ be disjoint from $B\cup V(\mathcal P)$. Let $\{Q_{i,j}:i\in[m],\ j\in\{1,2\}\}$ be a family $\mathcal Q$ of $2m$ pairwise disjoint paths in $G-W$, each with its first vertex in $U$, no other vertex in $U$, and its last vertex in $B$. Then $G$ contains $|W|+m$ pairwise disjoint paths $P'_w$ $(w\in W)$ and $Q'_i$ $(i\in[m])$ with the following properties. Each $P'_w$ has first vertex $w$. Each $Q'_i$ has as its first vertex the first vertex of $Q_{i,1}$ or of $Q_{i,2}$. Each of these paths has its last vertex in $B$, meets $W\cup U\cup B$ only in its ends, and has all its vertices in $V(\mathcal P)\cup V(\mathcal Q)$.
\item Let $\mathcal P$, $m$, $U$ and $\mathcal Q$ be as in~(b), except that the paths of $\mathcal Q$ may have further vertices in $U$. Then $G$ contains $|W|+m$ pairwise disjoint paths $P'_w$ $(w\in W)$ and $Q'_i$ $(i\in[m])$ with the following properties. Each $P'_w$ has first vertex $w$. Each $Q'_i$ has as its first vertex a vertex of $U$ on $Q_{i,1}$ or on $Q_{i,2}$. Each of these paths has its last vertex in $B$ and meets $W\cup U\cup B$ only in its ends.
\end{enumerate}
\end{lemma}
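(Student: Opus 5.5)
For part~(a), I would use the splitting construction announced before the statement. Form an auxiliary graph $G'$ from $G$ by replacing each $w\in W$ with two non-adjacent copies $w^1,w^2$, each adjacent to every vertex of $N_G(w)$. Let $A:=\{w^1,w^2:w\in W\}$, so $|A|=2|W|$. I would first show that every set $Q\subseteq V(G')$ meeting all paths from $A$ to $B$ has $|Q|\geqslant2|W|$. If $Q$ contains both copies of every $w$, this is clear. Otherwise some copy $w^j\notin Q$ survives. Since $r\geqslant2|W|$ and $|B|\geqslant2|W|$, a set $Q$ with fewer than $2|W|$ vertices projects to a set $Q_0\subseteq V(G)$ with $|Q_0|<r$, so $G-Q_0$ is connected. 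Moreover $G-Q_0$ contains $w$ (or a neighbour route from $w^j$) and a vertex of $B\setminus Q$, and lifting a path in $G-Q_0$ gives a path in $G'-Q$, a contradiction. Theorem~\ref{thm:menger} then gives $2|W|$ disjoint paths from $A$ to $B$ in $G'$, and I would trim them. Projecting back, each $w$ is the start of two paths that are disjoint outside $W$. To ensure a path from $w$ contains no other vertex of $W$, I would route the paths in $G'$ with the copies as sources only, cutting each path at its last vertex in $A$. This may change which copy starts the path, so I expect the bookkeeping to need a counting or re-indexing argument. Alternatively, I could first delete the copy-to-copy passages, using the fact that the paths start in $A$ and are trimmed to be $A$--$B$ paths; this gives exactly one path per copy automatically. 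For the chromatic bound, I would replace each path by an induced path in $G[V(P)]$ with the same ends. The union of $2|W|$ induced paths is then coloured with two colours per path, giving $4|W|$ colours. Shortening may break disjointness only within a single path's vertex set, so the family stays a double fan.

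For part~(b), I would add, for each $i\in[m]$, an auxiliary vertex $z_i$ adjacent to the first vertices of $Q_{i,1}$ and $Q_{i,2}$. I would then work in the graph $H$ formed by the union of $\mathcal P$, $\mathcal Q$ and the $z_i$. In $H$, the paths $z_iQ_{i,1}$ and $z_iQ_{i,2}$ form a double fan from $Z:=\{z_i\}$ to $B$. Set $A_1:=W$ and $A_2:=Z$. Deleting $Z$ leaves $\mathcal P$ intact. Deleting $W$ leaves $\mathcal Q$ intact, since $\mathcal Q$ lies in $G-W$ and $U$ is disjoint from $V(\mathcal P)$. So Lemma~\ref{lem:53} gives $|W|+m$ disjoint paths from $W\cup Z$ to $B$ in $H$. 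I would trim these paths and drop each $z_i$; the path starting at $z_i$ then begins at the first vertex of $Q_{i,1}$ or $Q_{i,2}$. The remaining properties follow from the choice of $H$: its vertices lie in $V(\mathcal P)\cup V(\mathcal Q)$, and the only vertices of $U$ in $H$ are the first vertices of the $Q$-paths.

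For part~(c), I would apply~(b) after replacing each $Q_{i,j}$ by its subpath starting at its last vertex in $U$. These subpaths satisfy the hypothesis of~(b), and the conclusion follows.

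I expect the main obstacle to be the exact form of the trimming in part~(a). The paths must start at $w$, avoid the rest of $W$, be disjoint outside $W$, and end in $B$, all while preserving the double count at each $w$. This is delicate because an $A$--$B$ trimming may pass through copies of other vertices of $W$.
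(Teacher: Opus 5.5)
Your plan is the paper's proof in all three parts. In (a) you double $W$ and apply Menger, then shorten to induced paths and use two colours per path. In (b) you add one auxiliary vertex per pair $Q_{i,1},Q_{i,2}$ and apply Lemma~\ref{lem:53} in the union of the paths. In (c) you cut each $Q_{i,j}$ at its last vertex in $U$.

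\emph{Part (a).} Your auxiliary graph differs slightly. You keep edges between copies of adjacent vertices of $W$, whereas the paper builds $G^\times$ from $G-W$ and joins $w^1,w^2$ only to $N_G(w)\setminus W$. With your version, a path of $G-Q_0$ lifts directly to $G'-Q$, provided you take $Q_0:=(Q\setminus A)\cup\{w:w^1,w^2\in Q\}$. Then $|Q_0|\leqslant|Q|<r$, and the $w$ with a surviving copy is not in $Q_0$. The paper instead has to start from the last vertex of $W$ on the path. The obstacle you anticipate is settled by the counting the paper uses, with no trimming or re-indexing. Menger gives $2|W|=|A|$ pairwise disjoint paths, each starting in $A$. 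So every vertex of $A$ is the first vertex of exactly one of them and lies on no other. Hence each path contains exactly one copy, and its second vertex lies in $N_G(w)\setminus W$.

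\emph{The gap, in part (b).} You do not justify that each new path meets $U$ only in its first vertex. Knowing that $V(H)\cap U$ consists of the first vertices $y_i,y'_i$ of the $Q$-paths does not stop a path from some $w\in W$, or from some $z_{i'}$, from running through $y_i$ as an internal vertex. The missing argument has three parts.
\begin{itemize}
\item $H$ must consist only of the edges of the paths in $\mathcal P\cup\mathcal Q$ together with the edges $z_iy_i,z_iy'_i$, not the induced subgraph of $G$ on these vertices.
\item Since $U$ misses $V(\mathcal P)\cup B$, and the $Q$-paths are disjoint with only their first vertices in $U$, each $y\in\{y_i,y'_i\}$ has exactly two neighbours in $H$: $z_i$ and the second vertex of its $Q$-path.
\item An internal occurrence of $y$ on one of the $|W|+m$ disjoint paths therefore puts $z_i$ on that path. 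Each vertex of $W\cup Z$ is the first vertex of exactly one of these paths, so this path starts at $z_i$ and $y$ is its second vertex.
\end{itemize}
This is where the hypothesis $U\cap V(\mathcal P)=\emptyset$ is actually needed. Checking that $\mathcal Q$ survives the deletion of $W$ uses only $\mathcal Q\subseteq G-W$. With this step added, your proof goes through as in the paper.
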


\begin{figure}[ht]
\centering
\begin{tikzpicture}[scale=0.85,
  vtx/.style={circle,fill,inner sep=1.2pt},
  hv/.style={circle,draw,fill=white,inner sep=1.4pt},
  lab/.style={font=\footnotesize},
  ppath/.style={orange!80!black,thick},
  qpath/.style={blue!70!black,thick},
  res/.style={line width=2.2pt,opacity=0.35}]
% ---------- (a) ----------
\draw[fill=gray!10] (4.2,0.9) ellipse (0.5 and 1.25);
\node[lab] at (4.2,2.4) {$B$};
\node[hv] (w1) at (0,1.6) {}; \node[lab,left] at (w1) {$w^1$};
\node[hv] (w2) at (0,0.2) {}; \node[lab,left] at (w2) {$w^2$};
\node[vtx] (n1) at (1.2,1.7) {}; \node[vtx] (n2) at (1.2,0.3) {};
\draw[gray,dashed] (w1)--(n2); \draw[gray,dashed] (w2)--(n1);
\draw[orange!80!black,thick] (w1)--(n1); \draw[orange!80!black,thick] (w2)--(n2);
\draw[ppath] (n1) .. controls (2.3,1.9) and (3.0,1.4) .. (4.0,1.5);
\draw[ppath] (n2) .. controls (2.3,0.1) and (3.0,0.5) .. (4.0,0.4);
\node[lab,align=center] at (2.4,-0.65) {replace $w^1$ and $w^2$ by $w$};
\node at (2.1,-1.35) {(a)};
% ---------- (b) ----------
\begin{scope}[xshift=7.2cm]
\draw[fill=gray!10] (4.2,0.9) ellipse (0.5 and 1.4);
\node[lab] at (4.2,2.55) {$B$};
\node[vtx] (bw) at (0,1.9) {}; \node[lab,left] at (bw) {$w$};
\node[vtx] (y1) at (0,0.35) {}; \node[vtx] (y2) at (0,-0.25) {};
\node[lab] at (0.05,0.65) {$y_1$}; \node[lab] at (0.05,-0.55) {$y'_1$};
\node[hv] (c1) at (-0.9,0.05) {}; \node[lab,left] at (c1) {$c_1$};
\draw[gray,dashed] (c1)--(y1);
\draw[res,blue!70!black] (c1)--(y2); \draw[gray,dashed] (c1)--(y2);
\draw[res,orange!80!black] (bw) .. controls (1.5,2.4) and (2.8,1.6) .. (4.0,1.8);
\draw[ppath] (bw) .. controls (1.5,2.4) and (2.8,1.6) .. (4.0,1.8);
\draw[ppath] (bw) .. controls (1.5,1.3) and (2.8,1.4) .. (4.0,1.1);
\draw[qpath] (y1) .. controls (1.5,0.6) and (2.8,0.3) .. (4.0,0.5);
\draw[res,blue!70!black] (y2) .. controls (1.5,-0.5) and (2.8,0.0) .. (4.0,-0.1);
\draw[qpath] (y2) .. controls (1.5,-0.5) and (2.8,0.0) .. (4.0,-0.1);
\node[lab,orange!80!black] at (2.2,2.35) {$\mathcal P$};
\node[lab,blue!70!black] at (2.2,-0.75) {$\mathcal Q$};
\node at (2.1,-1.35) {(b)};
\end{scope}
\end{tikzpicture}
\caption{The proof of Lemma~\ref{lem:paths}. (a)~The vertex $w$ is replaced by two non-adjacent copies $w^1$ and $w^2$ (white), each adjacent to the neighbours of $w$ outside $W$. Replacing $w^1$ and $w^2$ by $w$ in the Menger paths from $w^1$ and $w^2$ gives two paths from $w$ to $B$ that meet only in $w$. (b)~The proof of part~(b). Here $W=\{w\}$ and $m=1$. An auxiliary vertex $c_1$ (white), adjacent only to $y_1$ and $y'_1$, is added, and Lemma~\ref{lem:53} gives disjoint paths from $w$ and from $c_1$ in the union of the paths. In the highlighted outcome, the path from $c_1$ continues through $y'_1$, and deleting $c_1$ gives a path from $y'_1$.}\label{fig:paths}
\end{figure}

\begin{proof}
(a) As illustrated in Figure~\ref{fig:paths}(a), we replace each vertex of $W$ by two copies and apply Menger's theorem. Let $G^\times$ be obtained from $G-W$ by adding, for each $w\in W$, two new non-adjacent vertices $w^1$ and $w^2$, each adjacent to every vertex of $N_G(w)\setminus W$. Let $W^\times:=\{w^1,w^2:w\in W\}$, so $|W^\times|=2|W|$. We claim that every set $Q\subseteq V(G^\times)$ that meets every path from $W^\times$ to $B$ in $G^\times$ has at least $2|W|$ vertices. Suppose not. Let $O:=Q\setminus W^\times$ and $W_0:=\{w\in W:w^1,w^2\in Q\}$, and let $D:=O\cup W_0\subseteq V(G)$. Then $|D|\leqslant|O|+2|W_0|\leqslant|Q|<2|W|\leqslant r$, so $G-D$ is connected. Since $|Q|<|W^\times|$, some $w^i$ is not in $Q$, and then $w\notin D$. Since $|O|<2|W|\leqslant|B|$, some $b\in B$ is not in $O$, and $b\notin D$ since $b\notin W$. Let $P_0$ be a $w$--$b$ path in $G-D$, let $z$ be the last vertex of $P_0$ in $W$, and let $P_1$ be the subpath of $P_0$ from $z$ to $b$. Since $z\notin W_0$, some $z^j$ is not in $Q$. The second vertex of $P_1$ lies in $N_G(z)\setminus W$, so replacing $z$ by $z^j$ in $P_1$ gives a path in $G^\times$ from $z^j$ to $b$. This path avoids $Q$, since its vertices other than $z^j$ lie in $V(G)\setminus(W\cup D)$ and hence not in $O$. This contradiction proves the claim. By Theorem~\ref{thm:menger}, $G^\times$ has $2|W|$ pairwise vertex-disjoint paths from $W^\times$ to $B$. Since $|W^\times|=2|W|$, each vertex of $W^\times$ is the first vertex of exactly one of them, and hence is the only vertex of $W^\times$ on this path. Replacing the first vertex $w^i$ of each of these paths by $w$ gives a path in $G$ from $w$ to $B$ with no other vertex in $W$, since all its other vertices lie in $V(G)\setminus W$ and its second vertex is a neighbour of $w$. The two paths obtained from $w^1$ and $w^2$ start at $w$, and any two of the resulting paths are disjoint outside $W$. So they form a double fan from $W$ to $B$. Finally, replace each path $P$ of this double fan by a shortest path between its ends in $G[V(P)]$. This is an induced path of $G$ with the same ends and with vertex set contained in $V(P)$, so the resulting family $\mathcal P$ is again a double fan from $W$ to $B$. Each path of $\mathcal P$ is induced in $G$, and so has a proper $2$-colouring. Assign each vertex of $V(\mathcal P)$ to one path of $\mathcal P$ containing it, and colour the vertex with the pair consisting of this path and the colour of the vertex in the $2$-colouring of this path. If two adjacent vertices are assigned to the same path, then they are adjacent on this induced path, and so have distinct colours in its $2$-colouring. So adjacent vertices receive distinct pairs, and $\chi(G[V(\mathcal P)])\leqslant2\cdot2|W|=4|W|$. Moreover, if $R\subseteq V(G)$ contains $W\cup B$ and $G[R]$ satisfies the hypotheses of~(a), then~(a) applied to $G[R]$ gives a double fan $\mathcal P$ from $W$ to $B$ in $G$ with $V(\mathcal P)\subseteq R$ and $\chi(G[V(\mathcal P)])\leqslant4|W|$, since $G[R][V(\mathcal P)]=G[V(\mathcal P)]$.

(b) As illustrated in Figure~\ref{fig:paths}(b), we add an auxiliary vertex for each pair of paths $Q_{i,1},Q_{i,2}$ and apply Lemma~\ref{lem:53}. For $i\in[m]$, let $y_i$ and $y'_i$ be the first vertices of $Q_{i,1}$ and $Q_{i,2}$, and let $Y:=\{y_1,y'_1,\dots,y_m,y'_m\}$. Let $C:=\{c_1,\dots,c_m\}$ be a set of $m$ new vertices. Let $G^+$ be the graph with vertex set $V(\mathcal P)\cup V(\mathcal Q)\cup C$ whose edges are the edges of the paths in $\mathcal P\cup\mathcal Q$ together with the edges $c_iy_i$ and $c_iy'_i$ for $i\in[m]$. The paths of $\mathcal P$ avoid $U$, and each path of $\mathcal Q$ meets $U$ only in its first vertex, so $V(G^+)\cap U=Y$. Each $y\in\{y_i,y'_i\}$ is not in $B$, since $U\cap B=\emptyset$. So the path of $\mathcal Q$ starting at $y$ has a second vertex, and $y$ has exactly two neighbours in $G^+$, namely $c_i$ and this second vertex. In $G^+$ the hypotheses of Lemma~\ref{lem:53} hold with $A_1=W$, $A_2=C$ and the same $B$, since:
\begin{itemize}[nosep]
\item The paths of $\mathcal P$ form a double fan from $W$ to $B$ in $G^+-C$.
\item Adding $c_i$ at the start of $Q_{i,1}$ and of $Q_{i,2}$, for each $i\in[m]$, gives a double fan from $C$ to $B$ in $G^+-W$, since the paths of $\mathcal Q$ are pairwise disjoint and lie in $G-W$.
\end{itemize}
Lemma~\ref{lem:53} gives $|W|+m$ pairwise disjoint paths from $W\cup C$ to $B$ in $G^+$. Since $|W\cup C|=|W|+m$, exactly one of them starts at each vertex of $W\cup C$, and so each of them meets $W\cup C$ only in its first vertex. Replace each of these paths by its subpath from its first vertex to its first vertex in $B$. Let $L$ be one of the resulting paths, and let $y\in Y\cap V(L)$, with $y\in\{y_i,y'_i\}$. Since $y\notin W\cup C\cup B$, the vertex $y$ is an internal vertex of $L$, so both its neighbours in $G^+$ are its neighbours on $L$. In particular $c_i\in V(L)$. Since $L$ meets $W\cup C$ only in its first vertex, $L$ starts at $c_i$, and $y$ is the second vertex of $L$. Hence the paths starting in $W$ avoid $Y$, and, for each $i\in[m]$, the path starting at $c_i$ meets $Y$ only in its second vertex, which lies in $\{y_i,y'_i\}$. For each $i\in[m]$, delete $c_i$ from the path starting at $c_i$. All edges of $G^+$ not incident with $C$ are edges of $G$, so this gives $|W|+m$ pairwise disjoint paths in $G$ with all their vertices in $V(\mathcal P)\cup V(\mathcal Q)$. The path $P'_w$ starts at $w$, and the path $Q'_i$ starts at $y_i$ or at $y'_i$. Each of them meets $W$ only in its first vertex, meets $U$ only in its first vertex since $V(G^+)\cap U=Y$, and meets $B$ only in its last vertex.

(c) For each path $Q_{i,j}$ of $\mathcal Q$, let $Q^\ast_{i,j}$ be the subpath of $Q_{i,j}$ from its last vertex in $U$ to its last vertex. The paths $Q^\ast_{i,j}$ are pairwise disjoint paths in $G-W$, each with its first vertex in $U$, no other vertex in $U$, and its last vertex in $B$. Applying~(b) to them gives the required paths, since the first vertex of each $Q^\ast_{i,j}$ is a vertex of $U$ on $Q_{i,j}$.
\end{proof}

\subsection{A sharpened form of Mader's theorem}

The proofs of Lemmas~\ref{lem:hub} and~\ref{lem:massed} and of Theorem~\ref{thm:refinedlin} use the following form of Mader's theorem.

\begin{lemma}[Mader~\cite{Mad72}, see~{\cite[Theorem~1.4.3]{Die17}}]\label{lem:mader}
Let $k\geqslant1$ be an integer, and let $G$ be a non-null graph with $e(G)\geqslant2k\,v(G)$. Let $\gamma:=e(G)/v(G)$. Let $\gamma':=\frac12\bigl(\gamma+\sqrt{\gamma^2-2\gamma k}\bigr)$. Then $G$ has a $(k+1)$-connected induced subgraph $H$ with $\delta(H)>\gamma$, $v(H)>2\gamma'$ and $e(H)>\gamma'\,v(H)\geqslant(\gamma-k)\,v(H)$.
\end{lemma}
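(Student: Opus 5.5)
We will follow the standard minimal-subgraph proof of Mader's theorem, as in Diestel, but with a sharper threshold for which subgraphs count as dense. Fix $k$ and $\gamma=e(G)/v(G)\geqslant2k$, and note that $\gamma'\geqslant\gamma-k$: since $\gamma'$ is the larger root of $x^2-\gamma x+\gamma k/2=0$ and $\gamma\geqslant2k$, we have $\gamma-k\leqslant\gamma'$. Call a subgraph $F$ of $G$ \emph{rich} if
\[
v(F)\geqslant 2\gamma' \quad\text{and}\quad e(F)>\gamma'\,(v(F)-k)
\]
(up to an adjustment of strict versus non-strict inequalities to match the conclusion). The choice of $\gamma'$ is made exactly so that $G$ itself is rich. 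We have $e(G)=\gamma v(G)$, and $G$ must have $v(G)>2\gamma\geqslant2\gamma'$ because $e(G)\leqslant v(G)^2/2$. The inequality $\gamma v>\gamma'(v-k)$ holds trivially, but we will instead use the threshold $\gamma'(v-k)+$ something; we will tune the definition so that the quadratic $x^2-\gamma x+\gamma k/2$ appears. Concretely, the natural definition is $v(F)\geqslant 2\gamma'$ and $e(F)>\gamma'(v(F)-k)$, with $\gamma'$ chosen as the root so that the smallest admissible order $2\gamma'$ is consistent with $\binom{2\gamma'}{2}\geqslant\gamma'(2\gamma'-k)$.

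Next, take a rich subgraph $H$ with $v(H)$ minimum, and subject to that with $e(H)$ minimum, and pass to the induced subgraph on its vertex set, which is still rich. First we get the degree bound: if some vertex $v$ had $\deg_H(v)\leqslant\gamma'$, then deleting it keeps $e>\gamma'(v-1-k)$. This contradicts minimality unless $v(H)-1<2\gamma'$. So we must show that a rich graph on about $2\gamma'$ vertices cannot exist: such a graph would need $e(H)>\gamma'(2\gamma'-k)\geqslant\binom{v(H)}2$, and the defining equation of $\gamma'$ is exactly what makes this impossible. This gives $\delta(H)>\gamma'$, and the conclusion asks for $\delta(H)>\gamma$, which is stronger. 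So the definition of rich should be $e(F)>\gamma(v(F)-k)$-type with the order bound $2\gamma'$, and the quadratic comes from comparing $\gamma(2\gamma'-k)$ with $\binom{2\gamma'}2=\gamma'(2\gamma'-1)$, which yields $\gamma'^2-\gamma\gamma'+\gamma k/2\approx0$. I will settle the precise form by making the small-order case contradictory. The density conclusion $e(H)>\gamma'v(H)$ should then follow from richness together with $v(H)>2\gamma'$, possibly after a short computation.

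For connectivity, suppose $H$ has a separation $(A,B)$ of order at most $k$ with both $A\setminus B$ and $B\setminus A$ non-empty. The degree bound forces $|A|,|B|>\gamma$, and hence both $H[A]$ and $H[B]$ have order at least $2\gamma'$ (again via the choice of $\gamma'$, or via degree over $\gamma$). By minimality neither is rich, so $e(H[A])\leqslant\gamma(|A|-k)$ and likewise for $B$. Summing these gives $e(H)\leqslant\gamma(v(H)+|A\cap B|-2k)\leqslant\gamma(v(H)-k)$, which contradicts the richness of $H$. Finally, $v(H)>k+1$ follows from $v(H)>2\gamma'\geqslant2k$.

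The main obstacle is calibrating the definition of rich (which coefficient to use, $\gamma$ or $\gamma'$, and where the strict inequalities go) so that three things hold simultaneously: $G$ is rich, small graphs of order near $2\gamma'$ cannot be rich (this is where the root $\gamma'$ enters), and the degree and separation arguments go through with minimum degree greater than $\gamma$.
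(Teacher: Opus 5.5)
The overall framework is right: a minimal subgraph with more than $\gamma(v-k)$ edges, deleting a low-degree vertex, and splitting along a small separation. Your final computation $e(H)/v(H)>\gamma-\gamma k/(2\gamma')=\gamma'$ is also correct once $v(H)>2\gamma'$ is known. But the proposal never fixes what ``rich'' means, and under the version you settle on ($v(F)\geqslant2\gamma'$ and $e(F)>\gamma(v(F)-k)$) the connectivity step breaks. A vertex of $A\setminus B$ only gives $|A|>\delta(H)>\gamma$. On the other hand, $2\gamma'=\gamma+\sqrt{\gamma^2-2\gamma k}$ is about $2\gamma-k$ when $\gamma\gg k$. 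So ``hence $H[A]$ and $H[B]$ have order at least $2\gamma'$'' does not follow, and minimality of $H$ then gives no bound on $e(H[A])$.

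There are two smaller problems. You check small orders only at the (generally non-integer) value $2\gamma'$, whereas the degree step needs the check for every order in $[2\gamma',2\gamma'+1)$. And your first candidate, with coefficient $\gamma'$, fails outright, since $\gamma'(2\gamma'-k)<\binom{2\gamma'}{2}$ for $k\geqslant2$.

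The missing idea is that nothing needs to be calibrated: the sharpened bounds are consequences of the standard argument, not part of the definition. The paper keeps Diestel's definition verbatim: order at least $2k$ and more than $\gamma(v-k)$ edges. With it, $|A|>\gamma\geqslant2k$ makes the separation step work. Also, a good graph on $2k$ vertices would need more than $\gamma k\geqslant2k^2>\binom{2k}{2}$ edges, which is impossible.

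Only afterwards does the paper use $n:=v(H)>\delta(H)>\gamma$ together with $n^2/2>e(H)>\gamma(n-k)$. The quadratic $n^2-2\gamma n+2\gamma k$ is positive only outside $[\gamma-r,\gamma+r]$, where $r:=\sqrt{\gamma^2-2\gamma k}$. So $n>\gamma+r=2\gamma'$, and your computation then gives $e(H)>\gamma'v(H)$.

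Your $2\gamma'$ threshold could also be rescued. One checks that $\binom{n}{2}\leqslant\gamma(n-k)$ for every $n\in[\gamma,2\gamma'+1]$, by convexity and a check at the two endpoints. This handles both the degree step and separation sides of order between $\gamma$ and $2\gamma'$. That observation is not in the proposal, however.
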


The bounds $e(H)>\gamma'v(H)$ and $v(H)>2\gamma'$ are used in the proof of Lemma~\ref{lem:hub}.

\begin{proof}
The bounds $\delta(H)>\gamma$ and $e(H)>(\gamma-k)v(H)$ are in the statement and proof of~\cite[Theorem~1.4.3]{Die17}. The bounds $v(H)>2\gamma'$ and $e(H)>\gamma'v(H)$ are a slight sharpening that follows from the same proof. We give that proof, and record these bounds along the way. Here $\gamma\geqslant2k$. Call an induced subgraph $G'$ of $G$ \defn{good} if $v(G')\geqslant2k$ and $e(G')>\gamma\,(v(G')-k)$. The graph $G$ itself is good, since $e(G)=\gamma\,v(G)$ and $v(G)>2\gamma\geqslant4k$ (the average degree $2\gamma$ is less than $v(G)$). Let $H$ be a good subgraph with as few vertices as possible. No good graph has exactly $2k$ vertices, since it would have more than $\gamma k\geqslant2k^2>\binom{2k}2$ edges. So $v(H)>2k$. If $H$ had a vertex $v$ with $\deg_H(v)\leqslant\gamma$, then $H-v$ would have $v(H)-1\geqslant2k$ vertices and $e(H-v)>\gamma(v(H)-k)-\gamma=\gamma(v(H-v)-k)$ edges. So $H-v$ would be good, contradicting the choice of $H$. Hence $\delta(H)>\gamma$. In particular $v(H)>\gamma$. Let $n:=v(H)$ and $r:=\sqrt{\gamma^2-2\gamma k}$, which is real since $\gamma\geqslant2k$. Since $H$ is good and $e(H)\leqslant n(n-1)/2<n^2/2$, we have $n^2/2>\gamma(n-k)$, that is, $n^2-2\gamma n+2\gamma k>0$. The roots of this quadratic are $\gamma\pm r$, and $n>\gamma\geqslant\gamma-r$, so $n>\gamma+r=2\gamma'$. Hence
\[
\frac{e(H)}n\ >\ \gamma\Bigl(1-\frac kn\Bigr)\ >\ \gamma-\frac{\gamma k}{\gamma+r}\ =\ \gamma-\frac{\gamma-r}2\ =\ \gamma',
\]
using $(\gamma+r)(\gamma-r)=2\gamma k$. Finally, $\gamma'\geqslant\gamma-k$ is equivalent to $r\geqslant\gamma-2k$. Since $\gamma-2k\geqslant0$, squaring shows that this is equivalent to $2\gamma k\geqslant4k^2$, which holds since $\gamma\geqslant2k$.

It remains to show that $H$ is $(k+1)$-connected. We have $v(H)>2k\geqslant k+1$. Suppose that $H$ has a separation $(U_1,U_2)$ with $|U_1\cap U_2|\leqslant k$ and both $U_1\setminus U_2$ and $U_2\setminus U_1$ non-empty. Let $H_i:=H[U_i]$. A vertex of $U_1\setminus U_2$ has all its neighbours in $U_1$, so $v(H_1)>\delta(H)>\gamma\geqslant2k$. Similarly $v(H_2)>2k$. Each $H_i$ has fewer vertices than $H$, so it is not good, and hence $e(H_i)\leqslant\gamma(v(H_i)-k)$. Every edge of $H$ lies in $H_1$ or $H_2$, and $v(H_1)+v(H_2)\leqslant v(H)+k$. Therefore
\[
e(H)\ \leqslant\ e(H_1)+e(H_2)\ \leqslant\ \gamma\bigl(v(H_1)+v(H_2)-2k\bigr)\ \leqslant\ \gamma\,(v(H)-k),
\]
a contradiction, since $H$ is good. So $H$ is $(k+1)$-connected.
\end{proof}

%=====================================================================
\section{An explicit chromatic-separable reduction}\label{sec:sep}
%=====================================================================

This section proves Theorem~\ref{thm:sepexplicit}, the engine of each bootstrapping step. Roughly, it states that if $\had(G)<t$ and, for every real $b\geqslant a_0$, every $s$-chromatic-inseparable induced subgraph $H$ of $G$ with $\had(H)<b$ satisfies $s\leqslant\Phi b$, then $\chi(G)$ is at most about $\cS(\Phi)\,t$, where $\cS(\Phi)$ is defined in Theorem~\ref{thm:sepexplicit}. The section makes explicit a lemma and a theorem of Norin and Steiner~\cite[Lemma~B.12 and Theorem~15]{NS}. These are adaptations of results of Delcourt and Postle~\cite[Theorems~7.1 and~1.6]{DP}. Lemma~\ref{lem:hub} constructs the hub used in the proof of Lemma~\ref{lem:weave}. Theorem~\ref{thm:sepexplicit} follows from Lemma~\ref{lem:weave}, and is used in Lemma~\ref{lem:level}. Lemma~\ref{lem:weave} is also adapted in Section~\ref{sec:proof}, as Lemma~\ref{lem:weavefinal}.

\begin{lemma}\label{lem:hub}
Let $m\geqslant1$ be an integer and let $q:=\lceil25m/2\rceil+1$. Every graph $F$ with $\chi(F)\geqslant q$ has an $m$-linked induced subgraph $H_0$ with $v(H_0)>10m$ and $\chi(H_0)\leqslant q$.
\end{lemma}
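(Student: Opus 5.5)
We take a smallest induced subgraph $F_0$ of $F$ with $\chi(F_0)\geqslant q$. Such a subgraph exists because $F$ itself qualifies. By minimality, deleting any vertex lowers the chromatic number below $q$. Since deleting a vertex lowers $\chi$ by at most one, we get $\chi(F_0)=q$, and $F_0$ is vertex-critical. Hence $\delta(F_0)\geqslant q-1=\lceil25m/2\rceil$: if some vertex $v$ had fewer than $q-1$ neighbours, any $(q-1)$-colouring of $F_0-v$ would extend to $v$. It follows that $e(F_0)\geqslant\frac12\lceil25m/2\rceil\,v(F_0)$, so $\gamma:=e(F_0)/v(F_0)\geqslant25m/4$.

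Next we apply Lemma~\ref{lem:mader} to $F_0$ with $k:=2m-1$. Its hypothesis $\gamma\geqslant2k=4m-2$ holds. The lemma gives a $2m$-connected induced subgraph $H_0$ of $F_0$, hence an induced subgraph of $F$, with $v(H_0)>2\gamma'$ and $e(H_0)>\gamma'v(H_0)$, where $\gamma'=\frac12\bigl(\gamma+\sqrt{\gamma^2-2\gamma k}\bigr)$. Since $\gamma'$ is increasing in $\gamma$ for $\gamma\geqslant2k$, it suffices to check the case $\gamma=25m/4$. With $k\leqslant2m$ we get
\[
\gamma^2-2\gamma k\ \geqslant\ \tfrac{625}{16}m^2-25m^2\ =\ \tfrac{225}{16}m^2,
\]
so $\sqrt{\gamma^2-2\gamma k}\geqslant15m/4$. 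Therefore $\gamma'\geqslant\frac12\bigl(\frac{25m}4+\frac{15m}4\bigr)=5m$. This gives $e(H_0)>5m\,v(H_0)$ and $v(H_0)>10m$. Theorem~\ref{thm:TW} with $\ell=m$ then shows that $H_0$ is $m$-linked. Finally, $\chi(H_0)\leqslant\chi(F_0)=q$ because $H_0\subseteq F_0$.

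The main obstacle is matching the constant $25/2$ exactly. We must check that $k=2m-1$, rather than $2m$, still gives $\gamma'\geqslant5m$. We also need the slack in the bound: the density requirement $e\geqslant5m\,v$ is met with strict inequality, and connectivity $k+1=2m$ is exactly what Theorem~\ref{thm:TW} requires. If $k=2m-1$ were too tight for small $m$, the fallback is to use $\lceil25m/2\rceil\geqslant25m/2$ directly; the computation above already holds uniformly in $m$.
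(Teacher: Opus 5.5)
Your proof is correct and follows the paper's argument: a vertex-critical induced subgraph, then the sharpened Mader bound, then the density form of Thomas--Wollan. The differences are minor. You apply Lemma~\ref{lem:mader} with $k=2m-1$, which gives exactly the $2m$-connectivity that Theorem~\ref{thm:TW} needs, whereas the paper uses $k=2m$ and gets $(2m+1)$-connectivity. You also verify $\gamma'\geqslant5m$ by monotonicity in $\gamma$, where the paper uses a case split at $\gamma=10m$.
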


\begin{proof}
Let $S$ be a smallest set of vertices of $F$ with $\chi(F[S])\geqslant q$, and let $F_0:=F[S]$. Then $\chi(F_0-v)<q$ for every vertex $v$ of $F_0$. Since $q\geqslant1$, $F_0$ has a vertex, and deleting a vertex lowers the chromatic number by at most $1$, so $\chi(F_0)=q$. If some vertex $v$ of $F_0$ had $\deg_{F_0}(v)<q-1$, then a $(q-1)$-colouring of $F_0-v$ would extend to $F_0$, a contradiction. Hence $\delta(F_0)\geqslant q-1$, a classical property of critical graphs (see Dirac~\cite{Dir52}). Thus $\delta(F_0)\geqslant25m/2$, so $\gamma:=e(F_0)/v(F_0)\geqslant25m/4\geqslant4m$. Apply Lemma~\ref{lem:mader} to $F_0$ with $2m$ in place of $k$, and let $\gamma':=\frac12(\gamma+\sqrt{\gamma^2-4m\gamma})$. Then $\gamma'\geqslant5m$. Indeed, this is clear if $\gamma\geqslant10m$. Otherwise $0<10m-\gamma$, and $(10m-\gamma)^2\leqslant\gamma^2-4m\gamma$ since this is equivalent to $\gamma\geqslant25m/4$. So $\sqrt{\gamma^2-4m\gamma}\geqslant10m-\gamma$, and hence $\gamma'\geqslant5m$. So $F_0$ has a $(2m+1)$-connected induced subgraph $H_0$ with $e(H_0)>5m\,v(H_0)$ and $v(H_0)>10m$. By Theorem~\ref{thm:TW}, $H_0$ is $m$-linked. Since $F_0$ is an induced subgraph of $F$, so is $H_0$. Finally, $\chi(H_0)\leqslant\chi(F_0)=q$.
\end{proof}

Lemma~\ref{lem:hub} needs only a $(2m+1)$-connected subgraph, because it uses Theorem~\ref{thm:TW}. The connectivity form of the Thomas--Wollan theorem would need a $10m$-connected subgraph, and extracting one by Corollary~\ref{cor:nguyen} needs $\chi(F)\geqslant\lceil3.0625(10m-1)\rceil\approx30.6m$, instead of about $12.5m$. The density form needs the sharper bound $e(H)>\gamma'v(H)$ of Lemma~\ref{lem:mader}. The weaker bound $e(H)>(\gamma-k)v(H)$ gives only $e(H)>4.25m\,v(H)$, which is less than the $5m\,v(H)$ that Theorem~\ref{thm:TW} requires.

\begin{lemma}\label{lem:weave}
Let $\eta\in(0,1]$, $a_0\geqslant4+5/\eta$ and $\Phi\geqslant0$ be real. Let
\[
{\LamE}:=(1+\eta)\bigl(517.5+12\Phi\bigr),\qquad{\LamU}:=(1+\eta)\bigl(897.5+10\Phi\bigr),
\]
and let $\Lambda:=\min\{{\LamE},{\LamU}\}$. Let $a\geqslant1$ be an integer, and let $G$ be a $21a$-connected graph that satisfies $(\mathcal H_{\Phi,a_0})$ (defined in Section~\ref{sec:defs}) and
\[
\chi(G)\ \geqslant\ D\bigl(\tfrac{46}3a_0+4\bigr)+1+\Lambda a .
\]
Then $G$ is $(a,3a)$-woven if $\Lambda={\LamE}$, and $(a,\lceil10a/3\rceil)$-woven otherwise.
\end{lemma}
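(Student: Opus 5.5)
We argue by strong induction on $a$, following the scheme of Delcourt and Postle~\cite[Theorem~7.1]{DP} as adapted by Norin and Steiner~\cite[Lemma~B.12]{NS}. Fix $b'$, roots $R=\{r_1,\dots,r_a\}$ and terminal pairs $(s_i,t_i)_{i\in[b']}$ with $b'\leqslant 3a$ (respectively $\lceil10a/3\rceil$). For small $a$ the term $D(\frac{46}3a_0+4)+1$ in the threshold pays for everything: if $G$ has a $K_{2a}$ minor we are in the situation of Step~7a below, and otherwise $\chi(G)\leqslant D(2a)$, so we may assume $2a>\frac{46}3a_0+4$, giving room to apply $(\mathcal H_{\Phi,a_0})$ with $b\geqslant a_0$ in the recursion. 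The slack $\eta$ and the condition $a_0\geqslant4+5/\eta$ absorb the rounding losses ($\lceil\cdot\rceil$, additive constants) when $\Lambda a$ is compared with $\Lambda a'$ for the smaller parameters $a'$ used recursively.

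\emph{Step 1: hub.} Apply Lemma~\ref{lem:hub} with $m$ of order $4a$–$5a$ (enough to route all roots and all terminal pairs, say $m=b'+a$ suitably rounded) to get an $m$-linked induced subgraph $H_0$ with $\chi(H_0)\leqslant\lceil25m/2\rceil+1$. Then use $21a$-connectivity and Lemma~\ref{lem:paths}(a) to route the roots $R$ and the terminals into $H_0$ by a double fan, costing at most $4|W|$ colours, so that the part $G_1$ of $G$ remaining after deleting $H_0$ and these paths still has chromatic number at least $\chi(G)-O(a)$. The explicit numbers $517.5$ and $897.5$ should come out of summing these costs ($12.5m$ for the hub, $4|W|$ for fans, $2k$ for each application of Corollary~\ref{cor:nguyen}). \emph{Step 2:} apply Corollary~\ref{cor:nguyen} with $k=21a$ to $G_1$ to obtain a $21a$-connected induced subgraph $G_3$ of large chromatic number, and join it to the hub via Lemma~\ref{lem:paths}(b),(c) and Menger (Theorem~\ref{thm:menger}), so that it suffices to show that $G_3$ is $(a,0)$-woven with respect to the endpoints of these paths, with the terminal pairs linked inside the hub using $m$-linkedness.

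\emph{Step 7a:} if $G_3$ has a $K_{2a}$ minor, Lemma~\ref{lem:kawa} makes it $(a,0)$-woven and we are done. \emph{Step 7b:} otherwise $\had(G_3)<2a$, so by $(\mathcal H_{\Phi,a_0})$ with $b=2a$ every $s$-chromatic-inseparable induced subgraph has $s\leqslant 2\Phi a$. Separate $G_3$ twice to get three disjoint induced subgraphs each of chromatic number at least $\chi(G_3)-4\Phi a$ (hence the $12\Phi$, resp.\ $10\Phi$, coefficients after accounting for the $a'$ scale), pass to $21a'$-connected subgraphs via Corollary~\ref{cor:nguyen}, and apply induction with roots sizes $a_1,a_2,a_3$: even scheme $a_j=\lceil 2a/3\rceil$, uneven scheme with sizes chosen so that each branch set of $K_a$ is the union of two recursive branch sets from different parts, which are joined via terminal pairs in the recursive wovenness (this is why the recursive calls need $3a'$ or $\lceil10a'/3\rceil$ linkage pairs, closing the induction). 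Lemma~\ref{lem:515} and Lemma~\ref{lem:pad} are used to splice linkages through the woven pieces and to reduce root counts.

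The main obstacle is the bookkeeping in Step 7b: choosing the distribution $a_1,a_2,a_3$ and the number of terminal pairs passed to each recursive call so that (i) the pairs needed stay within $3a_j$ (resp.\ $\lceil10a_j/3\rceil$), and (ii) $\chi(G)-(\text{hub}+\text{fans}+2\cdot 2\Phi a+\text{Nguyen losses})\geqslant D(\cdot)+1+\Lambda a_j$ for each $j$, which is exactly the linear inequality defining $\LamE$, $\LamU$ with slack $\eta$. I would first fix the even scheme, verify the constant $517.5+12\Phi$, then optimise the split for the uneven one.
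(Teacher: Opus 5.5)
Your outline has the same architecture as the paper's proof, but the argument meant to make $a$ large fails as written. A $K_{2a}$ minor in $G$ gives, by Lemma~\ref{lem:kawa}, only that $G$ is $(a,0)$-woven. That says nothing about the terminal pairs, and your Step~7a is about $G_3$, not $G$. So when $G$ has a $K_{2a}$ minor you have neither finished nor shown that $a$ is large. Yet Step~7b needs $2a\geqslant a_0$ to apply $(\mathcal H_{\Phi,a_0})$ with $b=2a$, and Step~8 needs $a\geqslant a_0$ for the slack $\eta(a-4)\geqslant5$.

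The paper uses a larger clique. With $h:=2a+4\lceil10a/3\rceil$ (resp.\ $h:=14a$) one has $h\geqslant2|Z|$ for $Z=R\cup S\cup T$. So a $K_h$ minor and $\kappa(G)\geqslant21a\geqslant|Z|$ give, by Lemma~\ref{lem:kawa}, a $K_{|Z|}$ model rooted at all of $Z$. Its sets $B_{r_i}$ form the required $K_a$ model, and each $B_{s_i}\cup B_{t_i}$ contains an $s_i$--$t_i$ path. Otherwise $\chi(G)\leqslant D(h)$, and $h\leqslant\frac{46}3a_0+4$ whenever $a\leqslant a_0$; this is exactly where the term $D(\frac{46}3a_0+4)$ comes from.

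Alternatively, your dichotomy works if you apply it to $G_3$ rather than $G$. The steps up to $G_3$ are valid for every $a\geqslant1$ and lose at most $134.5a+17<\Lambda a$. So $\chi(G_3)\geqslant D(\frac{46}3a_0+4)+1$, and then $\had(G_3)<2a$ forces $2a>\frac{46}3a_0+4$.

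Three further ingredients are missing.

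First, wovenness allows $R$, $S$, $T$ to overlap and allows $s_i=t_i$. But after Lemma~\ref{lem:paths}(c) each vertex of $Z$ has only one path into the hub, so a root that is also a terminal cannot be both joined to $G_3$ and linked. The paper first replaces each such root, and each $t_i$ with $s_i=t_i$, by a private neighbour outside $Z$ (Hall's theorem, using $\delta(G)\geqslant21a\geqslant2(a+2\ell)$). It restores the original vertices at the end.

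Second, the roots $R'$ of $G_3$ are arbitrary, so the three high-chromatic subgraphs must be chosen after $R'$ and must avoid it. The paper deletes $R'$ and $2a$ private neighbours $s'_i,s'_{a+i}$ (cost $3a$), and separates $G_3-(R'\cup S')$ rather than $G_3$. It gets the $2a$ paths from $S'$ to the chosen roots $x_i,x'_i$ of the recursive models from Theorem~\ref{thm:TW}, since $G_3-R'$ is $20a$-connected. Lemma~\ref{lem:515} then reroutes these paths through each $J'_j$ in turn, which is why each $J'_j$ must be woven for all $2a$ pairs.

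Third, the coefficient $10\Phi$ cannot come from your uniform bound $\chi(G_3)-4\Phi a$. If all three parts lose $4\Phi a$ and host $a'_j$ branch sets with $a'_1+a'_2+a'_3\geqslant2a$, then $\min_j(a-a'_j)\leqslant a/3$. That forces a coefficient of at least $12\Phi$, up to lower-order terms. The uneven scheme separates only one of the two parts of the first separation a second time. The unseparated part $J_3$ then loses only $2\Phi a$ and can host $\lceil4a/5\rceil$ branch sets, while $J_1,J_2$ host $\lceil3a/5\rceil$ each. This is also why $\lceil10a/3\rceil$ pairs are needed, as $\lceil10\lceil3a/5\rceil/3\rceil\geqslant2a$.
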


We first give the idea of the proof, which uses induction on $a$. The roots and terminals are joined by paths to a linked subgraph $H_0$, called the hub. Through the hub, the terminals are joined in pairs, and the roots are joined to a $21a$-connected subgraph $G_3$ of large chromatic number. This reduces the required wovenness of $G$ to the $(a,0)$-wovenness of $G_3$ (Steps~1--6). If $G_3$ has a $K_{2a}$ minor, then Lemma~\ref{lem:kawa} finishes the proof (Step~7a). Otherwise two chromatic separations in a subgraph of $G_3$ give three disjoint $21a$-connected subgraphs of large chromatic number. These subgraphs are woven by induction (Steps~7b and~8). Each branch set of the required $K_a$ model in $G_3$ is assembled from branch sets in two of these three subgraphs (Step~9).

Step~9 distributes the $a$ branch sets among the three subgraphs. In the \defn{uneven scheme}, the subgraph $J'_3$, which is obtained by one chromatic separation rather than two (Step~7b), receives at most $\lceil4a/5\rceil$ of them. Each of the other two subgraphs $J'_1$ and $J'_2$ then receives only $\lceil3a/5\rceil$ branch sets. Each of $J'_1$ and $J'_2$ must still be woven with respect to all $2a$ paths of the linkage in Step~9. So the lemma must give $(a,\lceil10a/3\rceil)$-wovenness rather than $(a,3a)$-wovenness. Indeed, Step~8 applies it by induction with $\lceil3a/5\rceil$ in place of $a$, and Step~9 uses that $\lceil10\lceil3a/5\rceil/3\rceil\geqslant2a$. In the \defn{even scheme}, each subgraph receives at most $\lceil2a/3\rceil$ branch sets, and $3\lceil2a/3\rceil\geqslant2a$, so $(a,3a)$-wovenness suffices. The uneven scheme reduces the coefficient of $\Phi$ from $12$ to $10$, at the cost of increasing the constant term from $517.5$ to $897.5$. The even scheme gives the smaller value of $\Lambda$ when $\Phi<190$, and the two values coincide at $\Phi=190$. So Lemma~\ref{lem:weave} uses the even scheme if and only if $\Phi\leqslant190$. The even scheme is therefore used in the bootstrapping steps whose Gy\'arf\'as graphs are coloured by the base case, where $\Phi=4.01$ (Proposition~\ref{prop:explicitbootstrap}). The uneven scheme is used in the final step of Proposition~\ref{prop:explicitbootstrap}, where $\Phi=615.6401$ is the constant of the side step, and in the final application (Lemma~\ref{lem:weavefinal}), where the role of $\Phi$ is played by $78+2{\cst}\approx9.0\cdot10^4$. The scheme is determined by $\Phi$, so it is the same at every level of the induction below. We give the proof for the uneven scheme ($\Lambda={\LamU}<{\LamE}$), and then list the changes for the even scheme. Figure~\ref{fig:weave} illustrates Steps~3--6 and Step~9.

\begin{proof}
We proceed by induction on $a$. Every subgraph extracted below by Corollary~\ref{cor:nguyen} is induced, and the subgraphs produced by the chromatic separations in Step~7b are chosen induced. Hence every graph to which the induction hypothesis is applied is an induced subgraph of $G$ and satisfies $(\mathcal H_{\Phi,a_0})$. The proof uses the following subgraphs, which are defined in Steps~3, 5 and~7b. Each of them is an induced subgraph of $G$.
\[
G\supseteq G_1\supseteq H_0\qquad\text{and}\qquad G_1\supseteq G_2\supseteq G_3\supseteq G_4\supseteq J_i\supseteq J'_i\quad(i\in\{1,2,3\}).
\]
For the uneven scheme, let $\ell_0:=\lceil10a/3\rceil$ and $h:=2a+4\ell_0$. We show that $G$ is $(a,\ell_0)$-woven. In Step~1, $\ell\in\{0,\dots,\ell_0\}$ denotes the number of terminal pairs. Then
\[
\ell\leqslant\ell_0\leqslant\tfrac{10a+2}3,\qquad a+\ell\leqslant\tfrac{13}3a+1,\qquad a+2\ell\leqslant\tfrac{23}3a+2,\qquad h\leqslant\tfrac{46}3a+4.
\]

\begin{figure}[ht]
\centering
\begin{tikzpicture}[scale=0.72,
  vtx/.style={circle,fill,inner sep=1.2pt},
  lab/.style={font=\footnotesize},
  rpath/.style={red!75!black,thick,preaction={draw,white,line width=2.2pt}},
  bpath/.style={blue!75!black,thick,preaction={draw,white,line width=2.2pt}},
  gpath/.style={green!50!black,thick,preaction={draw,white,line width=2.2pt}}]
% ---------- (a) Steps 3--6 ----------
\draw[dashed,rounded corners] (-1.0,-0.5) rectangle (0.45,3.5);
\node[lab] at (-0.27,3.8) {$Z$};
\node[vtx,blue!75!black] (s1) at (0,3) {}; \node[lab,left] at (-0.05,3) {$s_1$};
\node[vtx,blue!75!black] (t1) at (0,1.5) {}; \node[lab,left] at (-0.05,1.5) {$t_1$};
\node[vtx,red!75!black] (r1) at (0,0) {}; \node[lab,left] at (-0.05,0) {$r_1$};
\draw[fill=gray!6] (4,1.5) circle (1.4);
\node[lab] at (4,3.2) {hub $H_0$};
\node[vtx,blue!75!black] (u1) at (2.79,2.2) {};
\node[vtx,blue!75!black] (u2) at (2.62,1.26) {};
\node[vtx,red!75!black] (u3) at (3.01,0.51) {};
\node[vtx,red!75!black] (u4) at (5.21,0.8) {};
\draw[fill=gray!6] (8.1,1.3) ellipse (1.3 and 1.85);
\node[lab] at (8.1,3.45) {$G_3$};
\draw[fill=red!12,draw=red!75!black] (8.2,0.6) ellipse (0.6 and 0.32);
\draw[fill=gray!12] (8.2,1.9) ellipse (0.6 and 0.32);
\draw (8.2,0.92)--(8.2,1.58);
\node[lab] at (8.25,0.6) {$M'_1$};
\node[lab] at (8.25,1.9) {$M'_2$};
\node[vtx,red!75!black] (v1) at (7.65,0.62) {};
\draw[bpath] (s1) .. controls (1.4,3.0) and (2.1,2.6) .. node[lab,above] {$P_1$} (u1);
\draw[bpath] (t1) .. controls (1.2,1.4) and (2.0,1.3) .. (u2);
\draw[blue!75!black,thick] (u1) .. controls (3.45,1.95) and (3.35,1.4) .. node[lab,right] {$Q_1$} (u2);
\draw[rpath] (r1) .. controls (1.2,0.0) and (2.3,0.2) .. node[lab,below] {$P_{\ell+1}$} (u3);
\draw[red!75!black,thick] (u3) .. controls (3.8,1.05) and (4.6,1.05) .. (u4);
\node[lab,red!75!black] at (4.55,1.3) {$Q_{\ell+1}$};
\draw[rpath] (u4) .. controls (5.9,0.6) and (7.0,0.6) .. node[lab,below,pos=0.35,red!75!black] {$P_{|Z|+1}$} (v1);
\node[lab,anchor=north] at (7.8,0.22) {$v_{|Z|+1}$};
\node at (3.9,-2.4) {(a)};
% ---------- (b) Step 9 ----------
\begin{scope}[xshift=11.3cm,yshift=-0.2cm]
\draw[rounded corners] (-0.9,-1.5) rectangle (7.0,4.6);
\node[lab] at (3.0,4.85) {$G_3$};
\foreach \y/\c/\i in {3.4/red/1,1.5/blue/2,-0.4/green/3}{
  \ifnum\i=3 \def\cc{green!50!black}\else\def\cc{\c!75!black}\fi
  \node[vtx,\cc] (r\i) at (0.2,\y) {};
  \node[lab,left] at (0.15,\y) {$r'_{\i}$};
  \node[vtx,\cc] (sa\i) at (0.95,\y+0.35) {};
  \node[vtx,\cc] (sb\i) at (0.95,\y-0.35) {};
  \draw[\cc,thick] (sa\i)--(r\i)--(sb\i);
}
\foreach \y/\n in {3.6/1,1.5/2,-0.6/3}{
  \draw[fill=gray!6] (5.0,\y) ellipse (1.35 and 0.75);
  \node[lab] at (6.65,\y+0.55) {$J'_{\n}$};
}
\draw[fill=red!12,draw=red!75!black] (5.1,3.85) ellipse (0.5 and 0.22);
\draw[fill=blue!12,draw=blue!75!black] (5.1,3.35) ellipse (0.5 and 0.22);
\draw (5.1,3.63)--(5.1,3.57);
\draw[fill=red!12,draw=red!75!black] (5.1,1.75) ellipse (0.5 and 0.22);
\draw[fill=green!12,draw=green!50!black] (5.1,1.25) ellipse (0.5 and 0.22);
\draw (5.1,1.53)--(5.1,1.47);
\draw[fill=blue!12,draw=blue!75!black] (5.1,-0.35) ellipse (0.5 and 0.22);
\draw[fill=green!12,draw=green!50!black] (5.1,-0.85) ellipse (0.5 and 0.22);
\draw (5.1,-0.57)--(5.1,-0.63);
\node[vtx,red!75!black] (ta) at (4.6,3.85) {};
\node[vtx,blue!75!black] (tb) at (4.6,3.35) {};
\node[vtx,red!75!black] (tc) at (4.6,1.75) {};
\node[vtx,green!50!black] (td) at (4.6,1.25) {};
\node[vtx,blue!75!black] (te) at (4.6,-0.35) {};
\node[vtx,green!50!black] (tf) at (4.6,-0.85) {};
\draw[rpath] (sa1) .. controls (2.5,3.8) and (3.5,3.85) .. (ta);
\draw[rpath] (sb1) .. controls (2.5,3.0) and (3.4,1.9) .. (tc);
\draw[bpath] (sa2) .. controls (2.4,2.2) and (3.4,3.3) .. (tb);
\draw[bpath] (sb2) .. controls (2.4,0.9) and (3.4,-0.3) .. (te);
\draw[gpath] (sa3) .. controls (2.4,0.2) and (3.4,1.2) .. (td);
\draw[gpath] (sb3) .. controls (2.5,-0.8) and (3.5,-0.85) .. (tf);
\node at (3.0,-2.2) {(b)};
\end{scope}
\end{tikzpicture}
\caption{The proof of Lemma~\ref{lem:weave}. (a)~Steps~3--6. Paths through the $(a+\ell)$-linked hub $H_0$ join the terminals $s_1$ and $t_1$ (blue), and join the root $r_1$ to the branch set $M'_1$ of a $K_a$ model $\{M'_1,M'_2,\dots\}$ in $G_3$ (red). (b)~Step~9. Each root $r'_i$ with $i\in I_{jk}$ is joined through its neighbours $s'_i$ and $s'_{a+i}$ to branch sets of $\mathcal M_j$ and $\mathcal M_k$. Here $r'_1$, $r'_2$ and $r'_3$ (red, blue and green) lie in $I_{12}$, $I_{13}$ and $I_{23}$. Any two of these index pairs intersect, so any two new branch sets are adjacent. Only the branch sets containing terminals are shown.}\label{fig:weave}
\end{figure}

\medskip\noindent\emph{Step 1. Reduction to disjoint terminals.} Let $\ell\in\{0,\dots,\ell_0\}$, and let $R=\{r_1,\dots,r_a\}$, $S=\{s_1,\dots,s_\ell\}$ and $T=\{t_1,\dots,t_\ell\}$ be as in the definition of $(a,\ell_0)$-woven. As illustrated in Figure~\ref{fig:step1}, we modify the roots and terminals, using that $\delta(G)\geqslant21a$. The list $r_1,\dots,r_a,s_1,\dots,s_\ell,t_1,\dots,t_\ell$ has $a+2\ell$ entries, which need not be distinct vertices. Since $2(a+2\ell)\leqslant\frac{46}3a+4\leqslant21a$, each entry has at least $\delta(G)-|R\cup S\cup T|\geqslant21a-(a+2\ell)\geqslant a+2\ell$ neighbours outside $R\cup S\cup T$. So, by Hall's theorem, we can assign to each entry a neighbour of it outside $R\cup S\cup T$, such that the $a+2\ell$ assigned vertices are distinct. For each $i$ with $s_i=t_i$, let $t_i^*$ be the vertex assigned to the entry $t_i$. For each $i$ with $r_i\in S\cup T$, let $r_i^*$ be the vertex assigned to the entry $r_i$. Let $t_i^*:=t_i$ for each $i$ with $s_i\ne t_i$, and let $r_i^*:=r_i$ for each $i$ with $r_i\notin S\cup T$. Let $R^*:=\{r_1^*,\dots,r_a^*\}$ and $T^*:=\{t_1^*,\dots,t_\ell^*\}$. The vertices $r_i^*$ are distinct, the vertices $t_i^*$ are distinct, and $R^*$, $S$ and $T^*$ are pairwise disjoint. So $(R^*,S,T^*)$ is as in the definition of $(a,\ell_0)$-woven.

Suppose that $\mathcal M^*=\{M_1^*,\dots,M_a^*\}$ is a $K_a$ model with $r_i^*\in M_i^*$ for each $i\in[a]$, and that $\mathcal P^*$ is an $\{(s_i,t_i^*)\}$-linkage with $V(\mathcal M^*)\cap V(\mathcal P^*)=\emptyset$. For each $i$ with $s_i=t_i$, replace the path of $\mathcal P^*$ from $s_i$ to $t_i^*$ by the one-vertex path $s_i$. For each $i$ with $r_i\ne r_i^*$, add $r_i$ to $M_i^*$. This keeps $G[M_i^*]$ connected, since $r_ir_i^*\in E(G)$. Each such $r_i$ lies in $S\cup T$. If $r_i=t_j$ and $s_j=t_j$, then $r_i=s_j\in S$. Otherwise $r_i\in S$ or $r_i=t_j=t_j^*\in T^*$. So $r_i\in S\cup T^*\subseteq V(\mathcal P^*)$, and hence $r_i\notin V(\mathcal M^*)$. Hence the new branch sets are pairwise disjoint. Each contains exactly one vertex of $R$, since every $r_j$ either equals $r_j^*\in M_j^*$ or lies in $V(\mathcal P^*)$. Let $\mathcal M$ and $\mathcal P$ be the resulting families. Then $\mathcal M$ is a $K_a$ model rooted at $R$, and $\mathcal P$ is an $\{(s_i,t_i)\}$-linkage. Since $V(\mathcal P)\subseteq V(\mathcal P^*)$ and $V(\mathcal M^*)\cap V(\mathcal P^*)=\emptyset$, every vertex $x\in V(\mathcal M)\cap V(\mathcal P)$ is one of the added vertices, so $x=r_i$ for some $i$ with $r_i\in S\cup T$. Conversely, each $r_i\in S\cup T$ lies in the $i$th branch set of $\mathcal M$ and in $\{s_j,t_j\}$ for some $j$, so $r_i$ is an end of the path of $\mathcal P$ from $s_j$ to $t_j$. Hence $V(\mathcal M)\cap V(\mathcal P)=R\cap(S\cup T)$. So it suffices to treat pairwise disjoint sets $R,S,T$, which we assume from now on.

\begin{figure}[ht]
\centering
\begin{tikzpicture}[scale=0.85,
  vtx/.style={circle,fill,inner sep=1.2pt},
  lab/.style={font=\footnotesize},
  lpath/.style={orange!80!black,very thick}]
\draw[rounded corners=10pt,dashed,red!70!black] (-0.3,0.2) rectangle (3.55,2.15);
\node[lab,red!70!black] at (-0.05,-0.05) {$M_1$};
\draw[fill=red!10,draw=red!70!black] (1.0,1.0) ellipse (0.95 and 0.5);
\node[lab] at (0.25,1.8) {$M_1^*$};
\node[vtx] (a) at (0.5,1.1) {}; \node[vtx] (rs) at (1.5,1.0) {};
\node[lab] at (1.5,0.75) {$r_1^*$};
\draw (a)--(rs);
\node[vtx] (r1) at (2.95,1.0) {}; \node[lab] at (2.95,0.68) {$r_1=s_2$};
\draw[red!70!black,very thick] (rs)--(r1);
\node[vtx] (t2) at (7.2,1.0) {}; \node[lab,right] at (t2) {$t_2$};
\draw[lpath] (r1) .. controls (4.3,1.5) and (5.8,0.5) .. node[lab,above,pos=0.55] {$P_2$} (t2);
\node[vtx] (s1) at (3.6,-0.9) {}; \node[lab,left] at (s1) {$s_1=t_1$};
\node[vtx] (ts) at (5.2,-0.9) {}; \node[lab,right] at (ts) {$t_1^*$};
\draw[orange!80!black,thick,dashed] (s1)--(ts);
\end{tikzpicture}
\caption{Step~1 of the proof of Lemma~\ref{lem:weave}. The root $r_1=s_2$ is replaced in $R$ by a neighbour $r_1^*$, and the pair with $s_1=t_1$ gets a new end $t_1^*$. Given a model $\mathcal M^*$ rooted at $R^*$ and a linkage $\mathcal P^*$ disjoint from it, we add $r_1$ to $M_1^*$ to obtain $M_1$ (dashed), and replace the path from $s_1$ to $t_1^*$ (dashed) by the one-vertex path $s_1$. The model and the linkage then meet exactly in $r_1$.}\label{fig:step1}
\end{figure}

Let $Z:=R\cup S\cup T$, so $|Z|=a+2\ell$. Index $Z$ as $z_i:=s_i$ $(1\leqslant i\leqslant\ell)$, $z_{\ell+i}:=r_i$ $(1\leqslant i\leqslant a)$ and $z_{a+\ell+i}:=t_i$ $(1\leqslant i\leqslant\ell)$.

\medskip\noindent\emph{Step 2. Two easy cases.} Suppose $G$ has a $K_h$ minor. Since $h\geqslant2|Z|$, the graph $G$ has a $K_{2|Z|}$ minor. Since $\kappa(G)\geqslant21a\geqslant|Z|$, Lemma~\ref{lem:kawa} shows that $G$ is $(|Z|,0)$-woven, so $G$ has a $K_{|Z|}$ model $\{B_z:z\in Z\}$ rooted at $Z$. The sets $B_{r_i}$ form a $K_a$ model rooted at $R$. For each $i\leqslant\ell$, the sets $B_{s_i}$ and $B_{t_i}$ are adjacent and induce connected subgraphs, so their union contains an $s_i$--$t_i$ path. These paths are pairwise disjoint and disjoint from the sets $B_{r_i}$. So $G$ is $(a,\ell_0)$-woven. We may therefore assume that $G$ has no $K_h$ minor. If $a\leqslant a_0$, then $h\leqslant\frac{46}3a_0+4$. Since $h$ is an integer, it follows that $h\leqslant\lfloor\frac{46}3a_0+4\rfloor$. The function $D$ is non-decreasing, since a graph with no $K_s$ minor has no $K_{s+1}$ minor. So $\chi(G)\leqslant D(h)\leqslant D(\frac{46}3a_0+4)$, contrary to the hypothesis. Hence $a>a_0\geqslant4+5/\eta\geqslant9$, and so $a\geqslant10$, since $a$ is an integer.

\medskip\noindent\emph{Step 3. The hub.} Let $G_1:=G-Z$. Then $\kappa(G_1)\geqslant21a-|Z|\geqslant2a$. This bound is used only in Step~6. Also $\chi(G_1)\geqslant\chi(G)-|Z|\geqslant\lceil12.5(a+\ell)\rceil+1$, since $\chi(G)-|Z|$ is an integer, $\chi(G)\geqslant1+\Lambda a\geqslant1+897.5a\geqslant62a+16$ and $|Z|+12.5(a+\ell)+1\leqslant62a+16$. By Lemma~\ref{lem:hub} with $a+\ell$ in place of $m$, $G_1$ has an $(a+\ell)$-linked induced subgraph $H_0$, which we call the \defn{hub}, with $v(H_0)>10(a+\ell)$ and $\chi(H_0)\leqslant\lceil12.5(a+\ell)\rceil+1\leqslant12.5(a+\ell)+2$.

\medskip\noindent\emph{Step 4. Paths to the hub.} We have $\kappa(G)\geqslant21a$ and $2|Z|\leqslant\frac{46}3a+4\leqslant21a$. The set $V(H_0)$ is disjoint from $Z$, since $H_0\subseteq G_1$, and $v(H_0)>10(a+\ell)\geqslant2|Z|$. So Lemma~\ref{lem:paths}(a), with $W=Z$, $B=V(H_0)$ and $r=21a$, gives a double fan $\mathcal P_Z$ from $Z$ to $V(H_0)$ in $G$ with $\chi(G[V(\mathcal P_Z)])\leqslant4|Z|$.

\medskip\noindent\emph{Step 5. The next high-chromatic subgraph.} Let $G_2:=G-(Z\cup V(\mathcal P_Z)\cup V(H_0))$. Deleting $Z$, $V(H_0)$ and $V(\mathcal P_Z)$ lowers the chromatic number by at most $|Z|$, $\chi(H_0)$ and $4|Z|$, respectively. So
\[
\chi(G_2)\ \geqslant\ \chi(G)-5|Z|-12.5(a+\ell)-2\ =\ \chi(G)-(17.5a+22.5\ell+2)\ \geqslant\ \chi(G)-(92.5a+17),
\]
since $3\ell\leqslant10a+2$. Since $\chi(G)\geqslant1+897.5a$, the right side is at least $805a-16\geqslant\lceil3.0625(21a-1)\rceil$. By Corollary~\ref{cor:nguyen}, $G_2$ has a $21a$-connected induced subgraph $G_3$ with $\chi(G_3)\geqslant\chi(G_2)-42a$.

\medskip\noindent\emph{Step 6. Reduction to $G_3$.} We show that if $G_3$ is $(a,0)$-woven, then $G$ is $(a,\ell_0)$-woven. Since $G_1$ is $2a$-connected by Step~3, and $V(G_3)$ and $V(H_0)$ are subsets of $V(G_1)$, each with more than $2a$ vertices, Theorem~\ref{thm:menger} gives a family $\mathcal P_Y=\{Q_{i,j}:i\in[a],\ j\in\{1,2\}\}$ of $2a$ pairwise disjoint paths from $V(G_3)$ to $V(H_0)$ in $G_1$. Since $G_3\subseteq G_2=G-(Z\cup V(\mathcal P_Z)\cup V(H_0))$, the set $V(G_3)$ is disjoint from $V(H_0)\cup V(\mathcal P_Z)$. The paths of $\mathcal P_Y$ lie in $G_1=G-Z$. So Lemma~\ref{lem:paths}(c) applies, with $\mathcal P_Z$ and $\mathcal P_Y$ in place of $\mathcal P$ and $\mathcal Q$, and with $W=Z$, $B=V(H_0)$, $U=V(G_3)$ and $m=a$. It gives $|Z|+a=2(a+\ell)$ pairwise disjoint paths $P_1,\dots,P_{2(a+\ell)}$ in $G$, each with its last vertex in $V(H_0)$. For $i\leqslant|Z|$, the path $P_i$ starts at $z_i$. For $i\leqslant a$, the path $P_{|Z|+i}$ starts at a vertex $v_{|Z|+i}$ of $V(G_3)$. Let $u_i\in V(H_0)$ be the last vertex of $P_i$. Each $P_i$ meets $Z\cup V(G_3)\cup V(H_0)$ only in its ends. So each $P_i$ meets $V(H_0)$ only in $u_i$, and meets $Z\cup V(G_3)$ only in its first vertex.

Since $G_3$ is $(a,0)$-woven, it has a $K_a$ model $\{M'_1,\dots,M'_a\}$ with $v_{|Z|+i}\in M'_i$. Since $H_0$ is $(a+\ell)$-linked, it contains pairwise disjoint paths $Q_1,\dots,Q_{a+\ell}$ with $Q_i$ joining $u_i$ to $u_{a+\ell+i}$. For $i\leqslant a$, let
\[
M_i:=V(P_{\ell+i})\cup V(Q_{\ell+i})\cup V(P_{|Z|+i})\cup M'_i .
\]
Here $Q_{\ell+i}$ joins $u_{\ell+i}$ to $u_{a+2\ell+i}=u_{|Z|+i}$. The paths $P_1,\dots,P_{2(a+\ell)}$ are pairwise disjoint, and so are the paths $Q_1,\dots,Q_{a+\ell}$. Each $Q_j$ lies in $H_0$, and each $P_k$ meets $V(H_0)$ only in $u_k$. Each vertex $u_k$ is an end of exactly one path $Q_j$. So each $Q_j$ meets $V(P_1)\cup\dots\cup V(P_{2(a+\ell)})$ only in its ends. Also $V(G_3)\cap V(H_0)=\emptyset$, and each $P_k$ meets $Z\cup V(G_3)$ only in its first vertex. Hence $G[M_i]$ is connected and contains $z_{\ell+i}=r_i$, and the sets $M_i$ are pairwise disjoint. Any two of them are adjacent, because the sets $M'_i$ are. So $\{M_i\}$ is a $K_a$ model rooted at $R$. For $i\leqslant\ell$, the path $P_i$ joins $s_i$ to $u_i$, the path $Q_i$ joins $u_i$ to $u_{a+\ell+i}$, and the path $P_{a+\ell+i}$ joins $z_{a+\ell+i}=t_i$ to $u_{a+\ell+i}$. Hence $P_i\cup Q_i\cup P_{a+\ell+i}$ is an $s_i$--$t_i$ path. These paths are pairwise disjoint and disjoint from every $M_j$. Hence $G$ is $(a,\ell_0)$-woven. It remains to show that $G_3$ is $(a,0)$-woven.

\medskip\noindent\emph{Step 7a. Using Lemma~\ref{lem:kawa} in $G_3$.} If $G_3$ has a $K_{2a}$ minor, then, since $\kappa(G_3)\geqslant21a\geqslant a$, Lemma~\ref{lem:kawa} shows that $G_3$ is $(a,0)$-woven, and we are done by Step~6. So we may assume that $\had(G_3)<2a$.

\medskip\noindent\emph{Step 7b. Three disjoint high-chromatic subgraphs.} To show that $G_3$ is $(a,0)$-woven, fix an arbitrary set $R'=\{r'_1,\dots,r'_a\}$ of $a$ vertices of $G_3$. The graphs $G_4$, $J_1,J_2,J_3$ and $J'_1,J'_2,J'_3$ defined below depend on $R'$. Since $\delta(G_3)\geqslant21a$, we can choose pairwise distinct vertices $s'_1,\dots,s'_{2a}\in V(G_3)\setminus R'$ such that $s'_i$ and $s'_{a+i}$ are neighbours of $r'_i$. Let $S':=\{s'_1,\dots,s'_{2a}\}$ and $G_4:=G_3-(R'\cup S')$. Then $\chi(G_4)\geqslant\chi(G_3)-3a$, $G_4$ is an induced subgraph of $G$, and $\had(G_4)\leqslant\had(G_3)<2a$ by Step~7a. Let $\sigma:=2a\Phi$. By Step~2, $2a>a>a_0$. Let $\sigma':=\frac12(\sigma+\lfloor\sigma\rfloor+1)$, so that $\sigma<\sigma'<\lfloor\sigma\rfloor+1$. If $G_4$ were $\sigma'$-chromatic-inseparable, then $(\mathcal H_{\Phi,a_0})$, with $\sigma'$, $2a$ and $G_4$ in place of $s$, $b$ and $H$, would give $\sigma'\leqslant2a\Phi=\sigma$, which is false. So $G_4$ has vertex-disjoint induced subgraphs $F_1,F_2$ with $\chi(F_i)\geqslant\chi(G_4)-\sigma'>\chi(G_4)-\lfloor\sigma\rfloor-1$. Since chromatic numbers are integers, $\chi(F_i)\geqslant\chi(G_4)-\lfloor\sigma\rfloor\geqslant\chi(G_4)-\sigma$. The same argument applied to $F_1$ gives vertex-disjoint induced subgraphs $F_{1,1},F_{1,2}$ of $F_1$ with $\chi(F_{1,j})\geqslant\chi(F_1)-\sigma$. Let $J_1:=F_{1,1}$, $J_2:=F_{1,2}$ and $J_3:=F_2$. Then $\chi(J_i)\geqslant\chi(G_4)-2\sigma$ for $i\in\{1,2\}$, and $\chi(J_3)\geqslant\chi(G_4)-\sigma$. By Steps~3--5 and the bounds above,
\[
\chi(J_i)\ \geqslant\ \chi(G)-(92.5a+17)-42a-3a-2\sigma\ =\ \chi(G)-(137.5a+17)-4\Phi a
\]
for each $i\in\{1,2,3\}$, since $\chi(J_3)\geqslant\chi(G_4)-\sigma\geqslant\chi(G_4)-2\sigma$. Since $\chi(G)\geqslant1+\bigl(897.5+10\Phi\bigr)a$, the right side is at least $700a\geqslant\lceil3.0625(21a-1)\rceil$. In particular, these subgraphs are non-null. By Corollary~\ref{cor:nguyen}, each $J_i$ contains a $21a$-connected induced subgraph $J'_i$ with $\chi(J'_i)\geqslant\chi(J_i)-42a$.

\medskip\noindent\emph{Step 8. Accounting.} Table~\ref{tab:losses} lists the losses in chromatic number from $G$ to $J'_i$, for both schemes. Collecting the losses for the uneven scheme, with $\sigma=2\Phi a$, we obtain
\[
\chi(J'_i)\ \geqslant\ \chi(G)-L_i,\qquad\text{where}\quad L_i:=\begin{cases}179.5a+17+4\Phi a&\text{if }i\in\{1,2\},\\[2pt]179.5a+17+2\Phi a&\text{if }i=3.\end{cases}
\]
Let $a'_1=a'_2:=\lceil3a/5\rceil$ and $a'_3:=\lceil4a/5\rceil$. Then $a'_i<a$ (as $a\geqslant10$), $a-a'_1=\lfloor2a/5\rfloor\geqslant(2a-4)/5$ and $a-a'_3=\lfloor a/5\rfloor\geqslant(a-4)/5$. Since $a>a_0\geqslant4+5/\eta$ by Step~2, we have $\eta(a-4)\geqslant5$. Hence $(1+\eta)(a-4)\geqslant a+1$ and $(1+\eta)(a-2)\geqslant(1+\eta)(a-4)+2\geqslant a+3$. Since $\Lambda={\LamU}=5(1+\eta)\bigl(179.5+2\Phi\bigr)$,
\[
\Lambda(a-a'_3)\ \geqslant\ (1+\eta)(a-4)\bigl(179.5+2\Phi\bigr)\ \geqslant\ (a+1)\bigl(179.5+2\Phi\bigr)\ \geqslant\ L_3
\]
and
\[
\Lambda(a-a'_1)\ \geqslant\ 2(1+\eta)(a-2)\bigl(179.5+2\Phi\bigr)\ \geqslant\ 2(a+3)\bigl(179.5+2\Phi\bigr)\ \geqslant\ L_1=L_2 .
\]
Therefore $\chi(J'_i)\geqslant D(\frac{46}3a_0+4)+1+\Lambda a'_i$ for each $i\in\{1,2,3\}$. Since $J'_i$ is $21a$-connected, it is $21a'_i$-connected. Also $J'_i$ is an induced subgraph of $G$, so it satisfies $(\mathcal H_{\Phi,a_0})$. So by induction each $J'_i$ is $(a'_i,\lceil10a'_i/3\rceil)$-woven.

\medskip\noindent\emph{Step 9. Weaving.} Let $p:=\lfloor2a/5\rfloor$, and partition $[a]=I_{12}\cup I_{13}\cup I_{23}$ with $|I_{13}|=|I_{23}|=p$ and $|I_{12}|=a-2p$. Then
\[
|I_{12}|+|I_{13}|\ =\ |I_{12}|+|I_{23}|\ =\ a-p\ =\ a'_1\ =\ a'_2\qquad\text{and}\qquad|I_{13}|+|I_{23}|\ =\ 2p\ \leqslant\ a'_3 .
\]
For $j\in\{1,2,3\}$, let $X_j$ be a set of $a'_j$ vertices of $J'_j$, which exists since $v(J'_j)>21a$. For each $i\in I_{jk}$ with $j<k$, choose terminals $x_i\in X_j$ and $x'_i\in X_k$, all distinct. This is possible, since the numbers of terminals needed in $X_1$, $X_2$ and $X_3$ are $|I_{12}|+|I_{13}|$, $|I_{12}|+|I_{23}|$ and $|I_{13}|+|I_{23}|$, which are at most $a'_1$, $a'_2$ and $a'_3$ by the display above.

Since $G_3$ is $21a$-connected and $|R'|=a$, the graph $G_3-R'$ is $20a$-connected. So $G_3-R'$ has minimum degree at least $20a$, and hence at least $10a\,v(G_3-R')$ edges. Hence $G_3-R'$ is $2a$-linked by Theorem~\ref{thm:TW} with $\ell=2a$. The $4a$ vertices $s'_1,\dots,s'_{2a}$ and $x_i,x'_i$ $(i\in[a])$ are distinct, since $S'\cap V(J'_j)=\emptyset$ for each $j\in\{1,2,3\}$, as $J'_j\subseteq G_4$. So $G_3-R'$ contains a linkage $\mathcal P^{(0)}$ consisting of a path from $s'_i$ to $x_i$ and a path from $s'_{a+i}$ to $x'_i$, for each $i\in[a]$. For $j=1,2,3$ in turn, apply Lemma~\ref{lem:515} in the graph $G_3$ with $J'_j$, $a'_j$, $X_j$ and $\mathcal P^{(j-1)}$ in place of $H$, $a$, $R$ and $\mathcal P$. Since $J'_j$ is an induced subgraph of $G_3$, we have $G_3[V(J'_j)]=J'_j$, which is $(a'_j,\lceil10a'_j/3\rceil)$-woven by Step~8. Since $a'_j\geqslant3a/5$, we have $\lceil10a'_j/3\rceil\geqslant2a$, which is the number of pairs of $\mathcal P^{(j-1)}$. So Lemma~\ref{lem:515} applies, with $c=\lceil10a'_j/3\rceil$. We obtain a linkage $\mathcal P^{(j)}$ for the same pairs and a $K_{a'_j}$ model $\mathcal M_j$ in $J'_j$ rooted at $X_j$, with
\[
V(\mathcal P^{(j)})\cap V(\mathcal M_j)\subseteq X_j\qquad\text{and}\qquad V(\mathcal P^{(j)})\subseteq V(J'_j)\cup V(\mathcal P^{(j-1)}).
\]
Let $\mathcal P'':=\mathcal P^{(3)}$. For $j<j'$, induction on $j'$ gives $V(\mathcal P^{(j')})\subseteq V(\mathcal P^{(j)})\cup V(J'_{j+1})\cup\dots\cup V(J'_{j'})$, and the graphs $J'_{j+1},\dots,J'_{j'}$ are disjoint from $\mathcal M_j$. Hence $V(\mathcal P'')\cap V(\mathcal M_j)\subseteq X_j$ for each $j\in\{1,2,3\}$. The case $j=0$ gives $V(\mathcal P'')\subseteq V(\mathcal P^{(0)})\cup V(J'_1)\cup V(J'_2)\cup V(J'_3)$. The linkage $\mathcal P^{(0)}$ lies in $G_3-R'$, and each $J'_j$ is a subgraph of $G_4=G_3-(R'\cup S')$. So $r'_i\notin V(\mathcal P'')$ for each $i\in[a]$.

For $i\in[a]$, let $P''_i$ and $P''_{a+i}$ be the paths of $\mathcal P''$ from $s'_i$ to $x_i$ and from $s'_{a+i}$ to $x'_i$. For $i\in I_{jk}$, let $C_i$ be the branch set of $\mathcal M_j$ containing $x_i$, let $C'_i$ be the branch set of $\mathcal M_k$ containing $x'_i$, and let $B_i:=\{r'_i\}\cup V(P''_i)\cup V(P''_{a+i})\cup C_i\cup C'_i$. Each $G[B_i]$ is connected, because $r'_i$ is adjacent to $s'_i$ and $s'_{a+i}$. The sets $B_i$ are pairwise disjoint since:
\begin{itemize}[nosep]
\item The paths of $\mathcal P''$ are pairwise disjoint.
\item The models $\mathcal M_j$ lie in the disjoint graphs $J'_j$. Distinct vertices of $X_j$ lie in distinct branch sets of $\mathcal M_j$, since $\mathcal M_j$ is rooted at $X_j$.
\item The paths meet the models only in vertices of the sets $X_j$. Each branch set of $\mathcal M_j$ contains exactly one vertex of $X_j$. So, for each $i'\in[a]$, every path of $\mathcal P''$ meets $C_{i'}$ only in $x_{i'}$, and meets $C'_{i'}$ only in $x'_{i'}$. Since the paths of $\mathcal P''$ are pairwise disjoint, $P''_{i'}$ is the only path of $\mathcal P''$ that meets $C_{i'}$, and $P''_{a+i'}$ is the only path of $\mathcal P''$ that meets $C'_{i'}$.
\item $r'_i\notin V(\mathcal P'')$, and $r'_i\notin V(J'_1)\cup V(J'_2)\cup V(J'_3)$, since each $J'_j$ is a subgraph of $G_4=G_3-(R'\cup S')$.
\end{itemize}
Any two of the pairs $\{1,2\},\{1,3\},\{2,3\}$ intersect. So any two of the sets $B_i$ contain distinct branch sets of a common model $\mathcal M_j$, and are therefore adjacent. Hence $\{B_i\}$ is a $K_a$ model rooted at $R'$, and $G_3$ is $(a,0)$-woven. This completes the proof for the uneven scheme.

\medskip\noindent\emph{The even scheme.} Suppose now that $\Lambda={\LamE}$. The proof is the same, with $\ell_0:=3a$ and $h:=14a$, so that $\ell\leqslant3a$, and with the following changes in the numbers. In Step~1, $|Z|\leqslant7a$, so $2(a+2\ell)\leqslant14a\leqslant21a$. In Step~2, if $a\leqslant a_0$ then $\chi(G)\leqslant D(14a)\leqslant D(\frac{46}3a_0+4)$, and so $a>a_0\geqslant4+5/\eta\geqslant9$ and $a\geqslant10$. In Step~3, $\chi(G_1)\geqslant\chi(G)-7a\geqslant50a+1$, since $\chi(G)\geqslant1+\Lambda a\geqslant1+517.5a$. The hub $H_0$ is $(a+\ell)$-linked, where $a+\ell\leqslant4a$, with $v(H_0)>10(a+\ell)$ and $\chi(H_0)\leqslant12.5(a+\ell)+2\leqslant50a+2$. In Step~4, $2|Z|\leqslant14a$, and the colouring bound of Lemma~\ref{lem:paths}(a) is $4|Z|\leqslant28a$. In Step~5, $\chi(G_2)\geqslant\chi(G)-(85a+2)\geqslant432.5a-1\geqslant\lceil3.0625(21a-1)\rceil$. In Step~7b, $\sigma=2a\Phi$ as before, and each of $J_1,J_2,J_3$ satisfies
\[
\chi(J_i)\ \geqslant\ \chi(G_4)-2\sigma\ \geqslant\ \chi(G)-(130a+2)-4\Phi a\ \geqslant\ 300a\ \geqslant\ \lceil3.0625(21a-1)\rceil .
\]
In Step~8, $\chi(J'_i)\geqslant\chi(G)-(172a+2)-4\Phi a$ for each $i\in\{1,2,3\}$, and we take $a'_1=a'_2=a'_3:=\lceil2a/3\rceil$. Then $a-a'_i\geqslant(a-2)/3$. As in the uneven scheme, $(1+\eta)(a-2)\geqslant a+3$, and so
\[
{\LamE}(a-a'_i)\ \geqslant\ (1+\eta)(a-2)\bigl(172.5+4\Phi\bigr)\ \geqslant\ (a+3)\bigl(172.5+4\Phi\bigr)\ \geqslant\ 172a+2+4\Phi a .
\]
Hence $\chi(J'_i)\geqslant D(\frac{46}3a_0+4)+1+{\LamE}a'_i$, and by induction each $J'_i$ is $(a'_i,3a'_i)$-woven. In Step~9, let $I_{12},I_{13},I_{23}$ have sizes $(n,n,n)$, $(n+1,n,n)$ or $(n+1,n+1,n)$ according as $a=3n$, $3n+1$ or $3n+2$. Then each of the three sums $|I_{12}|+|I_{13}|$, $|I_{12}|+|I_{23}|$ and $|I_{13}|+|I_{23}|$ is at most $\lceil2a/3\rceil$. Lemma~\ref{lem:515} applies to each $J'_j$ with $c=3a'_j$, since $J'_j$ is $(a'_j,3a'_j)$-woven and $3a'_j\geqslant2a$. The rest of the proof is unchanged.
\end{proof}

\paragraph{Comparison with earlier proofs.} The proof of Lemma~\ref{lem:weave} follows the proof of a theorem of Delcourt and Postle~\cite[Theorem~7.1]{DP}, and its adaptation by Norin and Steiner~\cite[Lemma~B.12]{NS}. Hypothesis $(\mathcal H_{\Phi,a_0})$ replaces the bound of Delcourt and Postle on small subgraphs and the hypothesis of Norin and Steiner on chromatic inseparability. The main differences are as follows. First, we construct the hub by Lemma~\ref{lem:hub}, whereas Delcourt and Postle obtain it from their Theorem~2.3, and Norin and Steiner from~\cite[Theorem~B.1]{NS} and the theorem of Gir\~ao and Narayanan~\cite[Theorem~B.3]{NS}. Second, Step~7a applies Lemma~\ref{lem:kawa} to $G_3$, which Delcourt and Postle~\cite[Section~7.1]{DP} observe would suffice if only $(a,0)$-wovenness were needed. So Step~7b separates graphs with no $K_{2a}$ minor, whereas these proofs separate graphs with no $K_{14a}$ minor. Third, we use Corollary~\ref{cor:nguyen} in place of the theorem of Gir\~ao and Narayanan in Steps~5 and~7b. Fourth, the uneven scheme is new, and the even scheme is the distribution of Delcourt and Postle. Finally, Step~6 groups the paths into pairs $\{Q_{i,1},Q_{i,2}\}$, since Lemma~\ref{lem:53} requires two paths at each start vertex.

\begin{table}[ht]
\centering\small
\begin{tabular}{@{}lll@{}}\toprule
Operation & Uneven scheme & Even scheme\\\midrule
Delete $Z$ and the paths $\mathcal P_Z$ (Steps~3 and~5) & $5|Z|=5a+10\ell$ & $35a$\\
Delete the hub $H_0$ (Steps~3 and~5) & $12.5(a+\ell)+2$ & $50a+2$\\
\quad Subtotal (Step~5) & $17.5a+22.5\ell+2\leqslant92.5a+17$ & $85a+2$\\
Extract $G_3$ (Step~5) & $42a$ & $42a$\\
Delete $R'\cup S'$ (Step~7b) & $3a$ & $3a$\\
Chromatic separations (Step~7b) & $2\sigma$, $2\sigma$, $\sigma$ for $J_1,J_2,J_3$ & $2\sigma$ for each $J_i$\\
Extract $J'_i$ (Step~7b) & $42a$ & $42a$\\\midrule
Total loss (Step~8) & $179.5a+17+2\sigma$ ($i=1,2$) & $172a+2+2\sigma$\\
 & $179.5a+17+\sigma$ ($i=3$) & \\
where & $\sigma=2a\Phi$ & $\sigma=2a\Phi$\\
Sizes $a'_1,a'_2,a'_3$ & $\lceil3a/5\rceil,\lceil3a/5\rceil,\lceil4a/5\rceil$ & $\lceil2a/3\rceil$ each\\\bottomrule
\end{tabular}
\caption{Upper bounds on the losses in chromatic number in the proof of Lemma~\ref{lem:weave}, where $\ell$ is the number of terminal pairs, with $\ell\leqslant\lceil10a/3\rceil\leqslant(10a+2)/3$ in the uneven scheme and $\ell\leqslant3a$ in the even scheme, and $|Z|=a+2\ell$.}\label{tab:losses}
\end{table}

Given the chromatic separations of Step~7b, the uneven scheme minimises the coefficient of $\Phi$. Suppose that $J'_1,J'_2,J'_3$ host fractions $f_1,f_2,f_3$ of the $a$ branch sets in Step~9. Each branch set uses two of these subgraphs, so $f_1+f_2+f_3=2$. Ignoring rounding, the coefficient of $\Phi$ in $\Lambda/(1+\eta)$ is $2\max\{2/(1-f_1),2/(1-f_2),1/(1-f_3)\}$, since $J'_1,J'_2$ are obtained by two separations of cost $2a\Phi$ and $J'_3$ by one. This is minimised, with value $10$, at $f_1=f_2=\frac35$ and $f_3=\frac45$. Delcourt and Postle use $f_i=\frac23$, which gives $12$.

\begin{theorem}\label{thm:sepexplicit}
Let $t\geqslant2$ be an integer, and let $\eta\in(0,1]$, $a_0\geqslant4+5/\eta$ and $\Phi\geqslant0$ be real. Let $\cS(\Phi):=42+\min\bigl\{517.5+12\Phi,\ 897.5+10\Phi\bigr\}$. Let $\Lambda=\min\{{\LamE},{\LamU}\}$ be as in Lemma~\ref{lem:weave}. If $G$ satisfies $\had(G)<t$ and $(\mathcal H_{\Phi,a_0})$, then
\[
\chi(G)\ <\ D\bigl(\tfrac{46}3a_0+4\bigr)+1+(42+\Lambda)\,t ,
\]
where $42+\Lambda=\cS(\Phi)+\eta\bigl(\cS(\Phi)-42\bigr)$.
\end{theorem}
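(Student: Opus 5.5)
We argue by contradiction, so assume that
\[
\chi(G)\ \geqslant\ D\bigl(\tfrac{46}3a_0+4\bigr)+1+(42+\Lambda)\,t .
\]
The plan is to extract a highly connected high-chromatic induced subgraph and apply Lemma~\ref{lem:weave} to it with $a:=t$. This yields a $K_t$ minor, which contradicts $\had(G)<t$.

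\emph{Step 1: extract the connected subgraph.} Apply Corollary~\ref{cor:nguyen} to $G$ with $k:=21t\geqslant2$. The hypothesis needed is
\[
\chi(G)\ \geqslant\ \lceil3.0625(21t-1)\rceil .
\]
This holds because $\Lambda\geqslant517.5$, so $(42+\Lambda)t\geqslant559.5t$, which is far larger than $64.3t$. We obtain a $21t$-connected induced subgraph $G'$ of $G$ with $\chi(G')\geqslant\chi(G)-42t$. Therefore
\[
\chi(G')\ \geqslant\ D\bigl(\tfrac{46}3a_0+4\bigr)+1+\Lambda t .
\]
Because $G'$ is an induced subgraph, it inherits $(\mathcal H_{\Phi,a_0})$.

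\emph{Step 2: apply the weaving lemma.} Lemma~\ref{lem:weave} with $a=t$ now applies to $G'$. It shows that $G'$ is $(t,0)$-woven, since $(t,c)$-woven for any $c\geqslant0$ implies $(t,0)$-woven. We have $v(G')>21t\geqslant t$, so we may take any set $R$ of $t$ vertices of $G'$ and $b'=0$. This gives a $K_t$ model in $G'\subseteq G$, so $\had(G)\geqslant t$, a contradiction.

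\emph{Step 3: the identity for $42+\Lambda$.} If $\Lambda={\LamE}$, then $\Lambda=(1+\eta)(\cS(\Phi)-42)$, because the minimum in $\cS$ is attained by the same term. This holds since both ${\LamE}$ and ${\LamU}$ carry the common factor $(1+\eta)>0$. Hence
\[
42+\Lambda\ =\ \cS(\Phi)+\eta\bigl(\cS(\Phi)-42\bigr).
\]
The case $\Lambda={\LamU}$ is identical.

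I expect no real obstacle. The one point needing care is checking the Nguyen threshold with the integer $k=21t$. In particular, the case $t=2$ should be verified numerically, and it holds with room to spare. It should also be noted that $\chi(G)$ being an integer makes the ceiling harmless.
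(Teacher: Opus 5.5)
Your proposal is correct and follows the paper's proof exactly: the same contradiction setup, Corollary~\ref{cor:nguyen} with $k=21t$ to get a $21t$-connected induced subgraph $G'$ losing at most $42t$ colours, then Lemma~\ref{lem:weave} with $a=t$ and $b'=0$ to get a $K_t$ minor. Your Step~3 supplies the short calculation $\Lambda=(1+\eta)(\cS(\Phi)-42)$ behind the stated identity; the paper leaves this implicit.
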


Theorem~\ref{thm:sepexplicit} parallels a theorem of Norin and Steiner~\cite[Theorem~15 and Section~B.3]{NS}, and the derivation by Delcourt and Postle of their Theorem~1.6 from their Theorem~7.1~\cite{DP}.

\begin{proof}
Suppose for a contradiction that $\chi(G)$ is at least the right side of the displayed bound. Then $\chi(G)\geqslant42t+\Lambda t\geqslant\lceil3.0625(21t-1)\rceil$, since $D(\frac{46}3a_0+4)\geqslant0$ and $\Lambda\geqslant517.5$. By Corollary~\ref{cor:nguyen}, $G$ has a $21t$-connected induced subgraph $G'$ with $\chi(G')\geqslant\chi(G)-42t\geqslant D(\frac{46}3a_0+4)+1+\Lambda t$. The graph $G'$ satisfies $(\mathcal H_{\Phi,a_0})$, since it is an induced subgraph of $G$. By Lemma~\ref{lem:weave} with $a=t$, $G'$ is $(t,0)$-woven. Since $v(G')>21t\geqslant t$, we may take any $t$ vertices of $G'$ as roots, so $G'$ has a $K_t$ minor, contradicting $\had(G)<t$.
\end{proof}

%=====================================================================
\section{Colouring graphs of order \texorpdfstring{$O(a\log^2a)$}{O(a log² a)}}\label{sec:bootstrap}
%=====================================================================

This section proves Corollary~\ref{cor:allscales}, which states that, for every integer $a\geqslant3$, every graph with no $K_a$ minor and at most $N_a$ vertices is ${\cst}a$-colourable, where $N_a\in O(a\log^2a)$ is defined in Proposition~\ref{prop:explicitbootstrap}. It is used in Section~\ref{sec:proof}. As in Parts~(I) and~(II) of the proof of Norin and Steiner (Section~\ref{sec:NSoverview}), we start from an explicit base case (Section~\ref{sec:base}) and apply bootstrapping steps (Sections~\ref{sec:level} and~\ref{sec:schedule}). Unlike in Part~(II), the Gy\'arf\'as graphs are coloured with a fixed earlier result. So each step maps a constant $c$ to $2c$ plus a term that does not depend on $c$ (see the remark after Lemma~\ref{lem:level}). We call such a step \defn{additive}, since the cost $\cS(\Phi)$ of the chromatic-separable reduction is added to $2c$, rather than multiplied by the previous constant as in Part~(II). The base case (Lemma~\ref{lem:base}) is an explicit instance of the small-graph bound of Norin and Steiner (Section~\ref{sec:NSres}), with base constant $4.01$ and exponent $0.49$, derived from a quantitative theorem of Norin and Steiner (Theorem~\ref{thm:NSquant}). Each bootstrapping step adds an error term $\xi=10^{-4}$ to the constant (see Lemma~\ref{lem:level}). Section~\ref{sec:small} treats the values of $a$ with $\log\log a<T_{\mathrm{boot}}$, where the threshold $T_{\mathrm{boot}}$ is defined in Proposition~\ref{prop:explicitbootstrap}.

\subsection{The base case}\label{sec:base}

This subsection makes the small-graph bound of Norin and Steiner (Section~\ref{sec:NSres}) explicit, for the constant $4.01$ and the exponent $0.49$ used below. We derive it from the following quantitative theorem. It is Theorem~2 of an earlier draft~\cite{NSdraft} of the paper of Norin and Steiner~\cite{NS}, and it is formalised in \texttt{LinearHadwiger} (see Appendix~\ref{app:lean}). Let $T_{\mathrm{base}}:=1160$.

\begin{theorem}[Norin and Steiner~{\cite[Theorem~2]{NSdraft}}]\label{thm:NSquant}
For every graph $G$ and every real $\eps\in(0,1]$,
\[
\chi(G)\ \leqslant\ 4\had(G)+\eps\,v(G)+A_\eps,\qquad\text{where }A_\eps:=(100/\eps)^{2000/\eps^2}.
\]
\end{theorem}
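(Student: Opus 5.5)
The plan is to follow the strategy of Norin and Steiner~\cite{NSdraft}: fractional colouring, then a linear programming step, then a quantitative nibble. Let $t:=\had(G)$ and $n:=v(G)$. If $n\leqslant A_\eps$ there is nothing to prove, since $\chi(G)\leqslant n$. So I assume $n>A_\eps$ and argue by induction on $n$. The induction lets me first peel off any independent set of size at least a large constant $s=s(\eps)$, say $s\approx 10/\eps$. Such a set costs one colour and removes at least $s$ vertices, so its contribution is absorbed into the $\eps n$ term. After this reduction I may assume $\alpha(G)<s$, or more flexibly that I only ever use independent sets of size at most $s$.

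\emph{Step 1 (fractional colouring with small classes).} By Theorem~\ref{thm:RS}, $\chi_f(G)\leqslant 2t$. I then use a linear programming argument to convert this into a fractional cover by independent sets of size at most $s$, with total weight at most about $4t$ plus an $O(\eps n)$ error. The idea is to split each independent set into pieces of size at most $s$. A plain split gives only $1+O(1/s)$ blow-up when sets are large, so the real saving has to come from a different LP. I would take the dual LP and look at the weight functions $y$ on $V(G)$. I would bound $\sum_v y_v$ using that $G[\{v:y_v\text{ large}\}]$ has no $K_{t+1}$ minor, so by Theorem~\ref{thm:RS} it has many vertices in independent sets. The factor $2$ lost here, on top of the $2$ of Reed and Seymour, is where the constant $4$ comes from. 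I expect this LP duality argument to be short but delicate.

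\emph{Step 2 (nibble).} Form the $\leqslant s$-uniform hypergraph $\mathcal H$ on $V(G)$ whose edges are the independent sets of size at most $s$, weighted by the fractional cover from Step~1. After rescaling and discarding vertices that are over-covered, this is a fractional perfect matching of total weight roughly $n/(\text{average edge size})$. Next I apply a quantitative Pippenger--Frankl--R\"odl theorem, in its weighted form for fractional matchings with small maximum edge weight (equivalently, bounded codegree after blowing up). This gives an integral matching, that is, a family of disjoint independent sets, covering all but $\eps n/2$ vertices. The number of sets used is at most $(1+\eps/2)$ times the fractional weight, which is at most $4t+\eps n/2$. The leftover vertices each receive their own colour, which gives the bound $4t+\eps n+A_\eps$.

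\emph{Main obstacle.} I expect Step~2 to be the hardest part. The fractional cover may have edges of large weight, so I must first split heavy edges into many parallel light copies to ensure the codegree condition. I then need an explicit error bound for the nibble in terms of $s$ and the maximum weight. That bound has to be tracked carefully enough to produce the threshold $A_\eps=(100/\eps)^{2000/\eps^2}$, which is of the form $\exp(O(s^2\log s))$ with $s=O(1/\eps)$. Vertices whose degree in $\mathcal H$ deviates from the average would be handled by the same induction: either they are few and are coloured separately, or they are removed with an independent set.
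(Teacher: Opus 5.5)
The paper does not prove this theorem. It imports it from Norin and Steiner~\cite{NSdraft}, and notes only that the proof combines the Reed--Seymour theorem, a linear programming argument and a quantitative Pippenger--Frankl--R\"odl theorem. Your outline uses the same ingredients, but Step~2 fails as written, and nothing else in the proposal makes up for it.

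\textbf{Step~2 is false as stated.} You claim that a fractional cover of weight $W$ by independent sets of size less than $s$ can be rounded to at most $(1+\eps/2)W$ disjoint independent sets covering all but $\eps v(G)/2$ vertices. Let $G$ be the join of $q$ copies of $C_5$, with $q$ large, and let $n:=5q$.
\begin{itemize}
\item $\alpha(G)=2$, so your reduction removes nothing.
\item Giving weight $\frac12$ to each of the five $2$-element independent sets in each copy is an exact fractional partition of weight $W=n/2$.
\item Your Step~2 would give $\chi(G)\leqslant(1+\eps/2)\frac n2+\frac{\eps n}2$.
\item But $\chi(G)=3q=\frac{3n}5$, which is larger whenever $\eps<\frac2{15}$.
\end{itemize}
The reason is that nibble theorems of Pippenger--Frankl--R\"odl type, including weighted versions for fractional matchings, need small \emph{pair} weights: $\sum_{I\ni u,v}x_I$ must be small for all distinct $u,v$ (codegree $o(\text{degree})$). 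Small weights of individual edges are not enough. In this example every non-adjacent pair inside a copy has weight $\frac12$. Splitting each edge into many parallel light copies leaves every pair weight unchanged, and the example still defeats the rounding. Your remark about vertices of deviating degree is beside the point: in an exact fractional partition every vertex has weight exactly $1$. The obstruction is codegree, not degree.

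\textbf{Your accounting of the constant $4$ hides this.} After your own reduction, $\alpha(G)<s$. So the Reed--Seymour fractional colouring, of weight at most $2\had(G)$, already uses only independent sets of size less than $s$. Step~1 is therefore vacuous, and its dual LP sketch produces no second factor $2$. If Step~2 held as stated, you would get $(2+\eps)\had(G)+\eps v(G)+A_\eps$.

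According to Appendix~\ref{app:further}, in Norin and Steiner's argument the second factor $2$ comes from a bipartition step in~\cite[Lemma~8]{NS}. The error term $(d+1)^{4m}$ of the same lemma, with $d=\exp(O(m\log m))$, underlies $A_\eps$.

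What your proposal lacks is an ingredient of this kind. You need a way to pass from the Reed--Seymour fractional colouring to a weighting whose pair weights are small enough for a quantitative nibble, at the cost of a constant factor. Alternatively, you need some other way to round when pair weights are large. The example above shows that, for small $\eps$, any such step must lose a factor larger than $1$ over the fractional weight.
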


The next lemma bounds the additive term $A_\eps$ for the value of $\eps$ used in Lemma~\ref{lem:base}.

\begin{lemma}\label{lem:Aeps}
Let $a$ be real with $y:=\log\log a\geqslant T_{\mathrm{base}}$, and let $\eps:=0.0099\,(\log a)^{-0.49}$. Then $0<\eps\leqslant1$ and $A_\eps\leqslant10^{-4}a$.
\end{lemma}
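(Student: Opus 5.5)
We need $\eps\in(0,1]$ and $\log A_\eps\leqslant\log a-4\log10$. Positivity of $\eps$ is immediate. For $\eps\leqslant1$, note that $y\geqslant T_{\mathrm{base}}$ gives $\log a=e^y\geqslant1$, so $(\log a)^{-0.49}\leqslant1$ and $\eps\leqslant0.0099$.

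For the main bound we work with logarithms. Write $L:=\log a=e^y$. Then
\[
\log A_\eps=\frac{2000}{\eps^2}\log\frac{100}{\eps}=\frac{2000\,L^{0.98}}{0.0099^2}\Bigl(\log\frac{100}{0.0099}+0.49\log L\Bigr).
\]
Since $\log L=y$, the bracket equals $\log(100/0.0099)+0.49y$, which is at most $0.5y$ once $y$ is large (already $y\geqslant 1000$ suffices, since $\log(10101)\approx9.22$). So it suffices to show
\[
\frac{2000}{0.0099^2}\cdot 0.5\,y\,e^{0.98y}\ \leqslant\ e^{y}-4\log10 .
\]
This is equivalent to $1.03\cdot10^{7}\,y\leqslant e^{0.02y}-4\log10\cdot e^{-0.98y}$. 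The last term is negligible. Taking logarithms, the inequality reduces to
\[
\log(1.03\cdot10^7)+\log y\leqslant0.02y .
\]
At $y=1160$ the left side is about $16.15+7.06=23.2$, while the right side is $23.2$. The margin here is tight, and this is the step I expect to be the main obstacle: the threshold $T_{\mathrm{base}}=1160$ is evidently chosen just at this crossing. The crude bound $0.5y$ for the bracket must therefore be replaced by the exact expression $\log(100/0.0099)+0.49y$, and the constants $2000/0.0099^2\approx2.0406\cdot10^7$ and $4\log10$ must be carried precisely, with a computer check at $y=1160$.

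Finally, monotonicity in $y$ extends the inequality to all $y\geqslant T_{\mathrm{base}}$. The function $0.02y-\log y-\log(\text{const}\cdot(9.22/y+0.49))$ is increasing for $y\geqslant 50$, because its derivative is at least $0.02-1/y>0$. Hence verifying the inequality at $y=1160$ suffices. Exponentiating then gives $A_\eps\leqslant a\cdot10^{-4}$.
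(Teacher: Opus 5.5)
Your plan is correct and is essentially the paper's argument. Both factor $e^{0.98y}$ out of $\log A_\eps$, reduce to an inequality in $y$ alone that is tight at $y=1160$, check it there, and extend it by monotonicity. The paper does the extension with the tangent-line bound $e^{0.02y}\geqslant e^{23.2}(0.02y-22.2)$ rather than a derivative, and it can even afford the cruder bounds $\log(100/0.0099)<10$ and $4\log10\leqslant10e^{0.98y}$.

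One correction to your diagnosis concerns your sample check, which gives $\log(1.03\cdot10^7)+\log 1160\approx23.2038>23.2$. It fails only because you rounded $K/2=10^3/0.0099^2\approx1.0203\cdot10^7$ up to $1.03\cdot10^7$. So the step you call an equivalence is actually a strictly stronger inequality, and it is false at $y=1160$. With the exact constant, even your crude bound $0.5y$ for the bracket works, giving $\approx23.1944<23.2$.
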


\begin{proof}
Since $\log a=e^y\geqslant1$, we have $0<\eps\leqslant0.0099$. Let $K:=2000/0.0099^2$ and $U:=e^{0.98y}\geqslant1$. Then $2000/\eps^2=KU$, $\log(100/\eps)=\log(100/0.0099)+0.49y$ and $\log a=e^y=Ue^{0.02y}$. Since $100/0.0099$ and $10^4$ are less than $2.7^{10}<e^{10}$, we have $\log(100/0.0099)<10$ and $\log10^4<10$. So $\log A_\eps=KU\log(100/\eps)<KU(10+0.49y)$. Since $\log10^4<10\leqslant10U$, it suffices to show that $KU(10+0.49y)+10U\leqslant Ue^{0.02y}$, that is,
\[
K(10+0.49y)+10\ \leqslant\ e^{0.02y} .
\]
Since $e^z\geqslant1+z$ for every real $z$, we have $e^{0.02y}=e^{23.2}e^{0.02y-23.2}\geqslant e^{23.2}(0.02y-22.2)$, where $0.02y-22.2\geqslant1$. Since $e>2.7182818283$ and $e^{0.2}\geqslant1+0.2+0.2^2/2=1.22$, we have $e^{23.2}=e^{23}e^{0.2}\geqslant2.7182818283^{23}\cdot1.22>1.188\cdot10^{10}$. So it suffices to show that $f(y):=1.188\cdot10^{10}(0.02y-22.2)-K(10+0.49y)-10$ is non-negative for $y\geqslant1160$. Since $1.188\cdot10^{10}$ exceeds both $24.5K$ and $578.4K+10$, the function $f$ is linear with positive slope $0.02\cdot1.188\cdot10^{10}-0.49K$, and $f(1160)=1.188\cdot10^{10}-578.4K-10>0$. So the inequality holds for every $y\geqslant T_{\mathrm{base}}$. Hence $\log A_\eps<\log a-\log10^4$, that is, $A_\eps<10^{-4}a$.
\end{proof}

\begin{lemma}\label{lem:base}
For every real $a$ with $\log\log a\geqslant T_{\mathrm{base}}$, every graph $J$ with $\had(J)<a$ and $v(J)\leqslant a(\log a)^{0.49}$ satisfies $\chi(J)\leqslant4.01\,a$.
\end{lemma}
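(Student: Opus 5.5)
Apply Theorem~\ref{thm:NSquant} to $J$ with $\eps:=0.0099\,(\log a)^{-0.49}$, the value in Lemma~\ref{lem:Aeps}. Since $\log\log a\geqslant T_{\mathrm{base}}$, Lemma~\ref{lem:Aeps} gives $0<\eps\leqslant1$, so Theorem~\ref{thm:NSquant} applies, and it also gives $A_\eps\leqslant10^{-4}a$. Theorem~\ref{thm:NSquant} then yields
\[
\chi(J)\ \leqslant\ 4\had(J)+\eps\,v(J)+A_\eps .
\]
We bound the three terms separately.

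\emph{First term.} Since $\had(J)<a$, we have $4\had(J)\leqslant4a$. No integrality of $a$ is needed.

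\emph{Second term.} Using $v(J)\leqslant a(\log a)^{0.49}$, we get $\eps\,v(J)\leqslant0.0099\,(\log a)^{-0.49}\cdot a(\log a)^{0.49}=0.0099\,a$. This cancellation is exactly why $\eps$ is chosen with the exponent $-0.49$ matching the exponent in the order bound.

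\emph{Third term.} Lemma~\ref{lem:Aeps} gives $A_\eps\leqslant10^{-4}a=0.0001\,a$.

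Adding the three bounds gives $\chi(J)\leqslant(4+0.0099+0.0001)\,a=4.01\,a$.

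Should $J$ be null, then $\chi(J)=0$ and the bound is trivial; Theorem~\ref{thm:NSquant} is stated for every graph in any case. There is no real obstacle here. All the numerical difficulty, namely showing that the doubly exponential term $A_\eps$ is at most $10^{-4}a$ once $\log\log a\geqslant1160$, has already been isolated in Lemma~\ref{lem:Aeps}. The only point to check is that the constants $0.0099$ and $10^{-4}$ sum to exactly $0.01$, which they do.
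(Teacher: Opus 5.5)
Your proof is correct and is essentially the paper's proof: apply Theorem~\ref{thm:NSquant} with $\eps=0.0099\,(\log a)^{-0.49}$, use Lemma~\ref{lem:Aeps} for both $0<\eps\leqslant1$ and $A_\eps\leqslant10^{-4}a$, and add the bounds $4a$, $0.0099\,a$ and $10^{-4}a$. As you note, all of the numerical work is already done in Lemma~\ref{lem:Aeps}.
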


\begin{proof}
Let $\eps:=0.0099\,(\log a)^{-0.49}$. By Lemma~\ref{lem:Aeps}, $0<\eps\leqslant1$ and $A_\eps\leqslant10^{-4}a$. Also $4\had(J)<4a$ and $\eps\,v(J)\leqslant0.0099\,a$. So Theorem~\ref{thm:NSquant} gives $\chi(J)\leqslant4a+0.0099\,a+10^{-4}a=4.01\,a$.
\end{proof}

With $\eps=0.0099\,(\log a)^{-0.49}$, the bound $A_\eps\leqslant10^{-4}a$ holds if and only if $\log\log a$ is at least about $1159.49$. For $\log\log a$ below about $1158.5$, no choice of $\eps$ makes $4a+\eps a(\log a)^{0.49}+A_\eps$ at most $4.01\,a$. So this argument cannot lower $T_{\mathrm{base}}$ by much.

\subsection{A bootstrapping step}\label{sec:level}

The following lemma is an explicit form of a theorem of Norin and Steiner~\cite[Theorem~5]{NS}. Hypothesis~(a) is the bound that the lemma extends from exponent $\beta$ to exponent $\beta+\zeta$, and hypothesis~(b) is the bound used for the Gy\'arf\'as graphs. The conditions $(\beta_{\mathrm G}-2\zeta)T\geqslant1$, $\zeta T\geqslant5$ and $\beta_{\mathrm G}\leqslant1$ ensure that the Gy\'arf\'as graphs are small enough for~(b), and the conditions $T\geqslant1000.9$ and $\Phi\leqslant10^4$ ensure that the error terms are at most $\xi t$. The proof needs only $T\geqslant100$. We state the lemma with $T\geqslant1000.9$ because this is the value in the formalisation, and every application of the lemma has $T\geqslant1160.9$.

\begin{lemma}\label{lem:level}
Let $\xi:=10^{-4}$. Let $\beta$ be real, and let $T\geqslant1000.9$, $\zeta>0$, $\beta_{\mathrm G}\leqslant1$, $c\geqslant0$ and $0\leqslant\Phi\leqslant10^4$ be real such that $(\beta_{\mathrm G}-2\zeta)T\geqslant1$ and $\zeta T\geqslant5$. Suppose that:
\begin{enumerate}[label=(\alph*),nosep]
\item For every real $t$ with $\log\log t\geqslant T$, every graph $G$ with $\had(G)<t$ and $v(G)\leqslant t(\log t)^{\beta}$ satisfies $\chi(G)\leqslant ct$.
\item For every real $a$ with $\log\log a\geqslant T-0.1$, every graph $J$ with $\had(J)<a$ and $v(J)\leqslant a(\log a)^{\beta_{\mathrm G}}$ satisfies $\chi(J)\leqslant\Phi a$.
\end{enumerate}
Then, for every real $t$ with $\log\log t\geqslant T$, every graph $G$ with $\had(G)<t$ and $v(G)\leqslant t(\log t)^{\beta+\zeta}$ satisfies $\chi(G)\leqslant(2c+\cS(\Phi)+\xi)t$. In particular, if $\Phi=4.01$, then $\chi(G)\leqslant(2c+{\Sst})t$, where ${\Sst}:=\cS(4.01)+\xi=42+517.5+12\cdot4.01+10^{-4}=607.6201$.
\end{lemma}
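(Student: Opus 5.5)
Following Norin and Steiner~\cite[Theorem~5]{NS}, fix $t$ with $\log\log t\geqslant T$ and a graph $G$ with $\had(G)<t$ and $v(G)\leqslant t(\log t)^{\beta+\zeta}$. I will split $G$ according to connected induced bipartite subgraphs. Let $k:=\lceil(\log t)^{\zeta}\rceil$. Greedily choose pairwise disjoint vertex sets $V_1,\dots,V_m$, each inducing a connected bipartite subgraph with exactly $k$ vertices, until none remains. Let $G_1:=G[V_1\cup\dots\cup V_m]$ and $G_2:=G-V(G_1)$, so that $\operatorname{bip}(G_2)<k$. Then $\chi(G)\leqslant\chi(G_1)+\chi(G_2)$, and I bound the two terms separately by about $2ct$ and $\cS(\Phi)t$.

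\emph{The part $G_1$.} Contract each $G[V_i]$ to a vertex. The resulting graph $G_1'$ is a minor of $G$, so $\had(G_1')<t$, and it has $m\leqslant v(G)/k\leqslant t(\log t)^{\beta}$ vertices. By~(a), $G_1'$ has a colouring with at most $ct$ colours. Combine this with a $2$-colouring of each bipartite piece $G[V_i]$, using the product colouring (colour of the contracted vertex, side of the bipartition). This is proper, since adjacent pieces receive distinct colours in $G_1'$. Hence $\chi(G_1)\leqslant2ct$. Here I need $v(G)/k\leqslant t(\log t)^{\beta}$, which holds because $k\geqslant(\log t)^\zeta$.

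\emph{The part $G_2$.} I apply Theorem~\ref{thm:sepexplicit} to $G_2$ with suitable $\eta$ and $a_0$, for example $a_0:=t/\log t$ and $\eta:=5/(a_0-4)$ as in Table~\ref{tab:constants}. First I verify $(\mathcal H_{\Phi,a_0})$. Let $H$ be an induced subgraph of $G_2$ that is $s$-chromatic-inseparable with $\had(H)<b$ and $b\geqslant a_0$. Then $\operatorname{bip}(H)<k$, and Lemma~\ref{lem:NS17} gives $s<\Phi b$, provided that every graph $J$ with $\had(J)<b$ and $v(J)\leqslant2bk(k-1)$ is $\Phi b$-colourable. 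This follows from~(b) applied with $a=b$, once I check two conditions.
\begin{itemize}[nosep]
\item $\log\log b\geqslant T-0.1$. This holds because $b\geqslant t/\log t$ and $\log\log(t/\log t)\geqslant\log\log t-0.1$ when $\log\log t\geqslant T\geqslant1000$.
\item $2bk(k-1)\leqslant b(\log b)^{\beta_{\mathrm G}}$. Here $2k^2\leqslant2((\log t)^\zeta+1)^2\approx2(\log t)^{2\zeta}$. Since $\log b\geqslant\log t-\log\log t$, the condition $(\beta_{\mathrm G}-2\zeta)T\geqslant1$ gives $(\log b)^{\beta_{\mathrm G}-2\zeta}\geqslant e^{(\beta_{\mathrm G}-2\zeta)\log\log b}$, which is at least about $e^{1}>2$ with room to spare. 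The condition $\zeta T\geqslant5$ makes $(\log t)^\zeta\geqslant e^5$ large, so the rounding in $k$ and the factor $(1+1/(\log t)^{\zeta})^2$ are absorbed. The condition $\beta_{\mathrm G}\leqslant1$ controls the loss in passing from $\log t$ to $\log b$.
\end{itemize}
There is a subtlety when $b$ is much larger than $t$. In that case I can replace $b$ by $\min\{b,t\}$: since $\had(H)<t$, the inequality $s<\Phi t\leqslant\Phi b$ suffices. So only $a_0\leqslant b\leqslant t$ matters. Theorem~\ref{thm:sepexplicit} then gives $\chi(G_2)<D(\tfrac{46}3a_0+4)+1+(\cS(\Phi)+\eta(\cS(\Phi)-42))t$.

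\emph{Error terms.} The remaining work, and I expect the main obstacle, is to show that $D(\tfrac{46}3a_0+4)+1+\eta(\cS(\Phi)-42)t\leqslant\xi t$. By Theorem~\ref{thm:kos}, $D(\tfrac{46}3a_0+4)$ is $O(a_0\sqrt{\log a_0})=O(t/\sqrt{\log t})$, which is tiny compared with $10^{-4}t$ since $\log t\geqslant e^{1000}$. Also $\eta\,\cS(\Phi)t=O(\Phi\log t)$, which is negligible since $\Phi\leqslant10^4$. These are routine explicit estimates using $\log\log t\geqslant1000.9$. Adding the bounds for $G_1$ and $G_2$ gives $\chi(G)\leqslant(2c+\cS(\Phi)+\xi)t$. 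The numerical value of ${\Sst}$ then follows by substituting $\Phi=4.01$, which lies in the even regime $\Phi\leqslant190$.
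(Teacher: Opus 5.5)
Your proposal is correct and follows the paper's proof essentially step for step. It uses the same maximal family of $k$-vertex connected induced bipartite subgraphs with $k=\lceil(\log t)^\zeta\rceil$, the same contraction-and-product colouring giving $\chi(G_1)\leqslant2ct$, the same verification of $(\mathcal H_{\Phi,a_0})$ via Lemma~\ref{lem:NS17} and~(b) at $b'=\min\{b,t\}$, and the same application of Theorem~\ref{thm:sepexplicit} with $a_0=t/\log t$ and $\eta=5/(a_0-4)$. The only detail you skip is that Theorem~\ref{thm:sepexplicit} is stated for integer $t$, so for real $t$ you apply it with $\lceil t\rceil$ and absorb the extra $\cS(\Phi)+\eta(\cS(\Phi)-42)<10^6$ into $\xi t$, as the paper does.
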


In the corresponding step of Norin and Steiner (Part~(II) in Section~\ref{sec:NSoverview}), which deduces $\mathrm{LH}(\frac43\beta)$ from $\mathrm{LH}(\beta)$, the Gy\'arf\'as graphs are coloured using the hypothesis $\mathrm{LH}(\beta)$ itself. That is, they are coloured with the constant $C_\beta$ obtained in the previous bootstrapping step. The constant $C_\beta$ then enters the hypothesis of the chromatic-separable reduction, and the new constant $4C_\beta+2C_{\mathrm{sep}}(1+C_\beta)$ of Part~(II) contains the product $C_{\mathrm{sep}}C_\beta$. In Lemma~\ref{lem:level} the Gy\'arf\'as graphs are coloured by hypothesis~(b), with a fixed constant $\Phi$ such as the constant $4.01$ of Lemma~\ref{lem:base}. So the term $\cS(\Phi)$ does not depend on $c$. The factor $2$ in $2c+\cS(\Phi)$ comes from colouring $G_1$ through its contraction $G'_1$ in the proof of Lemma~\ref{lem:level}.

\begin{proof}
We follow the proof of Theorem~5 by Norin and Steiner~\cite{NS}. Apart from the change explained above, we verify $(\mathcal H_{\Phi,a_0})$ with $a_0=t/\log t$, that is, for every $b\geqslant t/\log t$ and not only for $b\leqslant t$. We also use Theorem~\ref{thm:sepexplicit} in place of their Theorem~15, split $G$ as in~\cite[Lemma~12]{NSdraft}, and pass to real $t$ by rounding $t$ up.
Let $t$ be real with $y:=\log\log t\geqslant T$, let $G$ be a graph with $\had(G)<t$ and $v(G)\leqslant t(\log t)^{\beta+\zeta}$, let $p:=(\log t)^\zeta$ and let $k:=\lceil p\rceil$. Let $L:=\log t=e^y$ and $a_0:=t/L$. Since $T>0$ and $(\beta_{\mathrm G}-2\zeta)T\geqslant1$, we have $\beta_{\mathrm G}>2\zeta>0$. The condition $T\geqslant1000.9$ is used only through its consequence $y\geqslant100$, which gives $L\geqslant e^{100}\geqslant2^{100}\geqslant10^{30}$, and $11y\leqslant e^y=L$, since $e^y\geqslant y^2/2\geqslant11y$ for $y\geqslant22$. Since $t=e^L\geqslant L^3/6$ and $L^2\geqslant10^{60}$, we also have $t\geqslant a_0\geqslant L^2/6\geqslant10^{59}$. Since $\zeta y\geqslant\zeta T\geqslant5$, we have $p=e^{\zeta y}\geqslant e^5\geqslant32$. So $k\geqslant2$ and $k(k-1)<(p+1)p\leqslant\frac{33}{32}p^2$. Choose a maximal family $H_1,\dots,H_m$ of pairwise vertex-disjoint connected induced bipartite subgraphs of $G$, each with exactly $k$ vertices. Let $G_1:=G[V(H_1)\cup\dots\cup V(H_m)]$ and $G_2:=G-V(G_1)$. By maximality, $G_2$ has no connected induced bipartite subgraph with exactly $k$ vertices, so $\operatorname{bip}(G_2)<k$ (see Section~\ref{sec:defs}).
\begin{enumerate}[label=(\roman*),nosep]
\item \emph{Colouring $G_1$.} The sets $V(H_i)$ partition $V(G_1)$ into sets inducing connected subgraphs of chromatic number at most $2$. Let $G'_1$ be obtained from $G_1$ by contracting each $H_i$ to a vertex. Then $G'_1$ is a minor of $G$, so $\had(G'_1)<t$. Also $v(G'_1)=m\leqslant\lfloor v(G)/k\rfloor\leqslant v(G)/p\leqslant t(\log t)^\beta$. By~(a), $G'_1$ has a colouring $\psi$ with at most $ct$ colours. For each $i$, let $\phi_i$ be a proper $2$-colouring of $H_i$. For each $i$, let $z_i$ be the vertex of $G'_1$ obtained by contracting $V(H_i)$, and give each vertex $v\in V(H_i)$ the pair $(\psi(z_i),\phi_i(v))$. Adjacent vertices in the same $H_i$ differ in the second coordinate. If $v\in V(H_i)$ and $v'\in V(H_j)$ are adjacent with $i\ne j$, then $z_iz_j\in E(G'_1)$, so $\psi(z_i)\ne\psi(z_j)$, and the pairs of $v$ and $v'$ differ in the first coordinate. Hence $\chi(G_1)\leqslant2\chi(G'_1)\leqslant2ct$. The colouring is a special case of a lemma of Norin and Steiner~\cite[Lemma~13]{NS}, who credit the argument to Reed and Seymour~\cite{RS98} and to Steiner~\cite{Ste22}.
\item \emph{The Gy\'arf\'as graphs.} Let $H$ be an $s$-chromatic-inseparable induced subgraph of $G_2$ with $\had(H)<b$ for some real $b\geqslant a_0$, and let $b':=\min\{b,t\}$. Then $\had(H)<b'$, as $\had(H)\leqslant\had(G)<t$. Since $b'\geqslant a_0$ and $11y\leqslant L$, we have $\log b'\geqslant\log a_0=L-y\geqslant\frac{10}{11}L$. As $\log\frac{11}{10}<0.1$, it follows that $\log\log b'\geqslant y-0.1\geqslant T-0.1$. So~(b) applies with $b'$ in place of $a$. Since $y\geqslant T$ and $\beta_{\mathrm G}-2\zeta>0$, we have $L^{\beta_{\mathrm G}-2\zeta}=e^{(\beta_{\mathrm G}-2\zeta)y}\geqslant e^{(\beta_{\mathrm G}-2\zeta)T}\geqslant e>2.7$, that is, $L^{\beta_{\mathrm G}}\geqslant2.7\,p^2$. Since $0<\beta_{\mathrm G}\leqslant1$, we have $(\frac{10}{11})^{\beta_{\mathrm G}}\geqslant\frac{10}{11}$, and so
\[
(\log b')^{\beta_{\mathrm G}}\ \geqslant\ \bigl(\tfrac{10}{11}L\bigr)^{\beta_{\mathrm G}}\ \geqslant\ \tfrac{10}{11}\cdot2.7\,p^2\ >\ 2\cdot\tfrac{33}{32}\,p^2\ >\ 2k(k-1) .
\]
Consider Lemma~\ref{lem:NS17} with $b'$ and $\Phi b'\geqslant0$ in place of $b$ and $\Gamma$. Every Gy\'arf\'as graph $J$ in its hypothesis, that is, every graph $J$ with $\had(J)<b'$ and $v(J)\leqslant2b'k(k-1)$, satisfies $v(J)\leqslant b'(\log b')^{\beta_{\mathrm G}}$, and so $\chi(J)\leqslant\Phi b'$ by~(b) with $b'$ in place of $a$. Since $H$ is an induced subgraph of $G_2$, we have $\operatorname{bip}(H)\leqslant\operatorname{bip}(G_2)<k$. Also $H$ is $s$-chromatic-inseparable and $\had(H)<b'$. So Lemma~\ref{lem:NS17} gives $s<\Phi b'\leqslant\Phi b$. So $G_2$ satisfies $(\mathcal H_{\Phi,a_0})$.
\item \emph{The chromatic-separable reduction.} Apply Theorem~\ref{thm:sepexplicit} to $G_2$, with $t':=\lceil t\rceil$ in place of $t$, $a_0=t/\log t$ and $\eta:=5/(a_0-4)$. Then $t'\geqslant2$, $a_0=4+5/\eta$ and $0<\eta\leqslant10/a_0\leqslant10^{-58}$, as $a_0\geqslant10^{59}$. This gives $\chi(G_2)<D(\frac{46}3a_0+4)+1+(42+\Lambda)t'$, where $42+\Lambda=\cS(\Phi)+\eta c_0$ with $c_0:=\cS(\Phi)-42\geqslant0$. Since $t'\leqslant t+1$, it follows that $\chi(G_2)<\cS(\Phi)\,t+R$, where
\[
R\ :=\ D\bigl(\tfrac{46}3a_0+4\bigr)+1+\cS(\Phi)+\eta c_0(t+1) .
\]
Let $s_0:=\lfloor\frac{46}3a_0+4\rfloor\leqslant16a_0$. By Theorem~\ref{thm:kos}, $D(\frac{46}3a_0+4)+1\leqslant60\,s_0\sqrt{\log s_0}+2\leqslant960\,a_0\sqrt{\log(16t)}+2$. Since $\log(16t)=L+\log16\leqslant2L$, $\sqrt{2L}\leqslant10^{-12}L$ (as $L\geqslant10^{30}$) and $a_0L=t$, this is at most $960\cdot10^{-12}\,t+2\leqslant10^{-7}t$, as $t\geqslant10^{59}$. Since $0\leqslant\Phi\leqslant10^4$, we have $\cS(\Phi)\leqslant939.5+10\Phi<10^6$ and $c_0\leqslant897.5+10\Phi<10^6$. Hence, as $\eta\leqslant10^{-58}$ and $t\geqslant10^{59}$,
\[
R\ \leqslant\ 10^{-7}t+10^6+10^{-52}(t+1)\ \leqslant\ 10^{-7}t+2\cdot10^{-52}\,t\ \leqslant\ \xi t .
\]
So $\chi(G_2)<(\cS(\Phi)+\xi)t$.
\end{enumerate}
Hence $\chi(G)\leqslant\chi(G_1)+\chi(G_2)\leqslant(2c+\cS(\Phi)+\xi)t$.
\end{proof}

\subsection{The schedule}\label{sec:schedule}

We now combine Lemmas~\ref{lem:base} and~\ref{lem:level}.

\begin{proposition}\label{prop:explicitbootstrap}
Let ${\cst}:=45{,}024.9873$. Let $N_a:=24{\PL}M^2\,a\log^2a$, where ${\PL}=4.56\cdot10^{18}$ and $M=10^5$ are as in Theorem~\ref{thm:refinedlin} and Lemma~\ref{lem:Ck} below. Let $T_{\mathrm{boot}}:=1161$. For every real $a$ with $\log\log a\geqslant T_{\mathrm{boot}}$, every graph $J$ with $\had(J)<a$ and $v(J)\leqslant N_a$ satisfies $\chi(J)\leqslant {\cst} a$.
\end{proposition}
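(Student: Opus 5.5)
The plan is to run the additive bootstrap: start from Lemma~\ref{lem:base} (constant $4.01$, exponent $0.49$) and apply Lemma~\ref{lem:level} seven times, always at a threshold $T$ of about $1161$. Five \emph{main} steps colour their Gy\'arf\'as graphs with Lemma~\ref{lem:base} ($\Phi=4.01$, $\beta_{\mathrm G}=0.49$). One \emph{side} step produces a second, larger-exponent bound for later use. One \emph{final} step colours its Gy\'arf\'as graphs with the side-step bound.

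\emph{Constants.} Each main step maps $c\mapsto 2c+{\Sst}$ with ${\Sst}=607.6201$. Starting from $c_0=4.01$, five main steps give
\[
c_5=32\cdot4.01+31\cdot607.6201=18{,}964.5431 .
\]
The side step is one main step applied directly to the base case. It gives $\Phi_{\mathrm{side}}=2\cdot4.01+607.6201=615.6401$ at exponent $0.49+\zeta_{\mathrm{side}}$. Since $615.6401>190$, the final step uses the uneven scheme, so $\cS(\Phi_{\mathrm{side}})=42+897.5+10\cdot615.6401=7095.901$. Adding $\xi$, the final constant is
\[
2c_5+7095.9011=45{,}024.9873={\cst},
\]
as required. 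The condition $\Phi\leqslant10^4$ holds at every step.

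\emph{Exponents.} The conditions of Lemma~\ref{lem:level} are $\zeta T\geqslant5$ and $(\beta_{\mathrm G}-2\zeta)T\geqslant1$.
\begin{itemize}[nosep]
\item For the main steps, with $\beta_{\mathrm G}=0.49$ and $T\geqslant1161$, I would take $\zeta=0.244$. Five steps then raise the exponent from $0.49$ to $1.71$.
\item For the side step, also take $\zeta=0.244$, giving $\beta_{\mathrm G}=0.734$ for the final step.
\item For the final step, any $\zeta\leqslant(0.734-1/T)/2\approx0.3665$ is allowed. Taking $\zeta=0.36$ reaches exponent $2.07$.
\end{itemize}
Finally I will check that $N_a\leqslant a(\log a)^{2.07}$. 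This needs $(\log a)^{0.07}\geqslant24{\PL}M^2\approx1.1\cdot10^{30}$, which holds easily since $\log a\geqslant e^{1161}$.

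\emph{Thresholds.} This is the step I expect to need the most care, though it is bookkeeping rather than mathematics. Hypothesis~(b) of Lemma~\ref{lem:level} is needed at $\log\log a\geqslant T-0.1$.
\begin{itemize}[nosep]
\item For the main steps I take $T=1161$, so (b) is needed at $1160.9\geqslant T_{\mathrm{base}}$, where Lemma~\ref{lem:base} applies.
\item Hypothesis~(a) of the first main step is Lemma~\ref{lem:base} itself at $T=1161\geqslant T_{\mathrm{base}}$. This holds because a bound valid for all $\log\log t\geqslant1160$ is valid for all $\log\log t\geqslant1161$. The output of each main step is then hypothesis~(a) of the next.
\item For the final step, hypothesis~(b) is needed at $\log\log a\geqslant1160.9$. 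So I prove the side step with $T=1160.9$, whose own hypothesis~(b) is needed at $1160.8\geqslant T_{\mathrm{base}}$. The side step still satisfies $T\geqslant1000.9$, $\zeta T\geqslant5$ and $(\beta_{\mathrm G}-2\zeta)T\geqslant1$ with $\zeta=0.244$.
\end{itemize}
For the final step, the bound is then obtained for all real $a$ with $\log\log a\geqslant1161=T_{\mathrm{boot}}$ and $v(J)\leqslant a(\log a)^{2.07}$. With the inequality $N_a\leqslant a(\log a)^{2.07}$ this gives $\chi(J)\leqslant{\cst}a$.

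The arithmetic for the numbers $c_5$, the final constant and the exponents is exact, and all of it reduces to checking finitely many explicit inequalities.
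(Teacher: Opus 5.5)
Your proposal is correct and follows the paper's proof essentially exactly. It has the same side step at $T=1160.9$ from the base case, the same five main steps at $T=1161$ with Gy\'arf\'as graphs coloured by Lemma~\ref{lem:base}, and the same final uneven-scheme step whose Gy\'arf\'as graphs are coloured by the side step, giving the same constants $615.6401$, $18{,}964.5431$ and $45{,}024.9873$. The only difference is your slightly smaller increments $\zeta=0.244$ and $0.36$ (the paper uses $0.2445$ and $0.3667$, reaching exponent $2.0792$), which still satisfy every condition of Lemma~\ref{lem:level}. They also leave ample room for $N_a\leqslant a(\log a)^{2.07}$, since $0.07\cdot1161>\log(24\cdot4.56\cdot10^{18}\cdot10^{10})\approx69.2$.
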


\begin{proof}
Every application of Lemma~\ref{lem:level} in this proof has $T\in\{T_{\mathrm{boot}}-0.1,T_{\mathrm{boot}}\}$, so $T\geqslant T_{\mathrm{boot}}-0.1\geqslant1000.9$, and $\Phi\in\{4.01,c_{\mathrm{side}}\}$, so $0\leqslant\Phi\leqslant10^4$.

Let $\beta_0:=0.49$, the exponent of Lemma~\ref{lem:base}, and let $\vartheta:=0.0005$. Let $\zeta:=\beta_0/2-\vartheta=0.2445$ and $\beta_j:=\beta_0+j\zeta$ for $j\geqslant0$, so that $\beta_1=0.7345$ and $\beta_5=1.7125$. Let $\zeta_{\mathrm f}:=\beta_1/2-1.1\vartheta=0.3667$ and $\beta_{\mathrm{fin}}:=\beta_5+\zeta_{\mathrm f}=4.25\beta_0-6.6\vartheta=2.0792$. In each application below, $(\zeta,\beta_{\mathrm G})$ is $(0.2445,0.49)$ or $(0.3667,0.7345)$. In both cases $\beta_{\mathrm G}\leqslant1$, and $\beta_{\mathrm G}-2\zeta$ is $2\vartheta=0.001$ or $2.2\vartheta=0.0011$, respectively. Since $T\geqslant1000.9$, it follows that $(\beta_{\mathrm G}-2\zeta)T\geqslant0.001\cdot1000.9>1$ and $\zeta T\geqslant0.2445\cdot1000.9>5$. Every constant $c$ below is positive. So the conditions of Lemma~\ref{lem:level} on $T$, $\zeta$, $\beta_{\mathrm G}$, $c$ and $\Phi$ hold in every application.

\emph{The side step.} Apply Lemma~\ref{lem:level} with $T=T_{\mathrm{boot}}-0.1=1160.9$, $\beta=\beta_{\mathrm G}=\beta_0$, $c=\Phi=4.01$ and $\zeta=0.2445$. Hypotheses~(a) and~(b) hold by Lemma~\ref{lem:base}, since $T-0.1=1160.8\geqslant T_{\mathrm{base}}$. Lemma~\ref{lem:level} gives the constant $c_{\mathrm{side}}:=2\cdot4.01+{\Sst}=615.6401$ for exponent $\beta_1=0.7345$. That is, for every real $a$ with $\log\log a\geqslant T_{\mathrm{boot}}-0.1$, every graph $J$ with $\had(J)<a$ and $v(J)\leqslant a(\log a)^{0.7345}$ satisfies $\chi(J)\leqslant c_{\mathrm{side}}a$. The constant $c_{\mathrm{side}}$ equals the constant $\bar c_1$ of the first main step below. The side step differs from the first main step only in its size threshold. It is needed because hypothesis~(b) of the final step requires the bound for every real $a$ with $\log\log a\geqslant T_{\mathrm{boot}}-0.1$, whereas the main steps give it only when $\log\log a\geqslant T_{\mathrm{boot}}$.

\emph{The main steps.} Let $\bar c_0:=4.01$ and $\bar c_j:=2\bar c_{j-1}+{\Sst}$ for $j\geqslant1$. For $j=1,\dots,5$ in turn, apply Lemma~\ref{lem:level} with $T=T_{\mathrm{boot}}$, $\beta=\beta_{j-1}$, $\beta_{\mathrm G}=\beta_0$, $c=\bar c_{j-1}$, $\Phi=4.01$ and $\zeta=0.2445$. Hypothesis~(a) holds by Lemma~\ref{lem:base} for $j=1$ and by the previous application for $j\geqslant2$, and~(b) holds by Lemma~\ref{lem:base}. This gives the constant $\bar c_5=2^5\cdot4.01+31\,{\Sst}=18{,}964.5431$ for exponent $\beta_5=1.7125$ and size threshold $T_{\mathrm{boot}}$.

\emph{The final step.} Apply Lemma~\ref{lem:level} with $T=T_{\mathrm{boot}}$, $\beta=\beta_5$, $\zeta=\zeta_{\mathrm f}$, $\beta_{\mathrm G}=\beta_1$, $c=\bar c_5$ and $\Phi=c_{\mathrm{side}}$. Hypothesis~(a) is the result of the fifth main step, and~(b) is the result of the side step, since $T-0.1=T_{\mathrm{boot}}-0.1$. Since $c_{\mathrm{side}}>190$, we have $\cS(c_{\mathrm{side}})=42+897.5+10c_{\mathrm{side}}=7095.901$. So the constant obtained is $2\bar c_5+\cS(c_{\mathrm{side}})+10^{-4}={\cst}$, for exponent $\beta_{\mathrm{fin}}=2.0792$. Hence every graph $J$ with $\had(J)<a$ and $v(J)\leqslant a(\log a)^{2.0792}$ satisfies $\chi(J)\leqslant {\cst} a$, whenever $\log\log a\geqslant T_{\mathrm{boot}}$.

Finally, $N_a\leqslant a(\log a)^{2.0792}$ whenever $(\log a)^{0.0792}\geqslant24{\PL}M^2$. Since $24{\PL}M^2\leqslant2.7^{79}<e^{79}$, this holds whenever $0.0792\log\log a\geqslant79$, and $0.0792\,T_{\mathrm{boot}}\geqslant79$. So the proposition follows.
\end{proof}

\begin{table}[htbp]
\centering\small
\begin{tabular}{@{}llllll@{}}\toprule
Step & Exponent & Gy\'arf\'as graphs & Scheme & Constant & Threshold\\\midrule
Base case & $0.49$ & -- & -- & $4.01$ & $T_{\mathrm{base}}=1160$\\
Side step & $0.49\to0.7345$ & base case & even & $c_{\mathrm{side}}=615.6401$ & $1160.9$\\
Main step 1 & $0.49\to0.7345$ & base case & even & $\bar c_1=615.6401$ & $T_{\mathrm{boot}}=1161$\\
Main step 2 & $0.7345\to0.979$ & base case & even & $\bar c_2=1838.9003$ & $T_{\mathrm{boot}}$\\
Main step 3 & $0.979\to1.2235$ & base case & even & $\bar c_3=4285.4207$ & $T_{\mathrm{boot}}$\\
Main step 4 & $1.2235\to1.468$ & base case & even & $\bar c_4=9178.4615$ & $T_{\mathrm{boot}}$\\
Main step 5 & $1.468\to1.7125$ & base case & even & $\bar c_5=18{,}964.5431$ & $T_{\mathrm{boot}}$\\
Final step & $1.7125\to2.0792$ & side step & uneven & ${\cst}=45{,}024.9873$ & $T_{\mathrm{boot}}$\\\bottomrule
\end{tabular}
\caption{The schedule in the proof of Proposition~\ref{prop:explicitbootstrap}.}\label{tab:schedule}
\end{table}

Table~\ref{tab:schedule} summarises the schedule. For each step, it lists the exponents before and after the step, the result used to colour the Gy\'arf\'as graphs, the scheme of Lemma~\ref{lem:weave} (which is even if and only if $\Phi\leqslant190$), the constant obtained, and the size threshold, which is a lower bound on $\log\log a$. The final exponent $\beta_{\mathrm{fin}}$ must exceed $2$, since the inequality $N_a\leqslant a(\log a)^{\beta_{\mathrm{fin}}}$ requires $(\log a)^{\beta_{\mathrm{fin}}-2}\geqslant24{\PL}M^2$, and $24{\PL}M^2>1$. Since $\beta_{\mathrm{fin}}=4.25\beta_0-6.6\vartheta$, this schedule requires $\beta_0>8/17$, that is, a base case with exponent $\frac12-\delta$ for some $\delta<1/34$. The value of $\vartheta$ balances two thresholds. A smaller $\vartheta$ raises the threshold on $T$ imposed by the condition $(\beta_{\mathrm G}-2\zeta)T\geqslant1$ of Lemma~\ref{lem:level}, which is $1/(2\vartheta)$ for the side and main steps and $1/(2.2\vartheta)$ for the final step. A larger $\vartheta$ lowers $\beta_{\mathrm{fin}}$, and so raises the threshold $\log(24{\PL}M^2)/(\beta_{\mathrm{fin}}-2)$. With the chosen $\vartheta$, these thresholds are at most $1000$ and less than $873.4$, respectively. (The proof of Proposition~\ref{prop:explicitbootstrap} uses the cruder bound $24{\PL}M^2<e^{79}$, which needs only $\log\log a\geqslant79/0.0792$, and $79/0.0792<998$.) Both are below the threshold $1000.9$ of Lemma~\ref{lem:level}, and hence below $T_{\mathrm{base}}$. So $T_{\mathrm{boot}}$ is determined by $T_{\mathrm{base}}$. The threshold $2$ for $\beta_{\mathrm{fin}}$ comes from the order bound $N_t$ in Proposition~\ref{prop:DP}, which is in $O(t\log^2t)$ because the pieces of Theorem~\ref{thm:refinedlin} have $O(t\log t)$ vertices and the core $S$ built in the at most $M\sqrt{\log t}+1$ stages of the proof of Proposition~\ref{prop:DP} has $O(M^2t\log^2t)$ vertices. With pieces of order $O(t\log^3t)$, as in the small-dense-subgraph theorem of Delcourt and Postle, the order bound would be in $O(t\log^4t)$, and more steps would be needed. If the Gy\'arf\'as graphs are coloured with Lemma~\ref{lem:base} at every step, then each step raises the exponent by less than $0.49/2$, since the condition $(\beta_{\mathrm G}-2\zeta)T\geqslant1$ of Lemma~\ref{lem:level} forces $2\zeta<\beta_{\mathrm G}=0.49$. So seven steps are needed, and they give the constant $2^7\cdot4.01+127\,{\Sst}=77{,}681.0327$. Colouring the Gy\'arf\'as graphs of the final step with the result of the side step allows a larger increment $\zeta_{\mathrm f}=0.3667$, so one doubling is saved, at the cost of the larger additive term $\cS(c_{\mathrm{side}})$. Appendix~\ref{app:further} discusses other schedules.

\subsection{Small values of \texorpdfstring{$a$}{a}}\label{sec:small}

When $\log\log a<T_{\mathrm{boot}}$, graphs with no $K_a$ minor and at most $N_a$ vertices are coloured by the following bound.

\begin{lemma}\label{lem:indep}
Let $N_a$ be as in Proposition~\ref{prop:explicitbootstrap}. For every integer $a\geqslant3$, every graph $J$ with no $K_a$ minor and $v(J)\leqslant N_a$ satisfies
\[
\chi(J)\ \leqslant\ (144.4+4\log\log a)\,a .
\]
\end{lemma}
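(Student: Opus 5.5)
The plan is to apply Lemma~\ref{lem:greedy} directly. Let $p:=2a-2$. Since $J$ has no $K_a$ minor and $a\geqslant3$, Lemma~\ref{lem:greedy} gives $\chi(J)\leqslant\bigl(2+\max\{\log(v(J)/p),0\}\bigr)p$ (the null graph is trivial). As $p<2a$, it suffices to show that $2+\max\{\log(N_a/p),0\}\leqslant72.2+2\log\log a$. The maximum is then controlled by $\log(N_a/p)$ when $N_a\geqslant p$, which always holds because $N_a$ is a huge multiple of $a$.

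Next I will bound $N_a/p$. Since $a\geqslant3$ gives $p=2a-2\geqslant\frac43a$, we have $N_a/p\leqslant\frac34\cdot24{\PL}M^2\log^2a=18{\PL}M^2\log^2a$. Hence
\[
\log(N_a/p)\ \leqslant\ \log(18{\PL}M^2)+2\log\log a .
\]
With ${\PL}=4.56\cdot10^{18}$ and $M=10^5$, the quantity $18{\PL}M^2\approx8.2\cdot10^{29}$ has natural logarithm about $68.87$. This gives $2+\log(N_a/p)\leqslant70.9+2\log\log a$, which is below $72.2+2\log\log a$.

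One subtlety is that $\log\log a$ may be negative for small $a$. But $a\geqslant3$ gives $\log a\geqslant1.0986$, so $\log\log a>0$. In any case the inequality $\log(N_a/p)\leqslant\log(18{\PL}M^2)+2\log\log a$ holds as written.

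Multiplying by $p\leqslant2a$ then yields $\chi(J)\leqslant(141.8+4\log\log a)a\leqslant(144.4+4\log\log a)a$. The stated constant $144.4$ leaves slack. It presumably comes from a cruder estimate of $\log(24{\PL}M^2)$ with $p$ bounded below by $a$ rather than $\frac43a$: $\log(24{\PL}M^2)\approx69.16$, so $2(2+69.2)=142.4$, still within $144.4$.

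The only real obstacle is the numerical verification of $\log(18{\PL}M^2)$, or of $\log(24{\PL}M^2)$, against the constant. I would certify it by rational bounds, for instance $24{\PL}M^2<1.1\cdot10^{30}$ and $\log10<2.3026$, giving a logarithm below $69.2$.
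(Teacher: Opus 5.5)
Your proof is correct and follows the paper's argument: apply Lemma~\ref{lem:greedy} with $p=2a-2$, bound $v(J)/p$ by a constant times $\log^2a$, and use $\log\log a>0$ to absorb the maximum with $0$. The differences are only numerical. The paper uses $p\geqslant a$ and the certified bound $\log(24\cdot4.56\cdot10^{28})\leqslant70$ (via $2.7^{70}$), which gives $(72+2\log\log a)(2a-2)<(144+4\log\log a)a$. Your bound $p\geqslant\frac43a$ gives a little more slack than that.
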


\begin{proof}
This is the proof of Theorem~1.5 by Delcourt and Postle~\cite{DP}, with $N_a$ in place of their order bound and with Theorem~\ref{thm:RS} (through Lemma~\ref{lem:greedy}) in place of the theorem of Duchet and Meyniel.
We may assume that $J$ is non-null. Let $p:=2a-2$, so $p\geqslant a$ since $a\geqslant3$. Lemma~\ref{lem:greedy} gives $\chi(J)\leqslant\bigl(2+\max\{\log(v(J)/p),0\}\bigr)p$. Since $v(J)\leqslant N_a$ and $a/p\leqslant1$, we have $v(J)/p\leqslant N_a/p\leqslant24{\PL}M^2\log^2a$. Since $24{\PL}M^2\leqslant2.7^{70}<e^{70}$, we have $\log(24{\PL}M^2)\leqslant70$. Since also $\log\log a>0$, it follows that $\max\{\log(v(J)/p),0\}\leqslant70+2\log\log a$. Hence
\[
\chi(J)\ \leqslant\ (72+2\log\log a)\cdot(2a-2)\ <\ (144+4\log\log a)\,a\ \leqslant\ (144.4+4\log\log a)\,a .
\]
\end{proof}

Let $a\geqslant3$ be an integer with $\log\log a<T_{\mathrm{boot}}$. Then, by Lemma~\ref{lem:indep}, every graph $J$ with no $K_a$ minor and $v(J)\leqslant N_a$ satisfies $\chi(J)\leqslant(144.4+4T_{\mathrm{boot}})a=4788.4\,a\leqslant {\cst} a$. Together with Proposition~\ref{prop:explicitbootstrap}, we obtain the following.

\begin{corollary}\label{cor:allscales}
Let ${\cst}=45{,}024.9873$ and $N_a$ be as in Proposition~\ref{prop:explicitbootstrap}. For every integer $a\geqslant3$, every graph $J$ with no $K_a$ minor and $v(J)\leqslant N_a$ satisfies $\chi(J)\leqslant {\cst} a$.
\end{corollary}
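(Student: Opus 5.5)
The corollary is a case split on $\log\log a$ against the threshold $T_{\mathrm{boot}}=1161$, gluing Proposition~\ref{prop:explicitbootstrap} to Lemma~\ref{lem:indep}. Let $a\geqslant3$ be an integer and let $J$ be a graph with no $K_a$ minor and $v(J)\leqslant N_a$.

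If $\log\log a\geqslant T_{\mathrm{boot}}$, then I will apply Proposition~\ref{prop:explicitbootstrap} with this $a$, viewed as a real number. Since $J$ has no $K_a$ minor, we have $\had(J)<a$. Together with $v(J)\leqslant N_a$, the proposition gives $\chi(J)\leqslant{\cst}a$ directly. A null $J$ is harmless in this case, and also in the next.

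If instead $\log\log a<T_{\mathrm{boot}}$, then Lemma~\ref{lem:indep} applies, since $a\geqslant3$ is an integer, and it gives $\chi(J)\leqslant(144.4+4\log\log a)a$. Because $a\geqslant3$, we have $\log\log a>0$, and $\log\log a<1161$. So the coefficient is less than $144.4+4644=4788.4$, which is well below ${\cst}=45{,}024.9873$. Hence $\chi(J)\leqslant{\cst}a$.

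There is no genuine obstacle here. The only points needing care are that Proposition~\ref{prop:explicitbootstrap} is stated for real $a$, so it covers integer $a$, and that its hypothesis $\had(J)<a$ is exactly the hypothesis that $J$ has no $K_a$ minor. The numerical comparison $4788.4<45{,}024.9873$ is immediate.
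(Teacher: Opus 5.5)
Your proposal is correct and is the paper's argument: the paper derives Corollary~\ref{cor:allscales} by splitting on whether $\log\log a\geqslant T_{\mathrm{boot}}$. It uses Proposition~\ref{prop:explicitbootstrap} above the threshold and Lemma~\ref{lem:indep} below it, where the coefficient is at most $144.4+4T_{\mathrm{boot}}=4788.4\leqslant{\cst}$.
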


%=====================================================================
\section{Rooted minors via massed pairs}\label{sec:wollan}
%=====================================================================

This section proves Lemma~\ref{lem:Ck}, which states that, for an integer $t$ with $\log t\geqslant1.6\cdot10^6$, every $9t$-connected graph with at least $70t$ times as many edges as vertices is $(2x,\lfloor4.004t\rfloor)$-woven, where $x=\lceil t/(M\sqrt{\log t})\rceil$ and $M=10^5$. It is used in Section~\ref{sec:insep}, where it shows that the small dense pieces are woven. Lemma~\ref{lem:W} reduces wovenness to rooted models, in massed pairs, of the disjoint union of a clique of order $2x$ and a matching, Lemma~\ref{lem:massed} provides these rooted models, and Lemma~\ref{lem:Ck} combines the two. Only the clique needs a density argument, and the matching is routed by the linkage theorem of Thomas and Wollan. Since $2x$ is tiny compared with $t$, the clique model lies in a small set of vertices, and deleting this set leaves enough connectivity for the linkage. Recall that $K_s+\ell K_2$ is the disjoint union of $K_s$ and a matching with $\ell$ edges. For a graph $G$ and $Y\subseteq V(G)$, let $\rho_G(Y)$ denote the number of edges of $G$ with at least one end in $Y$. Following Wollan~\cite{Wol08} (see also~\cite{TW05}), a pair $(G,X)$ with $X\subseteq V(G)$ is \defn{$\lambda$-massed} if:
\begin{enumerate}[label=(M\arabic*),nosep]
\item $\rho_G(V(G)\setminus X)>\lambda\,|V(G)\setminus X|$, and
\item every non-trivial separation $(A,B)$ of $G$ with $X\subseteq A$ and order at most $|X|-1$ satisfies $\rho_G(B\setminus A)\leqslant\lambda|B\setminus A|$.
\end{enumerate}
Here, as in Section~\ref{sec:defs} and in Wollan's paper, a separation $(A,B)$ is trivial if $A\subseteq B$ or $B\subseteq A$. If $B\subseteq A$, then $\rho_G(B\setminus A)=\rho_G(\emptyset)=0$. If $X\subseteq A\subseteq B$, then $(A,B)$ has order $|A|\geqslant|X|$. So condition~(M2) is the same when it is required for every separation $(A,B)$ of $G$ with $X\subseteq A$ and order at most $|X|-1$, as in the formalisation. We use (M2) in this last form.

Wollan~\cite{Wol08} proves extremal theorems for rooted minors by studying a minimal counterexample. We use the following consequence of his structural results, rather than his final numerical bound. Let $F$ be a graph. A graph $G$ is \defn{$F$-universal at} a set $Z\subseteq V(G)$ if, for every injection $\pi:Z\to V(F)$, the graph $G$ has a model of $F[\pi(Z)]$ rooted at $Z$ according to $\pi$. A graph $J$ is \defn{$F$-universal} if $v(J)\geqslant|V(F)|$ and $J$ is $F$-universal at every set of $|V(F)|$ vertices of $J$. Equivalently, $v(J)\geqslant|V(F)|$ and, for every set $Y\subseteq V(J)$ with $|Y|=|V(F)|$ and every bijection $\pi:Y\to V(F)$, the graph $J$ has a model of $F$ rooted at $Y$ according to $\pi$.

\begin{theorem}[Wollan~{\cite[Theorems~2.1--2.3]{Wol08}}]\label{thm:wollanmin}
Let $F$ be a graph, and let $\lambda$ be real with $1\leqslant|V(F)|\leqslant\lambda$. Suppose that every non-null graph $D$ with $v(D)\leqslant2\lambda+1$ and $\delta(D)\geqslant\lambda-|V(F)|$ has an $F$-universal induced subgraph. Then for every $\lambda$-massed pair $(G,X)$ with $|X|\leqslant|V(F)|$ and every injection $\pi:X\to V(F)$, the graph $G$ has a model of $F[\pi(X)]$ rooted at $X$ according to $\pi$. That is, $G$ is $F$-universal at $X$.
\end{theorem}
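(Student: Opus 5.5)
The plan is to follow Wollan's minimal counterexample argument. This is Theorem~\ref{thm:wollanmin}, which packages his Theorems~2.1--2.3. Suppose the conclusion fails. Among all counterexamples, meaning a $\lambda$-massed pair $(G,X)$ with $|X|\leqslant|V(F)|$ and an injection $\pi$ with no rooted model of $F[\pi(X)]$, choose one minimising $v(G)+e(G)$. The first step is to enlarge $X$ if necessary, and normalise so that $|X|=|V(F)|$ is the main case. Adding a vertex to $X$ keeps (M1) and (M2) in the forms we need, since (M2) only becomes weaker in the relevant direction. A model of $F[\pi(X')]$ restricts to one of $F[\pi(X)]$ by deleting the extra branch sets, so we may assume $|X|$ is as large as possible.

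Next we derive the structure of a minimal counterexample, as in Wollan's Theorems~2.1 and~2.2. (i) Take any separation $(A,B)$ with $X\subseteq A$, order less than $|X|$, and $B\setminus A\ne\emptyset$. Minimality, together with (M1) and (M2) applied to $G[A]$ with the separator made into a clique (or linked to $X$ by Menger), shows that $B\setminus A$ could be contracted away. So no such separation exists, and $X$ is ``well connected'' to the rest. (ii) Any edge $e$ not inside $X$ cannot be deleted, because $(G-e,X)$ would lose (M1). Likewise no edge can be contracted, because the contracted pair would inherit massedness and yield a model that lifts back. For an edge $uv$ outside $X$, contraction must fail (M1). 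This says the number of edges lost, namely $1+|N(u)\cap N(v)|$, exceeds $\lambda$. So every edge outside $X$ lies in at least $\lambda-1$ triangles. (iii) Deletion minimality gives $\rho_G(V\setminus X)\leqslant\lambda|V\setminus X|+1$. Combined with (i), this forces a vertex $v\notin X$ of degree at most about $2\lambda+1$.

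Then we apply the hypothesis locally, following Wollan's Theorem~2.3. Let $D:=G[N(v)]$. By (ii), $\delta(D)\geqslant\lambda-1$, and by (iii), $v(D)\leqslant2\lambda+1$. After discarding the at most $|V(F)|$ roots that might lie in $N(v)$, we get $\delta\geqslant\lambda-|V(F)|$. So the hypothesis gives an $F$-universal induced subgraph $J\subseteq D$ with $v(J)\geqslant|V(F)|$. By (i), there is no separation of order less than $|X|$ between $X$ and $V(J)$. Menger (Theorem~\ref{thm:menger}) then gives $|X|$ disjoint $X$--$V(J)$ paths. These paths land on a set $Y\subseteq V(J)$, and the bijection induced by $\pi$ transfers to $Y$. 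Universality of $J$ yields a model rooted at $Y$, and appending the paths gives a model of $F[\pi(X)]$ rooted at $X$. This contradicts the choice of the counterexample. Some care is needed because the paths may pass through $V(J)$; trimming handles this.

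The main obstacle is step (ii)/(iii). We must verify that the contraction and deletion minors remain $\lambda$-massed, which means checking that (M2) survives. A separation in $G/uv$ lifts to one in $G$ of order at most one larger, and the edge-count bookkeeping with $\rho$ must be done precisely. This is exactly the content of Wollan's Theorems~2.1--2.2. I would follow his proofs verbatim, only checking that the formulation with injective $\pi$ and $|X|\leqslant|V(F)|$ (rather than $|X|=|V(F)|$) is handled by the reduction in the first step.
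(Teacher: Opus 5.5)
Your overall route is the paper's: take Wollan's minimal pair, find a low-degree vertex outside $X$, show its neighbourhood is dense, apply the hypothesis to get an $F$-universal subgraph, and contradict Wollan's Theorem~2.3 (your Menger endgame reproduces that theorem, given your property~(i)). However, two steps fail as written.

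\textbf{The normalisation to $|X|=|V(F)|$ is invalid.} Adding a vertex $v$ to $X$ does \emph{not} preserve (M2). For $X'=X\cup\{v\}$, condition (M2) constrains separations of order up to $|X|$. Condition (M2) for $X$ says nothing about separations of order exactly $|X|$.

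Here is a counterexample. Let $G$ consist of $X$ and two large cliques $K_1,K_2$, with every clique vertex adjacent to all of $X$. Then $(G,X)$ is $\lambda$-massed, and (M2) is vacuous since any separation with $X\subseteq A$ and $B\setminus A\ne\emptyset$ has $X\subseteq A\cap B$. But for $v\in K_1$, the separation $(X\cup K_1,X\cup K_2)$ violates (M2) for $X\cup\{v\}$, and symmetrically for $v\in K_2$.

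The reduction is also unnecessary, since nothing later needs more than $|X|\leqslant|V(F)|$. What does need separate treatment is $|X|\leqslant2$, because Wollan's Theorem~2.2(2) assumes $|X|\geqslant3$. The paper handles it directly:
\begin{itemize}
\item If $|X|\leqslant1$ or $\pi(x_1)\pi(x_2)\notin E(F)$, the singletons form a model.
\item If $x_1,x_2$ lie in one component, split an $x_1$--$x_2$ path into two branch sets.
\item Otherwise, the two order-$1$ separations through the component of $x_1$ give, via (M2), $\rho_G(V(G)\setminus X)\leqslant\lambda|V(G)\setminus X|$, contradicting (M1).
\end{itemize}
The paper also rules out isolated vertices outside $X$, which you need below.

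\textbf{The degree count is one short.} With $D=G[N(v)]$ and the roots discarded, you only get $\delta\geqslant(\lambda-1)-|X|$. This misses $\lambda-|V(F)|$ exactly when $|X|=|V(F)|$, the case you normalised to. Even the sharper count $\lfloor\lambda\rfloor$ is still short by the fractional part of $\lambda$.

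The paper avoids both losses in two ways.
\begin{itemize}
\item It uses the closed neighbourhood $D=G[\{z\}\cup N_G(z)]$. This needs the sharp bound $\deg_G(z)\leqslant2\lambda$ of Theorem~2.2(2), not ``about $2\lambda+1$''.
\item It keeps the roots, using Theorem~2.2(1): $z$ and $w$ have at least $\lfloor\lambda\rfloor$ common neighbours if $w\notin X$, and at least $\lfloor\lambda\rfloor-t_w$ if $w\in X$. Here $t_w\leqslant|X|-1$ counts the neighbours of $z$ in $X\setminus\{w\}$; contracting $zw$ into the root $w$ also removes these edges from $\rho$.
\end{itemize}
This gives $\deg_D(w)\geqslant\lfloor\lambda\rfloor-|X|+2>\lambda-|X|$ for every $w\in N_G(z)$. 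It also gives $\deg_D(z)>\lambda-|X|$, because $z$ has some neighbour $w$ and is adjacent to all their common neighbours. Hence $\delta(D)\geqslant\lambda-|V(F)|$, as the hypothesis requires.
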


Wollan does not state Theorem~\ref{thm:wollanmin} in this form, but it is the first step of his proof of~\cite[Theorem~5.1]{Wol08}, and follows from his Theorems~2.2 and~2.3 as below (his Theorem~2.1 is used in their proofs). These theorems concern an \defn{$(F,\lambda)$-minimal} pair, that is, a $\lambda$-massed pair $(G,X)$ with $|X|\leqslant|V(F)|$ such that $G$ is not $F$-universal at $X$, with $v(G)$ as small as possible and, subject to this, with $\rho_G(V(G)\setminus X)$ as small as possible. Wollan's Theorems~2.1--2.3 are formalised in \texttt{LinearHadwiger}, following his proofs, and Theorem~\ref{thm:wollanmin} is derived from them in the formalisation of this paper.

\begin{proof}
Suppose that the theorem fails, and let $(G,X)$ be an $(F,\lambda)$-minimal pair. Then $|X|\geqslant3$, as Wollan assumes in his proof of~\cite[Theorem~2.2(2)]{Wol08}. Indeed, let $\pi:X\to V(F)$ be an injection. If $|X|\leqslant1$, or if $X=\{x_1,x_2\}$ with $\pi(x_1)\pi(x_2)\notin E(F)$, then the sets $\{x\}$ with $x\in X$ form a model of $F[\pi(X)]$ rooted at $X$ according to $\pi$. Now let $X=\{x_1,x_2\}$ with $\pi(x_1)\pi(x_2)\in E(F)$. If $x_1$ and $x_2$ lie in a common component of $G$, then splitting an $x_1$--$x_2$ path into two subpaths, one containing $x_1$ and the other containing $x_2$, gives such a model. Otherwise, let $C$ be the vertex set of the component of $G$ containing $x_1$. The separations $\bigl((V(G)\setminus C)\cup\{x_1\},C\bigr)$ and $\bigl(C\cup\{x_2\},V(G)\setminus C\bigr)$ of $G$ have order $1=|X|-1$, so by~(M2), $\rho_G(Y_i)\leqslant\lambda|Y_i|$ for $Y_1:=C\setminus\{x_1\}$ and $Y_2:=V(G)\setminus(C\cup\{x_2\})$. The sets $Y_1$ and $Y_2$ partition $V(G)\setminus X$, and no edge joins them. So $\rho_G(V(G)\setminus X)=\rho_G(Y_1)+\rho_G(Y_2)\leqslant\lambda|V(G)\setminus X|$, contrary to~(M1). Hence $|X|\leqslant2$ is impossible for an $(F,\lambda)$-minimal pair, and so $|X|\geqslant3$. No vertex $z\in V(G)\setminus X$ is isolated. Otherwise $(G-z,X)$ is $\lambda$-massed, since $\rho$ is unchanged, $|V(G)\setminus X|$ decreases, and a separation $(A,B)$ of $G-z$ as in~(M2) gives the separation $(A\cup\{z\},B)$ of $G$ with the same order and the same set $B\setminus A$. Since $G-z$ is not $F$-universal at $X$, this contradicts the minimality of $v(G)$. By~\cite[Theorem~2.2(2)]{Wol08}, some vertex $z\in V(G)\setminus X$ has $\deg_G(z)\leqslant2\lambda$. Let $D:=G[\{z\}\cup N_G(z)]$, so that $v(D)\leqslant2\lambda+1$. Let $w\in N_G(z)$. By~\cite[Theorem~2.2(1)]{Wol08}, the vertices $z$ and $w$ have at least $\lfloor\lambda\rfloor$ common neighbours if $w\notin X$, and at least $\lfloor\lambda\rfloor-t_w$ common neighbours if $w\in X$, where $t_w\leqslant|X|-1$ is the number of neighbours of $z$ in $X\setminus\{w\}$. In both cases they have at least $\lfloor\lambda\rfloor-(|X|-1)$ common neighbours. These common neighbours and $z$ lie in $N_D(w)$, so $\deg_D(w)\geqslant\lfloor\lambda\rfloor-|X|+2>\lambda-|X|$. Since $z$ is not isolated, it has such a neighbour $w$. As $z$ is adjacent to $w$ and to their common neighbours, $\deg_D(z)=\deg_G(z)>\lambda-|X|$. Hence $\delta(D)\geqslant\lambda-|V(F)|$, and by hypothesis $D$ has an $F$-universal induced subgraph. This is an $F$-universal subgraph of $G$, contrary to~\cite[Theorem~2.3]{Wol08}.
\end{proof}

Wollan's main theorem~\cite[Theorem~5.1]{Wol08} combines his structural results with a general argument that works for every graph $F$. Its consequence~\cite[Theorem~1.1]{Wol08}, which is the form used by Delcourt and Postle~\cite[Theorem~5.9]{DP}, states that every $|V(F)|$-connected graph $G$ with at least $(9c+26{,}833|V(F)|)\,v(G)$ edges has a rooted model of $F$ for every choice of roots, where $c\geqslant1$ is such that every graph with at least $c$ times as many edges as vertices has an $F$ minor. Lemmas~\ref{lem:massed} and~\ref{lem:Ck} replace this theorem for the graphs $F=K_s+\ell K_2$ that we need, where $s\leqslant2\cdot10^{-8}t$ and $\ell=\lfloor4.004t\rfloor$.

\begin{lemma}\label{lem:W}
Let $p\geqslant1$ and $\ell\geqslant0$ be integers, let $F:=K_p+\ell K_2$, and let $\lambda>1$ be real with $\lambda\geqslant p+2\ell$. Suppose that for every $\lambda$-massed pair $(G_0,X)$ with $|X|\leqslant p+2\ell$ and every injection $\pi:X\to V(F)$, the graph $G_0$ has a model of $F[\pi(X)]$ rooted at $X$ according to $\pi$. Then every graph $G$ with $e(G)\geqslant\lambda\,v(G)$ and $\kappa(G)\geqslant2p+2\ell$ is $(p,\ell)$-woven.
\end{lemma}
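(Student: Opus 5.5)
The plan is to mirror Step~1 of the proof of Lemma~\ref{lem:weave}, reducing to the case where roots and terminals are pairwise disjoint. We then check that $(G,X)$ is $\lambda$-massed for the set $X$ of all roots and terminals, and read off the woven structure from the rooted model of $F[\pi(X)]$ supplied by the hypothesis.

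\emph{Reduction.} Fix $\ell'\leqslant\ell$ and $R=\{r_1,\dots,r_p\}$, $S$, $T$ as in the definition of $(p,\ell)$-woven.
\begin{itemize}[nosep]
\item \emph{Pairs with $s_i=t_i$.} We discard these pairs and will later use the one-vertex path $s_i$.
\item \emph{Roots in $S\cup T$.} For each root $r_i\in S\cup T$ we choose a replacement $r_i^*\in N_G(r_i)\setminus(R\cup S\cup T)$, with the $r_i^*$ distinct.
\end{itemize}
This is possible by Hall's theorem, because there are at most $p$ such roots and each has at least
\[
\delta(G)-|R\cup S\cup T|\geqslant(2p+2\ell)-(p+2\ell')\geqslant p
\]
available neighbours. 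This is where the term $2p$ in the connectivity hypothesis is used. As in Step~1 of Lemma~\ref{lem:weave}, a $K_p$ model rooted at $R^*$ together with a linkage of the remaining pairs disjoint from it can be converted back. We add each $r_i$ to the branch set containing $r_i^*$ (using the edge $r_ir_i^*$) and reinstate the trivial paths. The resulting model and linkage then meet exactly in $R\cap(S\cup T)$, by the same case check as there. So we may assume that $R$, $S$, $T$ are pairwise disjoint and that $s_i\ne t_i$ for all $i$.

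\emph{Massedness.} Let $X:=R\cup S\cup T$, so $|X|=p+2\ell'\leqslant p+2\ell\leqslant\lambda$. Let $\pi$ send $r_i$ to the $i$th vertex of $K_p$, and $s_i,t_i$ to the ends of the $i$th edge of $\ell K_2$.

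For (M1), only edges inside $G[X]$ fail to be counted by $\rho_G(V(G)\setminus X)$. Hence
\[
\rho_G(V(G)\setminus X)\geqslant\lambda\,v(G)-\tbinom{|X|}2>\lambda\,v(G)-\lambda|X|=\lambda|V(G)\setminus X|,
\]
where the strict inequality uses $|X|-1<2\lambda$ (true since $|X|\leqslant\lambda$); if $X=\emptyset$ it holds since $\lambda v(G)>0$.

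For (M2), let $(A,B)$ be a separation with $X\subseteq A$ and order at most $|X|-1<\kappa(G)$. By $k$-connectivity it has $A\setminus B=\emptyset$ or $B\setminus A=\emptyset$, since $\kappa(G)\leqslant v(G)-1$. If $A\subseteq B$ then its order is $|A|\geqslant|X|$, which is impossible. So $B\subseteq A$, and $\rho_G(B\setminus A)=0$. Hence $(G,X)$ is $\lambda$-massed.

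\emph{Reading off the structure.} The hypothesis, applied with $G_0=G$, gives a model of $F[\pi(X)]=K_p+\ell' K_2$ rooted at $X$ according to $\pi$. The $p$ clique branch sets form a $K_p$ model rooted at $R$. For each $i$, the two adjacent connected branch sets containing $s_i$ and $t_i$ contain an $s_i$--$t_i$ path. These paths are pairwise disjoint and disjoint from the clique branch sets, which is what the reduced case requires.

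The only delicate point is the counting in the reduction step. Replacing the ends $t_i$ of trivial pairs by neighbours, as Lemma~\ref{lem:weave} does, would require $p+\ell'$ replacements, which the degree bound cannot supply. This is why trivial pairs are instead dropped from $X$ and handled by one-vertex paths. With that choice, the remaining verifications are routine.
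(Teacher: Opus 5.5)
Your proof has a genuine gap in how it treats trivial pairs. When you drop a pair with $s_j=t_j$ from $X$, nothing constrains the vertex $s_j$ any more. The hypothesis only controls a model of $F[\pi(X)]$ rooted at $X$, and that model, together with the paths you extract from the matching branch sets, may pass through $s_j$. When you then reinstate the one-vertex path $s_j$, it can meet a clique branch set or another path of the linkage. Either the linkage is not vertex-disjoint, or $V(\mathcal M)\cap V(\mathcal P)$ contains $s_j\notin R$.

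The same problem breaks your conversion step when a root $r_i$ equals a trivial $s_j=t_j$. The case check in Step~1 of Lemma~\ref{lem:weave} relies on $r_i\in V(\mathcal P^*)$, which forces $r_i\notin V(\mathcal M^*)$. Here $r_i$ lies in no reduced terminal set, so $\mathcal M^*$ may put $r_i$ in another branch set, and adding $r_i$ to $M_i^*$ destroys disjointness. (If you meant to replace only roots in the reduced $S\cup T$, this root case is fine, but a trivial $s_j\notin R$ still fails.)

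The fix is to keep each trivial vertex in $X$. The paper does this as follows:
\begin{itemize}
\item It puts $s_j$ into $X$ with $\pi(s_j):=a_j$ and leaves $b_j$ outside $\pi(X)$. Then $a_j$ is an isolated vertex of $F[\pi(X)]$, and the rooted model gives a branch set containing $s_j$ that is disjoint from all the others.
\item It replaces only the roots lying in $S\cup T$ (the full sets) by neighbours. So $|X|\leqslant p+2\ell'$, and your massedness checks go through unchanged.
\end{itemize}

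Your claim that replacing $t_j$ by a neighbour, as in Lemma~\ref{lem:weave}, is impossible is also mistaken. Suppose there are $q$ roots in $S\cup T$ and $u$ trivial pairs. Then $|R\cup S\cup T|\leqslant p+2\ell'-u$, so each of the at most $q+u$ entries needing a replacement has at least $2p+2\ell-(p+2\ell'-u)\geqslant p+u\geqslant q+u$ neighbours outside $R\cup S\cup T$, and Hall's theorem applies. Either repair works. The rest of your argument (the (M1) and (M2) checks, and reading off the model and linkage) matches the paper's.
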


\begin{proof}
The proof combines Wollan's derivation of~\cite[Theorem~1.1]{Wol08}, which checks the massed conditions using connectivity, with the argument of Delcourt and Postle~\cite[Lemmas~5.11 and~5.13]{DP}. As illustrated in Figure~\ref{fig:W}, the latter uses the graph $K_p+\ell K_2$ and attaches each root $r_i$ that is a terminal to a neighbour $r'_i$.
Let $\ell'\in\{0,\dots,\ell\}$, and let $R=\{r_1,\dots,r_p\}$, $S=\{s_1,\dots,s_{\ell'}\}$ and $T=\{t_1,\dots,t_{\ell'}\}$ be as in the definition of $(p,\ell)$-woven. Let $I:=\{i\in[p]:r_i\in S\cup T\}$. Since $\delta(G)\geqslant\kappa(G)\geqslant2p+2\ell$ and $|R\cup S\cup T|\leqslant p+2\ell$, each $r_i$ has at least $p$ neighbours outside $R\cup S\cup T$. So, by Hall's theorem, there are pairwise distinct vertices $r'_1,\dots,r'_p\notin R\cup S\cup T$ with $r'_i$ adjacent to $r_i$ for each $i\in[p]$. For $i\in[p]$, let $x_i:=r'_i$ if $i\in I$, and $x_i:=r_i$ otherwise. Let $X:=\{x_1,\dots,x_p\}\cup S\cup T$. The vertices $x_1,\dots,x_p$ are distinct and lie outside $S\cup T$. Moreover $|X|\leqslant p+2\ell\leqslant\lambda$ and $|X|\leqslant\kappa(G)$.

We claim that $(G,X)$ is $\lambda$-massed. For~(M2), let $(A,B)$ be a separation of $G$ with $X\subseteq A$ and order at most $|X|-1<\kappa(G)$. Since $G$ is $\kappa(G)$-connected, one of $A\setminus B$ and $B\setminus A$ is empty. If $A\setminus B=\emptyset$, then $X\subseteq A\subseteq A\cap B$, contradicting $|A\cap B|<|X|$. So $B\setminus A=\emptyset$, and hence $\rho_G(B\setminus A)=0$. So~(M2) holds. For~(M1), note that every edge of $G$ not counted by $\rho_G(V(G)\setminus X)$ has both ends in $X$. So $e(G)\leqslant\rho_G(V(G)\setminus X)+\binom{|X|}2\leqslant\rho_G(V(G)\setminus X)+\binom{|X|+1}2$. Since $1\leqslant p\leqslant|X|\leqslant\lambda$ and $\lambda>1$, we have $|X|+1<2\lambda$, and hence $\binom{|X|+1}2=\frac{(|X|+1)|X|}2<\lambda|X|$. Since $e(G)\geqslant\lambda\,v(G)$,
\[
\rho_G(V(G)\setminus X)\ \geqslant\ e(G)-\tbinom{|X|+1}{2}\ >\ \lambda v(G)-\lambda|X|\ =\ \lambda|V(G)\setminus X| .
\]

Let $F$ have clique vertices $q_1,\dots,q_p$ and matching edges $a_1b_1,\dots,a_\ell b_\ell$. Define $\pi(x_i):=q_i$. If $s_j\ne t_j$, put $\pi(s_j):=a_j$ and $\pi(t_j):=b_j$. If $s_j=t_j$, put $\pi(s_j):=a_j$. Since the sets $\{s_j,t_j\}$ with $j\in[\ell']$ are pairwise disjoint, this defines an injection $\pi:X\to V(F)$. By hypothesis, $G$ has a model $\{B_f:f\in\pi(X)\}$ of $F[\pi(X)]$ rooted at $X$ according to $\pi$.

For each $j$ with $s_j\ne t_j$, the sets $B_{a_j}$ and $B_{b_j}$ induce connected subgraphs and are joined by an edge, so their union contains an $s_j$--$t_j$ path $P_j$. If $s_j=t_j$, let $P_j$ be the path with the single vertex $s_j$. The paths $P_j$ are pairwise disjoint and disjoint from every $B_{q_i}$. For $i\in[p]$, let $M_i:=B_{q_i}\cup\{r_i\}$. If $i\notin I$, then $r_i=x_i\in B_{q_i}$, so $M_i=B_{q_i}$. If $i\in I$, then $r_i\in S\cup T$ lies in some set $B_{a_j}$ or $B_{b_j}$, so not in any $B_{q_{i'}}$, and $G[M_i]$ is connected since $r_ir'_i\in E(G)$ and $r'_i\in B_{q_i}$. So $\mathcal M:=\{M_1,\dots,M_p\}$ is a $K_p$ model rooted at $R$, and $\mathcal P:=\{P_1,\dots,P_{\ell'}\}$ is an $\{(s_j,t_j)\}$-linkage. Let $w\in M_i\cap V(P_j)$ for some $i\in[p]$ and $j\in[\ell']$. Then $w\in B_{a_j}\cup B_{b_j}$, so $w\notin B_{q_i}$. Hence $w=r_i$ and $i\in I$. Conversely, if $i\in I$, then $r_i\in\{s_j,t_j\}$ for some $j\in[\ell']$, so $r_i\in M_i\cap V(P_j)$. Therefore $V(\mathcal M)\cap V(\mathcal P)=R\cap(S\cup T)$.
\end{proof}

\begin{figure}[ht]
\centering
\begin{tikzpicture}[scale=0.85,
  vtx/.style={circle,fill,inner sep=1.2pt},
  lab/.style={font=\footnotesize},
  lpath/.style={orange!80!black,very thick}]
\draw[fill=red!10,draw=red!70!black] (1.0,3.1) ellipse (0.8 and 0.45);
\draw[fill=red!10,draw=red!70!black] (1.0,1.05) ellipse (0.8 and 0.45);
\node[lab] at (1.0,3.8) {$B_{q_1}$}; \node[lab] at (-0.25,1.05) {$B_{q_2}$};
\node[vtx] (x1) at (0.65,3.1) {}; \node[lab] at (-0.55,3.1) {$r_1=x_1$};
\node[vtx] (q1b) at (1.3,2.9) {}; \node[vtx] (q2a) at (1.3,1.25) {};
\node[vtx] (x2) at (1.55,0.95) {}; \node[lab] at (2.0,0.35) {$r'_2=x_2$};
\draw (x1)--(q1b); \draw (q2a)--(x2); \draw[gray] (q1b)--(q2a);
\foreach \y/\j in {3.1/1,1.05/2}{
  \draw[fill=gray!10] (4.6,\y) ellipse (1.0 and 0.5);
  \draw[fill=gray!10] (7.4,\y) ellipse (1.0 and 0.5);
  \node[lab] at (4.6,\y+0.75) {$B_{a_\j}$}; \node[lab] at (7.4,\y+0.75) {$B_{b_\j}$};
}
\node[vtx] (s1) at (4.15,3.2) {}; \node[lab] at (4.15,2.95) {$s_1$};
\node[vtx] (t1) at (7.75,3.2) {}; \node[lab] at (7.75,2.95) {$t_1$};
\node[vtx] (r2) at (4.15,1.15) {}; \node[lab] at (4.4,0.85) {$r_2=s_2$};
\node[vtx] (t2) at (7.75,1.15) {}; \node[lab] at (7.75,0.9) {$t_2$};
\draw[red!70!black,very thick] (x2)--(r2);
\draw[lpath] (s1) .. controls (5.4,3.4) and (6.6,2.85) .. node[lab,above,pos=0.5] {$P_1$} (t1);
\draw[lpath] (r2) .. controls (5.4,1.35) and (6.6,0.8) .. node[lab,above,pos=0.55] {$P_2$} (t2);
\end{tikzpicture}
\caption{The proof of Lemma~\ref{lem:W} with $p=\ell=2$. Since $r_2=s_2$, the root $r_2$ is replaced in $X$ by its neighbour $r'_2$. In the rooted model of $K_2+2K_2$, the path $P_j$ joins $s_j$ to $t_j$ inside $B_{a_j}\cup B_{b_j}$. The branch sets $B_{q_1}$ and $B_{q_2}\cup\{r_2\}$ form a $K_2$ model rooted at $\{r_1,r_2\}$ that meets the linkage only in $r_2$.}\label{fig:W}
\end{figure}

\paragraph{The weaving constant.} We call the constant ${\Ck}$ of Lemma~\ref{lem:Ck} below the \defn{weaving constant}. The other parameters of Lemma~\ref{lem:Ck} are chosen as follows. The number $\ell=\lfloor4.004t\rfloor$ is an upper bound on the number of linkage pairs in each application of Lemma~\ref{lem:515} in the proof of Proposition~\ref{prop:DP}. Delcourt and Postle~\cite[proof of Lemma~6.4]{DP} bound the number of pairs by $10t$. They also take $x=\lceil t/\sqrt{\log t}\rceil$, so that the density that Theorem~\ref{thm:kos} requires for a $K_{2x}$ minor alone is about $60t$. The factor $M=10^5$ makes $s=2x\leqslant2\cdot10^{-8}t$ when $\log t\geqslant1.6\cdot10^6$. Then the $K_s$ model in the proof of Lemma~\ref{lem:massed} lies in a set $Z$ of at most $0.008t$ vertices of a $(9t+1)$-connected graph $J$, and $J-Z$ is still $8.99t$-connected, which suffices for the linkage found there. The cost is that Proposition~\ref{prop:DP} has at most $M\sqrt{\log t}+1$ stages instead of about $\sqrt{\log t}$. This affects only the order bound $N_t=24{\PL}M^2\,t\log^2t$ and the size thresholds, not the final constant.

\begin{figure}[ht]
\centering
\begin{tikzpicture}[scale=0.9,
  vtx/.style={circle,fill,inner sep=1.3pt},
  lab/.style={font=\footnotesize},
  rpath/.style={red!75!black,thick,preaction={draw,white,line width=2.2pt}}]
\draw[fill=gray!5] (0,0) ellipse (5.2 and 2.7);
\node[lab] at (-4.1,2.25) {$J$};
\draw[dashed,fill=gray!14] (0,0.2) ellipse (1.5 and 1.05);
\node[lab] at (0,1.5) {$Z$};
\draw[fill=red!15,draw=red!75!black] (-0.55,0.2) ellipse (0.45 and 0.65);
\draw[fill=orange!20,draw=orange!80!black] (0.55,0.2) ellipse (0.45 and 0.65);
\draw (-0.1,0.2)--(0.1,0.2);
\node[lab] at (-0.6,0.45) {$B_1$};
\node[lab] at (0.55,0.45) {$B_2$};
\node[vtx,red!75!black] (z1) at (-0.65,-0.15) {}; \node[lab] at (-0.4,-0.15) {$z_1$};
\node[vtx,orange!80!black] (z2) at (0.65,-0.15) {}; \node[lab] at (0.4,-0.15) {$z_2$};
\node[vtx,red!75!black] (y1) at (-2.0,-1.0) {}; \node[lab,below] at (-2.0,-1.05) {$y_1$};
\node[vtx,orange!80!black] (y2) at (2.0,-1.0) {}; \node[lab,below] at (2.0,-1.05) {$y_2$};
\draw[red!75!black,thick] (z1)--(y1);
\draw[orange!80!black,thick] (z2)--(y2);
% roots in Y
\node[vtx,red!75!black] (r1) at (-4.0,0.6) {}; \node[lab,left] at (-4.05,0.6) {$r_1$};
\node[vtx,orange!80!black] (r2) at (4.0,0.6) {}; \node[lab,right] at (4.05,0.6) {$r_2$};
\node[vtx,blue!75!black] (u1) at (-3.6,-1.6) {}; \node[lab,left] at (-3.65,-1.6) {$u_1$};
\node[vtx,cyan!60!black] (v1) at (3.6,-1.6) {}; \node[lab,right] at (3.65,-1.6) {$u'_1$};
\draw[red!75!black,thick] (r1) .. controls (-3.2,1.5) and (-2.6,-0.2) .. (y1);
\node[lab,red!75!black] at (-3.35,1.55) {$P_1$};
\draw[orange!80!black,thick] (r2) .. controls (3.2,1.5) and (2.6,-0.2) .. (y2);
\node[lab,orange!80!black] at (3.35,1.55) {$P_2$};
\node[vtx,cyan!60!black] (w1) at (-2.9,-2.25) {};
\draw[gray,thick,dotted] (u1)--(w1);
\draw[cyan!60!black,thick] (w1) .. controls (-1.6,-2.6) and (2.4,-2.6) .. (v1);
\node[lab] at (0,-2.05) {$P'_1$};
\end{tikzpicture}
\caption{The proof of Lemma~\ref{lem:massed} with $s=2$ and $\ell=1$. A set $Z$ of at most $0.008t$ vertices of $J-Y$, chosen by averaging, contains a $K_s$ model $\{B_1,\dots,B_s\}$. A linkage in $J-Z$ joins each root $r_i$ by a path $P_i$ to a neighbour $y_i$ of a vertex $z_i\in B_i$, and joins $u_1$ to $u'_1$ by a path $P'_1$. The branch set for $a_1$ is $\{u_1\}$, and the branch set for $b_1$ is the rest of $P'_1$, joined to $u_1$ by the first edge of $P'_1$ (dotted).}\label{fig:massed}
\end{figure}

\begin{lemma}\label{lem:massed}
Let $t$ be an integer with $\log t\geqslant1.6\cdot10^6$. Let $s:=2x$, where $x=\lceil t/(M\sqrt{\log t})\rceil$, let $\ell:=\lfloor4.004t\rfloor$ and $F:=K_s+\ell K_2$. Then for every $70t$-massed pair $(G,X)$ with $|X|\leqslant s+2\ell$ and every injection $\pi:X\to V(F)$, the graph $G$ has a model of $F[\pi(X)]$ rooted at $X$ according to $\pi$.
\end{lemma}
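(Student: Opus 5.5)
We apply Theorem~\ref{thm:wollanmin} with $\lambda:=70t$ and $F=K_s+\ell K_2$. Here $|V(F)|=s+2\ell\leqslant 2\cdot10^{-8}t+8.008t\leqslant\lambda$. So it suffices to verify its hypothesis: every non-null graph $D$ with $v(D)\leqslant 140t+1$ and $\delta(D)\geqslant 70t-(s+2\ell)\geqslant 61.99t$ has an $F$-universal induced subgraph. Theorem~\ref{thm:wollanmin} then gives the required rooted model of $F[\pi(X)]$ for every $70t$-massed pair $(G,X)$ with $|X|\leqslant s+2\ell$.

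\emph{Step 1: a highly connected dense piece.} Given such a $D$, we have $e(D)/v(D)\geqslant 30.99t$. Apply Lemma~\ref{lem:mader} with $k:=9t$; this needs $\gamma\geqslant 2k=18t$, which holds. We obtain a $(9t+1)$-connected induced subgraph $J$ with $\delta(J)>\gamma\geqslant 30.99t$, $v(J)\leqslant 140t+1$, and $e(J)>\gamma' v(J)$, where $\gamma'\geqslant\gamma-k\geqslant 21.99t$. We claim that $J$ is $F$-universal. It has at least $|V(F)|$ vertices since $\delta(J)>30t$.

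\emph{Step 2: a small clique model.} Fix $Y\subseteq V(J)$ with $|Y|=s+2\ell$ and a bijection $\pi:Y\to V(F)$. Let $r_1,\dots,r_s$ be the roots sent to the clique and $u_j,u'_j$ those sent to $a_j,b_j$. By averaging, choose a set $Z\subseteq V(J)\setminus Y$ with $|Z|\leqslant 0.008t$ that induces a subgraph with density at least $30\,s\sqrt{\log s}$. A random subset of $V(J)\setminus Y$ of size $\lfloor 0.008t\rfloor$ has expected density about $\frac{0.008t}{140t}$ times the average degree of $J-Y$, which is $\Omega(t\cdot 10^{-4})$. Since $s\sqrt{\log s}\leqslant 2\cdot10^{-8}t\cdot\sqrt{\log t}\cdot(\text{small})$ and $\log s\leqslant\log t$, we need $t\cdot10^{-4}\gg 6\cdot10^{-7}t\sqrt{\log t}/(M\sqrt{\log t})\cdot M$; that is, the bound $s\leqslant 2\cdot 10^{-8}t$ from $\log t\geqslant1.6\cdot10^6$ makes the required density $O(10^{-8}t\sqrt{\log t})$. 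This must be checked carefully against $\log t$: $s\sqrt{\log s}\approx 2t\sqrt{\log t}/(M\sqrt{\log t})=2t/M$. This is fine, since density $60t/M$ is far below the $\approx 10^{-3}t$ available. Theorem~\ref{thm:kos} then gives a $K_s$ model $\{B_1,\dots,B_s\}$ inside $J[Z]$. Each branch set has at least one vertex $z_i$, and $z_i$ has many neighbours $y_i$ outside $Z\cup Y$. We pick distinct such $y_i$ by Hall's theorem, using $\delta(J)\geqslant 30t$.

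\emph{Step 3: linkage.} The graph $J':=J-Z$ is $(9t+1-0.008t)$-connected, hence at least $8.99t$-connected. Its density is at least $21.99t-0.008t\cdot(\text{degree})/v$, and the density needed is $5(s+\ell)\leqslant 20.03t$. The delicate point is that deleting $Z$ removes at most $|Z|\,v(J)$ edges, so the density drops by at most $0.008t$, leaving more than $21.9t$. Since $2(s+\ell)\leqslant 8.009t\leqslant 8.99t$, Theorem~\ref{thm:TW} makes $J'$ $(s+\ell)$-linked. For each matching edge $a_jb_j$, let $w_j$ be a neighbour of $u_j$ that is distinct from all other designated vertices; alternatively, link $u_j$ to $u'_j$ directly and split the path as in the figure, taking $\{u_j\}$ as the branch set for $a_j$ and the rest of the path for $b_j$. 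So we link $r_i$ to $y_i$ for each $i$ and $u_j$ to $u'_j$ for each $j$, with all $2(s+\ell)$ ends distinct. The branch set for $q_i$ is then $V(P_i)\cup B_i$. These sets are disjoint, connected, and pairwise adjacent through the $B_i$, and the matching branch sets are adjacent. This gives the rooted model of $F$, and Theorem~\ref{thm:wollanmin} finishes the proof.

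The main obstacle is the numerics in Steps~1 and~3. The density that Mader's lemma retains must still exceed $5(s+\ell)\approx 20.02t$ after deleting $Z$, while the connectivity $9t$ must stay above $2(s+\ell)$. If $\gamma'$ from Lemma~\ref{lem:mader} is too weak, I would instead take $k$ slightly smaller, or use the sharper bound $\gamma'=\frac12(\gamma+\sqrt{\gamma^2-2\gamma k})$. With $\gamma\approx31t$ and $k=9t$, this gives $\gamma'\approx 25.6t$, which is comfortable.
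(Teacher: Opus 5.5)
Your proposal is essentially the paper's proof: the reduction through Theorem~\ref{thm:wollanmin} with $\lambda=70t$, Lemma~\ref{lem:mader} with $k=9t$, an averaging choice of a small set $Z\subseteq V(J)\setminus Y$ that carries a $K_s$ model by Theorem~\ref{thm:kos}, Hall's theorem for the vertices $y_i$, and Theorem~\ref{thm:TW} in $J-Z$, with each path $P'_j$ split into $\{u_j\}$ and the rest. The one thing to fix is the arithmetic in Step~2, where the margin is not ``far below'' as you claim: first bound the density of $J-Y$ (deleting $Y$ removes at most $|Y|\,v(J)\leqslant8.0081t\,v(J)$ edges, so it is only about $14t$), and note that the sampling factor $|Z|/v(J-Y)\approx0.008/141$ multiplies this density, not the average degree, which gives under $8\cdot10^{-4}t$ against the required $\gamma_s=30s\sqrt{\log s}\leqslant6.001\cdot10^{-4}t$; the inequality holds, but only with about $25\%$ to spare, so it needs the explicit check the paper carries out (alternatively, take $|Z|$ much larger, which the connectivity $9t+1$ and the density of $J$ easily allow).
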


The formalised form of Lemma~\ref{lem:massed} allows every real $t$ with $\log t\geqslant1.6\cdot10^6$, and so Lemma~\ref{lem:massed} is a special case of it.

Together with Theorem~\ref{thm:wollanmin}, the proof of Lemma~\ref{lem:massed} follows Wollan's proof of~\cite[Theorem~5.1]{Wol08} (in particular Observation~5.2 and Claim~5.3 there). Like Wollan, we take a minimal counterexample and find a dense subgraph in the neighbourhood of a vertex of low degree (both in the proof of Theorem~\ref{thm:wollanmin}), then find a minor inside a small set of vertices of this subgraph (Wollan chooses this set at random, and we choose it by averaging), and link the roots to this minor using the theorem of Thomas and Wollan. The differences are that we use Lemma~\ref{lem:mader} to obtain a highly connected dense subgraph $J$, and that only the clique part of $F$ needs a minor argument, while the matching edges are routed directly by the linkage.

\begin{proof}
Let $L:=\log t$, $h:=s+2\ell$, $\lambda:=70t$ and $\gamma_s:=30\,s\sqrt{\log s}$. Since $t=e^L\geqslant L^3/6$ and $L\geqslant1.6\cdot10^6$, we have $t\geqslant(1.6\cdot10^6)^2L/6\geqslant4\cdot10^{11}L\geqslant4\cdot10^{11}\sqrt L$ and $\sqrt L\geqslant1264$. In particular $t\geqslant10^{13}$. Since $t/(M\sqrt L)>1$, we have $x\geqslant2$ and $s=2x\geqslant4$. Also $x<t/(M\sqrt L)+1$, so
\[
s\sqrt L\ \leqslant\ \frac{2t}M+2\sqrt L .
\]
Hence $s\leqslant2t/(1264M)+2\leqslant2\cdot10^{-8}t$, since $2\leqslant4\cdot10^{-9}t$. In particular $s\leqslant t$, so $\log s\leqslant L$, and $\gamma_s\leqslant30s\sqrt L\leqslant60t/M+60\sqrt L\leqslant0.0006001t$, since $60\sqrt L\leqslant10^{-7}t$. Also $\ell\leqslant4.004t$, so $h\leqslant8.0081t$, $m:=s+\ell\leqslant4.005t$ and $2s+2\ell\leqslant9t$.

Since $|V(F)|=h\geqslant1$ and $\lambda\geqslant h$, Theorem~\ref{thm:wollanmin} reduces the lemma to the following claim. Every non-null graph $D$ with $v(D)\leqslant2\lambda+1$ and $\delta(D)\geqslant\lambda-h$ has an $F$-universal induced subgraph. Let $D$ be such a graph. Then $v(D)\leqslant140t+1\leqslant141t$ and $\delta(D)\geqslant70t-8.0081t\geqslant61.99t$, so $\gamma:=e(D)/v(D)\geqslant30.99t$. In particular, $\gamma\geqslant2k$ with $k=9t$, as Lemma~\ref{lem:mader} requires.

By Lemma~\ref{lem:mader} with $k=9t$, $D$ has a $(9t+1)$-connected induced subgraph $J$ with $\delta(J)>\gamma\geqslant30.99t>30.5t$ and $e(J)>(\gamma-k)\,v(J)\geqslant21.99t\,v(J)>21.5t\,v(J)$. Also $v(J)\leqslant v(D)\leqslant141t$ and $v(J)>\delta(J)>h$. We show that $J$ is $F$-universal.

Let $Y\subseteq V(J)$ with $|Y|=h$, and let $\pi:Y\to V(F)$ be a bijection. Let $K_s+\ell K_2$ have clique vertices $q_1,\dots,q_s$ and matching edges $a_1b_1,\dots,a_\ell b_\ell$. Let $r_i:=\pi^{-1}(q_i)$, $u_j:=\pi^{-1}(a_j)$ and $u'_j:=\pi^{-1}(b_j)$.

As illustrated in Figure~\ref{fig:massed}, we first find a small $K_s$ model in $J-Y$, and then use a linkage that avoids this model to join each root $r_i$ to it and to join $u_j$ to $u'_j$ for each $j\in[\ell]$.

\emph{A small clique model avoiding $Y$.} Let $J_0:=J-Y$ and $N:=v(J_0)$. Each vertex of $Y$ lies in at most $v(J)$ edges, so $e(J_0)\geqslant e(J)-h\,v(J)>(21.5t-8.0081t)\,v(J)\geqslant13.49t\,v(J)\geqslant13.49t\,N$. Also $N\leqslant v(J)\leqslant141t$, and $N=v(J)-h>\delta(J)-h>30.5t-8.0081t>22t$. We use the following averaging argument. Let $J'$ be a graph with $n\geqslant2$ vertices, and let $0<p\leqslant1$. Then some set $Z\subseteq V(J')$ satisfies $|Z|\leqslant pn+2$ and $e(J'[Z])\geqslant p^2e(J')$. Indeed, let $n_0:=\min\{n,\lceil pn\rceil+1\}$, so that $2\leqslant n_0\leqslant pn+2$. Each edge of $J'$ lies in $J'[Z]$ for exactly $\binom{n-2}{n_0-2}$ of the $\binom n{n_0}$ sets $Z$ of $n_0$ vertices, so the average of $e(J'[Z])$ over these sets is $\frac{n_0(n_0-1)}{n(n-1)}\,e(J')$. If $n_0=n$, this is $e(J')\geqslant p^2e(J')$. Otherwise $n_0=\lceil pn\rceil+1$, so $n_0-1\geqslant pn$ and $n_0(n_0-1)\geqslant(pn+1)pn\geqslant p^2n(n-1)$. In both cases some set $Z$ of $n_0$ vertices satisfies $e(J'[Z])\geqslant p^2e(J')$.

We apply this with $J'=J_0$, $n=N$, $\nu:=0.0079t-2$ and $p:=\nu/N$. Here $0<p\leqslant1$, since $0<\nu<22t<N$. This gives $Z\subseteq V(J_0)$ with $|Z|\leqslant\nu+2=0.0079t$ and
\begin{align*}
e(J_0[Z])\ &\geqslant\ \frac{\nu^2}{N^2}\,e(J_0)\ \geqslant\ \frac{13.49t\,\nu^2}N\ \geqslant\ \frac{13.49}{141}\,\nu^2\\
&\geqslant\ 0.0956\,(0.00789t)^2\ >\ 0.0006001t\cdot0.0079t\ \geqslant\ \gamma_s|Z| ,
\end{align*}
using $N\leqslant141t$, $\nu\geqslant0.00789t$ (as $t\geqslant10^{13}$), $\gamma_s\leqslant0.0006001t$ and $|Z|\leqslant0.0079t$. In particular $Z\ne\emptyset$. By Theorem~\ref{thm:kos}, $J_0[Z]$ has a $K_s$ model $\{B_1,\dots,B_s\}$. Also $Z\cap Y=\emptyset$ and $|Z|\leqslant0.008t$.

\emph{Linking around the clique model.} For each $i\in[s]$ choose $z_i\in B_i$ and a neighbour $y_i$ of $z_i$ in $J$, with $y_i\notin Z\cup Y$ and $y_1,\dots,y_s$ distinct. This is possible by Hall's theorem, since each $z_i$ has more than $30.5t$ neighbours in $J$, at most $|Z|+|Y|\leqslant8.0161t$ of them lie in $Z\cup Y$, and $30.5t-8.0161t\geqslant t\geqslant s$. The graph $J-Z$ is $(9t+1-|Z|)$-connected, and $9t+1-|Z|\geqslant8.99t\geqslant2m$. Also $e(J-Z)\geqslant e(J)-|Z|\,v(J)>(21.5t-0.008t)\,v(J)\geqslant5m\,v(J-Z)$. By Theorem~\ref{thm:TW}, $J-Z$ is $m$-linked. The $2m$ vertices $r_i,y_i$ ($i\in[s]$) and $u_j,u'_j$ ($j\in[\ell]$) are distinct and lie in $J-Z$. So $J-Z$ has pairwise disjoint paths $P_i$ from $r_i$ to $y_i$ and $P'_j$ from $u_j$ to $u'_j$. Let $B'_i:=B_i\cup V(P_i)$, which induces a connected subgraph since $y_iz_i\in E(J)$. Let $C_{q_i}:=B'_i$ for $i\in[s]$, and let $C_{a_j}:=\{u_j\}$ and $C_{b_j}:=V(P'_j)\setminus\{u_j\}$ for $j\in[\ell]$. Since $u_j\ne u'_j$, the set $C_{b_j}$ is the vertex set of the non-empty subpath $P'_j-u_j$, which contains $u'_j$. These sets are pairwise disjoint, because the paths $P_i$ and $P'_j$ are pairwise disjoint and avoid $Z$. The sets $C_{q_i}$ are pairwise adjacent because the sets $B_i$ are, and $C_{a_j}$ and $C_{b_j}$ are adjacent because the first edge of $P'_j$ joins them. So $\{C_f:f\in V(F)\}$ is a model of $F$ rooted at $Y$ according to $\pi$. Hence $J$ is an $F$-universal induced subgraph of $D$, which proves the claim.
\end{proof}

\begin{lemma}\label{lem:Ck}
Let ${\Ck}:=70$. If $t$ is an integer with $\log t\geqslant1.6\cdot10^6$, then every graph $G$ with $e(G)\geqslant {\Ck}t\,v(G)$ and $\kappa(G)\geqslant9t$ is $(2x,\lfloor4.004t\rfloor)$-woven, where $x=\lceil t/(M\sqrt{\log t})\rceil$ and $M=10^5$.
\end{lemma}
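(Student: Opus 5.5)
The plan is to combine Lemma~\ref{lem:W} with Lemma~\ref{lem:massed}, taking $p:=s=2x$, $\ell:=\lfloor4.004t\rfloor$, $F:=K_s+\ell K_2$ and $\lambda:=70t={\Ck}t$. Lemma~\ref{lem:massed} provides exactly the hypothesis of Lemma~\ref{lem:W}. For every $70t$-massed pair $(G_0,X)$ with $|X|\leqslant s+2\ell$ and every injection $\pi:X\to V(F)$, the graph $G_0$ has a model of $F[\pi(X)]$ rooted at $X$ according to $\pi$. Lemma~\ref{lem:W} then gives that $G$ is $(2x,\ell)$-woven, provided its remaining numerical hypotheses hold.

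Those hypotheses are the following.
\begin{itemize}[nosep]
\item $p=2x\geqslant1$ and $\ell\geqslant0$ are integers. This is immediate, since $x\geqslant2$ was shown in the proof of Lemma~\ref{lem:massed}.
\item $\lambda=70t>1$, which holds since $t\geqslant10^{13}$.
\item $\lambda\geqslant p+2\ell$. The proof of Lemma~\ref{lem:massed} gives $s\leqslant2\cdot10^{-8}t$, so $p+2\ell\leqslant2\cdot10^{-8}t+8.008t\leqslant8.0081t<70t$.
\item $e(G)\geqslant\lambda v(G)$, which is the assumption $e(G)\geqslant{\Ck}t\,v(G)$.
\item $\kappa(G)\geqslant2p+2\ell$. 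We have $2p+2\ell=2s+2\ell\leqslant4\cdot10^{-8}t+8.008t\leqslant9t\leqslant\kappa(G)$. This matches the bound $2s+2\ell\leqslant9t$ already recorded in the proof of Lemma~\ref{lem:massed}.
\end{itemize}

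The bounds $t\geqslant10^{13}$ and $s\leqslant2\cdot10^{-8}t$ follow from $\log t\geqslant1.6\cdot10^6$ exactly as in the first paragraph of the proof of Lemma~\ref{lem:massed}, so we simply quote them.

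There is no real obstacle here. The substantive work is in Lemmas~\ref{lem:W} and~\ref{lem:massed}. The only point needing care is confirming that the connectivity requirement $2p+2\ell$ of Lemma~\ref{lem:W}, which counts the roots twice, still fits under $9t$. It does, because $s$ is negligible compared with $t$ when $M=10^5$.
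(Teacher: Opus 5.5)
Your proposal is correct and matches the paper's proof: Lemma~\ref{lem:massed} supplies the hypothesis of Lemma~\ref{lem:W} with $p=s=2x$ and $\lambda=70t$, after checking $s+2\ell\leqslant8.0081t\leqslant70t$ and $2s+2\ell\leqslant9t\leqslant\kappa(G)$. The paper does exactly this. Your extra checks that $p\geqslant1$ and $\lambda>1$ are harmless additions that the paper leaves implicit.
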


\begin{proof}
Let $s:=2x$ and $\ell:=\lfloor4.004t\rfloor$. By the proof of Lemma~\ref{lem:massed}, $s+2\ell\leqslant8.0081t\leqslant70t$ and $2s+2\ell\leqslant4\cdot10^{-8}t+8.008t\leqslant9t\leqslant\kappa(G)$. By Lemma~\ref{lem:massed}, the hypothesis of Lemma~\ref{lem:W} holds with $p=s$ and $\lambda=70t$. So $G$ is $(2x,\lfloor4.004t\rfloor)$-woven by Lemma~\ref{lem:W}.
\end{proof}

%=====================================================================
\section{A refined form of Lin's theorem}\label{sec:lin}
%=====================================================================

This section proves Theorem~\ref{thm:refinedlin}, which supplies the small dense pieces of the chromatic-inseparable case (Section~\ref{sec:insep}). That case needs small highly connected subgraphs in dense graphs with no $K_t$ minor. Delcourt and Postle use their small-dense-subgraph theorem (Section~\ref{sec:extremal}). Lin~\cite[Theorem~1.4]{Lin} proved a variant in which the highly connected subgraph has only $O(t\log t)$ vertices. Let ${\CLin}:=\max\{30{,}300,32\cdot6400^2\}$, which equals $1{,}310{,}720{,}000$. Lin's theorem states that, for all integers $t\geqslant3$ and $k\geqslant t$, every non-null graph $G$ with no $K_t$ minor and $e(G)\geqslant{\CLin}k\,v(G)$ has a $k$-connected subgraph with at most ${\CLin}^2\,t\log t$ vertices. Lin states the theorem with an unspecified constant, and ${\CLin}$ is the value given by Lin's proof~\cite{Lin}, where $6400={\CNP}$ is the constant of the Norin--Postle bound (Theorem~\ref{thm:NP}). We do not use Lin's theorem. The density constant ${\CLin}\approx1.31\cdot10^9$ is quadratic in the Norin--Postle constant, and much larger than the constant ${\CDP}\approx3.07\cdot10^6$ of the small-dense-subgraph theorem. In our application the density constant determines $\Cdens$ (Proposition~\ref{prop:DP}). The bound on the number of vertices of the subgraph enters only $N_t$, and hence only the size thresholds. We therefore rerun Lin's argument with parameters that make the density constant small, at the cost of a larger bound on the number of vertices. We also extract a subgraph that is dense, not only highly connected, since Lemma~\ref{lem:Ck} needs density ${\Ck}t$.

\begin{theorem}\label{thm:refinedlin}
Let $\cL:=0.0044$ and ${\PL}:=4.56\cdot10^{18}$. Let $t\geqslant10^{13}$ and $d\geqslant1.8\cdot10^4\,t$ be integers, and let $k$ be an integer with $1\leqslant k\leqslant\cL d/2$. Every non-null graph $G$ with no $K_t$ minor and $e(G)\geqslant d\,v(G)$ has a set $A$ of at most ${\PL}\,t\log t$ vertices such that $G[A]$ is $(k+1)$-connected and $e(G[A])>(\cL d-k)\,|A|$.
\end{theorem}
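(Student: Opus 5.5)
The plan is to run Lin's density-increment argument, with the parameters tuned so that the density loss is only a constant factor. Rather than aiming for connectivity directly, I first find a small subgraph of density at least $\cL d$ on $O(t\log t)$ vertices. I then extract the highly connected dense part with Lemma~\ref{lem:mader}.

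Concretely, I would first pass to a minimal minor-closed witness. Among all minors $G'$ of $G$ with $e(G')\geqslant d\,v(G')$, choose one with the fewest vertices. Since minors of $G$ have no $K_t$ minor, $G'$ has no $K_t$ minor either. By minimality, contracting any edge $uv$ loses more than $d$ edges, so every edge lies in more than $d-1$ triangles. Deleting a vertex loses more than $d$ edges, so $\delta(G')>d$. Since a subgraph of $G'$ is a subgraph of $G$ only if no contractions were made, I would follow Lin and instead work with induced subgraphs of $G$ itself, using a ``small-density-drop'' minimality. Fix $H\subseteq G$ induced and minimal subject to $e(H)\geqslant d_H v(H)$, with $d_H$ decreasing slowly along a chain. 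Then every vertex has degree above $d_H$.

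Next I take a vertex $v$ of minimum degree and its neighbourhood $N$. If $|N|$ is already $O(t\log t)$, then $H[N\cup\{v\}]$ has large density by the triangle count, and I am done. Otherwise I use the Norin--Postle bound (Theorem~\ref{thm:NP}) on the bipartite graph between a ball around $v$ and its complement. This shows that either a small ball carries density comparable to $d$, or the number of edges leaving the ball is at most $\CNP t\sqrt{\log t}\sqrt{|V_1||V_2|}+(t-2)(|V_1|+|V_2|)$. In the latter case, deleting the ball leaves a subgraph of density at least $d$ times $(1-\text{small})$. Iterating this would lose a density factor each round; choosing radius and thresholds so that the product of losses stays above $\cL\approx0.0044$ is the main tuning. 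The large constant $\PL=4.56\cdot10^{18}\approx(\text{const}\cdot\CNP)^2$ arises because the ball-growth stops once $|V_1|\gtrsim(\CNP t\sqrt{\log t}/d)^2\cdot$(stuff), i.e. once it has $O(\CNP^2 t\log t/\text{const})$ vertices. I expect this step to be the main obstacle: the balance between $d\geqslant1.8\cdot10^4t$ and the bipartite bound, so that either the ball boundary is sparse relative to $d|V_1|$ or the ball is small and dense. I would first try Lin's exact scheme of doubling ball sizes with a potential function $e(H)-d'v(H)$, and only then adjust exponents.

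Having obtained a set $A_0$ with $|A_0|\leqslant\PL t\log t$ and $e(G[A_0])\geqslant\cL d\,|A_0|$, I apply Lemma~\ref{lem:mader} with $k$. This is valid since $\cL d\geqslant2k$. It gives an induced $(k+1)$-connected subgraph on $A\subseteq A_0$ with $e(G[A])>(\gamma-k)|A|\geqslant(\cL d-k)|A|$, which is exactly the conclusion. The threshold $t\geqslant10^{13}$ is used only to absorb the additive $(t-2)(|V_1|+|V_2|)$ term and rounding in the bipartite bound into the slack between $d$ and $\cL d$.
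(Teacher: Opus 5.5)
\textbf{There is a genuine gap.} Your last step matches the paper: once you have a set $A_0$ with $e(G[A_0])\geqslant\cL d\,|A_0|$, you apply Lemma~\ref{lem:mader} using $\cL d\geqslant2k$. The gap is in producing $A_0$, which is the whole difficulty, and your mechanisms for that step fail for three reasons.

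First, you switch between two incompatible minimality notions. The triangle count (every edge in more than $d-1$ triangles) comes from minimality under edge contraction. So the dense neighbourhood lives in a minor and need not be a subgraph of $G$. Once you pass to an induced subgraph $H$ of $G$ that is minimal under vertex deletion, you only get $\delta(H)>d_H$. Then $N_H(v)$ can be independent (for instance if $H$ is bipartite), so ``$H[N\cup\{v\}]$ has large density'' is unsupported.

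Second, the Norin--Postle step does not do what you need. For a ball $V_1$ and its complement $V_2$, the bound ${\CNP}\,t\sqrt{\log t}\sqrt{|V_1||V_2|}+(t-2)(|V_1|+|V_2|)$ grows with $|V_2|$, which you do not control. So it cannot show that the boundary is sparse compared with $d|V_1|$. Theorem~\ref{thm:NP} is useful only when one side is tiny and the comparison is with the total $dn$.

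Third, iterated deletion with a multiplicative density loss per round, and no bound on the number of rounds, gives no constant lower bound such as $\cL$.

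The missing idea is the paper's weight-based contraction. Contract a maximal family of disjoint connected subgraphs, each of weight at least $Q=2\cdot10^{15}{\CNP}^2t^2\log t/d$, while losing at most $\eps=0.083$ times their total weight in edges. Then the set $X$ of contracted vertices satisfies $|X|\leqslant2dn/Q$. Theorem~\ref{thm:NP}, applied to the unbalanced bipartite graph between $X$ and the uncontracted set $O$, makes $e_{G'}(X,O)$ a negligible fraction of $dn$. Next, discard the vertices with at least $\theta\deg$ neighbours in $X$, and apply Lemma~\ref{lem:core}. This gives a set $S'\subseteq O$ with large minimum degree and $|N(v)\cap S'|\geqslant\deg(v)/r$. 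Since $G'[O]=G[O]$, anything found inside $O$ is a subgraph of $G$ itself, which resolves your minor-versus-subgraph problem.

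Then take a maximal connected set $U\subseteq S'$ of weight less than $Q$ whose contraction loses at most $\eps\,w(U)$ edges. The bound $Q<\delta_0^2$, which is where $t\geqslant10^{13}$ is used, guarantees that $U$ has a neighbour in $S'\setminus U$. Maximality forces each such neighbour $z$ to have more than $(\eps-\theta)\deg(z)-1$ neighbours in the set $R$ of vertices outside $U\cup X$ adjacent to $U$. The core property gives $|R'|\geqslant(1/r-2\eps)\,w(U)\geqslant0.1764\,|R|$, where $R'=R\cap S'$. Hence $R$ or $R'$ has density at least $\cL d$, while $|R|\leqslant w(U)<Q\leqslant{\PL}\,t\log t$ because $d\geqslant1.8\cdot10^4t$. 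The quadratic dependence of ${\PL}$ on ${\CNP}$ comes from this threshold $Q$, chosen to make the Norin--Postle term negligible, not from a ball-growth stopping rule.
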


For a graph $J$ and $A\subseteq V(J)$, the \defn{weight} of $A$ in $J$ is $w_J(A):=\sum_{v\in A}\deg_J(v)$. The proof of Theorem~\ref{thm:refinedlin} contracts disjoint connected subgraphs of large weight while few edges are lost (Step~1), discards the vertices with many neighbours among the contracted vertices (Step~2), and finds a set $S'$ of large minimum degree among the remaining vertices (Step~3). A maximal connected set $U\subseteq S'$ of small weight whose contraction loses few edges then has a dense neighbourhood with few vertices (Steps~4--6), and Lemma~\ref{lem:mader} extracts the required set from it (Step~7).

The improvement over Lin's theorem has four sources. First, since $d\geqslant1.8\cdot10^4\,t$, the linear term $(t-2)(|V_1|+|V_2|)$ of the Norin--Postle bound (Theorem~\ref{thm:NP}) is at most $dn/(1.8\cdot10^4)$ in Claim~2 below, where $n=v(G)$. Second, each contracted subgraph has weight at least $Q$ in $G$, and the threshold $Q$ below is taken very large. This choice of $Q$ makes the other term of the Norin--Postle bound at most $3.17\cdot10^{-8}\,dn$ in Claim~2. The cost is a larger ${\PL}$, since the final set has fewer than $Q$ vertices. Third, the parameters $r$ and $\delta_0$ of Lemma~\ref{lem:core} are chosen so that the inequality in Step~3 holds with little room to spare, which makes the density obtained in Step~6 large. Fourth, in Steps~6 and~7 we extract a dense subgraph by a simple averaging argument, and then use the proof of Mader's theorem~\cite{Mad72} to obtain a highly connected subgraph whose density is close to that of the dense subgraph (Lemma~\ref{lem:mader}). Besides Theorems~\ref{thm:kos} and~\ref{thm:NP} and Lemma~\ref{lem:mader}, the proof uses one further tool.

\begin{lemma}[Delcourt and Postle~{\cite[Lemma~4.2]{DP}}, see also~{\cite[Lemma~2.2]{Lin}}]\label{lem:core}
Let $r>2$ and $\delta_0>0$ be real. Let $G$ be a graph and $S\subseteq V(G)$. If
\[
(r-2)\,e(G[S])\ >\ (r-1)\,\delta_0\,|S|+e_G(S,V(G)\setminus S),
\]
then there is a non-empty set $S'\subseteq S$ such that $\delta(G[S'])\geqslant\delta_0$ and $|N_G(v)\cap S'|\geqslant\deg_G(v)/r$ for every $v\in S'$.
\end{lemma}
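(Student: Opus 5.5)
The plan is to peel off, one at a time, vertices that have too few neighbours inside the current set, measuring the process with a potential function. Start from $S_0:=S$. While the current set $S_i$ contains a vertex $v$ with $\deg_{G[S_i]}(v)<\delta_0$, or with $|N_G(v)\cap S_i|<\deg_G(v)/r$, delete one such vertex to obtain $S_{i+1}$. If the process stops at a non-empty set $S'$, then both required properties hold by construction. So the whole task is to show that the process cannot empty $S$.

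For this, track the potential
\[
\Psi(X):=(r-2)\,e(G[X])-(r-1)\,\delta_0\,|X|-e_G(X,V(G)\setminus X).
\]
By hypothesis $\Psi(S)>0$, and $\Psi(\emptyset)=0$. It therefore suffices to show that deleting a bad vertex never decreases $\Psi$. We will then have $\Psi(S_i)\geqslant\Psi(S)>0$ for every $i$, so no $S_i$ is empty, and the process ends at a non-empty $S'$.

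Fix the current set $X$ and a bad vertex $v\in X$. Write $a:=|N_G(v)\cap X|$ and $b:=\deg_G(v)-a$, so $b$ is the number of neighbours of $v$ outside $X$. Deleting $v$ changes the three terms of $\Psi$ as follows:
\begin{itemize}[nosep]
\item $e(G[X])$ drops by $a$;
\item $|X|$ drops by $1$;
\item $e_G(X,V(G)\setminus X)$ loses the $b$ edges from $v$ to outside $X$ and gains the $a$ edges from $v$ into $X\setminus\{v\}$.
\end{itemize}
Hence
\[
\Psi(X\setminus\{v\})-\Psi(X)=-(r-2)a+(r-1)\delta_0-a+b=(r-1)\delta_0+b-(r-1)a .
\]

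It remains to check that this is non-negative in both bad cases, and this case check is the only place with any content; it is routine.
\begin{itemize}[nosep]
\item If $a<\delta_0$, then $(r-1)(\delta_0-a)>0$ and $b\geqslant0$, so the change is positive.
\item If $a<\deg_G(v)/r=(a+b)/r$, then $ra<a+b$, that is, $(r-1)a<b$. So $b-(r-1)a>0$, and together with $(r-1)\delta_0>0$ the change is positive.
\end{itemize}
The assumption $r>2$ guarantees $r-1>0$ and $r-2>0$; the latter is what makes the hypothesis on $\Psi(S)$ meaningful.
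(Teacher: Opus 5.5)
Your proof is correct and complete. The increment $\Psi(X\setminus\{v\})-\Psi(X)=(r-1)\delta_0+b-(r-1)a$ is computed correctly, and it is strictly positive for either kind of bad vertex. So $\Psi$ never drops below $\Psi(S)>0=\Psi(\emptyset)$, and the peeling, which ends after at most $|S|$ deletions, stops at a non-empty $S'$ with both required properties.

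The paper gives no proof of its own to compare against, since it quotes this lemma from Delcourt and Postle and from Lin. One small remark: the monotonicity step uses only $r>1$. The condition $r>2$ matters only because, for $1<r\leqslant2$, the hypothesis cannot hold.
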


\begin{proof}[Proof of Theorem~\ref{thm:refinedlin}]
The proof follows Lin~\cite[Section~3]{Lin}, which in turn follows the proof of a theorem of Delcourt and Postle~\cite[Theorem~2.3]{DP}. Claims~1--3 below are Lin's claims~\cite[Claims~3.1--3.3]{Lin} with our parameters. Let $n:=v(G)$. Since $t\geqslant10^{13}$, we have $d\geqslant1.8\cdot10^{17}$. Since $dn$ is an integer and $e(G)\geqslant dn$, deleting edges gives a spanning subgraph $G_0$ of $G$ with $e(G_0)=dn$, and $G_0$ has no $K_t$ minor. If a set $A$ has the required properties in $G_0$, then it has them in $G$, since $G_0[A]$ is a spanning subgraph of $G[A]$, adding edges preserves $(k+1)$-connectivity, and $e(G[A])\geqslant e(G_0[A])$. So we may assume that $e(G)=dn$. Then
\[
w_G(V(G))\ =\ 2e(G)\ =\ 2dn .
\]
Since $G$ has no $K_t$ minor and $e(G)\geqslant dn$, Theorem~\ref{thm:kos} gives $d\leqslant30\,t\sqrt{\log t}$. Define the parameters
\[
\eps:=0.083,\quad\theta:=0.003,\quad\eps':=0.02,\quad r:=2.92,\quad\delta_0:=0.37\,d,
\]
and let
\[
Q\ :=\ \frac{2\cdot10^{15}\,{\CNP}^2\,t^2\log t}{d} .
\]

\medskip\noindent\emph{Step 1. Contracting heavy subgraphs.} Choose pairwise vertex-disjoint non-null connected subgraphs $H_1,\dots,H_m$ of $G$ with the following two properties, with $m$ as large as possible. First, $w_G(V(H_i))\geqslant Q$ for each $i\in[m]$. Second, the graph $G'$ obtained by contracting each $H_i$ to a new vertex $x_i$ satisfies
\[
e(G)-e(G')\ \leqslant\ \eps\sum_{i=1}^mw_G(V(H_i)).
\]
The empty family has both properties, so such a family exists. The graph $G'$ is a minor of $G$, so it has no $K_t$ minor. Let $X:=\{x_1,\dots,x_m\}$ and $O:=V(G')\setminus X$. Since the sets $V(H_i)$ are disjoint, $|X|Q\leqslant w_G(V(G))=2dn$. Hence
\[
|X|\ \leqslant\ \frac{2dn}{Q}\ =\ \frac{2d^2n}{2\cdot10^{15}\,{\CNP}^2\,t^2\log t} .
\]
Also $e(G')\geqslant e(G)-2\eps\,dn=(1-2\eps)\,dn$.

\medskip\noindent\emph{Claim 1.} There is no non-empty set $A\subseteq O$ such that $G'[A]$ is connected, $w_{G'}(A)\geqslant Q$ and $e(G')-e(G'/A)\leqslant\eps\,w_{G'}(A)$. In particular, $\deg_{G'}(v)<Q$ for every $v\in O$.

\medskip\noindent\emph{Proof of Claim 1.} The contractions forming $G'$ do not change adjacencies within $O$. So $G'[O]=G[O]$, and $\deg_{G'}(v)\leqslant\deg_G(v)$ for $v\in O$. Suppose that $A$ exists. Then $w_G(A)\geqslant w_{G'}(A)\geqslant Q$. Moreover $G'/A$ is obtained from $G$ by contracting $H_1,\dots,H_m$ and $G[A]$. Since
\[
e(G)-e(G'/A)\ =\ \bigl(e(G)-e(G')\bigr)+\bigl(e(G')-e(G'/A)\bigr)\ \leqslant\ \eps\sum_{i=1}^mw_G(V(H_i))+\eps\,w_G(A),
\]
the family $H_1,\dots,H_m,G[A]$ contradicts the maximality of $m$. For the second statement, take $A=\{v\}$, for which $e(G')-e(G'/A)=0$. \hfill$\lozenge$

\medskip\noindent\emph{Step 2. Removing the vertices attached to $X$.} Let
\[
Y:=\bigl\{v\in O:|N_{G'}(v)\cap X|\geqslant\theta\deg_{G'}(v)\bigr\},
\]
and let $S:=V(G')\setminus(X\cup Y)$.

\medskip\noindent\emph{Claim 2.} $e(G'[X\cup Y])+e_{G'}(S,X\cup Y)\leqslant\eps'\,dn$.

\medskip\noindent\emph{Proof of Claim 2.} Apply Theorem~\ref{thm:NP} to $G'(X,O)$. Since $|O|\leqslant n$ and $|X|+|O|\leqslant n$,
\[
e_{G'}(X,O)\ \leqslant\ {\CNP}\,t\sqrt{\log t}\,\sqrt{|X|\,n}+tn\ \leqslant\ 3.17\cdot10^{-8}\,dn+\frac td\,dn\ \leqslant\ \nu'\,dn ,
\]
where $\nu':=3.17\cdot10^{-8}+1/(1.8\cdot10^4)<5.559\cdot10^{-5}$. Here the second inequality holds since $({\CNP}\,t\sqrt{\log t})^2\,|X|\,n\leqslant10^{-15}(dn)^2$, by the bound on $|X|$. By the definition of $Y$,
\[
\theta\sum_{v\in Y}\deg_{G'}(v)\ \leqslant\ e_{G'}(X,Y)\ \leqslant\ e_{G'}(X,O),
\]
so $\sum_{v\in Y}\deg_{G'}(v)\leqslant(\nu'/\theta)\,dn$. Theorem~\ref{thm:kos} and the bound on $|X|$ give
\[
e(G'[X])\ \leqslant\ 30\,t\sqrt{\log t}\,|X|\ \leqslant\ \frac{60}{2\cdot10^{15}\,{\CNP}^2}\cdot\frac{d}{t\sqrt{\log t}}\cdot dn\ <\ 10^{-19}\,dn ,
\]
using $d\leqslant30\,t\sqrt{\log t}$. We have
\begin{align*}
\sum_{v\in X\cup Y}\deg_{G'}(v)\ &=\ 2e(G'[X\cup Y])+e_{G'}(S,X\cup Y)\qquad\text{and}\\
\sum_{v\in X}\deg_{G'}(v)\ &=\ 2e(G'[X])+e_{G'}(X,O).
\end{align*}
Since $X$ and $Y$ are disjoint, the first sum equals the second sum plus $\sum_{v\in Y}\deg_{G'}(v)$. Hence
\begin{align*}
e(G'[X\cup Y])+e_{G'}(S,X\cup Y)\ &\leqslant\ 2e(G'[X])+e_{G'}(X,O)+\sum_{v\in Y}\deg_{G'}(v)\\
&\leqslant\ \bigl(2\cdot10^{-19}+\nu'(1+1/\theta)\bigr)dn\ <\ 0.01859\,dn\ <\ \eps'\,dn .
\end{align*}
This proves Claim~2. \hfill$\lozenge$

\medskip\noindent\emph{Step 3. A dense subset.} By Claim~2,
\[
e(G'[S])\ =\ e(G')-e(G'[X\cup Y])-e_{G'}(S,X\cup Y)\ \geqslant\ (1-2\eps-\eps')\,dn\ =\ 0.814\,dn .
\]
Since $|S|\leqslant n$, Claim~2 also gives
\begin{align*}
(r-1)\,\delta_0\,|S|+e_{G'}(S,V(G')\setminus S)\ &\leqslant\ (1.92\cdot0.37+\eps')\,dn\ <\ 0.7305\,dn\\
&<\ 0.92\cdot0.814\,dn\ \leqslant\ (r-2)\,e(G'[S]) .
\end{align*}
By Lemma~\ref{lem:core}, there is a non-empty set $S'\subseteq S$ such that $\delta(G'[S'])\geqslant\delta_0$ and $|N_{G'}(v)\cap S'|\geqslant\deg_{G'}(v)/r$ for every $v\in S'$.

\medskip\noindent\emph{Step 4. A comparison of $Q$ and $\delta_0$.} Since $\log x\leqslant x-1$ for $x>0$ and $\log10<3$,
\[
\log t\ =\ 12\log10+\log(t/10^{12})\ <\ 35+t/10^{12}\ \leqslant\ 9.7\cdot10^{-12}\,t ,
\]
where the last inequality holds because $t\geqslant10^{13}$. Since $d\geqslant1.8\cdot10^4\,t$, it follows that
\[
2\cdot10^{15}\,{\CNP}^2\,t^2\log t\ <\ 2\cdot10^{15}\,{\CNP}^2\cdot9.7\cdot10^{-12}\,t^3\ <\ 0.37^2\,(1.8\cdot10^4)^3\,t^3\ \leqslant\ 0.37^2\,d^3 .
\]
Dividing by $d$ gives $Q<\delta_0^2$.

\medskip\noindent\emph{Step 5. A maximal light set.} Choose a non-empty set $U\subseteq S'$ such that $G'[U]$ is connected, $w_{G'}(U)<Q$ and $e(G')-e(G'/U)\leqslant\eps\,w_{G'}(U)$, with $|U|$ as large as possible. Every singleton in $S'$ has these properties, by Claim~1. Some vertex of $S'\setminus U$ has a neighbour in $U$. Indeed, otherwise $N_{G'}(u)\cap S'\subseteq U$ for every $u\in U$. Since $\delta(G'[S'])\geqslant\delta_0$, it follows that $|U|\geqslant\delta_0$ and $\deg_{G'}(u)\geqslant\delta_0$ for every $u\in U$. So $w_{G'}(U)\geqslant\delta_0|U|\geqslant\delta_0^2>Q$ by Step~4, contrary to the choice of $U$.

\medskip\noindent\emph{Claim 3.} Let $z\in S'\setminus U$ have a neighbour in $U$. Then $e(G')-e(G'/(U\cup\{z\}))>\eps\,w_{G'}(U\cup\{z\})$.

\medskip\noindent\emph{Proof of Claim 3.} Suppose not. The graph $G'[U\cup\{z\}]$ is connected. If $w_{G'}(U\cup\{z\})<Q$, then $U\cup\{z\}$ contradicts the maximality of $|U|$. Otherwise $w_{G'}(U\cup\{z\})\geqslant Q$, and since $U\cup\{z\}\subseteq S'\subseteq O$, this contradicts Claim~1. \hfill$\lozenge$

\medskip\noindent\emph{Step 6. A dense neighbourhood.} Let $A\subseteq V(G')$ be non-empty with $G'[A]$ connected. Contracting $A$ deletes the edges of $G'[A]$. For each vertex $y\notin A$ with a neighbour in $A$, it replaces the $|N_{G'}(y)\cap A|$ edges between $y$ and $A$ by one edge. Hence
\begin{equation}\label{eq:linloss}
e(G')-e(G'/A)\ =\ e(G'[A])+\sum_{\substack{y\in V(G')\setminus A\\N_{G'}(y)\cap A\ne\emptyset}}\bigl(|N_{G'}(y)\cap A|-1\bigr).
\end{equation}
Let $R$ be the set of vertices of $V(G')\setminus(U\cup X)$ with a neighbour in $U$, and let $R':=R\cap S'$. Each vertex of $R$ is a neighbour of a vertex of $U$, so $|R|\leqslant w_{G'}(U)$. Since $S'\cap X=\emptyset$, the set $R'$ consists of the vertices of $S'\setminus U$ with a neighbour in $U$. So~\eqref{eq:linloss} with $A=U$ gives
\begin{align*}
\sum_{u\in U}|N_{G'}(u)\cap S'|\ &=\ 2e(G'[U])+|R'|+\sum_{y\in R'}\bigl(|N_{G'}(y)\cap U|-1\bigr)\\
&\leqslant\ 2\bigl(e(G')-e(G'/U)\bigr)+|R'|\ \leqslant\ 2\eps\,w_{G'}(U)+|R'|.
\end{align*}
By the choice of $S'$, the sum on the left is at least $w_{G'}(U)/r$. Hence
\[
|R'|\ \geqslant\ \Bigl(\frac1r-2\eps\Bigr)w_{G'}(U)\ \geqslant\ 0.1764\,w_{G'}(U)\ \geqslant\ 0.1764\,|R| .
\]
By Step~5, some vertex $z_0\in S'\setminus U$ has a neighbour in $U$, and $z_0\in R'$. So $R'$, and hence $R$, is non-empty.
Let $z\in R'$ be arbitrary, and let $C_z$ be the set of vertices of $V(G')\setminus(U\cup\{z\})$ that are adjacent to $z$ and have a neighbour in $U$. The graph $G'[U\cup\{z\}]$ is connected. Compare~\eqref{eq:linloss} for $A=U\cup\{z\}$ with~\eqref{eq:linloss} for $A=U$. The first term grows by $|N_{G'}(z)\cap U|$, the term $|N_{G'}(z)\cap U|-1$ of $z$ disappears, the term of each vertex of $C_z$ grows by $1$, and every other vertex contributes the same amount, or $0$, to both. Hence
\[
e(G')-e(G'/(U\cup\{z\}))\ =\ e(G')-e(G'/U)+1+|C_z| .
\]
By Claim~3 and the choice of $U$,
\[
1+|C_z|\ >\ \eps\,w_{G'}(U\cup\{z\})-\eps\,w_{G'}(U)\ =\ \eps\deg_{G'}(z).
\]
Since $z\in S$, we have $z\notin Y$, and so $|N_{G'}(z)\cap X|<\theta\deg_{G'}(z)$. Every vertex of $C_z$ lies in $N_{G'}(z)\cap R$ or in $N_{G'}(z)\cap X$. Hence $|N_{G'}(z)\cap R|>(\eps-\theta)\deg_{G'}(z)-1$. Since $\deg_{G'}(z)\geqslant\delta_0=0.37\,d$ and $d\geqslant1.8\cdot10^{17}$,
\[
|N_{G'}(z)\cap R|\ >\ 0.08\cdot0.37\,d-1\ \geqslant\ 0.02959\,d .
\]
This holds for every $z\in R'$. Each edge of $G'[R']$ contributes $2$ to $\sum_{z\in R'}|N_{G'}(z)\cap R|$, each edge of $G'[R]$ with exactly one end in $R'$ contributes $1$, and the other edges of $G'[R]$ contribute $0$. Since $R'\subseteq R$, it follows that
\[
0.02959\,d\,|R'|\ \leqslant\ \sum_{z\in R'}|N_{G'}(z)\cap R|\ \leqslant\ e(G'[R])+e(G'[R']) .
\]
Suppose that $e(G'[R])<\cL d\,|R|$ and $e(G'[R'])<\cL d\,|R'|$. Since $|R|\leqslant|R'|/0.1764$ and $R'$ is non-empty,
\[
e(G'[R])+e(G'[R'])\ <\ \cL d\Bigl(\frac1{0.1764}+1\Bigr)|R'|\ <\ 0.02935\,d\,|R'|\ <\ 0.02959\,d\,|R'| ,
\]
a contradiction. So one of $G'[R]$ and $G'[R']$ has at least $\cL d$ times as many edges as vertices. Let $R^\ast$ be $R$ or $R'$ accordingly, so that $R^\ast\subseteq R$ is non-empty and $e(G'[R^\ast])\geqslant\cL d\,|R^\ast|$.

\medskip\noindent\emph{Step 7. Conclusion.} Let $\gamma:=e(G'[R^\ast])/|R^\ast|$. By Step~6, $\gamma\geqslant\cL d\geqslant2k$. By Lemma~\ref{lem:mader}, $G'[R^\ast]$ has a $(k+1)$-connected induced subgraph $H$ with $e(H)>(\gamma-k)\,v(H)\geqslant(\cL d-k)\,v(H)$. Let $A:=V(H)$. Then $A\subseteq R\subseteq O$, and $G'[O]=G[O]$ (see the proof of Claim~1). So $G[A]=H$, and $A$ has the required connectivity and density. Finally, since $d\geqslant1.8\cdot10^4\,t$,
\[
|A|\ \leqslant\ |R|\ \leqslant\ w_{G'}(U)\ <\ Q\ \leqslant\ \frac{2\cdot10^{15}\,{\CNP}^2}{1.8\cdot10^4}\,t\log t\ <\ {\PL}\,t\log t .
\]
Here $2\cdot10^{15}\,{\CNP}^2/(1.8\cdot10^4)<4.552\cdot10^{18}$.
\end{proof}

Since $d\leqslant30\,t\sqrt{\log t}$ and $d\geqslant1.8\cdot10^4\,t$, no graph satisfies the hypotheses of Theorem~\ref{thm:refinedlin} unless $\log t\geqslant3.6\cdot10^{5}$. In Proposition~\ref{prop:DP}, Theorem~\ref{thm:refinedlin} is applied with $\log t\geqslant1.6\cdot10^6$.

%=====================================================================
\section{The chromatic-inseparable case}\label{sec:insep}
%=====================================================================

The following proposition is a version of a lemma of Delcourt and Postle~\cite[Lemma~2.5]{DP} in which the bound $\Gamma$ on the chromatic number of small subgraphs enters additively. It uses Lemma~\ref{lem:Ck} and Theorem~\ref{thm:refinedlin}.

\begin{proposition}\label{prop:DP}
Let $\cL=0.0044$ and ${\PL}=4.56\cdot10^{18}$ be the constants of Theorem~\ref{thm:refinedlin}, and let ${\Ck}=70$ and $M=10^5$ be the constants of Lemma~\ref{lem:Ck}. Let $\Cdens:=18{,}000$, so that $\cL\Cdens\geqslant {\Ck}+9$. Let $t$ be an integer with $\log t\geqslant1.6\cdot10^6$ (so in particular $t\geqslant10^{13}$). Let $N_t$ be as in Proposition~\ref{prop:explicitbootstrap}, with $t$ in place of $a$. Let $H$ be a graph with no $K_t$ minor, and let $\Gamma\geqslant0$ be a real number such that every induced subgraph of $H$ with at most $N_t$ vertices has chromatic number at most $\Gamma$. Let
\[
\sigma:=78t+2\Gamma\qquad\text{and}\qquad\Psi:=(32+2\Cdens)t+2\Gamma+2 .
\]
If $\chi(H)\geqslant\Psi$, then $H$ is $\sigma$-chromatic-separable. Consequently, if $H$ is $s$-chromatic-inseparable for some real $s\geqslant0$, then $s<(2\Cdens+33)\,t+2\Gamma$.
\end{proposition}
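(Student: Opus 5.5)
We argue by contradiction, following Delcourt and Postle~\cite[Lemma~2.5 and Lemma~6.4]{DP}. Suppose $\chi(H)\geqslant\Psi$ and $H$ is $\sigma$-chromatic-inseparable. The plan is to build a $K_t$ model in stages, which contradicts the hypothesis that $H$ has no $K_t$ minor. Let $k:=12t$, $d:=\Cdens t$ and $x:=\lceil t/(M\sqrt{\log t})\rceil$. There are at most $M\sqrt{\log t}+1$ stages, and each stage adds $x$ branch sets (the first stage may add $2x$).

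Throughout the construction we maintain a set $S$ of used vertices, the core. It is the union of the pieces used so far together with the connecting paths. Each piece has at most $\PL t\log t$ vertices by Theorem~\ref{thm:refinedlin}, so $|S|\leqslant N_t$ at every stage. Hence $\chi(H[S])\leqslant\Gamma$ by hypothesis.

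\textbf{Step 1 (pieces).} Consider the current remainder $H'=H-S$.
\begin{itemize}
\item If $H'$ has a subgraph with density at least $d$, apply Theorem~\ref{thm:refinedlin} with $k+1$ replaced by $9t$-connectivity. This gives a piece $A$ that is $(9t+1)$-connected, with $e(H[A])>(\cL d-9t)|A|\geqslant\Ck t|A|$; this is exactly where the condition $\cL\Cdens\geqslant\Ck+9$ is used.
\item Lemma~\ref{lem:Ck} then makes $H[A]$ $(2x,\lfloor4.004t\rfloor)$-woven.
\item If no dense subgraph exists, $H'$ is $2d$-degenerate, so $\chi(H')\leqslant2\Cdens t+1$.
\end{itemize}

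\textbf{Step 2 (inseparability keeps the remainder large).} Use $\sigma$-inseparability to bound the remainder from below. Since $\chi(H[S])\leqslant\Gamma$, we get $\chi(H-S)\geqslant\chi(H)-\Gamma$. Apply inseparability to the pair $H[S']$ and $H-S'$, where $S'$ is the core together with a small extra part. At least one of the two must have chromatic number below $\chi(H)-\sigma$. Since $H[S']$ has chromatic number at most about $\Gamma+O(t)$, which is below $\chi(H)-\sigma$, this forces nothing directly. The useful consequence is instead the following: any subgraph $F$ disjoint from the core and the current piece, with $\chi(F)\geqslant\chi(H)-\sigma$, must meet every other such subgraph. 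So the remainder after deleting the core still has chromatic number at least $\chi(H)-\Gamma-O(t)\geqslant\Psi-\Gamma-O(t)$. This exceeds $2\Cdens t+1$, so the degenerate outcome of Step~1 never occurs. By Corollary~\ref{cor:nguyen}, we then extract a $12t$-connected induced subgraph with large chromatic number, which contains the next piece. The accounting here is $\Psi=2\sigma-(\text{stuff})$. The $32t$ term collects the costs of the $4|W|$ double-fan colourings from Lemma~\ref{lem:paths}(a), the $2k$ loss from Nguyen, and the junction. The factor $2$ in $2\Gamma$ and $2\Cdens$ comes from applying the separation twice.

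\textbf{Step 3 (linking).} Connect each new piece to the previous branch sets as in~\cite[Lemma~6.4]{DP}.
\begin{itemize}
\item Lemma~\ref{lem:paths}(a) gives double fans; their colouring cost is at most $4|W|$.
\item Lemma~\ref{lem:paths}(c) gives disjoint paths into the new piece.
\item Lemma~\ref{lem:515} reroutes at most $(4n-3)x\leqslant\lfloor4.004t\rfloor$ pairs through the woven piece while producing a rooted $K_{2x}$ model. This makes the new branch sets pairwise adjacent and adjacent to the old ones.
\end{itemize}
The first stage is handled separately: there we simply take a $K_{2x}$ model in the first piece. The final joins are made in a designated junction piece by Lemma~\ref{lem:knit}, which requires connectivity at most $7.001t\leqslant12t$ and density $5k$.

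Once $x(M\sqrt{\log t}+1)\geqslant t$ branch sets have been built, we obtain a $K_t$ minor, the desired contradiction. The consequence stated in the proposition follows: if $s\geqslant\Psi$ then $s\geqslant\sigma$, so $H$ is not $s$-inseparable. The main obstacle is Step~2's bookkeeping, namely showing that the chromatic losses of all stages are absorbed once rather than accumulating. The first thing to try is to apply inseparability only to the pair (core plus a bounded linking part, everything else), so that the total loss per stage stays $\Gamma+O(t)$ independently of the number of stages.
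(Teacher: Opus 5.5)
There is a genuine gap, and it lies exactly at what you call the main obstacle. It has two parts. First, your core $S$ contains the connecting paths, and these have unbounded length, so $|S|\leqslant N_t$ and $\chi(H[S])\leqslant\Gamma$ do not follow. The $4|W|$ bound of Lemma~\ref{lem:paths}(a) covers only the double fans. It does not cover the Menger paths and rerouted linkage paths that end up inside the branch sets. The paper keeps the core small: it consists of the pieces used so far and the tangent vertices, so $|S|\leqslant2n^2\PL\,t\log t$. It then controls the branch sets separately by Lemma~\ref{lem:62}, which replaces each branch set $B$ by a connected subset containing $S\cap B$ that is $2$-colourable outside a set of at most $3|S\cap B|$ vertices. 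Since there are at most $3t$ branch sets and the exceptional sets have at most $3|S|\leqslant N_t$ vertices in total, this gives $\chi(H[V(\mathcal A)])\leqslant6t+\Gamma$ at every stage. That bound is what makes it cheap to delete the whole model from $H$, and nothing in your proposal plays this role.

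Second, you have no mechanism that does two things at once: keep the old branch sets attached to a highly connected region of large chromatic number, and stop losses from accumulating over roughly $M\sqrt{\log t}$ stages. Your suggested fix, applying inseparability to the core plus a bounded part against the rest, gives nothing, as you observe yourself.

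The paper proves an invariant by induction. There is a $k$-connected induced subgraph $J^\ast$ that is \emph{tangent} to the current model, meaning it meets each branch set in exactly one vertex, which lies in the core. Moreover $\chi(J^\ast)\geqslant\chi(H)-\Theta$, where $\Theta=6t+\Gamma+2k$. The stage then runs as follows.
\begin{enumerate}
\item The $n$ fresh pieces of stage $n$ are found inside $J^\ast$ minus its tangent vertices, not in $H-S$. The tangent vertices are linked through $J^\ast$ to the junction.
\item The stage extracts a $k$-connected $H_3$ with $\chi(H_3)\geqslant\chi(H)-\sigma$, and the new branch sets are built to be tangent to $H_3$.
\item One deletes the shrunk model from $H$ itself and applies Corollary~\ref{cor:nguyen}. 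This gives a $k$-connected $H_5$, disjoint from the model, with $\chi(H_5)\geqslant\chi(H)-\Theta$.
\item Inseparability is used here. If $|V(H_3)\cap V(H_5)|<k$, then $H_5-V(H_3)$ and $H_3$ form a $\sigma$-chromatic separation. So they share at least $k$ vertices, and $H[V(H_3)\cup V(H_5)]$ is $k$-connected, tangent to the new model, and has chromatic number at least $\chi(H)-\Theta$.
\end{enumerate}
Every bound is measured from $\chi(H)$ rather than from the previous stage, so nothing accumulates.

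Your explanation of the constants should change accordingly. In $\sigma$, one copy of $\Gamma$ comes from the exceptional set of the shrinking step (through $\Theta$), and the other from the union of the current pieces. In $\Psi$, the second copy comes from the union of fewer than $n$ maximal pieces in the degeneracy alternative. That alternative bounds the chromatic number of $J^\ast$ minus its tangent vertices by $2d+\Gamma$, not by $2d$. The term $2\Cdens t=2d$ is just that degeneracy bound, not a consequence of separating twice.
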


In the first conclusion, $\Cdens$ enters the chromatic threshold $\Psi$ but not the separation cost $\sigma$. The lemma of Delcourt and Postle~\cite[Lemma~2.5]{DP} has the shape of the first conclusion with $\Psi=2\sigma$, so their density constant appears in both. The second conclusion is the single-parameter form, which matches hypothesis $(\mathcal H_{\Phi,a_0})$. The final application (Section~\ref{sec:proof}) uses the first conclusion. The paragraph after the proof compares this proof with that of Delcourt and Postle.

The proof of Proposition~\ref{prop:DP} below builds a $K_t$ model in about $M\sqrt{\log t}$ stages, each adding $x$ branch sets, while keeping a $k$-connected induced subgraph $J^\ast$ of large chromatic number that is tangent to the model (the statement $(\ast)_n$ below). At each stage, $J^\ast$ contains many small dense pieces, since otherwise the chromatic number of $J^\ast$ would be at most about $2d+\Gamma$ (Step~1). By Lemma~\ref{lem:Ck}, these pieces are woven. Paths from the old branch sets, from rooted clique models in the pieces and from a new subgraph $H_3$ of large chromatic number are joined in one piece, the junction, by knitting (Steps~2--4, with Step~5 for the first stage). The branch sets are then shrunk by Lemma~\ref{lem:62}, so that the whole model has chromatic number at most $6t+\Gamma$ (Step~6). Since $H$ is $\sigma$-chromatic-inseparable, a $k$-connected subgraph of large chromatic number outside the model meets $H_3$ in at least $k$ vertices, and its union with $H_3$ restores the invariant (Step~7).

\begin{proof}
\emph{Setup.} Let $x:=\lceil t/(M\sqrt{\log t})\rceil$ and $r:=\lceil M\sqrt{\log t}\rceil$, so that $rx\geqslant t$. Let $k:=12t$ and $d:=\Cdens t$. Here $k$ is the connectivity of the high-chromatic subgraphs extracted by Corollary~\ref{cor:nguyen}, and $d$ is the density threshold. Theorem~\ref{thm:refinedlin} is applied to subgraphs with at least $d$ times as many edges as vertices. Let $\Theta:=6t+\Gamma+2k$. Since $k=12t$ and $d=\Cdens t$,
\[
\sigma=30t+2\Gamma+4k\qquad\text{and}\qquad\Psi=8t+2\Gamma+2k+2d+2 .
\]

For $1\leqslant n\leqslant r$ we have $n\leqslant M\sqrt{\log t}+1$, and $(M\sqrt{\log t}+1)^2\leqslant4M^2\log t$. Hence the numbers $n{\PL}\,t\log t$ and $6n^2{\PL}\,t\log t$ are at most $N_t$.

Each application of Corollary~\ref{cor:nguyen} below extracts a $k$-connected induced subgraph from a graph with chromatic number at least $\Psi-\Theta-24t-\Gamma=2d-22t+2$, as we verify at each application. This is at least $\lceil3.0625(k-1)\rceil$, because $d=1.8\cdot10^4\,t$ and $k=12t$. Each application loses at most $2k$ in chromatic number.

\emph{Reduction.} It suffices to prove the following claim:
\begin{quote}
($\dagger$) if $H$ is $\sigma$-chromatic-inseparable and $\chi(H)\geqslant\Psi$, then $H$ has a $K_t$ minor.
\end{quote}
Indeed, since $H$ has no $K_t$ minor, ($\dagger$) shows that if $\chi(H)\geqslant\Psi$ then $H$ is $\sigma$-chromatic-separable. The second conclusion follows from the first by the remark on chromatic thresholds in Section~\ref{sec:prelim}, since $\sigma=78t+2\Gamma\leqslant\Psi=(32+2\Cdens)t+2\Gamma+2\leqslant(2\Cdens+33)\,t+2\Gamma$.

\emph{Induction.} Assume that $H$ is $\sigma$-chromatic-inseparable, $\chi(H)\geqslant\Psi$, and $H$ has no $K_t$ minor. We prove by induction on $n\in\{0,1,\dots,r\}$ the following statement:
\begin{quote}
$(\ast)_n$: $H$ has a $K_{nx}$ model $\mathcal A=\{A_1,\dots,A_{nx}\}$ and a set $S\subseteq V(H)$ with the following properties:
\begin{enumerate}[label=(I\arabic*),nosep]
\item $S$ is a core of $\mathcal A$, and $|S|\leqslant2n^2{\PL}\,t\log t$.
\item There is a set $R\subseteq V(H)$ such that $J^\ast:=H[R]$ is $k$-connected.
\item $J^\ast$ is tangent to $\mathcal A$, that is, $|V(J^\ast)\cap A_j|=1$ for every $j\in[nx]$.
\item $V(J^\ast)\cap V(\mathcal A)\subseteq S$.
\item $\chi(J^\ast)\geqslant\chi(H)-\Theta$.
\end{enumerate}
\end{quote}
Since $rx\geqslant t$, the statement $(\ast)_r$ gives a $K_t$ minor of $H$, contradicting our assumption and so proving ($\dagger$). In the inductive step, the pieces and the next high-chromatic subgraph $H_3$ are found inside $J^\ast$. By~(I3), each old branch set contains exactly one vertex of $J^\ast$, and in Step~4 a path from this vertex extends the branch set.

\emph{The case $n=0$.} Since $\chi(H)\geqslant\Psi\geqslant\lceil3.0625(k-1)\rceil$, Corollary~\ref{cor:nguyen} shows that $H$ has a $k$-connected induced subgraph $J^\ast=H[R]$ with $\chi(J^\ast)\geqslant\chi(H)-2k\geqslant\chi(H)-\Theta$. Take $\mathcal A$ to be the empty model and $S:=\emptyset$. Then (I1)--(I5) hold, (I1), (I3) and (I4) vacuously.

\emph{The inductive step.} Let $1\leqslant n\leqslant r$ and assume $(\ast)_{n-1}$. Let $\mu':=(n-1)x$ and $\mu:=nx$. Then $\mu'\leqslant t+M\sqrt{\log t}\leqslant1.0001t$ and $\mu=\mu'+x\leqslant3t$. Moreover $r\leqslant M\sqrt{\log t}+1$ and $x\leqslant t/(M\sqrt{\log t})+1$, so
\begin{align*}
3\mu'+\mu\ =\ (4n-3)x\ &\leqslant\ (4M\sqrt{\log t}+1)\Bigl(\frac t{M\sqrt{\log t}}+1\Bigr)\\
&=\ 4t+\frac t{M\sqrt{\log t}}+4M\sqrt{\log t}+1\ \leqslant\ 4.004t ,
\end{align*}
since $\log t\geqslant1.6\cdot10^6$. As $3\mu'+\mu$ is an integer, $3\mu'+\mu\leqslant\lfloor4.004t\rfloor$. Let $\mathcal A'=\{A'_1,\dots,A'_{\mu'}\}$, $S'$ and $H':=J^\ast$ be given by $(\ast)_{n-1}$. By (I3), for $j\in[\mu']$ there is a unique vertex $w_j$ in $V(H')\cap A'_j$. Since the sets $A'_j$ are disjoint, the vertices $w_1,\dots,w_{\mu'}$ are distinct, and
\begin{equation}\label{eq:HprimeA}
V(H')\cap V(\mathcal A')=\{w_1,\dots,w_{\mu'}\}\subseteq S' .
\end{equation}
The inclusion holds by (I4). Every subgraph of $H'$ that avoids $\{w_1,\dots,w_{\mu'}\}$ is therefore disjoint from $V(\mathcal A')$.

\medskip\noindent\emph{Step 1. Small pieces.} Let $H_1:=H'-\{w_1,\dots,w_{\mu'}\}$, an induced subgraph of $H$. Then $\chi(H_1)\geqslant\chi(H')-\mu'\geqslant\chi(H)-\Theta-\mu'$. Choose a maximal family of pairwise disjoint sets $K\subseteq V(H_1)$ such that $|K|\leqslant {\PL}\,t\log t$, $H[K]$ is $(9t+1)$-connected and $e(H[K])>{\Ck}t\,|K|$.

Suppose for a contradiction that this family has fewer than $n$ members, and let $U$ be the union of its members. Then $|U|\leqslant n{\PL}\,t\log t\leqslant N_t$, so $\chi(H[U])\leqslant\Gamma$. Let $T\subseteq V(H_1)\setminus U$ be non-empty. Suppose that $e(H[T])\geqslant d\,|T|$. Theorem~\ref{thm:refinedlin}, applied to $H[T]$ with $9t$ in place of $k$, gives a set $A\subseteq T$ such that $|A|\leqslant{\PL}\,t\log t$, $H[A]$ is $(9t+1)$-connected and $e(H[A])>(\cL d-9t)\,|A|=70.2t\,|A|\geqslant{\Ck}t\,|A|$. Theorem~\ref{thm:refinedlin} applies because $H[T]$ has no $K_t$ minor, $t\geqslant10^{13}$, $d=\Cdens t\geqslant1.8\cdot10^4\,t$ and $9t\leqslant\cL d/2$. The set $A$ could be added to the family, contradicting maximality. So $e(H[T])<d\,|T|$, and $H[T]$ has a vertex of degree less than $2d$. Hence $H_1-U$ is $(2d-1)$-degenerate and $\chi(H_1-U)\leqslant2d$. Then $\chi(H_1)\leqslant2d+\Gamma$. But $\chi(H_1)\geqslant\Psi-\Theta-\mu'=2d+\Gamma+2t+2-\mu'>2d+\Gamma$, since $\mu'\leqslant1.0001t$. This is a contradiction.

So the family has at least $n$ members. Choose $n$ of them, $K_0,K_1,\dots,K_{n-1}$, and let $J_i:=H[K_i]$ for $0\leqslant i\leqslant n-1$. Call $J_0,J_1,\dots,J_{n-1}$ the \defn{pieces}, and call $J_0$ the \defn{junction}. Each piece $J$ is $(9t+1)$-connected, so $v(J)>9t+1$, and $e(J)>{\Ck}t\,v(J)$. By Lemma~\ref{lem:Ck}, $J$ is $(2x,\lfloor4.004t\rfloor)$-woven. By~\eqref{eq:HprimeA}, the pieces are disjoint from $V(\mathcal A')$.

\medskip\noindent\emph{Step 2. Paths to the junction $J_0$.} If $n=1$, let $W:=\emptyset$ and $\mathcal Q_1:=\emptyset$. Now let $n\geqslant2$. For $i\in[n-1]$, choose a set $X_i$ of $2x$ vertices of $J_i$ and label them $w_{\mu'+2(i-1)x+1},\dots,w_{\mu'+2ix}$. Since $2(n-1)x=2\mu'$, the labels run up to $3\mu'$. Let $W:=\{w_1,\dots,w_{3\mu'}\}$. Then $|W|=3\mu'\leqslant6t$. The vertices $w_1,\dots,w_{\mu'}$ lie in $H'$ but not in $H_1$, while each $X_i$ lies in $J_i\subseteq H_1$. So the $3\mu'$ labelled vertices are distinct, and $W\subseteq V(H')$ is disjoint from $V(J_0)$. The graph $H'$ is $k$-connected, $2|W|=6\mu'\leqslant6.0006t\leqslant12t=k$, and $v(J_0)>9t+1\geqslant2|W|$. Since $H'=H[R]$ for the set $R$ of~(I2), Lemma~\ref{lem:paths}(a) applied inside $R$, with $H$ in place of $G$ and $B=V(J_0)$ (see the end of its proof), gives a double fan $\mathcal Q_1$ from $W$ to $V(J_0)$ in $H$ with $V(\mathcal Q_1)\subseteq V(H')$ and $\chi(H[V(\mathcal Q_1)])\leqslant4|W|=12\mu'$. Since $H'=H[R]$ is an induced subgraph of $H$, $\mathcal Q_1$ is also a double fan in $H'$. For every $n\geqslant1$, let $\overline J:=V(J_0)\cup V(J_1)\cup\dots\cup V(J_{n-1})$. Then $|\overline J|\leqslant n{\PL}\,t\log t\leqslant N_t$, so $\chi(H[\overline J])\leqslant\Gamma$.

\medskip\noindent\emph{Step 3. The next high-chromatic subgraph.} Let $H_2:=H_1-(\overline J\cup V(\mathcal Q_1))$. So $H_2$ is obtained from $H'$ by removing the vertices of $\{w_1,\dots,w_{\mu'}\}$, of $\overline J$ and of $V(\mathcal Q_1)$. Their removal lowers the chromatic number by at most $\mu'$, $\Gamma$ and $4|W|=12\mu'$, respectively. Since $13\mu'\leqslant13(t+M\sqrt{\log t})\leqslant24t$, we obtain $\chi(H_2)\geqslant\chi(H')-24t-\Gamma$. By~(I5), $\chi(H')\geqslant\chi(H)-\Theta\geqslant\Psi-\Theta$. Hence $\chi(H_2)\geqslant\Psi-\Theta-24t-\Gamma$, and so Corollary~\ref{cor:nguyen} applies to $H_2$. So $H_2$ has a $k$-connected induced subgraph $H_3$ with
\[
\chi(H_3)\ \geqslant\ \chi(H)-\Theta-24t-\Gamma-2k\ =\ \chi(H)-\sigma .
\]
By construction, $H_3$ is disjoint from $\overline J$, from $V(\mathcal Q_1)$ and from $W\subseteq\{w_1,\dots,w_{\mu'}\}\cup V(\mathcal Q_1)$. By~\eqref{eq:HprimeA}, it is also disjoint from $V(\mathcal A')$.

\medskip\noindent\emph{Step 4. Building a $K_\mu$ model when $n\geqslant2$.}

\emph{A linkage to the junction.} The graph $H'-W$ is $2\mu$-connected, as $k-|W|\geqslant12t-6t\geqslant2\mu$. The sets $V(H_3)$ and $V(J_0)$ avoid $W$. They have more than $k\geqslant2\mu$ and more than $9t+1\geqslant2\mu$ vertices, respectively. So, by Theorem~\ref{thm:menger}, $H'-W$ has $2\mu$ pairwise disjoint paths from $V(H_3)$ to $V(J_0)$. Trimming them gives a family $\mathcal Q_2=\{Q_{i,j}:i\in[\mu],\ j\in\{1,2\}\}$ of $2\mu$ pairwise disjoint $V(H_3)$--$V(J_0)$ paths in $H'-W$. By Step~3, $V(H_3)$ is disjoint from $V(J_0)\cup V(\mathcal Q_1)$. So Lemma~\ref{lem:paths}(b), with $H'$, $\mathcal Q_1$ and $\mathcal Q_2$ in place of $G$, $\mathcal P$ and $\mathcal Q$, and with $B=V(J_0)$, $U=V(H_3)$ and $m=\mu$, gives $3\mu'+\mu$ pairwise disjoint paths $P_1,\dots,P_{3\mu'+\mu}$ in $H'$ with all their vertices in $V(\mathcal Q_1)\cup V(\mathcal Q_2)$. Here $P_i$ starts at $w_i$ for $i\leqslant3\mu'$, and $P_{3\mu'+i}$ starts at a vertex $w_{3\mu'+i}\in V(H_3)$ for $i\in[\mu]$. Let $u_i\in V(J_0)$ be the last vertex of $P_i$. Each $P_i$ meets $W\cup V(H_3)\cup V(J_0)$ only in its ends. The vertices $w_1,\dots,w_{3\mu'+\mu}$ and $u_1,\dots,u_{3\mu'+\mu}$ are $2(3\mu'+\mu)$ distinct vertices, since the paths $P_i$ are pairwise disjoint and $W\cup V(H_3)$ is disjoint from $V(J_0)$. So $\mathcal P=\{P_1,\dots,P_{3\mu'+\mu}\}$ is a $\{(w_i,u_i)\}$-linkage in $H'$. Also $V(\mathcal P)\subseteq V(H')$ and $V(H')\cap V(\mathcal A')=\{w_1,\dots,w_{\mu'}\}\subseteq W$. Hence $\mathcal P$ has the following three intersection properties:
\[
\begin{gathered}
V(\mathcal P)\cap V(H_3)=\{w_{3\mu'+1},\dots,w_{3\mu'+\mu}\},\qquad V(\mathcal P)\cap V(J_0)=\{u_1,\dots,u_{3\mu'+\mu}\},\\
V(\mathcal P)\cap V(\mathcal A')=\{w_1,\dots,w_{\mu'}\}.
\end{gathered}
\]

\emph{Rooted models in the pieces.} Each $J_i$ is $(2x,\lfloor4.004t\rfloor)$-woven, by Step~1, and hence $(2x,3\mu'+\mu)$-woven, since $\lfloor4.004t\rfloor\geqslant3\mu'+\mu$. As $3\mu'+\mu$ is the number of pairs of $\mathcal P$, Lemma~\ref{lem:515} applies with $c=3\mu'+\mu$. Let $\mathcal P^{(0)}:=\mathcal P$. For $i=1,\dots,n-1$ in turn, apply Lemma~\ref{lem:515} in the graph $H'$, with the linkage $\mathcal P^{(i-1)}$, the subgraph $J_i=H'[K_i]$ in place of $H$, and the root set $X_i$. This gives a $\{(w_l,u_l)\}$-linkage $\mathcal P^{(i)}$ in $H'$ and a $K_{2x}$ model $\mathcal M_i$ in $J_i$ rooted at $X_i$, with
\[
V(\mathcal P^{(i)})\cap V(\mathcal M_i)\subseteq X_i\qquad\text{and}\qquad V(\mathcal P^{(i)})\subseteq V(J_i)\cup V(\mathcal P^{(i-1)}).
\]
Let $\mathcal P':=\mathcal P^{(n-1)}$. For $i<i'$, the second inclusion gives $V(\mathcal P^{(i')})\subseteq V(\mathcal P^{(i)})\cup V(J_{i+1})\cup\dots\cup V(J_{i'})$, and the graphs $J_{i+1},\dots,J_{i'}$ are disjoint from $\mathcal M_i\subseteq J_i$. Hence $V(\mathcal P')\cap V(\mathcal M_i)\subseteq X_i$. Since every vertex of $X_i$ is the first vertex of a path of every linkage $\mathcal P^{(i')}$ with $0\leqslant i'\leqslant n-1$, in fact $V(\mathcal P')\cap V(\mathcal M_i)=X_i$. Also $V(\mathcal P')\subseteq V(\mathcal P)\cup V(J_1)\cup\dots\cup V(J_{n-1})$. The graphs $J_i$ are disjoint from $H_3$, from $V(\mathcal A')$ and from $J_0$. So the three displayed intersection properties of $\mathcal P$ also hold for $\mathcal P'$. For $i\in[n-1]$ and $w\in X_i$, let $B(w)$ denote the branch set of $\mathcal M_i$ containing $w$. For $l\in[3\mu'+\mu]$, let $P'_l$ denote the path of $\mathcal P'$ starting at $w_l$.

\emph{Indices.} For $i\in[n-1]$ and $j\in[x]$, let $q_{i,j}:=\mu'+2(i-1)x+x+j$, so that the vertices $w_{q_{i,j}}$ form the second half of $X_i$. For $j\in[\mu']$, let $i(j):=\lceil j/x\rceil$, let $p_j:=\mu'+2(i(j)-1)x+(j-(i(j)-1)x)$, and let $p'_j:=3\mu'+j$. The vertices $w_{p_j}$ form the first halves of the sets $X_i$. Define
\[
\Pi_j:=\{u_j,u_{p_j},u_{p'_j}\}\ \ (j\in[\mu']),\qquad \Pi_{\mu'+j}:=\{u_{q_{i,j}}:i\in[n-1]\}\cup\{u_{4\mu'+j}\}\ \ (j\in[x]).
\]
The index sets $\{j:j\in[\mu']\}$, $\{p_j:j\in[\mu']\}$, $\{p'_j:j\in[\mu']\}$, $\{q_{i,j}:i\in[n-1],\,j\in[x]\}$ and $\{4\mu'+j:j\in[x]\}$ partition $[3\mu'+\mu]$. So $\Pi_1,\dots,\Pi_\mu$ are pairwise disjoint and contain $3\mu'+\mu$ vertices in total. As illustrated in Figure~\ref{fig:step4}, each branch set $A''_l$ of the model $\mathcal A''=\{A''_1,\dots,A''_\mu\}$ defined below uses some of the paths of $\mathcal P'$ to reach the junction $J_0$. Table~\ref{tab:indices} lists these paths.

\begin{table}[ht]
\centering\small
\begin{tabular}{@{}llll@{}}\toprule
Branch set & Path & First vertex & Lies in\\\midrule
$A''_j$, $j\in[\mu']$ & $P'_j$ & $w_j$ & $W$, the old branch set $A'_j$\\
 & $P'_{p_j}$ & $w_{p_j}$ & first half of $X_{i(j)}$, in $J_{i(j)}$\\
 & $P'_{p'_j}$, $p'_j=3\mu'+j$ & $w_{3\mu'+j}$ & $H_3$\\\midrule
$A''_{\mu'+j}$, $j\in[x]$ & $P'_{q_{i,j}}$, $i\in[n-1]$ & $w_{q_{i,j}}$ & second half of $X_i$, in $J_i$\\
 & $P'_{4\mu'+j}$ & $w_{4\mu'+j}$ & $H_3$\\\bottomrule
\end{tabular}
\caption{The paths of $\mathcal P'$ used by each branch set of $\mathcal A''$ in Step~4 of the proof of Proposition~\ref{prop:DP}. The last vertex of each path lies in $\Pi_j$ or $\Pi_{\mu'+j}$, respectively, and hence in $R_j$ or $R_{\mu'+j}$.}\label{tab:indices}
\end{table}

\begin{figure}[ht]
\centering
\begin{tikzpicture}[scale=0.92,
  vtx/.style={circle,fill,inner sep=1.3pt},
  lab/.style={font=\footnotesize},
  rpath/.style={red!75!black,thick,preaction={draw,white,line width=2.2pt}},
  bpath/.style={blue!75!black,thick,preaction={draw,white,line width=2.2pt}}]
% old model
\draw[dashed,rounded corners] (-1.0,1.85) rectangle (1.9,5.35);
\node[lab] at (0.45,5.6) {old model $\mathcal A'$};
\draw[fill=red!12,draw=red!75!black] (0.35,4.35) ellipse (1.05 and 0.55);
\node[lab] at (0.05,4.35) {$A'_1$};
\draw[fill=gray!12] (0.35,2.75) ellipse (1.05 and 0.55);
\node[lab] at (0.05,2.75) {$A'_2$};
\draw (0.0,3.85)--(0.0,3.25);
\node[vtx,red!75!black] (w1) at (1.15,4.35) {};
\node[lab,left] at (1.1,4.35) {$w_1$};
\node[vtx,gray] at (1.15,2.75) {};
% H_3
\draw[fill=green!7] (0.35,-0.55) ellipse (1.45 and 0.95);
\node[lab] at (-0.35,-0.55) {$H_3$};
\node[vtx,red!75!black] (h1) at (1.2,-0.15) {};
\node[lab,left] at (1.15,-0.15) {$w_{3\mu'+1}$};
\node[vtx,blue!75!black] (h2) at (1.2,-0.95) {};
\node[lab,left] at (1.15,-0.95) {$w_{4\mu'+1}$};
% piece J_1
\draw[fill=gray!6] (5.0,4.15) ellipse (1.45 and 1.0);
\node[lab] at (5.0,5.4) {$J_1\supseteq\mathcal M_1$};
\draw[fill=red!12,draw=red!75!black] (4.85,4.5) ellipse (0.75 and 0.27);
\draw[fill=blue!12,draw=blue!75!black] (4.85,3.8) ellipse (0.75 and 0.27);
\draw (4.85,4.23)--(4.85,4.07);
\node[vtx,red!75!black] (p1) at (5.3,4.5) {};
\node[lab] at (4.75,4.5) {$w_{p_1}$};
\node[vtx,blue!75!black] (q1) at (5.3,3.8) {};
\node[lab] at (4.72,3.8) {$w_{q_{1,1}}$};
% dots and piece J_{n-1}
\node at (5.0,2.8) {$\vdots$};
\draw[fill=gray!6] (5.0,1.15) ellipse (1.45 and 0.85);
\node[lab] at (5.0,-0.05) {$J_{n-1}\supseteq\mathcal M_{n-1}$};
\draw[fill=blue!12,draw=blue!75!black] (4.95,1.05) ellipse (0.85 and 0.27);
\draw[fill=gray!12] (4.85,1.6) ellipse (0.55 and 0.2);
\draw (4.85,1.32)--(4.85,1.4);
\node[vtx,blue!75!black] (q2) at (5.55,1.05) {};
\node[lab] at (4.85,1.05) {$w_{q_{n-1,1}}$};
% junction J_0
\draw[fill=gray!6] (10.0,1.9) ellipse (1.5 and 2.75);
\node[lab] at (10.0,4.95) {junction $J_0$};
\draw[fill=red!12,draw=red!75!black] (10.15,3.05) ellipse (0.75 and 0.95);
\node[lab] at (10.45,3.05) {$R_1$};
\draw[fill=blue!12,draw=blue!75!black] (10.15,0.6) ellipse (0.75 and 0.95);
\node[lab] at (10.3,0.6) {$R_{\mu'+1}$};
\node[vtx,red!75!black] (r1) at (9.65,3.65) {};
\node[vtx,red!75!black] (r2) at (9.6,3.05) {};
\node[vtx,red!75!black] (r3) at (9.65,2.45) {};
\node[vtx,blue!75!black] (b1) at (9.65,1.2) {};
\node[vtx,blue!75!black] (b2) at (9.6,0.6) {};
\node[vtx,blue!75!black] (b3) at (9.65,0.0) {};
% paths
\draw[rpath] (w1) .. controls (3.2,6.3) and (7.6,6.0) .. node[lab,above,pos=0.5] {$P'_1$} (r1);
\draw[rpath] (p1) .. controls (7.0,4.5) and (8.3,3.2) .. node[lab,above,pos=0.55] {$P'_{p_1}$} (r2);
\draw[bpath] (q1) .. controls (7.0,3.6) and (8.3,1.5) .. (b1);
\draw[rpath] (h1) .. controls (2.0,2.65) and (6.0,2.3) .. node[lab,right,pos=0.1] {$P'_{3\mu'+1}$} (r3);
\draw[bpath] (q2) .. controls (7.2,1.1) and (8.3,0.7) .. (b2);
\draw[bpath] (h2) .. controls (4.2,-1.6) and (7.6,-0.9) .. node[lab,below,pos=0.5] {$P'_{4\mu'+1}$} (b3);
\end{tikzpicture}
\caption{Step~4 of the proof of Proposition~\ref{prop:DP}, for the extended branch set $A''_1$ (red) and the new branch set $A''_{\mu'+1}$ (blue). The set $A''_1$ is the union of $A'_1$, the branch set $B(w_{p_1})$ of $\mathcal M_1$, the paths $P'_1$, $P'_{p_1}$ and $P'_{3\mu'+1}$, and $R_1$. The set $A''_{\mu'+1}$ is the union of the branch sets $B(w_{q_{i,1}})$ of $\mathcal M_i$ ($i\in[n-1]$), the paths $P'_{q_{i,1}}$ and $P'_{4\mu'+1}$, and $R_{\mu'+1}$. Each meets $H_3$ in exactly one vertex. They are adjacent, since they contain distinct branch sets of $\mathcal M_1$.}\label{fig:step4}
\end{figure}

\emph{Knitting in the junction.} The graph $J_0$ is $(9t+1)$-connected, and $e(J_0)>{\Ck}t\,v(J_0)\geqslant5(3\mu'+\mu)\,v(J_0)$. Moreover, $J_0$ has the connectivity $2(3\mu'+\mu)-\mu$ required by Lemma~\ref{lem:knit}, since $2(3\mu'+\mu)-\mu=7\mu'+x\leqslant7(t+M\sqrt{\log t})+t/(M\sqrt{\log t})+1\leqslant7.001t\leqslant9t+1$. By Lemma~\ref{lem:knit} with $3\mu'+\mu$ and $\mu$ in place of $k$ and $\ell$, the graph $J_0$ is $(3\mu'+\mu,\mu)$-knit, so there are pairwise disjoint sets $R_1,\dots,R_\mu\subseteq V(J_0)$ with $\Pi_j\subseteq R_j$ and each $J_0[R_j]$ connected.

\emph{The model.} For $j\in[\mu']$ let
\[
A''_j:=A'_j\cup V(P'_j)\cup R_j\cup V(P'_{p_j})\cup B(w_{p_j})\cup V(P'_{p'_j}),
\]
and for $j\in[x]$ let
\[
A''_{\mu'+j}:=V(P'_{4\mu'+j})\cup R_{\mu'+j}\cup\bigcup_{i\in[n-1]}\bigl(V(P'_{q_{i,j}})\cup B(w_{q_{i,j}})\bigr).
\]
Each path in these definitions starts in $A'_j$, in some set $B(\cdot)$, or in $H_3$, and ends in the set $R_j$ or $R_{\mu'+j}$ of the same definition. Since the graphs $H[A'_j]$, $H[B(\cdot)]$ and $J_0[R_l]$ are connected, $H[A''_l]$ is connected for every $l\in[\mu]$. The sets $A''_1,\dots,A''_\mu$ are pairwise disjoint since:
\begin{itemize}[nosep]
\item Within each of the following families, the members are pairwise disjoint: the sets $A'_j$, the paths of $\mathcal P'$, the sets $R_j$, and the sets $B(\cdot)$. For the sets $B(\cdot)$, this holds since they are branch sets of a model $\mathcal M_i$ in the pairwise disjoint pieces $J_i$.
\item If a path $P'_l$ of $\mathcal P'$ meets $V(\mathcal A')\cup V(J_0)\cup V(\mathcal M_1)\cup\dots\cup V(\mathcal M_{n-1})$ in a vertex $v$, then $v$ is an end of $P'_l$, and $v$ lies in the set $A''_j$ whose definition uses $P'_l$.
\item $J_0$ and the graphs $J_i$ are disjoint from $V(\mathcal A')$.
\item $J_0$ is disjoint from the graphs $J_i$.
\item Every element of $\Pi_j$ belongs to $R_j$.
\end{itemize}
The sets are pairwise adjacent. Two sets $A''_j,A''_{j'}$ with distinct $j,j'\leqslant\mu'$ are adjacent because $\mathcal A'$ is a model. For $j\leqslant\mu'$ and $j'\in[x]$, the sets $A''_j$ and $A''_{\mu'+j'}$ contain the distinct branch sets $B(w_{p_j})$ and $B(w_{q_{i(j),j'}})$ of $\mathcal M_{i(j)}$. For distinct $j,j'\in[x]$, the sets $A''_{\mu'+j}$ and $A''_{\mu'+j'}$ contain the distinct branch sets $B(w_{q_{1,j}})$ and $B(w_{q_{1,j'}})$ of $\mathcal M_1$. In the last two cases these branch sets are adjacent, since $\mathcal M_{i(j)}$ and $\mathcal M_1$ are $K_{2x}$ models. So $\mathcal A'':=\{A''_1,\dots,A''_\mu\}$ is a $K_\mu$ model.

\emph{Tangency and core.} The sets $A'_j$, $R_j$ and $B(\cdot)$ are disjoint from $H_3$, by Step~3 and since $V(J_0)$ and the sets $V(J_i)$ lie in $\overline J$. Let $S:=S'\cup \overline J\cup\{w_{3\mu'+1},\dots,w_{3\mu'+\mu}\}$. By the intersection properties of $\mathcal P'$, the paths of $\mathcal P'$ meet $V(H_3)$ exactly in the vertices $w_{3\mu'+1},\dots,w_{3\mu'+\mu}$, and $w_{3\mu'+l}$ is the first vertex of $P'_{3\mu'+l}$ for each $l\in[\mu]$. If $l\leqslant\mu'$, then $3\mu'+l=p'_l$. If $l>\mu'$, then $3\mu'+l=4\mu'+(l-\mu')$. In both cases the path $P'_{3\mu'+l}$ is used by exactly one set $A''_j$, namely $A''_l$. So $H_3$ meets each $A''_j$ exactly in $w_{3\mu'+j}$, which lies in $S$. Hence $H_3$ is tangent to $\mathcal A''$. Adjacencies between the sets $A''_j$ with $j\leqslant\mu'$ are witnessed by edges with both ends in $S'$, and all other adjacencies by edges of some $J_i$, whose ends lie in $V(J_i)\subseteq\overline J$. Hence $S$ is a core of $\mathcal A''$, and $V(H_3)\cap V(\mathcal A'')\subseteq S$. Finally,
\[
|S|\ \leqslant\ 2(n-1)^2{\PL}\,t\log t+n{\PL}\,t\log t+3t\ \leqslant\ 2n^2{\PL}\,t\log t ,
\]
because $2(n-1)^2+n+1\leqslant2n^2$ for $n\geqslant1$ and $3t\leqslant {\PL}\,t\log t$.

\medskip\noindent\emph{Step 5. Building a $K_\mu$ model when $n=1$.} Here $\mu'=0$, there are no pieces $J_i$ with $i\geqslant1$, and $\mu=x$. The construction in Step~4 does not apply, because it uses the model $\mathcal M_1$ to make the new branch sets pairwise adjacent. Instead, since $H'$ is $x$-connected and $V(H_3)$ and $V(J_0)$ have more than $k\geqslant x$ and more than $9t+1\geqslant x$ vertices, respectively, Theorem~\ref{thm:menger} gives $x$ pairwise disjoint paths from $V(H_3)$ to $V(J_0)$ in $H'$. Trimming them gives $x$ pairwise disjoint $V(H_3)$--$V(J_0)$ paths $P^\circ_1,\dots,P^\circ_x$ in $H'$, with $P^\circ_j$ joining $y_j\in V(H_3)$ to $z_j\in V(J_0)$. The graph $J_0$ is $(2x,\lfloor4.004t\rfloor)$-woven, by Step~1, and in particular $(2x,0)$-woven. Since $v(J_0)>9t+1\geqslant2x$, Lemma~\ref{lem:pad} with $b=0$ shows that $J_0$ is $(x,0)$-woven. So $J_0$ has a $K_x$ model $\{B_1,\dots,B_x\}$ with $z_j\in B_j\subseteq V(J_0)$. Let $A''_j:=V(P^\circ_j)\cup B_j$ and $S:=V(J_0)\cup\{y_1,\dots,y_x\}$. Since each $P^\circ_j$ meets $V(J_0)$ only in $z_j$ and $V(H_3)$ only in $y_j$, and each $B_j\subseteq V(J_0)$ is disjoint from $H_3$ by Step~3, these sets form a $K_x$ model $\mathcal A''$ to which $H_3$ is tangent, and $V(H_3)\cap V(\mathcal A'')=\{y_1,\dots,y_x\}\subseteq S$. Also $S$ is a core, since all required edges lie in $J_0$, and $|S|\leqslant {\PL}\,t\log t+x\leqslant2{\PL}\,t\log t$.

\medskip\noindent\emph{Step 6. Shrinking the branch sets.} By Steps~4 and~5, for each $j\in[\mu]$, the graph $H_3$ meets $A''_j$ in exactly one vertex $h_j$, and $h_j\in S$. For each $j\in[\mu]$, apply Lemma~\ref{lem:62} to the connected graph $H[A''_j]$ and the set $S\cap A''_j$, which is non-empty since it contains $h_j$. We obtain a set $A_j\subseteq A''_j$ with $H[A_j]$ connected, and a set $T_j$ with $S\cap A''_j\subseteq T_j\subseteq A_j$, $|T_j|\leqslant3|S\cap A''_j|$ and $\chi(H[A_j\setminus T_j])\leqslant2$. Let $\mathcal A:=\{A_1,\dots,A_\mu\}$. The sets $A_j\subseteq A''_j$ are pairwise disjoint, and each $H[A_j]$ is connected. Since $S\cap A''_j\subseteq A_j\subseteq A''_j$, we have $A_j\cap S=A''_j\cap S$. As $S$ is a core of $\mathcal A''$, for all distinct $j,j'$ some edge joins $A''_j\cap S=A_j\cap S$ to $A''_{j'}\cap S=A_{j'}\cap S$. So $\mathcal A$ is a $K_\mu$ model with core $S$. Next, $V(H_3)\cap A''_j=\{h_j\}$ and $h_j\in S\cap A''_j\subseteq A_j$. Hence $|V(H_3)\cap A_j|=1$ for every $j\in[\mu]$, that is, $H_3$ is tangent to $\mathcal A$. Also $V(H_3)\cap V(\mathcal A)\subseteq V(H_3)\cap V(\mathcal A'')\subseteq S$. Let $T:=\bigcup_jT_j$. Then $|T|\leqslant3|S|\leqslant6n^2{\PL}\,t\log t\leqslant N_t$, so $\chi(H[T])\leqslant\Gamma$. Also $V(\mathcal A)\setminus T$ is the disjoint union of the sets $A_j\setminus T_j$, and each $H[A_j\setminus T_j]$ can be coloured with two colours of its own. So $\chi(H[V(\mathcal A)\setminus T])\leqslant2\mu\leqslant6t$. Hence $\chi(H[V(\mathcal A)])\leqslant\chi(H[T])+\chi(H[V(\mathcal A)\setminus T])\leqslant6t+\Gamma$.

\medskip\noindent\emph{Step 7. Restoring the invariant.} Let $H_4:=H-V(\mathcal A)$. Then $H_4$ is an induced subgraph of $H$, and
\[
\chi(H_4)\ \geqslant\ \chi(H)-\chi(H[V(\mathcal A)])\ \geqslant\ \chi(H)-6t-\Gamma\ \geqslant\ \Psi-6t-\Gamma\ \geqslant\ \lceil3.0625(k-1)\rceil .
\]
By Corollary~\ref{cor:nguyen}, $H_4$ has a $k$-connected induced subgraph $H_5$ with $\chi(H_5)\geqslant\chi(H)-6t-\Gamma-2k=\chi(H)-\Theta$. Since $H_4$ is an induced subgraph of $H$, so is $H_5$. Also $V(H_5)\cap V(\mathcal A)=\emptyset$. As illustrated in Figure~\ref{fig:step7}, there are two cases, according to whether $H_5$ and $H_3$ share fewer than $k$ vertices.

\emph{Case 1. $|V(H_5)\cap V(H_3)|<k$.} Let $Y_1:=V(H_5)\setminus V(H_3)$ and $Y_2:=V(H_3)$. These sets are disjoint, so $H[Y_1]$ and $H[Y_2]$ are vertex-disjoint induced subgraphs of $H$. Deleting a vertex lowers the chromatic number by at most $1$, and $H[Y_1]=H_5-(V(H_5)\cap V(H_3))$ since $H_5$ is induced. Hence
\[
\chi(H[Y_1])\ \geqslant\ \chi(H_5)-(k-1)\ \geqslant\ \chi(H)-\Theta-k\ \geqslant\ \chi(H)-\sigma ,
\]
since $\sigma-\Theta=24t+\Gamma+2k\geqslant k$. Moreover $H[Y_2]=H_3$, since $H_3$ is an induced subgraph of $H$, so $\chi(H[Y_2])=\chi(H_3)\geqslant\chi(H)-\sigma$ by Step~3. Thus $H$ is $\sigma$-chromatic-separable, a contradiction.

\emph{Case 2. $|V(H_5)\cap V(H_3)|\geqslant k$.} Let $R:=V(H_3)\cup V(H_5)$ and $J^\ast:=H[R]$. We claim that $J^\ast$ is $k$-connected. Indeed, $v(J^\ast)\geqslant v(H_5)>k$. Let $Z\subseteq R$ with $|Z|<k$. Then $H_3-Z$ and $H_5-Z$ are connected, because $H_3$ and $H_5$ are $k$-connected. They share a vertex, because $|V(H_3)\cap V(H_5)|\geqslant k>|Z|$. Their union is a connected spanning subgraph of $J^\ast-Z$. So $J^\ast-Z$ is connected. This proves the claim.

We now verify $(\ast)_n$ for $\mathcal A$, $S$ and $J^\ast$. Property~(I1) holds by Step~6, and by the bound on $|S|$ in Step~4 or Step~5. Property~(I2) was just shown. For~(I3) and~(I4), recall that $V(H_5)\cap V(\mathcal A)=\emptyset$. So $V(J^\ast)\cap A_j=V(H_3)\cap A_j$ for every $j\in[\mu]$. This set has exactly one vertex, by Step~6. Also $V(J^\ast)\cap V(\mathcal A)=V(H_3)\cap V(\mathcal A)\subseteq S$, by Step~6. Finally, $\chi(J^\ast)\geqslant\chi(H_5)\geqslant\chi(H)-\Theta$, which is~(I5). This proves $(\ast)_n$.
\end{proof}

\begin{figure}[ht]
\centering
\begin{tikzpicture}[scale=0.85,lab/.style={font=\footnotesize}]
% ---------- Case 1 ----------
\draw[fill=red!10,draw=red!70!black] (0,0) ellipse (1.5 and 1.0);
\draw[fill=blue!10,draw=blue!70!black,fill opacity=0.7] (2.55,0) ellipse (1.5 and 1.0);
\begin{scope}
\clip (0,0) ellipse (1.5 and 1.0);
\fill[violet!35] (2.55,0) ellipse (1.5 and 1.0);
\end{scope}
\node[lab] at (-0.35,0) {$H_3=H[Y_2]$};
\node[lab] at (3.3,0) {$H_5$};
\node[lab,align=center] at (1.27,-1.45) {fewer than $k$\\common vertices};
\node[lab] at (3.15,1.25) {$Y_1=V(H_5)\setminus V(H_3)$};
\node at (1.27,-2.25) {Case~1};
% ---------- Case 2 ----------
\begin{scope}[xshift=7.6cm]
\draw[fill=red!10,draw=red!70!black] (0,0) ellipse (1.5 and 1.0);
\draw[fill=blue!10,draw=blue!70!black,fill opacity=0.7] (1.4,0) ellipse (1.5 and 1.0);
\begin{scope}
\clip (0,0) ellipse (1.5 and 1.0);
\fill[violet!35] (1.4,0) ellipse (1.5 and 1.0);
\end{scope}
\node[lab] at (-0.9,0) {$H_3$}; \node[lab] at (2.3,0) {$H_5$};
\node[lab] at (0.7,0) {$\geqslant k$};
\node[lab] at (0.7,1.3) {$J^\ast=H[V(H_3)\cup V(H_5)]$};
\node at (0.7,-2.25) {Case~2};
\end{scope}
\end{tikzpicture}
\caption{Step~7 of the proof of Proposition~\ref{prop:DP}. In Case~1, the $k$-connected graphs $H_3$ and $H_5$ share fewer than $k$ vertices, and $Y_1=V(H_5)\setminus V(H_3)$ and $Y_2=V(H_3)$ give a chromatic separation of $H$. In Case~2, they share at least $k$ vertices, so $J^\ast=H[V(H_3)\cup V(H_5)]$ is $k$-connected.}\label{fig:step7}
\end{figure}

\paragraph{Comparison with the proof of Delcourt and Postle.} The proof of Proposition~\ref{prop:DP} follows the proof of Lemma~6.4 of Delcourt and Postle~\cite{DP}, and $(\ast)_n$ is their inductive statement. The main differences are as follows. We use Corollary~\ref{cor:nguyen}, Theorem~\ref{thm:refinedlin} and Lemmas~\ref{lem:knit} and~\ref{lem:Ck} in place of~\cite[Theorems~5.1, 2.3 and~5.8 and Lemma~5.13]{DP}. We use $M$ times as many stages, each adding $x=\lceil t/(M\sqrt{\log t})\rceil$ branch sets instead of $\lceil t/\sqrt{\log t}\rceil$. We record the chromatic number lost at each step, so that $\Gamma$ enters $\sigma$ and $\Psi$ only with coefficient~$2$, and $\Cdens$ enters only through the term $2d$ of $\Psi$. Step~5 is new, since for $n=1$ their construction gives no adjacency between the new branch sets.

%=====================================================================
\section{Proof of Theorem~\ref{thm:all}}\label{sec:proof}
%=====================================================================

This section combines Proposition~\ref{prop:DP}, Corollary~\ref{cor:allscales} and an adaptation of Lemma~\ref{lem:weave} in a final separation step, and then completes the proof.

\begin{corollary}\label{cor:sepP}
Let $b$ be an integer with $\log b\geqslant1.6\cdot10^6$, and let
\[
\sigma_b:=(78+2{\cst})\,b\qquad\text{and}\qquad\Psi_b:=(32+2\Cdens+2{\cst})\,b+2 .
\]
Every graph $F$ with no $K_b$ minor and $\chi(F)\geqslant\Psi_b$ is $\sigma_b$-chromatic-separable.
\end{corollary}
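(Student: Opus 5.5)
This follows directly from the first conclusion of Proposition~\ref{prop:DP}, with $t:=b$ and $H:=F$, and with $\Gamma:={\cst}b$ supplied by Corollary~\ref{cor:allscales}. The numbers match: with $\Gamma={\cst}b$ we get $\sigma=78b+2\Gamma=(78+2{\cst})b=\sigma_b$ and $\Psi=(32+2\Cdens)b+2\Gamma+2=\Psi_b$. So the whole proof is checking that the hypotheses of Proposition~\ref{prop:DP} hold.

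\emph{Step 1.} Note that $b$ is an integer with $\log b\geqslant1.6\cdot10^6$, exactly as Proposition~\ref{prop:DP} requires of $t$. The graph $F$ has no $K_b$ minor.

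\emph{Step 2.} We must show that every induced subgraph $J$ of $F$ with at most $N_b$ vertices has $\chi(J)\leqslant{\cst}b$. Such a $J$ has no $K_b$ minor, since minors of $J$ are minors of $F$. Also $b\geqslant3$, because $\log b\geqslant1.6\cdot10^6$. So Corollary~\ref{cor:allscales}, applied with $a=b$, gives $\chi(J)\leqslant{\cst}b$. The order bound $N_a$ in Corollary~\ref{cor:allscales} is the same $N_t$ used in Proposition~\ref{prop:DP} (both are $24{\PL}M^2\,a\log^2a$), so the two statements fit together without further work. Hence $\Gamma:={\cst}b\geqslant0$ is a valid choice.

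\emph{Step 3.} If $\chi(F)\geqslant\Psi_b=\Psi$, Proposition~\ref{prop:DP} says that $F$ is $\sigma$-chromatic-separable, which is $\sigma_b$-chromatic-separable.

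I do not expect a real obstacle. The only point needing care is Step~2: confirming that the size bound and the integrality and threshold conditions of Corollary~\ref{cor:allscales} match those of Proposition~\ref{prop:DP}, and that the corollary is stated for all integers $a\geqslant3$. It is, so it covers this $b$ regardless of the threshold $T_{\mathrm{boot}}$.
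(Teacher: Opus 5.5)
Your proposal is correct and is the paper's proof: apply the first conclusion of Proposition~\ref{prop:DP} with $t:=b$, $H:=F$ and $\Gamma:={\cst}b$. Corollary~\ref{cor:allscales} supplies the bound on induced subgraphs with at most $N_b$ vertices, using $b\geqslant3$ and the fact that these subgraphs have no $K_b$ minor.
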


\begin{proof}
Since $\log b\geqslant1.6\cdot10^6$, we have $b\geqslant3$. Let $U\subseteq V(F)$ with $|U|\leqslant N_b$. Then $F[U]$ has no $K_b$ minor, so Corollary~\ref{cor:allscales} gives $\chi(F[U])\leqslant{\cst} b$. So Proposition~\ref{prop:DP} applies with $b$, $F$ and ${\cst} b$ in place of $t$, $H$ and $\Gamma$, and its first conclusion is the claim.
\end{proof}

In the final application, we use Corollary~\ref{cor:sepP} in Step~7b of the proof of Lemma~\ref{lem:weave}, in place of $(\mathcal H_{\Phi,a_0})$, with ${\LamF}$ and ${\KF}$ as defined in Lemma~\ref{lem:weavefinal} below. The separation cost $\sigma_b=(78+2{\cst})b$ does not involve $\Cdens$. Each separation in Step~7b has cost $\sigma_b$ with $b=\min\{2a,t\}\leqslant2a$, and these costs are covered by the term ${\LamF}a$, as in Step~8. The chromatic threshold $\Psi_b$ does involve $\Cdens$. Since $b\leqslant t$, the requirement $\chi(G_4)\geqslant\Psi_b+\sigma_b$, which allows two successive separations, is met by the term ${\KF}t$ in the hypothesis of Lemma~\ref{lem:weavefinal}. This term is the same at every level of the induction on $a$. So $\Cdens$ does not enter ${\LamF}$, and enters ${\KF}$, and hence the final constant, only through the summand $2\Cdens$. By contrast, using $(\mathcal H_{\Phi,a_0})$ with the second conclusion of Proposition~\ref{prop:DP} would put $2\Cdens$ into $\Phi$, and the coefficient $10$ of $\Phi$ in ${\LamU}$ would give the summand $20\Cdens$ (see Appendix~\ref{app:further}).

\begin{lemma}\label{lem:weavefinal}
Let $T_{\mathrm{fin}}:=18.4$. Let $t$ be an integer with $\log\log t\geqslant T_{\mathrm{fin}}$, and let $a_0:=t/\log t$ and $\eta:=5/(a_0-4)$. Let
\[
{\LamF}:=(1+\eta)\bigl(897.5+10(78+2{\cst})\bigr)\qquad\text{and}\qquad {\KF}:=110+2\Cdens+4{\cst}+\frac2t .
\]
Let $a\geqslant1$ be an integer, and let $G$ be a $21a$-connected graph with no $K_t$ minor and
\[
\chi(G)\ \geqslant\ D\bigl(\tfrac{46}3a_0+4\bigr)+1+{\LamF} a+{\KF}t .
\]
Then $G$ is $(a,\lceil10a/3\rceil)$-woven.
\end{lemma}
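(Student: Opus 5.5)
We follow the proof of Lemma~\ref{lem:weave} for the uneven scheme, by induction on $a$, and change only Step~7b and the accounting in Step~8. The hypothesis $(\mathcal H_{\Phi,a_0})$ is replaced by Corollary~\ref{cor:sepP}. The extra summand ${\KF}t$ in the chromatic threshold stays fixed at every level of the induction, so it is carried unchanged into the induction hypothesis. Set $\ell_0:=\lceil10a/3\rceil$ and $h:=2a+4\ell_0$.

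\emph{Steps~1--7a.} These go through verbatim, because they only use lower bounds on $\chi(G)$, and the new threshold is larger. Step~1 reduces to disjoint $R,S,T$. Step~2 either finds a $K_h$ minor, or, when $a\leqslant a_0$, contradicts $\chi(G)>D(\frac{46}3a_0+4)$. So we may assume $a>a_0\geqslant4+5/\eta$, and hence $a\geqslant10$. Steps~3--6 then build the hub, the double fan and $G_3$, losing at most $92.5a+17+42a$. Step~7a either finishes via Lemma~\ref{lem:kawa}, or gives $\had(G_3)<2a$.

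\emph{Step~7b, the new step.} Let $G_4:=G_3-(R'\cup S')$. Set $b:=\min\{2a,t\}$; then $G_4$ has no $K_b$ minor. We need $\log b\geqslant1.6\cdot10^6$. Since $b\geqslant a>a_0=t/\log t$, this follows from $\log\log t\geqslant18.4$, because then $\log t-\log\log t$ is far above $1.6\cdot10^6$. This is the first thing to check.

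We then apply Corollary~\ref{cor:sepP} twice: to $G_4$, giving $F_1,F_2$, and to $F_1$, giving $F_{1,1},F_{1,2}$. Each separation costs at most $\sigma_b\leqslant(78+2{\cst})\cdot2a$. The second application needs $\chi(F_1)\geqslant\Psi_b$, so it suffices that $\chi(G_4)\geqslant\Psi_b+\sigma_b$. Using $b\leqslant t$, we have
\[
\Psi_b+\sigma_b\leqslant(110+2\Cdens+4{\cst})t+2={\KF}t.
\]
We have $\chi(G_4)\geqslant\chi(G)-(137.5a+17)$ and $\chi(G)\geqslant{\LamF}a+{\KF}t$. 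Since ${\LamF}a$ dominates $137.5a+17$, we get $\chi(G_4)\geqslant{\KF}t$. The separated pieces are induced, as noted in Section~\ref{sec:defs}.

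\emph{Step~8, accounting.} The losses are as in Table~\ref{tab:losses}, with $\Phi$ replaced by $\Phi':=78+2{\cst}$, because $\sigma_b\leqslant2a\Phi'$. So $L_i\leqslant179.5a+17+4\Phi'a$ for $i=1,2$, and $L_3\leqslant179.5a+17+2\Phi'a$. Since ${\LamF}=5(1+\eta)(179.5+2\Phi')$, the inequalities ${\LamF}(a-a'_i)\geqslant L_i$ hold exactly as in Lemma~\ref{lem:weave}. Hence
\[
\chi(J'_i)\geqslant D\bigl(\tfrac{46}3a_0+4\bigr)+1+{\LamF}a'_i+{\KF}t.
\]
The same inequality gives $\chi(J_i)\geqslant700a\geqslant\lceil3.0625(21a-1)\rceil$, so Corollary~\ref{cor:nguyen} applies. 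Each $J'_i$ is $21a'_i$-connected and has no $K_t$ minor, so the induction hypothesis shows that $J'_i$ is $(a'_i,\lceil10a'_i/3\rceil)$-woven.

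\emph{Step~9} is unchanged.

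The main obstacle is Step~7b. There we must confirm that $b=\min\{2a,t\}$ meets the size condition of Corollary~\ref{cor:sepP} at every level of the induction, and that the fixed term ${\KF}t$ really covers both thresholds. In particular, $F_1$ must still satisfy $\chi(F_1)\geqslant\Psi_b$ after the first separation. The rest is bookkeeping identical to the proof of Lemma~\ref{lem:weave}.
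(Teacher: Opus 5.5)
Your proposal is correct and follows the paper's proof essentially step for step. You take $b=\min\{2a,t\}$, apply Corollary~\ref{cor:sepP} twice with both applications paid for by the fixed term ${\KF}t\geqslant\Psi_b+\sigma_b$, and redo the Step~8 accounting with $78+2{\cst}$ in place of $\Phi$. The only slip is the claim $b\geqslant a$, which fails if $a>t$; the paper instead uses $b\geqslant\min\{a,t\}\geqslant a_0$, which is what the bound $\log b\geqslant1.6\cdot10^6$ actually needs.
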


\begin{proof}
Since $a_0=t/\log t\geqslant9$ when $\log\log t\geqslant T_{\mathrm{fin}}$, we have $\eta\in(0,1]$ and $a_0=4+5/\eta$. The proof is the proof of Lemma~\ref{lem:weave} for the uneven scheme, with the following changes. Throughout, $t$ is fixed, and we use induction on $a$. Every graph to which the induction hypothesis is applied is an induced subgraph of $G$, and so has no $K_t$ minor. Steps~1--6, 7a and~9 are unchanged, and Step~2 gives $a>a_0$ as before, since $\chi(G)\geqslant D(\frac{46}3a_0+4)+1$. Steps~3 and~5 use only $\chi(G)\geqslant1+897.5a$, which holds since ${\LamF}>897.5$. In Step~7b, the choice of $R'$ and $S'$, the definition of $G_4$ and the bound $\chi(G_4)\geqslant\chi(G_3)-3a$ are unchanged. The rest of Step~7b up to the definition of $J_1,J_2,J_3$ is replaced by the following, and Step~8 is modified as described below.

Let $b:=\min\{2a,t\}$, which is an integer. The graph $G_4$ has no $K_{2a}$ minor by Step~7a, and no $K_t$ minor since $G$ has none. So $G_4$ has no $K_b$ minor. Also $b\geqslant\min\{a,t\}\geqslant a_0$. Let $L:=\log t$, so that $L\geqslant e^{T_{\mathrm{fin}}}\geqslant9.7\cdot10^7$. Then $\log L\leqslant L/2$, since $e^{L/2}\geqslant(L/2)^2/2=L^2/8\geqslant L$. So $\log b\geqslant\log a_0=L-\log L\geqslant L/2\geqslant4.85\cdot10^7\geqslant1.6\cdot10^6$. Since $b\leqslant t$,
\[
\Psi_b+\sigma_b\ =\ (110+2\Cdens+4{\cst})\,b+2\ \leqslant\ {\KF}t .
\]
By Steps~3--5 and the bound $\chi(G_4)\geqslant\chi(G_3)-3a$, we have $\chi(G_4)\geqslant\chi(G)-(137.5a+17)$. Since ${\LamF} a\geqslant137.5a+17$, it follows that $\chi(G_4)\geqslant{\KF}t\geqslant\Psi_b+\sigma_b$. By Corollary~\ref{cor:sepP}, the graph $G_4$ is $\sigma_b$-chromatic-separable. So it has vertex-disjoint induced subgraphs $F_1,F_2$ with $\chi(F_i)\geqslant\chi(G_4)-\sigma_b\geqslant\Psi_b$. Applying Corollary~\ref{cor:sepP} to $F_1$, which also has no $K_b$ minor, gives vertex-disjoint induced subgraphs $F_{1,1},F_{1,2}$ of $F_1$ with $\chi(F_{1,j})\geqslant\chi(F_1)-\sigma_b$. Let $J_1:=F_{1,1}$, $J_2:=F_{1,2}$ and $J_3:=F_2$. Then $\sigma_b=(78+2{\cst})b\leqslant2(78+2{\cst})a$. So the rest of Step~7b, and Step~8, go through as in the proof of Lemma~\ref{lem:weave} with $\sigma_b$ in place of $\sigma$, with $\Phi$ replaced by $78+2{\cst}$, with ${\LamF}$ in place of ${\LamU}$, and with ${\KF}t$ added to the lower bound on $\chi(G)$ in the hypothesis and to the resulting lower bounds on $\chi(J'_i)$ in Step~8. In particular, each $J'_i$ is a $21a'_i$-connected graph with no $K_t$ minor and
\[
\chi(J'_i)\ \geqslant\ D\bigl(\tfrac{46}3a_0+4\bigr)+1+{\LamF} a'_i+{\KF}t ,
\]
so the induction hypothesis applies to it.
\end{proof}

The even scheme would give $(1+\eta)\bigl(517.5+12(78+2{\cst})\bigr)$ in place of ${\LamF}$, which is larger, since $78+2{\cst}>190$.

We now prove the main theorem.

\begin{proof}[Proof of Theorem~\ref{thm:all}]
Let $G$ be a graph, and let $h:=\had(G)$. If $h\leqslant1$, then $G$ has no edge, since an edge gives a $K_2$ minor. So $\chi(G)=0=h$ if $G$ is null, and $\chi(G)=1=h$ otherwise. Now let $h\geqslant2$ and $t:=h+1$, so that $G$ has no $K_t$ minor. Since $t\geqslant3$, Theorem~\ref{thm:kos} gives $\chi(G)<60\,t\sqrt{\log t}+1$.

If $h<10^6$, then $\log t\leqslant t-1=h<10^6\leqslant1001^2$, so $\sqrt{\log t}\leqslant1001$. Since $h\geqslant2$,
\[
\chi(G)\ <\ 60{,}060\,t+1\ =\ 60{,}060\,h+60{,}061\ \leqslant\ 1{,}118{,}430\,h .
\]
Now let $h\geqslant10^6$.

\emph{Case 1. $\log\log t<T_{\mathrm{fin}}$.} Then $\log t<e^{T_{\mathrm{fin}}}$, so $\sqrt{\log t}<e^{T_{\mathrm{fin}}/2}=e^{9.2}<10^4$. Since $h\geqslant10^6$,
\[
\chi(G)\ <\ 600{,}000\,t+1\ =\ 600{,}000\,h+600{,}001\ \leqslant\ 1{,}118{,}430\,h .
\]

\emph{Case 2. $\log\log t\geqslant T_{\mathrm{fin}}$.} Let $a_0$, $\eta$, ${\LamF}$ and ${\KF}$ be as in Lemma~\ref{lem:weavefinal}, and let $L:=\log t$. Then $L\geqslant e^{T_{\mathrm{fin}}}\geqslant9.7\cdot10^7$. Since $t=e^L\geqslant L^3/6$, we have $t\geqslant a_0=t/L\geqslant L^2/6\geqslant2.5\cdot10^9$.

We first show that $D(\frac{46}3a_0+4)+1\leqslant t/10$. Let $s_0:=\lfloor\frac{46}3a_0+4\rfloor$, so that $D(\frac{46}3a_0+4)=D(s_0)$. Since $a_0\geqslant6$, we have $2\leqslant s_0\leqslant16a_0\leqslant16t$, and so $\log s_0\leqslant\log16+L<L+3$. Since $L\geqslant9.7\cdot10^7\geqslant9650^2(1+3/L)$, we have $L+3\leqslant(L/9650)^2$, that is, $\sqrt{L+3}\leqslant L/9650$. By Theorem~\ref{thm:kos}, and since $a_0L=t$,
\[
D(s_0)\ \leqslant\ 60\,s_0\sqrt{\log s_0}+1\ \leqslant\ 60\cdot16a_0\cdot\frac{L}{9650}+1\ =\ \frac{960}{9650}\,t+1\ =\ \frac t{10}-\frac t{1930}+1 .
\]
Since $t\geqslant3860$, it follows that $D(s_0)\leqslant t/10-1$.

Next, every $21t$-connected graph $G'$ with no $K_t$ minor satisfies
\[
\chi(G')\ <\ \frac t{10}+{\LamF}t+{\KF}t .
\]
Otherwise $\chi(G')\geqslant D(\frac{46}3a_0+4)+1+{\LamF}t+{\KF}t$, and Lemma~\ref{lem:weavefinal} with $a=t$ shows that $G'$ is $(t,\lceil10t/3\rceil)$-woven. Since $G'$ has more than $21t$ vertices, it has a $K_t$ minor, a contradiction.

We now claim that
\[
\chi(G)\ <\ \frac t{10}+42t+{\LamF} t+{\KF}t .
\]
Suppose not. Then $\chi(G)\geqslant42t+{\LamF}t\geqslant3.0625(21t-1)$, since ${\LamF}\geqslant64$. As $\chi(G)$ is an integer, $\chi(G)\geqslant\lceil3.0625(21t-1)\rceil$. So by Corollary~\ref{cor:nguyen} with $k=21t$, $G$ has a $21t$-connected induced subgraph $G'$ with $\chi(G')\geqslant\chi(G)-42t\geqslant t/10+{\LamF} t+{\KF}t$. Since $G'$ has no $K_t$ minor, this contradicts the previous paragraph. This proves the claim.

Finally, we bound the right-hand side of the claim. Let $\Lambda_0:=897.5+10(78+2{\cst})=902{,}177.246$, so that ${\LamF}=(1+\eta)\Lambda_0$. Since $a_0-4\geqslant2500\,\Lambda_0$, we have $\eta\Lambda_0=5\Lambda_0/(a_0-4)\leqslant0.002$. Also ${\KF}t=(110+2\Cdens+4{\cst})t+2$. Since $10(78+2{\cst})+4{\cst}=780+24{\cst}$ and $0.1+42+897.5+780+110=1829.6$,
\[
\frac t{10}+42t+\Lambda_0t+(110+2\Cdens+4{\cst})t\ =\ (1829.6+2\Cdens+24{\cst})\,t\ =\ {\cfin}t .
\]
Hence, by the claim,
\[
\chi(G)\ <\ ({\cfin}+0.002)\,t+2\ \leqslant\ 1{,}118{,}430\,(t-1)\ =\ 1{,}118{,}430\,h .
\]
Here the second inequality holds because ${\cfin}=1{,}118{,}429.2952$, so it is equivalent to $0.7028\,t\geqslant1{,}118{,}432$, which holds as $t\geqslant2.5\cdot10^9$.

In all cases $\chi(G)\leqslant1{,}118{,}430\,\had(G)$.
\end{proof}

The margin $1{,}118{,}430-{\cfin}=0.7048$ absorbs the error term $\eta\Lambda_0t\leqslant0.002t$ from ${\LamF}t$, the summand $2$ of ${\KF}t$, and the change from $t$ to $h=t-1$. The value $T_{\mathrm{fin}}=18.4$ is, to one decimal place, the least value for which the argument of Case~2 gives $D(\frac{46}3a_0+4)+1\leqslant t/10$. That argument bounds $D(s_0)$ by $960\,t\sqrt{L+3}/L+1$, which is less than $t/10$ only if $960\sqrt{L+3}<L/10$, and this fails for $L=e^{18.3}$. A larger value of $T_{\mathrm{fin}}$ up to $19.6$ would give the same constant, since $60\,e^{19.6/2}<1.09\cdot10^6$, so Case~1 would still give $\chi(G)\leqslant1{,}118{,}430\,h$ (as $h\geqslant10^6$).

\appendix

%=====================================================================
\section{Postscript}\label{app:further}
%=====================================================================

This paper was first submitted to the arXiv on the morning of 7th October 2026 (Melbourne time). A few hours later, OpenAI released its disproof of Hadwiger's Conjecture~\cite{OpenAI-HC}.\footnote{OpenAI also proved that $\operatorname{ch}(G)\in O(\had(G))$~\cite{OpenAI-list}, where $\operatorname{ch}(G)$ is the list chromatic number of $G$. Since $\chi(G)\leqslant\operatorname{ch}(G)$, this implies the Linear Hadwiger Theorem of Norin and Steiner~\cite{NS}. No explicit constant is given for the bound $\operatorname{ch}(G)\in O(\had(G))$.} OpenAI's construction gives infinitely many graphs $G$ with $\alpha(G)\leqslant2$ and $\had(G)<\frac{26}{75}v(G)+\frac23$, implying $\chi(G)\geqslant \frac12 v(G)>\frac{75}{52}\had(G)-\frac{25}{26}$ (and $\had(G)\to\infty$ along this family). Two natural questions arise. Let $c_1$ be the infimum of the real numbers $c$ such that $\chi(G)\leqslant c\,\had(G)$ for every graph $G$. Let $c_2$ be the infimum of the real numbers $c$ such that $\chi(G)\leqslant(c+o(1))\had(G)$ for every graph $G$, where the $o(1)$ term tends to $0$ as $\had(G)\to\infty$. What are $c_1$ and $c_2$? We know that $\frac{75}{52}\leqslant c_2\leqslant c_1\leqslant1{,}118{,}430$. With the goal of closing this gap, we now identify the parts of the argument that determine the upper bound in Theorem~\ref{thm:all}, and indicate what would be needed to reduce it further.

\paragraph{The dominant term.} The constant ${\cfin}=1829.6+2\Cdens+24{\cst}$ is dominated by $24{\cst}\approx1.081\cdot10^6$, whose origin is explained in Section~\ref{sec:constants}. The term $2\Cdens=36{,}000$ contributes about $3.2\%$. So ${\cfin}$ is determined mainly by the bootstrap constant ${\cst}$ and by the coefficient $10$ of $\Phi$ in ${\LamU}$, which turns $78+2{\cst}$ into the term $20{\cst}$ of $24{\cst}$. In particular, further improvements to the weaving constant ${\Ck}$ or to the proportion $\cL$ affect only $\Cdens$, and so would change ${\cfin}$ by less than $3.2\%$. Using Proposition~\ref{prop:DP} only through its second conclusion, that is, Lemma~\ref{lem:weave} with $(\mathcal H_{\Phi,a_0})$ in place of Lemma~\ref{lem:weavefinal}, gives a constant of about $939.6+10(2\Cdens+33)+20{\cst}\approx1.262\cdot10^6$, which is about $13\%$ more. Several estimates below are values of the formula $1829.6+2\Cdens+24{\cst}$ or of a variant of it. A value of ${\cfin}$ below $60\,e^{T_{\mathrm{fin}}/2}$, which exceeds $593{,}827$, would also require a smaller size threshold $T_{\mathrm{fin}}$, since Case~1 of the proof of Theorem~\ref{thm:all} uses only $\chi(G)\leqslant60\,t\sqrt{\log t}+1$, which is close to $60\,e^{T_{\mathrm{fin}}/2}\,t$ when $\log\log t$ is just below $T_{\mathrm{fin}}$.

\paragraph{The chromatic-inseparable case.} This case affects the main constant mainly through $\Gamma$, which enters each of $\sigma$ and $\Psi$ twice (see the paragraph after the proof of Proposition~\ref{prop:DP}). In each of $\sigma$ and $\Psi$, one copy of $\Gamma$ enters through $\Theta$. This copy of $\Gamma$ bounds the chromatic number of the set $T$ of Step~6 of the proof of Proposition~\ref{prop:DP}, which lies in the new branch sets. The other copy bounds the chromatic number of a union of pieces, namely the set $\overline J$ of Step~2 (used in Step~3) for $\sigma$, and the set $U$ of Step~1 for $\Psi$. In the final separation step (Corollary~\ref{cor:sepP}), $\Gamma={\cst}b$, and the two copies of $\Gamma$ produce the term $24{\cst}$ of ${\cfin}$ (Section~\ref{sec:constants}). An argument that loses $\Gamma$ only once would lower this term to $12{\cst}$, a reduction of $12{\cst}\approx5.4\cdot10^5$, almost half of ${\cfin}$. The other contributions of this case are small. Better connectivity results would change only the terms $2\Cdens=36{,}000$ and $1829.6$, which together are less than $4\%$ of ${\cfin}$. Lemma~\ref{lem:Ck} needs pieces that are $9t$-connected and have density ${\Ck}t=70t$, and Lemma~\ref{lem:mader}, used in the proof of Theorem~\ref{thm:refinedlin}, gives both. By Lemma~\ref{lem:mader}, average degree $4k$ forces a $(k+1)$-connected subgraph with more than $2k$ vertices. Bernshteyn and Kostochka~\cite{BK16} showed that average degree $(3+\frac16)k$ forces a $(k+1)$-connected subgraph, and Carmesin~\cite{Car20} showed that average degree $(3+\frac13)k$ forces a $(k+1)$-connected subgraph with more than $2k$ vertices, and that this is sharp. These results give connectivity without the density, and a result that gives only one of the two properties does not help. Kawarabayashi and Yu~\cite{KY26} prove that every $8\ell$-connected graph is \defn{$\ell$-knitted}, that is, $(\ell,1)$-knit in our terminology, correcting a gap in their earlier argument for $9\ell$~\cite{KY13}. The knitting in the proof of Proposition~\ref{prop:DP} needs connectivity at most $7.001t$, which is below the $9t$ already required by Lemma~\ref{lem:Ck}, so a better knitting result would not help.

\paragraph{The bootstrap.} Within the framework of Section~\ref{sec:bootstrap}, the schedule of Proposition~\ref{prop:explicitbootstrap} is optimal up to the error terms $73\xi$. A dynamic program over all schedules (script \texttt{bootstrap\_dp.py}) takes $\xi=0$ and rounds each exponent reached up to the right end of a short interval, which can only help a schedule. Its value $45{,}024.98$ is therefore a lower bound on the constant given by any schedule, and ${\cst}=45{,}024.98+73\xi$. So a reduction of ${\cst}$ by more than $73\xi$ needs a change to the framework itself. There are three places to look. First, each step doubles the previous constant, because $G_1$ is coloured through its contraction (step~(i) of the proof of Lemma~\ref{lem:level}). A factor smaller than $2$ would reduce each of the first three terms of ${\cst}=2^6\cdot4.01+62{\Sst}+\cS(c_{\mathrm{side}})+\xi$. Second, each step raises the exponent by less than half the exponent $\beta_{\mathrm G}$ at which the Gy\'arf\'as graphs are coloured, since these graphs have order up to $2b'k(k-1)$ with $k=\lceil(\log t)^\zeta\rceil$ (by the condition $(\beta_{\mathrm G}-2\zeta)T\geqslant1$ of Lemma~\ref{lem:level}). Third, the final exponent must exceed $2$, because the order bound $N_t$ of Proposition~\ref{prop:DP} satisfies $N_t\in O(t\log^2t)$ (see the remark after Proposition~\ref{prop:explicitbootstrap}). A larger increment per step, or a smaller order bound, would save steps. For example, one fewer main step would replace ${\cst}$ by $2\bar c_4+\cS(c_{\mathrm{side}})+\xi\approx25{,}453$, and ${\cfin}$ by about $6.49\cdot10^5$. The additive term $\cS(\Phi)$ of each step and the base case are discussed below.

\paragraph{The base case.} The base case enters the main constant through its constant and through its exponent. In ${\cst}=2^6\cdot4.01+62{\Sst}+\cS(c_{\mathrm{side}})+\xi$, the base constant $4.01$ appears with coefficient $2^6=64$ in the first term, with coefficient $12$ in each of the $62$ copies of ${\Sst}=\cS(4.01)+\xi$, and with coefficient $10\cdot14=140$ in $\cS(c_{\mathrm{side}})$, since $c_{\mathrm{side}}=2\cdot4.01+{\Sst}$. So each unit of the base constant contributes $64+744+140=948$ to ${\cst}$, and hence $22{,}752$ to ${\cfin}$. The constant $4$ of the small-graph bound of Norin and Steiner and of Theorem~\ref{thm:NSquant} is the factor $2$ of Reed and Seymour times the factor $2$ from a bipartition step in the proof of~\cite[Lemma~8]{NS}. Removing the second factor would lower ${\cfin}$ by about $45{,}500$. The exponent matters more. The base case stops at exponent $\frac12$ because the additive term $(100/\eps)^{2000/\eps^2}$ of Theorem~\ref{thm:NSquant} is at most $\eps a$ for $\eps$ of order $(\log a)^{-\beta}$ only if $\beta<\frac12$. In the argument of Norin and Steiner, this reflects the error term $(d+1)^{4m}$ of~\cite[Lemma~8]{NS}, with $d\in\exp(O(m\log m))$, which is in $\exp(O(m^2\log m))$~\cite[Section~2.1]{NS}. An error term in $\exp(O(m\log m))$ would extend the base case to exponent $1-o(1)$. Then $\zeta$ could be close to $\frac12$ in Lemma~\ref{lem:level}, and three bootstrapping steps with the base case for the Gy\'arf\'as graphs would suffice. These three steps would give the constant $2^3\cdot4.01+7{\Sst}\approx4285.4$. However, Corollary~\ref{cor:allscales} also uses the bound $(144.4+4T_{\mathrm{boot}})a=4788.4\,a$ of Section~\ref{sec:small}. So ${\cst}$ would be replaced by $4788.4$, and ${\cfin}$ by about $1.53\cdot10^5$. If $T_{\mathrm{boot}}$ were also lowered to at most $1035.25$, so that $144.4+4T_{\mathrm{boot}}\leqslant4285.4$, then ${\cst}$ would be replaced by about $4285.4$, and ${\cfin}$ by about $1.41\cdot10^5$.

\paragraph{The chromatic-separable reduction.} The cost of this reduction is $\cS(\Phi)=42+\min\{517.5+12\Phi,\,897.5+10\Phi\}$. In the uneven-scheme term $897.5+10\Phi$ of $\cS(\Phi)$, the coefficient $10$ of $\Phi$ produces the term $20{\cst}$ of ${\cfin}$, since $78+2{\cst}$ plays the role of $\Phi$ in the final step (see above). The coefficient $12$ enters ${\cst}$ through the side step and the five main steps, and the coefficient $10$ enters ${\cst}$ through the final bootstrapping step. The coefficient $10=5\cdot2$ has two factors. The factor $2$ arises because Lemma~\ref{lem:kawa} gives rooted $K_a$ models only from a $K_{2a}$ minor, so Step~7b of the proof of Lemma~\ref{lem:weave} separates graphs with no $K_{2a}$ minor. A lemma that finds rooted $K_a$ models from a $K_{(1+o(1))a}$ minor in $21a$-connected graphs would halve both coefficients, to $6$ and $5$. With all other terms unchanged, this would give ${\cst}\approx40{,}335$ and ${\cfin}\approx6.0\cdot10^5$. The factor $5$ arises because two chromatic separations give three subgraphs, and each branch set uses two of them (Step~9). Among the ways of assigning two of the three subgraphs to each branch set, the uneven scheme is optimal (see the discussion after Table~\ref{tab:losses}), so a smaller factor needs a different way to combine the subgraphs. The additive terms $517.5=3\cdot172.5$ and $897.5=5\cdot179.5$ come from the losses listed in Table~\ref{tab:losses}, mainly from the set $Z$, the hub and the two extractions of $21a$-connected subgraphs. Lowering each of $517.5$ and $897.5$ by $1$ would lower ${\cst}$ by $73$ and ${\cfin}$ by about $1750$. The connectivity $21a$ is needed because Step~9 needs $G_3-R'$ to be $2a$-linked, and the connectivity form of the Thomas--Wollan theorem gives this when $G_3-R'$ is $20a$-connected. A lower connectivity would lower the losses $42a$ of the two extractions of $21a$-connected subgraphs. Theorem~\ref{thm:TW} might allow lower connectivity, but it needs density and connectivity in the same subgraph, which Corollary~\ref{cor:nguyen} does not provide.

%=====================================================================
\section{Computer verification}\label{app:lean}
%=====================================================================

\begin{sloppypar}
\begin{table}[!t]
\centering\small
\begin{tabular}{@{}>{\raggedright}p{6.8cm}>{\raggedright}p{2.5cm}>{\raggedright\arraybackslash}p{4.3cm}@{}}\toprule
Claim & Where stated & Script\\\midrule
Constants ${\Sst}$, $c_{\mathrm{side}}$, $\bar c_j$, ${\cst}$, ${\cfin}$, and the parts $897.5+10(78+2{\cst})$ of ${\LamF}$ and $110+2\Cdens+4{\cst}$ of ${\KF}$ & Secs.~\ref{sec:constants}, \ref{sec:bootstrap}, \ref{sec:proof} & \texttt{precise\_check.py} (exact)\\
The remarks after Theorem~\ref{thm:kos} and Lemma~\ref{lem:hub} & Secs.~\ref{sec:prelim}, \ref{sec:sep} & \texttt{precise\_check.py} (exact, interval)\\
Linear bounds in Lemmas~\ref{lem:hub}, \ref{lem:weave} and Theorem~\ref{thm:sepexplicit}, Table~\ref{tab:losses}, the crossover $\Phi=190$, the coefficients $10$ and $12$ & Sec.~\ref{sec:sep} & \texttt{precise\_check.py} (exact)\\
The bounds $2.7182818283^{23}\cdot1.22>1.188\cdot10^{10}>578.4K+10$ and $1.188\cdot10^{10}>24.5K$, and the other bounds in Lemmas~\ref{lem:Aeps} and~\ref{lem:base} & Sec.~\ref{sec:base} & \texttt{precise\_check.py} (interval)\\
The bounds for $y\geqslant100$ in the proof of Lemma~\ref{lem:level} & Sec.~\ref{sec:level} & \texttt{precise\_check.py} (exact, interval)\\
The conditions of Lemma~\ref{lem:level} on $T$, $\zeta$, $\beta_{\mathrm G}$ and $\Phi$ at the size thresholds, the bounds $24{\PL}M^2\leqslant2.7^{79}$ and $0.0792\,T_{\mathrm{boot}}\geqslant79$, and the bounds $79/0.0792<998$ and $873.4$ in the remark after Table~\ref{tab:schedule} & Sec.~\ref{sec:schedule} & \texttt{precise\_check.py} (exact, interval)\\
The bounds $24{\PL}M^2\leqslant2.7^{70}$, $\log(24{\PL}M^2)\leqslant70$ and $4788.4$ of Section~\ref{sec:small} & Sec.~\ref{sec:small} & \texttt{precise\_check.py} (interval)\\
Bounds in Lemmas~\ref{lem:massed} and~\ref{lem:Ck} & Sec.~\ref{sec:wollan} & \texttt{precise\_check.py} (exact, interval)\\
Constants in Section~\ref{sec:lin} & Sec.~\ref{sec:lin} & \texttt{precise\_check.py} (exact, interval)\\
Bounds in the proof of Proposition~\ref{prop:DP} ($4.004t$, $7.001t$, $24t$, orders) & Sec.~\ref{sec:insep} & \texttt{precise\_check.py} (exact, interval)\\
The bounds in the proof of Lemma~\ref{lem:weavefinal}, the bounds $t/10$, $9650$, $2.5\cdot10^9$, $0.002$, $600{,}000$ and $60{,}060$ in the proof of Theorem~\ref{thm:all}, and the remarks on $T_{\mathrm{fin}}$ after it & Sec.~\ref{sec:proof} & \texttt{precise\_check.py} (interval)\\
Estimates in Section~\ref{sec:overview} (including the argument of Norin and Steiner) and in Appendix~\ref{app:further} & Sec.~\ref{sec:overview}, App.~\ref{app:further} & \texttt{precise\_check.py} (exact, interval)\\
Optimality of the schedule within the framework & App.~\ref{app:further} & \texttt{bootstrap\_dp.py}\\
Location of the size thresholds (not needed for the proof) & Secs.~\ref{sec:bootstrap}, \ref{sec:proof} & \texttt{explicit\_thresholds.py}\\
Floating-point summary of Sections~\ref{sec:wollan} and~\ref{sec:lin} (not needed) & Secs.~\ref{sec:wollan}, \ref{sec:lin} & \texttt{refined\_lin\_check.py}\\\bottomrule
\end{tabular}
\caption{The numerical claims of the paper and the scripts that check them. Here ``exact'' means exact rational arithmetic, and ``interval'' means interval arithmetic, after the reduction to the size threshold described above.}\label{tab:scripts}
\end{table}

\paragraph{The scripts.} The scripts check the numerical inequalities in three ways. First, rational constants such as ${\Sst}$, ${\cst}$ and ${\cfin}$ are checked in exact rational arithmetic (\texttt{precise\_check.py}). Second, inequalities involving logarithms, square roots and exponentials at specific values are checked in interval arithmetic with outward rounding at $60$ digits (\texttt{precise\_check.py}, using the \texttt{mpmath} library), so these checks are certified despite rounding errors. Third, since numbers such as $\exp(e^{1161})$ are far too large to handle directly, each inequality required for all $t$ with $\log\log t$ at least a size threshold is rewritten in terms of $y:=\log\log t$, so that $\log t=e^y$. It is then reduced to the case where $y$ equals the size threshold. This case is checked in interval arithmetic. This reduction uses that, in each case, the inequality can be written as $f(y)\geqslant0$ with $f$ non-decreasing in $y$, so that the inequality at the size threshold implies it for all larger $y$. For most inequalities this is clear, because the terms that must be small decrease with $y$. Where it is not clear, the text proves it. The script \texttt{explicit\_thresholds.py} locates the size thresholds, and the script \texttt{refined\_lin\_check.py} summarises the computations of Sections~\ref{sec:wollan} and~\ref{sec:lin}. Both use floating-point arithmetic and are not needed for the proof. Table~\ref{tab:scripts} lists where each numerical claim of the paper is checked.
\end{sloppypar}

\paragraph{The Lean formalisation.} The formalisation of Theorem~\ref{thm:all} builds on the Lean library \texttt{LinearHadwiger}, which formalises the proof of Norin and Steiner and its external inputs (\url{https://github.com/snorin239/LinearHadwiger}, commit \texttt{9ef0db5}, Lean and Mathlib v4.32.1). This library accompanies their paper~\cite{NS}, and was produced by OpenAI Codex following guidance from Norin and Steiner. On top of this, Claude Opus 5.5 formalised the arguments of the present paper, and the following external results not in \texttt{LinearHadwiger}:
\begin{itemize}[nosep]
\item the linkage theorem of Thomas and Wollan~\cite[Corollary~1.2]{TW05} (Theorem~\ref{thm:TW}),
\item Theorem~1.6 of Thomas and Wollan~\cite{TW05}, which states that every graph with minimum degree at least $5k$ and at most $10k$ vertices has a $k$-linked subgraph with at least $2k$ vertices, and on which their proof of Theorem~\ref{thm:TW} relies,
\item the degree condition on which Theorem~1.6 of Thomas and Wollan relies, due to Egawa et al.~\cite{EFGISW00} and to Kawarabayashi, Kostochka and Yu~\cite[Theorem~1.1]{KKY06} (formalised in the minimum-degree form stated by Thomas and Wollan~\cite[Theorem~4.1]{TW05}, by adapting the proof of Kawarabayashi, Kostochka and Yu),
\item the sharpened form of Mader's theorem (Lemma~\ref{lem:mader}, proved in Section~\ref{sec:prelim}), since \texttt{LinearHadwiger} contains only the basic form, which gives a $k$-connected induced subgraph,
\item Nguyen's theorem~\cite[Theorem~1.1]{Ngu24}, and
\item the greedy colouring bound of Delcourt and Postle (Lemma~\ref{lem:greedy}).
\end{itemize}
\begin{sloppypar}
The final Lean statement, displayed in Section~\ref{sec:intro}, uses Mathlib's chromatic number \texttt{G.chromaticNumber} and the Hadwiger number \texttt{hadwigerNumber G}, which is defined from Mathlib notions in \texttt{MainMathlib.lean}. The library \texttt{LinearHadwiger} defines \texttt{chromatic G} as \texttt{G.chromaticNumber} converted to a natural number, which loses nothing since $G$ is finite. In \texttt{LinearHadwiger}, \texttt{cliqueMinorNumber G} is the largest $n$ such that $G$ has a $K_n$ model. The file \texttt{BSMain.lean} proves the bound of Theorem~\ref{thm:all} for \texttt{chromatic G} and \texttt{cliqueMinorNumber G}. The Lean files also prove that \texttt{hadwigerNumber G} equals \texttt{cliqueMinorNumber G}, and derive the final statement from this bound. Every numbered result of the paper is formalised, and every statement and proof in the paper follows the Lean files. Results of \texttt{LinearHadwiger} are stated in the paper in the forms formalised there. Published results that the paper cites without proof, such as Wollan's Theorems~2.1--2.3~\cite{Wol08}, are formalised in \texttt{LinearHadwiger} or in the list above.
\end{sloppypar}

\end{document}